\documentclass[12pt]{article}

\usepackage{amsthm}
\usepackage{amssymb}
\usepackage{amsmath}
\usepackage{amsfonts}
\usepackage{times}
\usepackage{mathtools}
\usepackage{color}
\usepackage[colorlinks]{hyperref}
\usepackage{tikz}
\usetikzlibrary{matrix}

\usepackage[paper=a4paper, top=2.5cm, bottom=2.5cm, left=2.5cm, right=2.5cm]{geometry}


\def\qed{\hfill $\vcenter{\hrule height .3mm
		\hbox {\vrule width .3mm height 2.1mm \kern 2mm \vrule width .3mm
			height 2.1mm} \hrule height .3mm}$ \bigskip}

\def \Sph{\mathbb{S}^{n-1}}
\def \RR {\mathbb R}

\def \EE {\mathbb E}
\def \ZZ {\mathbb Z}
\def \Var {\mathrm{Var}}

\def \PP {\mathbb P}
\def \eps {\varepsilon}

\def \Id {\mathbf{I}_n}
\def \COV {\mathrm{Cov}}
\def \KL {\mathrm{D_{KL}}}
\def \b {\mathbf{b}}
\def \tilt {\mathcal{T}}
\def \res {\mathcal{R}}
\def \pin {\mathcal{R}}
\def \ones {\vec{\mathbf{1}}}
\def \SG {\mathfrak{gap}}
\def \EF {\mathrm{\rho_{LS}}}
\def \Ent {\mathrm{Ent}}
\def \sgn {\mathrm{Sgn}}
\def \OP {\mathrm{OP}}
\def \COR {\mathrm{Corr}}

\def \zoom {\mathcal{S}}
\def \SI {\mathbf{\Psi}}
\def \COR {\mathbf{Cor}}
\def \ee {\mathbf{e}}

\def \TTT {\mathfrak{a}}

\def \FF {\mathcal{F}}

\def \M {\mathcal{M}}

\long\def\symbolfootnotetext[#1]#2{\begingroup
	\def\thefootnote{\fnsymbol{footnote}}\footnotetext[#1]{#2}\endgroup}

\DeclarePairedDelimiterX{\infdivx}[2]{(}{)}{%
  #1\;\delimsize\|\;#2%
}

\newtheorem{theorem}{Theorem}
\newtheorem{lemma}[theorem]{Lemma}

\newtheorem{fact}[theorem]{Fact}
\newtheorem{claim}[theorem]{Claim}

\newtheorem{proposition}[theorem]{Proposition}
\newtheorem{corollary}[theorem]{Corollary}
\theoremstyle{definition}
\newtheorem{definition}[theorem]{Definition}
\theoremstyle{remark}
\newtheorem{remark}[theorem]{Remark}

\long\def\symbolfootnotetext[#1]#2{\begingroup
\def\thefootnote{\fnsymbol{footnote}}\footnotetext[#1]{#2}\endgroup}

\DeclareMathOperator{\diag}{diag}

\DeclareMathOperator{\Cov}{Cov}

\DeclareMathOperator{\sign}{sign}

\DeclareMathOperator*{\argmin}{arg\,min}


\newcommand{\real}{\ensuremath{\mathbb{R}}}


\newcommand{\Exs}{\ensuremath{{\mathbb{E}}}}
\newcommand{\Prob}{\ensuremath{{\mathbb{P}}}}


\newcommand{\brackets}[1]{\left[ #1 \right]}
\newcommand{\parenth}[1]{\left( #1 \right)}

\newcommand{\braces}[1]{\left\{ #1 \right \}}
\newcommand{\abss}[1]{\left| #1 \right |}
\newcommand{\angles}[1]{\left\langle #1 \right \rangle}

\newcommand{\floors}[1]{\left\lfloor #1 \right \rfloor}


\newcommand{\vecnorm}[2]{\left\| #1\right\|_{#2}}

\begin{document}
\title{Localization schemes: A framework for proving mixing bounds for Markov chains}
\author{Yuansi Chen\thanks{Duke University.}~~and Ronen Eldan\thanks{Microsoft Reseach. This work was partially supported by a European Research Council Grant no. 803084. and by NSF grant no. DMS-1926686. }}
\date{}
\maketitle
\begin{abstract}
Two recent and seemingly-unrelated techniques for proving mixing bounds for Markov chains are: (i) the framework of Spectral Independence, introduced by Anari, Liu and Oveis Gharan, and its numerous extensions, which have given rise to several breakthroughs in the analysis of mixing times of discrete Markov chains and (ii) the Stochastic Localization technique which has proven useful in establishing mixing and expansion bounds for both log-concave measures and for measures on the discrete hypercube. In this paper, we introduce a framework which connects ideas from both techniques. Our framework unifies, simplifies and extends those two techniques. In its center is the concept of a ``localization scheme'' which, to every probability measure on some space $\Omega$, assigns a martingale of probability measures which ``localize'' in space as time evolves. As it turns out, to every such scheme corresponds a Markov chain, and many chains of interest appear naturally in this framework. This viewpoint provides tools for deriving mixing bounds for the dynamics through the analysis of the corresponding localization process. Generalizations of concepts of Spectral Independence and Entropic Independence naturally arise from our definitions, and in particular we recover the main theorems in the spectral and entropic independence frameworks via simple martingale arguments (completely bypassing the need to use the theory of high-dimensional expanders). We demonstrate the strength of our proposed machinery by giving short and (arguably) simpler proofs to many mixing bounds in the recent literature. In particular, we: (i) Give the first $O(n \log n)$ bound for mixing time of the hardcore-model (of arbitrary degree) in the tree-uniqueness regime, under Glauber dynamics, (ii) Give the first optimal mixing bounds for Ising models in the uniqueness regime under any external fields, (iii) Prove a KL-divergence decay bound for log-concave sampling via the Restricted Gaussian Oracle, which achieves optimal mixing under any $\exp\left (n\right )$-warm start, (iv) Prove a logarithmic-Sobolev inequality for near-critical Ferromagnetic Ising models, recovering in a simple way a variant of a recent result by Bauerschmidt and Dagallier.
\end{abstract}
\newpage
\tableofcontents
\section{Introduction}
Suppose that we would like to sample from a measure $\nu$ on some set $\Omega$. For the sake of discussion, suppose that either $\Omega = \{-1,1\}^n$ is the Boolean hypercube or $\Omega = \RR^n$. A common algorithm is to find a Markov chain whose stationary distribution is $\nu$ and which exhibits good \emph{mixing bounds}. 

In the case $\Omega=\{-1,1\}^n$, a very useful Markov chain associated to a measure $\nu$ is the Glauber dynamics, defined as follows: Given $x \in \{-1,1\}^n$, the transition kernel from $x$, $P_{x \to ~\cdot}$, is the law which describes the point $Y$ picked according to following random procedure: Pick uniformly a coordinate $i \in [n]$ and then take $Y$ according to the law $\nu$ conditioned on the event $\left \{Y_j = x_j, ~ \forall j \in [n] \setminus \{i\} \right \}$.

For a Markov chain $(X_t)_t$ in a state space $\Omega$ which has a unique stationary measure $\nu$, a mixing bound typically asserts that for every $\eps > 0$ there is a time $t(\eps)$ such that for all measurable $A \subset \Omega$ and all $t>t(\eps)$, one has $\left | \PP(X_{t} \in A) - \nu(A) \right | \leq \eps$. See \cite{MCbook} for an extensive account of this subject.

In recent years, there appeared two seemingly-unrelated new techniques which were used to establish mixing bounds through functional inequalities:
\begin{itemize}
\item
The work \cite{ALO-SI} put forth the notion of \emph{Spectral independence} and developed a framework which relies on those notions in order to establish mixing bounds for measures on the set of subsets of $[n]$. This framework relies on the theory of high-dimensional expanders. Some follow-up works which extended this technique are \cite{EI1, CLV, EI2, CFYZ21-rapid, feng2021rapid, blanca2022mixing, liu2021coupling,AAFractionally}.
\item
The \emph{stochastic localization} technique, introduced by the second author in \cite{Eldan-SL}, is the central ingredient used in the proofs of several functional inequalities, both in the continuous setting where $\Omega=\RR^n$ and $\nu$ is a \emph{logarithmically-concave} measure and in the setting of the discrete hypercube. Most notably, the technique gives the state-of-the-art bounds, due to the first author (\cite{chen2021almost}), for the so-called Kannan-Lov\'asz-Simonovits conjecture (\cite{KLS95}) and Bourgain's slicing problem (see \cite{KM-Slicing}). Some follow-up works based on this technique are \cite{Eldan-taming,Eldan-Shamir,Klartag-SL,LV-KLS,chen2021almost}.
\end{itemize}

In this work, we both unify and expand these two techniques towards a new framework which can be used to establish mixing bounds in various settings, showing that the same principles govern in a wide variety scenarios.

One of the main principles underlying both techniques is that concentration bounds on a measure can be deduced from bounds on the covariance structure of a certain family of measures which are transformations of the original measure: In the spectral independence framework, a sufficient condition for a spectral gap is the boundedness of the \emph{influence} matrices of restrictions of the measure (the influence matrix has a simple correspondence with the covariance matrix), and in the stochastic localization framework a spectral gap is implied by the boundedness of the covariance matrix along a certain stochastic process which is associated with the measure.

This work shows that with the correct point of view, those two reductions follow from the exact same argument. By approaching the notions of spectral and entropic independence from this point of view, we will be able to:
\begin{enumerate}
\item[(i)]
Generalize the theory, giving rise to a natural family of Markov chains together with a toolbox of ingredients that can be used to prove mixing bounds for those chains.
\item[(ii)]
Simplify the proofs in the foundations of the framework of spectral and entropic independence, and in particular completely bypass the need to use the theory of high-dimensional expanders.
\item[(iii)]
Provide a self-contained and (arguably) simpler proofs for many of the expansions of the spectral/entropic independence machinery, and in particular reprove in a more general context several main theorems which appear in \cite{ALO-SI,CLV,EI1,EI2,CFYZ21-rapid,CFYZ21-treeuniqueness,RGO20,RGO21}.
\end{enumerate}
\subsubsection*{Summary of applications}
To demonstrate the strength of our machinery, we apply it in several settings:
\begin{itemize}
\item
We derive the optimal mixing rate for Glauber dynamics on the hardcore model of any degree in the tree-uniqueness regime, showing that it mixes in time $O(n \log n)$. A similar mixing rate was obtained in \cite{EI2} for a different Markov chain which was tailored for the hardcore model. Additionally, our framework (arguably) allows us to significantly simplify the argument.
\item
We give the first optimal mixing bounds for graphical Ising models in the uniqueness regime under any external fields, improving the results in \cite{CFYZ21-treeuniqueness} in the sense that there is no dependence of the bound on the external field. In this case as well, the proof is significantly simpler.
\item
Provide a very simple proof of a KL-divergence decay bound for log-concave sampling via the so-called Restricted-Gaussian-Oracle introduced in \cite{RGO20}. Our bound works under any $\exp(n)$-warm start, resolving an issue raised in \cite{RGO21}.
\item
Give a self-contained and simpler proof of mixing for the Glauber dynamics for Ising models whose interaction matrix has operator norm bounded by $1$, which in particular gives optimal mixing for Glauber dynamics on the Sherrington-Kirkpatrick model in high enough temperature, recovering the result derived from \cite{EKZ,EI2}.
\item
We recover, in a simple way, a variant of a recent result by Bauerschmidt and Dagallier~\cite{bauerschmidt2022log}, proving a logarithmic-Sobolev inequality for Ferromagnetic Ising models in terms of the model's susceptibility.
\end{itemize}
\subsubsection*{Concurrent work}
Shortly before submitting this manuscript, we were informed by Chen, Feng, Yin and Zhang of a manuscript in preparation~\cite{chen2022optimal} which independently proves parts of the results regarding antiferromagnetic Ising models that we prove here, and in particular gives the same mixing bound for the hardcore model, using a different proof technique.

\subsubsection*{Mixing via functional inequalities}
Consider a reversible transition operator $P = P_{x \to y}$ with stationary measure $\nu$ on a state space $\Omega$. Its \textbf{spectral gap} is defined as the quantity
$$
\SG(P) := \inf_{\varphi:\Omega \to \RR} \frac{ \int_{\Omega \times \Omega} (\varphi(x) - \varphi(y))^2 d P_x (y) d \nu(x) }{ 2 \Var_\nu[\varphi]}.
$$
Next, define
$$
\Ent_{\nu}[f] = \int f(x) \log f(x) d \nu(x) - \int f(x) d \nu(x) \log \left ( \int f(x) d \nu(x) \right ).
$$
We define the \textbf{modified log-Sobolev Inequality (MLSI)} coefficient as
$$
\EF(P) := 1 - \sup_{f:\Omega \to [0, \infty)} \frac{\Ent_\nu[P f]}{  \Ent_\nu[f] }.
$$
\begin{remark}
The above definition is only valid if the Markov transition kernel $P$ is reversible (or self adjoint). If it is not reversible, $Pf$ needs to be replaced by $\frac{d P^* (f \nu)}{d \nu}$. In this work we only discuss the reversible case, so we stick to the simpler notation.
\end{remark}

It is standard to deduce mixing bounds for the associated Markov chain from the above quantities. Given an initial distribution $\mu$ which is absolutely continuous with respect to $\nu$, consider the total-variation mixing time
$$
t_{\mathrm{mix}}(P, \eps; \mu) = \min \left \{t >0; ~~ \left |P^t[\mu](A) - \nu(A) \right | \leq \eps, ~~ \forall A \subset \Omega \right  \}
$$
Moreover, define
$$
t_{\mathrm{mix}}(P, \eps) = \max_{x \in \Omega} t_{\mathrm{mix}}(P, \eps, \delta_x).
$$
The following fact is standard (see e.g. \cite[Theorem 12.4]{MCbook} and \cite[Fact 3.5]{CLV}).
\begin{fact} \label{fact:mixing}
Suppose that for all $x \in \Omega$ one has $\nu(\{x\}) \geq \eta$. Then,
\begin{align*}
	t_{\mathrm{mix}}(P, \eps) &\leq C \SG(P)^{-1} \left (\log(1/\eta) + \log(1/\eps) \right ), \text{ and } \\
	t_{\mathrm{mix}}(P, \eps) &\leq C \EF(P)^{-1} \left (\log \log(1/\eta) + \log(1/\eps) \right ).
\end{align*}
\end{fact}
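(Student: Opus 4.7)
The plan is to derive both bounds through standard exponential-contraction arguments: prove a one-step contraction of an appropriate divergence (chi-squared for the first bound, KL for the second) under the transition operator $P$, bound this divergence at time zero when the chain starts from a worst-case point mass, and finally convert from divergence to total variation. Throughout, write $\mu_t := P^t \mu_0$ for the law of the reversible chain after $t$ steps, initialized from $\mu_0 = \delta_x$ for the worst-case $x$.

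For the first inequality I would upgrade the Poincar\'e inequality $\Var_\nu[\varphi]\le \SG(P)^{-1}\dirichlet(\varphi,\varphi)$ implied by the definition of $\SG(P)$ into a one-step $L^2$ contraction $\chi^2(P\mu\|\nu)\le(1-\SG(P))\chi^2(\mu\|\nu)$, iterate it, bound the initial chi-squared by $\chi^2(\delta_x\|\nu) = 1/\nu(\{x\})-1\le 1/\eta$, and apply $\mathrm{TV}(\mu,\nu)\le\tfrac12\sqrt{\chi^2(\mu\|\nu)}$. Setting the resulting bound $\le\eps$ and taking logarithms yields the first claim. For the second inequality the plan is parallel and simpler: the defining inequality of $\EF(P)$ is already a one-step contraction of the entropy, so one iterates to get $\KL(\mu_t\|\nu)\le(1-\EF(P))^t\KL(\mu_0\|\nu)$, bounds $\KL(\delta_x\|\nu)=\log(1/\nu(\{x\}))\le\log(1/\eta)$, and applies Pinsker's inequality $\mathrm{TV}(\mu,\nu)\le\sqrt{\tfrac12\KL(\mu\|\nu)}$. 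Because the resulting equation for $t$ has $\log(1/\eta)$ appearing inside a square root (hence ultimately inside an outer logarithm after solving), the $\eta$-dependence improves from $\log(1/\eta)$ to $\log\log(1/\eta)$.

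The only place requiring any real care is the first step of the chi-squared argument: the variational definition of $\SG(P)$ gives $\SG(P)=1-\lambda_2(P)$ but does not by itself control the most negative eigenvalue of $P$, which is what is needed for the one-step $L^2$ contraction used above. I would handle this in the standard way by replacing the discrete iteration with the continuous-time semigroup $P_t=e^{-t(I-P)}$, whose $L^2$ contraction rate $e^{-2\SG(P) t}$ is an immediate consequence of the Poincar\'e inequality, or equivalently by passing to the lazy version of $P$; either modification only affects the universal constant $C$ in the statement. The analogous one-step contraction for entropy is already built into the definition of $\EF(P)$, so no such remedy is needed in that case.
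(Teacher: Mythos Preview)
The paper does not give a proof of this fact; it simply states it as standard and refers the reader to \cite[Theorem 12.4]{MCbook} and \cite[Fact 3.5]{CLV}. Your sketch is a correct outline of exactly those standard arguments (chi-squared contraction from Poincar\'e for the first bound, entropy contraction plus Pinsker for the second), including the correct observation that the spectral-gap bound as literally stated requires controlling the bottom of the spectrum via a lazy or continuous-time modification, which is absorbed into the constant $C$.
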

The main aim of our machinery is to provide tools which give lower bounds for the spectral gap and modified log-Sobolev coefficient for a family of Markov chains that arise naturally in our framework.
\subsection{Review of the framework and ideas}
We now give a brief overview of the framework constructed in the next sections.

\begin{itemize}
\item
The key definitions in the framework are a localization process and a localization scheme. A \textbf{localization process} is a stochastic process $(\nu_t)_{t \geq 0}$ where each $\nu_t$ is a probability measure on some space $\Omega$, having the property that for every subset $A \subset \Omega$, the stochastic process $t \to \nu_t(A)$ is a martingale which satisfies $\lim_{t \to \infty} \nu_t(A) \in \{0,1\}$. A \textbf{localization scheme} is a mapping which, to every probability measure $\nu$ on $\Omega$, assigns a localization process $(\nu_t)_t$ with $\nu_0 = \nu$. Thus, a localization scheme can be thought of as a way to interpolate between a given measure $\nu$ and a (random) Dirac measure, via a martingale on the space of measures.
\item
Given a localization process $(\nu_t)_t$ and a time $\tau > 0$, there is a reversible Markov chain on $\nu$, whose stationary measure is $\nu = \nu_0$, which is naturally associated to the localization process. This Markov chain is defined by the formula
$$
P_{x \to y} = \EE \left [ \frac{\nu_\tau(x) \nu_\tau(y)}{\nu(x)} \right ].
$$
It turns out that many Markov chains arise via naturally defined localization schemes. In particular, the Glauber dynamics on $\left (\{-1,1\}^n, \nu \right )$ can be derived from what we call the coordinate-by-coordinate localization scheme. This scheme is defined by taking $(k_1,...,k_n)$ to be a uniformly random permutation of $[n]$, taking $X \sim \nu$ and setting $\nu_t$ to be the law of $X$ conditioned on $X_{k_1},\dots,X_{k_t}$. Some other chains that arise via this framework (using other localization schemes) are the hit-and-run walk, the up-down walk, the Restricted-Gaussian dynamics and the field dynamics.
\item
Next, we will see that there is a simple way to analyze the spectral gap and MLSI coefficient of the Markov chain associated to a localization scheme: In order to give a bound on the spectral gap, one needs to give a lower bound to the quantity $\EE \left [\frac{\Var_{\nu_\tau}[\varphi]}{\Var_{\nu}[\varphi]}\right ]$ for an arbitrary test function $\varphi: \Omega \to \RR$. Similarly, for MLSI, one needs to lower bound the quantity $\EE \left [\frac{\Ent_{\nu_\tau}[f]}{\Ent_{\nu}[f]}\right ]$ for $f:\Omega \to [0, \infty)$. Intuitively speaking, the measure-valued process $\nu_t$ ``zooms in'' on smaller and smaller portions of $\Omega$, and we want to establish that there is still some variance (or entropy) left all the way up to time $\tau$, while the measure $\nu_\tau$ is already focused on a small portion of the space.
\item
In light of the above intuition, it makes sense to analyze the time differentials of the stochastic processes $t \to \log(\Var_{\nu_t}[\varphi])$ and $t \to \log(\Ent_{\nu_t}[f])$ (which can be in either continuous or discrete time, depending on the localization scheme). Roughly speaking, we want to obtain lower bounds for their drifts. By integrating those bounds with respect to time, we could then obtain bounds on the spectral gap and MLSI. When a measure $\nu$ satisfies the lower bound $\frac{d \Var_{\nu_t}[\varphi]}{ \Var_{\nu_t}[\varphi]} \geq - \alpha dt + \mbox{martingale}$, we say that it satisfies $\alpha$-\textbf{approximate conservation of variance} bound, and similarly when $ \frac{d\Ent_{\nu_t}[f]}{\Ent_{\nu_t}[f]} \geq - \alpha dt + \mbox{martingale}$, we say that it satisfies $\alpha$-\textbf{approximate conservation of entropy}.
\item
It turns out that there is an efficient way to obtain approximate conservation of variance and entropy bounds for a family of localization schemes which we call linear-tilt localizations. Roughly speaking, these are schemes where $\frac{\nu_{t+h}}{\nu_t}$ is a linear function up to $o(h)$. We will see that many localization schemes of interest, including the coordinate-by-coordinate scheme and the stochastic localization scheme of \cite{Eldan-SL} can be described this way.

\tikzset{every picture/.style={line width=0.75pt}} 
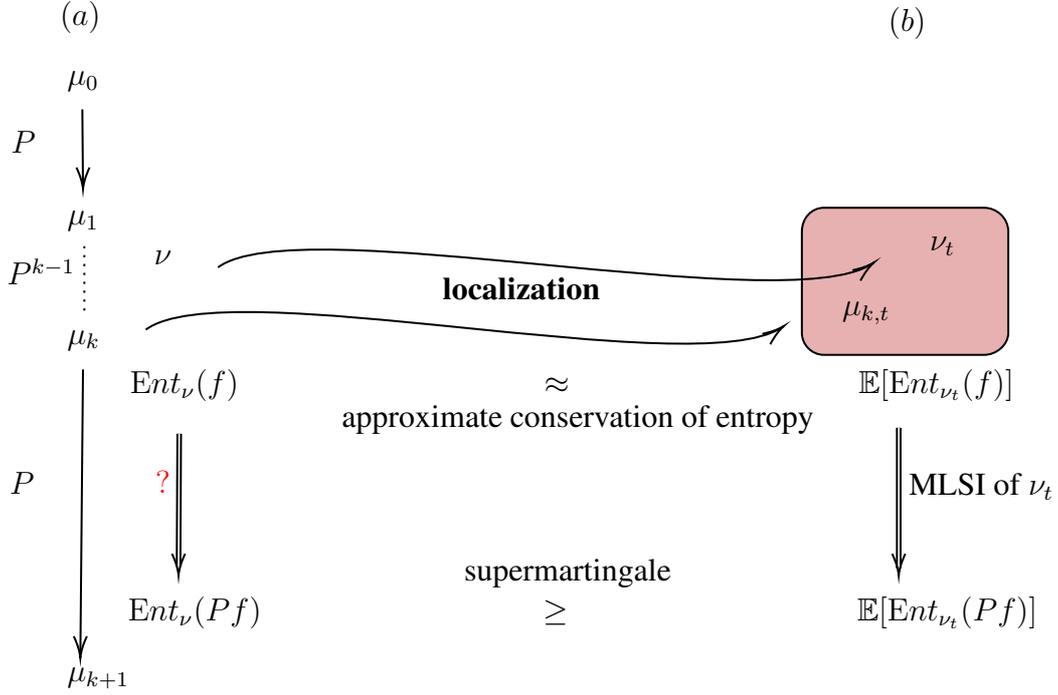
\begin{figure}[ht]
\centering
\begin{tikzpicture}[x=0.75pt,y=0.75pt,yscale=-1,xscale=1]

\draw  [fill={rgb, 255:red, 231; green, 177; blue, 177 }  ,fill opacity=1 ] (476,182) .. controls (476,175) and (481,170) .. (487,170) -- (567,170) .. controls (573,170) and (579,175) .. (579,182) -- (579,233) .. controls (579,239) and (573,244) .. (567,244) -- (487,244) .. controls (481,244) and (476,239) .. (476,233) -- cycle ;
\draw    (117,120.5) -- (117.32,158.67) ;
\draw [shift={(117.33,160.67)}, rotate = 270] [color={rgb, 255:red, 0; green, 0; blue, 0 }  ][line width=0.75]    (10.93,-3.29) .. controls (6.95,-1.4) and (3.31,-0.3) .. (0,0) .. controls (3.31,0.3) and (6.95,1.4) .. (10.93,3.29)   ;
\draw  [dash pattern={on 0.84pt off 2.51pt}]  (118,189.33) -- (118,224) ;
\draw    (117.33,251.33) -- (116.02,394.5) ;
\draw [shift={(116,396.5)}, rotate = 270] [color={rgb, 255:red, 0; green, 0; blue, 0 }  ][line width=0.75]    (10.93,-3.29) .. controls (6.95,-1.4) and (3.31,-0.3) .. (0,0) .. controls (3.31,0.3) and (6.95,1.4) .. (10.93,3.29)   ;
\draw    (184.67,200) .. controls (224.27,170.3) and (467.07,226.36) .. (510.74,198.37) ;
\draw [shift={(512,197.5)}, rotate = 143.13] [color={rgb, 255:red, 0; green, 0; blue, 0 }  ][line width=0.75]    (10.93,-3.29) .. controls (6.95,-1.4) and (3.31,-0.3) .. (0,0) .. controls (3.31,0.3) and (6.95,1.4) .. (10.93,3.29)   ;
\draw    (148.67,231.33) .. controls (188.27,201.63) and (422.25,258.35) .. (465.74,230.37) ;
\draw [shift={(467,229.5)}, rotate = 143.13] [color={rgb, 255:red, 0; green, 0; blue, 0 }  ][line width=0.75]    (10.93,-3.29) .. controls (6.95,-1.4) and (3.31,-0.3) .. (0,0) .. controls (3.31,0.3) and (6.95,1.4) .. (10.93,3.29)   ;
\draw  [double]  (524.33,280.67) -- (524.33,351.33) ;
\draw [shift={(524.33,355)}, rotate = 270] [color={rgb, 255:red, 0; green, 0; blue, 0 }  ][line width=0.75]    (10.93,-3.29) .. controls (6.95,-1.4) and (3.31,-0.3) .. (0,0) .. controls (3.31,0.3) and (6.95,1.4) .. (10.93,3.29)   ;
\draw [double] (165.33,283.67) -- (165.01,350.5) ;
\draw [shift={(165,355)}, rotate = 270] [color={rgb, 255:red, 0; green, 0; blue, 0 }  ][line width=0.75]    (10.93,-3.29) .. controls (6.95,-1.4) and (3.31,-0.3) .. (0,0) .. controls (3.31,0.3) and (6.95,1.4) .. (10.93,3.29)   ;

\draw (108,100) node [anchor=north west][inner sep=0.75pt]    {$\mu _{0}$};
\draw (108,170) node [anchor=north west][inner sep=0.75pt]    {$\mu _{1}$};
\draw (80,130) node [anchor=north west][inner sep=0.75pt]    {$P$};
\draw (79.33,301.33) node [anchor=north west][inner sep=0.75pt]    {$P$};
\draw (108,230) node [anchor=north west][inner sep=0.75pt]    {$\mu _{k}$};
\draw (108,400) node [anchor=north west][inner sep=0.75pt]    {$\mu _{k+1}$};
\draw (78,194) node [anchor=north west][inner sep=0.75pt]    {$P^{k-1}$};
\draw (151.33,190.33) node [anchor=north west][inner sep=0.75pt]    {$\nu $};
\draw (494.67,215.67) node [anchor=north west][inner sep=0.75pt]    {$\mu _{k,t}$};
\draw (538.33,183) node [anchor=north west][inner sep=0.75pt]    {$\nu _{t}$};
\draw (295.33,203.33) node [anchor=north west][inner sep=0.75pt]   [align=left] {\textbf{localization}};
\draw (139.33,249.67) node [anchor=north west][inner sep=0.75pt]    {$\mathrm{E} nt_{\nu }( f) \ \ \ \ \ \ \ \ \ \ \ \ \ \ \ \ \ \ \ \ \ \ \ \ \ \ \ \ \ \ \ \ \ \ \ \ \ \approx \ \ \ \ \ \ \ \ \ \ \ \ \ \ \ \ \ \ \ \ \ \ \ \ \ \ \ \ \ \ \ \ \ \ \ \mathbb{E}[\mathrm{E} nt_{\nu _{t}}( f)] \ \ $};
\draw (138.33,364.33) node [anchor=north west][inner sep=0.75pt]    {$\mathrm{E} nt_{\nu }( Pf) \ \ \ \ \ \ \ \ \ \ \ \ \ \ \ \ \ \ \ \ \ \ \ \ \ \ \ \ \ \ \ \ \ \ \geq \ \ \ \ \ \ \ \ \ \ \ \ \ \ \ \ \ \ \ \ \ \ \ \ \ \ \ \ \ \ \ \ \ \ \ \mathbb{E}[\mathrm{E} nt_{\nu _{t}}( Pf)] \ \ $};
\draw (528,302) node [anchor=north west][inner sep=0.75pt]   [align=left] {MLSI of $\displaystyle \nu _{t}$};
\draw (104.33,64.67) node [anchor=north west][inner sep=0.75pt]    {$( a)$};
\draw (517.67,67.67) node [anchor=north west][inner sep=0.75pt]    {$( b)$};
\draw (245,268) node [anchor=north west][inner sep=0.75pt]   [align=left] {approximate conservation of entropy};
\draw (306,345) node [anchor=north west][inner sep=0.75pt]   [align=left] {supermartingale};
\draw (152,301) node [anchor=north west][inner sep=0.75pt]    {$\textcolor[rgb]{1,0,0}{?}$};
\end{tikzpicture}
\caption{$(a)$. Probability measures encountered during the iterates of a Markov chain $P$ with target distribution $\nu$ and initial distribution $\mu _{0}$. We wish to establish the entropy decay (or MLSI) for each iteration of the Markov chain. $(b)$. Probability measures generated via a localization scheme. If a stochastic scheme is used, the red box indicates that one may generate multiple random instances of $( \nu _{t} ,\mu _{k,t})$ starting from $(\nu ,\ \mu _{k})$. The localization scheme is designed such that it is easier to establish the entropy decay for the new measures on $(b)$.
 With $f = \frac{\mu_k}{\nu}$, the entropy decay properties of the measures on $(b)$  can be related to those on $(a)$ thanks to the martingale properties of the localization. }
\label{fig:MLSI_analysis_sketch}
\end{figure}


\item
For linear-tilt localization scheme, we see that a simple argument which uses the Cauchy-Schwartz inequality shows that approximate variance conservation is related to the process of covariance matrices $t \to \COV(\nu_t)$. For the case of coordinate-by-coordinate localization this argument recovers the spectral independence framework of Anari, Liu and Oveis Gharan in a very simple way that completely bypasses the need to use high-dimensional expanders.
\item
Approximate conservation of entropy turns out to follow from a natural condition that we call \textbf{entropic stability}, which roughly says that the center of mass of the measure cannot move too much under perturbations with small relative entropy. This can be thought of as a weak form of \textbf{transportation-entropy inequalities}. In the context of the coordinate-by-coordinate localization we will see that it amounts to a very simple property of the logarithmic Laplace transform which is very similar to fractional log-concavity. This would give us a way to recover a variant of the entropic-independence framework introduced in \cite{EI1}.
\item
Another ingredient in our framework will be the concept of \textbf{annealing} via a localization scheme. Given two localization schemes on the same space, we can ``concatenate'' them by running one scheme up to some time $t$ and then running the other on the measure $\nu_t$ produced by the first scheme. What we will see is that we can produce spectral gap and MLSI bounds, with respect to the dynamics associated with the second localization scheme, by showing that the first localization scheme does not contract the variance / entropy and ``anneals'' the measure in a way that it outputs a measure to which we can ensure mixing for the second scheme. Such annealing procedure will be useful in many of the applications.
\end{itemize}

In Figure~\ref{fig:MLSI_analysis_sketch}, we summarize the conceptual diagram used to analyze the MLSI coefficient of a Markov chain via a localization scheme.

\subsection{Related techniques}

\subsubsection*{Local-to-global theorems and high-dimensional expanders}
The framework of spectral independence was originally based on \emph{local-spectral-expansion} inequalities in high-dimensional expanders, which was defined and derived in works of Alev, Dinur, Kaufmann, Lau, Mass and Oppenheim  (\cite{AlevLau, Op18, KO20, KD17, KM17}). These inequalities allow us to compare between the spectral gap of the $\ell$-down-up-walk and that of the $\ell+1$-up-down-walk on spectral expanders. This principle was first used towards establishing bounds on the spectral gap by Anari, Liu, Oveis Gharan and Vinzant \cite{LCP2}. Our framework could be thought as an alternative way to prove ``local-to-global'' theorems. 
\subsubsection*{Pathwise analysis}
Our technique can be seen as a manifestation of the pathwise analysis method (see \cite{eldan2022analysis}). This method is closely-related to the so-called \emph{semigroup technique} where, in order to prove an inequality regarding a measure, one considers an evolution of the measure via a semigroup (usually the heat semigroup) which provides an interpolation between the measure in hand and a simpler one, and the analysis amounts to showing inequalities regarding the evolution (e.g., monotonicity of certain quantities). Pathwise-analysis considers a random evolution rather than a deterministic one which allows arguments to be carried out path-by-path rather than on an average sense.
 
\subsubsection*{The logarithmic Laplace transform}
The logarithmic Laplace transform is central to our framework: Upper bounds on either itself or its Hessian turn out to be naturally related to functional inequalities of the underlying measure, and it is also used associate between inequalities regarding the center of mass and the covariance structure and relative entropies. In the context of measures on the discrete cube, it was suggested that the log-Laplace transform performs a natural role in concentration inequalities in a paper by Shamir and the second author \cite{Eldan-Shamir}. In the context of log-concave measures on $\RR^n$ the log-Laplace transform was long known to perform a central role in concentration inequalities, see \cite{Klartag-pert,KM-LL,KE-ThinSlice}.

\subsubsection*{Related techniques in Markov chains}
Our technique is also related to Markov chain decomposition techniques (\cite{MCdecomp1, MCdecomp2}) where the analysis of mixing times of a Markov chain is carried out by splitting the chain into simple-to-analyze components via restriction and projection-like operations. Another common technique in Markov chains is \emph{coupling}, of paths. The coupling technique is also based on path-by-path analysis but otherwise is very different in spirit from our technique, which does not consider paths of the random walk but rather of the measure-evolution process.

\subsubsection*{Localization techniques}
The idea of proving concentration inequalities by considering a certain scheme which converts the measures into simpler ones by localizing on space goes back to Gromov and Milman \cite{GM-LOC} and Kannan, Lov\'asz and Simonivits (\cite{KLS93,KLS95}). In these works, a measure is effectively decomposed into one-dimensional components called ``needles''. More recently, needle-decompositions of measures were also used in Riemannian geometry \cite{Klartag-Needle,Milman-QCD}.

\subsection*{Summary of notation}
\begin{itemize}
\item $\EF(\cdot)$ - Modified log-Sobolev (MLSI) coefficient.
\item $\SG(\cdot)$ - Spectral gap.
\item
$\Ent_\nu[f] := \int f(x) \log f(x) d \nu(x) - \int f d \nu \log \left ( \int f d \nu \right )$.
\item
$\KL(\mu || \nu) := \int \log \frac{d \mu}{d \nu}(x) d \mu(x)$. 
\item 
For a matrix $A \in \mathbb{M}_{n \times n}$, $\|A\|_{\OP}$ - operator norm; $\rho(A)$ - spectral norm.
\item For a measure $\nu$ on $\RR^n$:
\begin{itemize}
\item 
$\b(\nu) := \int x d \nu(x)$ - The \textbf{center of mass}.
\item $\COV(\nu) = \int (x - \b(\nu))^{\otimes 2} d \nu(x)$ - The \textbf{covariance matrix}.
\item
$\COR(\nu) := \mathrm{diag}(\COV(\nu))^{-1/2} \COV(\nu) \mathrm{diag}(\COV(\nu))^{-1/2}$, where $\diag(\cdot)$ is the diagonal matrix obtained by setting all the off-diagonal entries to $0$, called the \textbf{correlation matrix}.
\item
$\SI(\nu) := \COV(\nu) \mathrm{diag}(\COV(\nu))^{-1}$ the \textbf{influence matrix}.
\item
$\tilt_v \nu$ where $v \in \RR^n$ is defined as $\frac{d\tilt_v \nu(x)}{d \nu(x)} := \frac{e^{\langle v, x \rangle} d \nu(x)}{ \int e^{\langle v, z \rangle} d \nu(z) }$ (\textbf{exponential tilt} of a measure).
\item
$(\mathbf{e}_1,\dots,\mathbf{e}_n)$ is the standard basis of $\RR^n$.
\item $\mathbb{S}^{n-1}$ is the unit sphere in $\RR^n$.
\end{itemize}
\item
For a measure $\nu$ on $\{-1,1\}^n$,
\begin{itemize}
\item
For $u \in \{-1,0,1\}^n$ define $S_u := \bigl \{x \in \{-1,1\}^n; ~ x_i u_i \geq 0, ~ \forall i \in [n] \bigr \}$.
\item  
$\pin_u \nu$ is the normalized restriction of $\nu$ to $S_u$ (\textbf{pinning} of $\nu$ according to $u$).
\item
For $A \subset [n]$, $\res_A \nu = \pin_{\mathbf{1}_A} \nu$ (by abuse of notation, pinning of coordinates in $A$ to $+1$).
\item
$P^{\mathrm{GD}}(\nu)$ is the transition kernel for the Glauber-dynamics on $\nu$.
\end{itemize}
\end{itemize}

\subsubsection*{Acknowledgements} 
R.E. would like to thank Kuikui Liu and Bo'az Klartag for enlightening discussions. We would also like to thank Kuikui Liu, Mark Sellke and Eric Vigoda for comments, and Nima Anari and Vishesh Jain for spotting mistakes in an early version of this paper. This work was carried out when R.E. was on a fellowship at the Institute for Advanced Study, Princeton.

\section{Localization schemes and their associated dynamics}
In this section, first we introduce the central object in our framework: a \textbf{localization scheme}. Second we discuss the dynamics associated with a localization scheme. Finally, we present several examples of localization schemes.

We fix a set $\Omega$ equipped with a $\sigma$-algebra $\Sigma$. For simplicity the reader may assume that either $\Omega$ is $\RR^n$ equipped the $\sigma$-algebra of Lebesgue measurable sets, or $\Omega$ is $\{-1,1\}^n$ equipped with the discrete $\sigma$-algebra. Denote by $\M(\Omega)$ the space of probability measures on $\Omega$.
	
\begin{definition}\label{def:localization_process}
A \textbf{localization process} on $\Omega$ is a measure-valued stochastic process $(\nu_t)_{t \geq 0}$ which satisfies the following properties:
\begin{itemize}
\item[(P1)] 
Almost surely, $\nu_t$ is a probability measure on $\Omega$ for all $t$.
\item[(P2)] 
For every measurable $A \subset \Omega$, the process $t \to \nu_t(A)$ is a martingale. 
\item[(P3)] 
For any measurable $A \subset \Omega$, the process $\nu_t(A)$ almost surely converges to either $0$ or $1$ as $t \to \infty$.
\end{itemize}
\end{definition}
\begin{remark}
In the following, we deal with both \textbf{continuous-time} localization processes (namely the time parameter $t$ lives in $[0, \infty)$) and \textbf{discrete-time} localization processes (with the discrete time parameter $t=0,1,2,...$). In order to keep the notation simple, we always assume that $t$ is real-valued, but in the latter case we simply assume that the process is constant in intervals of the form $[k,k+1)$ for $k \in \ZZ$. 
\end{remark}	
	
\begin{definition}
A \textbf{localization scheme} on $\Omega$ is a mapping that assigns to each probability measure $\nu \in \M(\Omega)$ a localization process $(\nu_t)_{t \geq 0}$ which satisfies $\nu_0 = \nu$.
\end{definition}
If $\mathcal{L}$ is a localization scheme and $(\nu_t)_{t\geq 0} = \mathcal{L}(\nu)$, then we say that \emph{$(\nu_t)_{t\geq 0}$ is the localization process associated with $\nu$ via the localization scheme $\mathcal{L}$}.
	
\subsection{Simple examples of localization schemes}
To better understand the definition of localization schemes, we give a few simple examples.
\subsubsection{Example 1: Coordinate-by-coordinate localization} \label{sec:coord}
Let $\nu$ be a probability measure on $\{-1,1\}^n$. Define a process $\nu_1,...,\nu_n$ in the following way: Let $(k_1,...,k_n)$ be a random variable drawn from the uniform distribution over all permutations of $[n]$. Let $X$ be a random point sampled from $\nu$, independent of $(k_1,...,k_n)$. For $t \geq 0$, define $\nu_t$ to be the law of $X$ conditioned on $X_{k_1},...,X_{k_i}$ for $i = \min(\lfloor t \rfloor, n)$.

Note that, by definition, the sequence $(\nu_t)_{t \geq 0}$ is a martingale, the measures $\nu_t$ are probability measures and almost surely $\nu_t$ is a Dirac measure for all $t \geq n$. Thus, $(\nu_t)_t$ is a localization process. The coordinate-by-coordinate localization scheme is the scheme that assigns the process $(\nu_t)_{t \geq 0}$ to the measure $\nu$.
	
\subsubsection{Example 2: Random-subspace localization} \label{sec:subspace}
Let $\nu$ be a probability measure on $\RR^n$ and let $(u_1,...,u_n)$ be a random orthogonal basis in $\RR^n$, distributed according to the Haar measure on the orthogonal group $\mathrm{O}(n)$. Sample $Z$ a random point from $\nu$ independently of $(u_1,...,u_n)$. For all $t \geq 0$, let $\nu_t$ be the law of $Z$ conditioned on $\langle Z, u_1 \rangle, \dots, \langle Z, u_i \rangle$ for $i = \min(\lfloor t \rfloor, n)$. Then $\nu_t$ is a Doob martingale of probability measures and $\nu_n$ is a Dirac point measure supported on $Z$. Thus, $(\nu_t)_{t\geq 0}$ is a localization process.

\subsubsection{Example 3: Halfspace-bisection localization} \label{sec:halfspace}
Let $\nu$ be an absolutely continuous probability measure on $\RR^n$. Let $\theta_1,\theta_2,\dots$ be a sequence of independent points generated from the Haar measure on $\Sph$ and let $\epsilon_1,\epsilon_2,\ldots$ be a sequence of independent $\pm 1$ Bernoulli random variables. Set $\nu_0 = \nu$ and for all $i \in \mathbb{Z}$, define $\nu_i$ inductively as follows. Set
$$
t_i = \min \left \{s \in \RR; ~~ \nu_{i-1} ( \{x; ~ x \cdot \theta_i \leq s\}  ) = 1/2 \right \},
$$
define $H_i = \{x \in \RR^n; ~ (x \cdot \theta_i - t_i) \epsilon_i \geq 0 \}$. For all measurable $A \subset \RR^n$, set $\nu_{i}(A) := 2\nu_{i-1}(A \cap H)$, the normalized restriction of $\nu_{i-1}$ to $H$. In words, $\nu_{i}$ is generated by choosing a uniformly random direction in the sphere and picking one of the two half-spaces that bisect $\nu_{i-1}$ into two parts of equal mass in that direction. Define $\nu_t = \nu_{\min(\lfloor t \rfloor, n)}$. It is not hard to check that this process is a martingale and thus (P1) and (P2) are satisfied. It can be shown that (P3) is also satisfies, and so the process $(\nu_t)_t$ is a localization process, but we omit the proof since this process is only given for the sake of the example and is not used in the rest of the paper. 

\subsubsection{Example 4: Gaussian channel localization} \label{sec:gaussloc}
Consider the case $\Omega = \RR^n$. Let $\nu$ be a probability measure on $\Omega$. We construct the process $(\nu_t)_t$ as follows. Let $X \sim \nu$ and let $B_t$ be a standard Brownian motion on $\RR^n$ independent of $X$. Define $\nu_t$ to be the law of $X$ conditioned on $t X + B_t$. This process clearly satisfies (P1) and (P2). Under mild conditions we have that $\frac{tX+B_t}{t} = X + \frac{B_t}{t}$ converges to $X$ and that $(\nu_t)_t$ is a localization process. More details about the Gaussian channel localization are provided in section \ref{sec:lintilt}.

\subsubsection{Example 5: Subset simplicial-complex localization} \label{sec:subsetloc}
Let $k$ be an integer less than or equal to $n$. Let $\Omega$ be the set of all subsets of size of $k$ of $[n]$, and let $\nu$ be a probability measure on $\Omega$. We construct a sequence of measures as follows: Let $X \sim \nu$ and let $j_1,...j_k$ be a random permutation of $[k]$, independent of $X$. Write $X=\{x_1,..,x_k\}$, where $x_1 < \cdots < x_k$. For $i \in \braces{1, \ldots, k}$, define $\nu_i$ to be the law of $X$ conditioned on $X_{j_1},\dots,X_{j_i}$. Define $\nu_t = \nu_{\lfloor t \rfloor \wedge k}$.
	
\subsection{Dynamics associated with a localization scheme}
Given a localization scheme, there is a natural way to introduce a sampling algorithm associated with it. Specifically, starting from a localization process associated with $\nu$ with a localization scheme, we can define a Markov chain whose stationary measure is $\nu$.

\begin{definition} \label{def:mc1} (\textbf{Markov chain associated with a localization process}) Let $(\nu_t)_t$ be a localization process on $\Omega$ and let $\tau > 0$. The dynamics associated with $\nu_t$ and $\tau$ is the Markov chain whose transition kernel is defined by the formula
\begin{equation}\label{eq:MC}
P_{x \to A} = \EE \left [\frac{\nu_{\tau}(x) \nu_{\tau}(A)}{ \nu(x)} \right ], ~~ \forall x \in \Omega, ~~ A \subset \Omega.
\end{equation}
\end{definition}
\begin{remark}
The above definition only makes sense under the condition that $\nu_t$ is almost surely absolutely-continuous with respect to $\nu$, in which case the quantity $\frac{\nu_t(x)}{\nu(x)}$ is well-defined for $\nu$-almost every $x$. Below, we will see how to remedy this issue if this is not the case.
\end{remark}

\begin{fact}
The operator defined in \eqref{eq:MC} is the transition kernel of a reversible Markov chain whose stationary measure is $\nu$.
\end{fact}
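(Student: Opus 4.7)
The plan is to verify three things: that $P_{x\to\cdot}$ is a bona fide probability measure for $\nu$-almost every $x$, that $P$ satisfies detailed balance with respect to $\nu$, and that stationarity follows (either from detailed balance or directly). The entire argument will rest on the martingale property (P2) applied in slightly different ways.

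The first step is to set up notation cleanly. Under the standing assumption that $\nu_\tau \ll \nu$ almost surely, let $\rho_\tau(x) = \tfrac{d\nu_\tau}{d\nu}(x)$, so that the formula for the transition kernel reads $P_{x\to A} = \mathbb{E}[\rho_\tau(x)\,\nu_\tau(A)]$. The pointwise martingale property $\mathbb{E}[\rho_\tau(x)] = 1$ for $\nu$-a.e.\ $x$ is exactly (P2) in the discrete case (take $A=\{x\}$) and follows in the continuous case from (P2) applied to small balls around $x$ together with Radon--Nikodym differentiation. Plugging $A = \Omega$ and using $\nu_\tau(\Omega) = 1$ a.s., I obtain $P_{x\to\Omega} = \mathbb{E}[\rho_\tau(x)] = 1$, so $P_{x\to\cdot}$ is a probability measure.

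For reversibility, the key observation is that the joint measure on $\Omega\times\Omega$ defined by $\nu(dx)\,P_{x\to dy}$ equals $\mathbb{E}[\rho_\tau(x)\rho_\tau(y)]\,\nu(dx)\nu(dy) = \mathbb{E}[\nu_\tau(dx)\,\nu_\tau(dy)]$, which is manifestly symmetric under swapping $x$ and $y$. This gives detailed balance directly, and then stationarity of $\nu$ is automatic: integrating detailed balance against $x$ yields $\int \nu(dx) P_{x\to A} = \int_A \nu(dy) \cdot P_{y\to\Omega} = \nu(A)$. (Alternatively, one can verify stationarity directly via Fubini: $\int P_{x\to A}\,d\nu(x) = \mathbb{E}\bigl[\nu_\tau(A)\int \rho_\tau(x)\,d\nu(x)\bigr] = \mathbb{E}[\nu_\tau(A)] = \nu(A)$ by (P2).)

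The only genuine subtlety, and hence the main obstacle if there is one, is measure-theoretic: making precise the density-ratio $\nu_\tau(x)/\nu(x)$ when $\Omega$ is continuous, and justifying the swap between expectation and integration over $\Omega$. Both are handled by introducing $\rho_\tau$ as above, noting it is jointly measurable in $(\omega, x)$, and appealing to Fubini, which is licit because $\rho_\tau \geq 0$ and all expectations reduce to $1$. No additional ingredient beyond (P1)--(P2) is needed; property (P3) plays no role in this particular fact and will only matter later when one wants the chain to actually mix.
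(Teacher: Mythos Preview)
Your proof is correct and follows essentially the same approach as the paper: use (P2) to verify $P_{x\to\Omega}=1$, then observe that $\nu(dx)P_{x\to dy}=\mathbb{E}[\nu_\tau(dx)\nu_\tau(dy)]$ is symmetric in $x,y$, giving detailed balance and hence stationarity. Your additional remarks on the measure-theoretic subtleties (Radon--Nikodym, Fubini) and the irrelevance of (P3) are accurate and go slightly beyond what the paper spells out.
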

\begin{proof}
By property (P2) of Definition~\ref{def:localization_process}, we have $\EE[\nu_{\tau}] = \nu$, which implies that for $\nu$-almost every $x \in \Omega$, 
$$
\EE \left [\frac{\nu_{\tau}(x) \nu_{\tau}(\Omega)}{ \nu(x)} \right ] = \EE \left [\frac{\nu_{\tau}(x)}{ \nu(x)} \right ] = 1,
$$ 
so that $P_{x \to \cdot}$ is indeed a probability measure. It is evident from the definition that for all $A,B \subset \Omega$,
\begin{align*}
\int_A P_{x \to B} d \nu(x) & = \int_A \EE \left [ \frac{d \nu_\tau(x)}{d \nu(x)} \nu_\tau(B) \right ] d \nu(x) \\
& = \EE \left [ \nu_\tau(A) \nu_\tau(B) \right ] = \int_B P_{y \to A} d \nu(y),
\end{align*}
hence the Markov chain is reversible and has stationary measure $\nu$.
\end{proof}
\begin{remark}
Instead of taking the time $\tau$ to be deterministic, we can take it to be a \emph{stopping time}. It is not hard to verify that we still get a reversible Markov chain. 
\end{remark}

Another interpretation of the Markov chain associated to localization process is the following: consider a random variable $(X,Y) \in \Omega \times \Omega$ defined by
\begin{equation} \label{eq:XYalt}
\PP(X \in A, ~ Y \in B) = \EE\left [ \nu_\tau(A) \nu_\tau(B) \right ].
\end{equation}
Define a transition kernel by the formula
$$
P_{x \to A} = \PP(Y \in A ~ | X = x).
$$
If $\nu_t$ is absolutely continuous with respect to $\nu$, the conditioning is well-defined for $\nu$-almost every $x$. If that is the case, it is not hard to check that the above transition kernel coincides with the one in definition~\ref{def:mc1}. 

Definition \ref{def:mc1} introduces a Markov chain with a localization process and a parameter $\tau$. Consequently, every localization scheme gives rise to a sampling algorithm.
\begin{definition} (\textbf{Sampling algorithm associated to a localization scheme}). Given a localization scheme $\mathcal{L}$ on a space $\Omega$ and given $\tau > 0$, we define $P^{(\mathcal{L}, \tau)}$ as the mapping from $\M(\Omega)$ to the space of Markov kernels on $\Omega$ where $P^{(\mathcal{L}, \tau)}(\nu)$ is given by equation \eqref{eq:MC}.
\end{definition}

\subsubsection*{Example 1: The coordinate-by-coordinate scheme and Glauber dynamics}
Let us show that the dynamics associated with the coordinate-by-coordinate localization scheme from subsection \ref{sec:coord}, with the choice $\tau=n-1$, is the Glauber dynamics, defined as follows.

\begin{definition} (Glauber dynamics)
For a measure $\nu$ on $\{-1,1\}^n$, the Glauber dynamics is the Markov chain whose transition kernel $P^{\mathrm{GD}}(\nu)$ is
\begin{align*}
	P^{\mathrm{GD}}_{x \to y}(\nu) = \frac{1}{n} \frac{\nu(y)}{\nu(x) + \nu(y)} \mathbf{1}_{\{\|x-y\|_1 = 1 \}} + p \mathbf{1}_{\{x=y\}},
\end{align*}
with $p= \frac{1}{n} \sum_{i=1}^n \frac{\nu(x)}{\nu(x) + \nu(x \oplus \ee_i)}$ and $x\oplus \ee_i$ denotes the vector of $\braces{-1, 1}^n$ obtaining by flipping the $i$-th coordinate of $x$.
\end{definition}
	
Consider the dynamics obtained from the formula \eqref{eq:MC} with $\tau=n-1$. First, for $y = x \oplus \ee_i$, we have
\begin{align*}
&\quad \EE \left [\frac{\nu_{\tau}(x) \nu_{\tau}(y)}{ \nu(x)} \right ] \\
&=  \frac{1}{n} \sum_{j \in [n]} \EE \left [ \left . \frac{\nu_{n-1}(x) \nu_{n-1}(y)}{ \nu(x)} \right | k_n = j \right ] \\
& = \frac{ \PP(\mathrm{supp}(\nu_{n-1}) = \{x,y\} )  }{n} \EE \left [ \left . \frac{\nu_{n-1}(x) \nu_{n-1}(y)}{ \nu(x)} \right | k_n = i, \mathrm{supp}(\nu_{n-1}) = \{x,y\} \right ] \\
& = \frac{\nu(x) + \nu(y)}{n \nu(x)} \frac{\nu(x)}{\nu(x) + \nu(y)} \frac{\nu(y)}{\nu(x) + \nu(y)} \\
& = \frac{1}{n} \frac{\nu(y)}{\nu(x) + \nu(y)}.
\end{align*}
Second, if $\|x-y\|_1 > 1$ then $P^{\mathrm{GD}}_{x \to y}(\nu) = 0$, which establishes the fact that the Markov chain associated with the coordinate-by-coordinate process is Glauber dynamics.

\begin{remark} \label{rmk:lGD}
We can also consider the Markov chain obtained by choosing $\tau = n-\ell$, for some $\ell > 1$. The transition kernel of this Markov chain can be described as follows: Given that the current state is some $x \in \{-1,1\}^n$, the next state will be generated by first choosing a subset of coordinates $A \subset [n]$ with $|A|=\ell$ uniformly at random, and then choosing a point according to the restriction of $\nu$ to the subcube defined by $\{y; ~ y_i = x_i, ~~ \forall i \in [n] \setminus A \}$. We refer to this Markov chain as the \textbf{$\ell$-Glauber-dynamics}.
\end{remark} 

\subsubsection*{Example 2: The subspace localization scheme and the hit-and-run walk} \label{sec:hitandrun}
Fix a measure $\nu$ on $\RR^n$ and let $(\nu_t)_t$ be the  subspace-localization process defined in Subsection \ref{sec:subspace}. Define $\tau=n-1$. Let $(X,Y)$ obey equation \eqref{eq:XYalt}. Given $x \in \Omega$, consider the expression 
\begin{align*}
P_{x \to A} & = \PP[Y \in A | X = x] \\
&  = \EE_{u_1,...,u_{n-1}, Z}[\PP(Y \in A | X = x, u_1,...,u_{n-1}, Z)].
\end{align*}
Note that, by definition, conditioned on $u_1,..,u_{n-1},Z$ we have that $X,Y$ have the law of the restriction of $\nu$ to $\{x; ~ \langle x-Z, u_i \rangle = 0, ~~ \forall i \in [n-1]\}$. Therefore, we can understand the conditioning on $X=x$ as conditioning on the event $E_x = \{\langle Z-x, u_i \rangle = 0, ~~ \forall i \in [n-1] \}$. In other words 
\begin{align*}
P_{x \to A} & = \EE_{u_1,...,u_n} \left [\PP\left (Z \in A | \langle Z-x, u_i \rangle = 0, ~ \forall i \in [n-1] \right ) \right ] \\
& = \EE_{u_n} \left [\PP\left (Z \in A | \mathrm{Proj}_{u_n^\perp }(Z-x) = 0 \right ) \right ].
\end{align*}
In words, given the point $x$, to generate the next point in the Markov chain, we first generate the direction $u_n$ uniformly from the Haar measure on the unit sphere and then generate a point from the restriction of $\nu$ to the fiber $\mathrm{Proj}_{u_n^\perp} x + \mathrm{Span}(u_n)$. This Markov chain is known as the \textbf{hit-and-run} walk (see e.g, \cite{HitandRun99}).
\subsubsection*{Other examples of Markov chains}
\begin{itemize}
\item 
In Subsection~\ref{sec:intro_RGO}, we show that the Gaussian-channel localization gives rise to the sampling algorithm introduced in \cite{RGO20}, where each step is a \textbf{restricted Gaussian oracle} step.
\item
The dynamics associated to the subset simplicial-complex localization in Subsection \ref{sec:subsetloc} with time $\tau = k-\ell$ leads to the $\ell$-\textbf{down-up walk} (see \cite{ALO-SI}).
\item
A continuous analogue of the coordinate-by-coordinate localization scheme gives rise to the \textbf{coordinate hit-and-run} algorithm (see \cite{CHNR}). 
\item
The dynamics associated with the negative-fields localization constructed in Subsection \ref{sec:NF} is the \textbf{field dynamics} introduced in \cite{CFYZ21-rapid}.
\end{itemize}

\subsection{Doob localization schemes}
In this subsection, we describe a class of localization schemes which generalizes several examples we have seen so far. Given a set $\Omega$ equipped with a $\sigma$-algebra $\Sigma$, we say that a filtration $\parenth{\FF_t}_{t \geq 0}$ of $\Omega$ is precise if $\sigma \left ( \bigcup_{t \geq 0} \FF_t \right ) = \Sigma$. Let $\mathbf{F}_\Omega$ be the space of probability measures over the set of precise filtrations on $\Omega$. 

Given a distribution $m \in \mathbf{F}_\Omega$, we can introduce a localization scheme using the filtration generated from $m$. For any measure $\nu$ on $\Omega$, we construct a localization process $(\nu_t)_t$ as follows: Let $Y$ be a $\nu$-distributed random variable and let $\FF_t$ be a precise filtration generated from $m$ independently of $Y$. $\nu_t$ is defined as follows
\begin{align*}
	\nu_t(A) = \EE \brackets{ \PP(Y \in A | \FF_t)}, ~~ \forall A \subset \Omega \mbox{ measurable }.
\end{align*}
It is evident that $\nu_t$ satisfies (P1) and (P2). Since $\FF_t$ is precise we also have that $\nu_t$ satisfies (P3). We call localization schemes constructed this way \textbf{Doob localization schemes}.

We can verify that coordinate-by-coordinate scheme and the random-subspace localization schemes are Doob localization schemes. For the coordinate-by-coordinate localization scheme, it suffices to take $\parenth{\FF_t}_{t\geq 0}$ to be $\sigma$-algebra generated by the maps $x \mapsto x_i$ for $i \in k_1,\ldots,k_{\floors{t}}$. For the random-subspace localization scheme, we take $\parenth{\FF_t}_{t\geq 0}$ to be $\sigma$-algebra generated by the maps $x \mapsto \angles{x, u_i}$ for $i \in k_1,\ldots,k_{\floors{t}}$. 


\subsection{Linear-tilt-localizations} \label{sec:lintilt}
In this subsection, we describe a family of localization schemes which arise from a process known in the literature under the name \textbf{stochastic localization}. To avoid confusion and to make it possible to include new localization schemes to this family, we refer to this family as linear-tilt localizations. The general idea behind these schemes is that, in order to obtain the measure $\nu_{t+dt}$ from the measure $\nu_t$, one multiplies the density by a random linear function. Our definitions in this section make sense whenever $\Omega$ can be naturally embedded in a linear space. However, we will mainly focus our attention on the cases $\Omega = \RR^n$ and $\Omega = \{-1,1\}^n \subset \RR^n$.

Given a measure $\nu$ on $\Omega$, denote the center of mass of $\nu$ as $\b(\nu) := \int_{\Omega} x \nu(dx)$.

\subsubsection{An alternative viewpoint on the coordinate-by-coordinate localization} \label{sec:tiltcoord}
We begin with an alternative description of the coordinate-by-coordinate localization scheme described in Subsection \ref{sec:coord}, which serves as our first example for a linear-tilt localization.

Fix a measure $\nu$ on $\Omega = \{-1,1\}^n$. Let $(k_1,...,k_n)$ be a uniform permutation of $[n]$ and let $U_1,\dots,U_n$ be an i.i.d. sequence of independent random variables drawn from the uniform distribution on $[-1,1]$.

Define $\nu_0 = \nu$. For $i=0,1,\dots,n$ define
\begin{equation}\label{eq:cbctilt}
\nu_{i+1} (x) = \nu_i(x) \bigl (1 + \left \langle x - \b(\nu_i), Z_i  \right \rangle  \bigr )	
\end{equation}
where $Z_i$ is defined by
\begin{equation} \label{eq:defZt}
Z_i = \mathbf{e}_{k_i} \times \begin{cases}
\frac{1}{1+ \b(\nu_i)_{k_i}} & \mbox{if}~~ \b(\nu_i)_{k_i} \geq U_i, \\
\frac{-1}{1 - \b(\nu_i)_{k_i}} & \mbox{if}~~ \b(\nu_i)_{k_i} \leq U_i.
\end{cases}
\end{equation}
and where $\mathbf{e}_1,\dots,\mathbf{e}_n$ is the standard basis of $\RR^n$ . Finally, to extend the process beyond times $1,\dots,n$, take $\nu_t = \nu_{\lfloor t \rfloor \wedge n}$.

To see that this is indeed a localization process, first observe that to obtain $\nu_{t+1}$, we multiply $\nu_t$ by a linear function which is equal to $1$ at its center of mass, meaning that $\nu_t(\Omega) = \nu_{t+1}(\Omega)$. It is easy to check that $\EE[Z_t] = 0$, which verifies (P2). Finally, a direct calculation shows that $\nu_{i+1}$ is in fact a pinning of the $k_i$-th coordinate of $\nu_{i}$. Indeed, if $x_{k_i} = - \sign(U_i - \b(\nu_i)_{k_i})$ then $\left \langle x - \b(\nu_i), Z_i  \right \rangle = -1$ in which case the right-hand side of \eqref{eq:cbctilt} vanishes.

It follows from the above that the definition in this subsection is equivalent to the one given in subsection \ref{sec:coord}. This point of view can be used as a motivation for the family of localization schemes considered in this section: Note that it is not evident at all from this definition that the described scheme is a \emph{Doob} localization scheme. However, in this case $(\nu_t)_t$ can be thought of as a Markov chain on the space of measures, where the transition corresponds to multiplication by linear functions with random slopes whose conditional expectation is $0$. In this example, the slopes are chosen in the unique way so that it will cause the measure $\nu_{t+1}$ to be a restriction of $\nu_t$ to a subcube. 


\subsubsection{Stochastic localization driven by a Brownian motion}
Next we describe the stochastic localization process initially constructed in \cite{Eldan-SL}. Let $\nu$ be a probability measure on $\RR^n$, and let $B_t$ be a standard Brownian motion in $\RR^n$ adapted to a filtration $\FF_t$. Let $(C_t)_t$ be an $\FF_t$-measurable process which takes values in the space of $n \times n$ positive-definite matrices. We define a process $(\nu_t)_t$ via the change of measure $\frac{d \nu_t}{d \nu}(x) = F_t(x)$, where the functions $F_t(x)$ solve the following system of stochastic differential equations:
\begin{equation}\label{eq:SL}
F_0(x) = 1, ~~ d F_t(x) = F_t(x) \langle x - \b(\nu_t), C_t d B_t \rangle, ~~ \forall x \in \RR^n.
\end{equation}
The above is an infinite system of differential equations (one for each $x \in \RR^n$), but it turns out that a slightly different point of view allows us to view it as a finite system. The uniqueness and existence of the solution of the system of stochastic differential equations are established in \cite{Eldan-SL,MR4152649}. The following fact gives a sufficient condition for the process to be a localization process. It was proved in \cite{Eldan-SL, MR4152649}, but we provide a sketch for completeness.
\begin{fact} 
If, for every unit vector $\theta \in \mathbb{S}^{n-1}$, one has $\int_0^\infty |C_t \theta|^2 dt = \infty$  almost surely, then the process $(\nu_t)_t$ is a localization process.
\end{fact}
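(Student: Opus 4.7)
The plan is to verify properties (P1), (P2), (P3) of Definition~\ref{def:localization_process} in order. Both (P1) and (P2) are standard stochastic-calculus facts. For (P1), since the SDE for $F_t(x)$ is a Doléans-Dade stochastic exponential, we have
\[
F_t(x) = \exp\!\left(\int_0^t \langle x - \b(\nu_s), C_s dB_s\rangle - \tfrac{1}{2}\int_0^t |C_s(x - \b(\nu_s))|^2 ds\right) \geq 0
\]
pointwise, and applying Fubini and Ito to $g(t) := \int F_t(x)\,d\nu(x)$ gives $dg(t) = \left\langle \int F_t(x)(x - \b(\nu_t))\,d\nu(x),\, C_t dB_t \right\rangle$, which vanishes by the definition of $\b(\nu_t)$. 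Together with $g(0)=1$ this gives $g \equiv 1$, so $\nu_t$ is almost surely a probability measure. For (P2), the driftless SDE for $F_t$ yields via Fubini that $\nu_t(A) = \int_A F_t(x)\,d\nu(x)$ is a local martingale; boundedness $\nu_t(A) \in [0,1]$ upgrades it to a true martingale.

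The bulk of the work is (P3). The plan is to show that the covariance matrix $A_t := \COV(\nu_t)$ shrinks to zero in every direction, which then forces $\nu_t$ to concentrate at a single (random) point. A direct Ito computation (using the fact that $d\b(\nu_t) = A_t C_t dB_t$, together with the Ito correction from the quadratic variation of $\b(\nu_t)\b(\nu_t)^T$) produces the matrix SDE
\[
dA_t = (\text{local martingale}) - A_t\, C_t^{T} C_t\, A_t\, dt.
\]
Applying the matrix-inverse Ito formula to $A_t^{-1}$ and noting that the second-order correction from the martingale part of $A_t$ is positive semidefinite, the drift of $A_t^{-1}$ dominates $C_t^{T} C_t\, dt$ in the semidefinite order. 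Hence for every $\theta \in \Sph$, the scalar process $\theta^{T} A_t^{-1} \theta - \int_0^t |C_s\theta|^2 ds$ is a submartingale. Under the hypothesis $\int_0^\infty |C_s\theta|^2 ds = \infty$ a.s., a standard submartingale argument yields $\theta^{T} A_t^{-1}\theta \to \infty$ a.s., and Cauchy--Schwarz $(\theta^{T} A_t\theta)(\theta^{T} A_t^{-1}\theta) \geq 1$ forces $\theta^{T} A_t \theta \to 0$ a.s. Meanwhile $\b(\nu_t)$ is a martingale with $\sup_t\EE[|\b(\nu_t)|^2] \leq \int|x|^2 d\nu$ (since $\EE[\mathrm{tr}(A_t) + |\b(\nu_t)|^2] = \int|x|^2 d\nu$ by (P2)), hence $\b(\nu_t) \to \b_\infty$ a.s. A Chebyshev argument then gives $\nu_t \to \delta_{\b_\infty}$ weakly, so $\nu_t(A) \to \mathbf{1}_{\b_\infty \in A} \in \{0,1\}$, establishing (P3).

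The main obstacle is making the Ito computations rigorous under minimal integrability assumptions on $\nu$. The derivation of $dA_t$ and the manipulations with $A_t^{-1}$ require uniform-in-$t$ control of the moments of $\nu_t$ and of the smallest eigenvalue of $A_t$ (to avoid $A_t^{-1}$ blowing up before the integral $\int |C_s\theta|^2 ds$ does). The standard workaround is to truncate $\nu$ to a compact set (or localize with a stopping time), run the argument there, and pass to the limit. A second subtlety is that the bound on the drift of $A_t^{-1}$ is a matrix inequality, so the argument is carried out direction-by-direction; this is exactly what the hypothesis $\int_0^\infty |C_t\theta|^2 dt = \infty$ for every unit $\theta$ permits, and the countable-dense-subset trick handles all $\theta \in \Sph$ simultaneously.
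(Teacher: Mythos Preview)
Your treatment of (P1) and (P2) is correct and matches the paper's sketch essentially verbatim (stochastic exponential for nonnegativity; the barycenter identity kills the drift of $\nu_t(\RR^n)$; the driftless SDE gives the martingale property). For (P3) the paper is actually terser than you: it only records the matrix SDE $d\COV(\nu_t) = -\COV(\nu_t)C_t^2\COV(\nu_t)\,dt + \text{martingale}$ and refers to \cite[Section~2]{Eldan-taming} for the rest. So you have attempted more than the paper proves on the page.

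Unfortunately your (P3) argument has a genuine gap. The Cauchy--Schwarz inequality $(\theta^T A_t\theta)(\theta^T A_t^{-1}\theta)\geq 1$ bounds the product from \emph{below}, so $\theta^T A_t^{-1}\theta\to\infty$ does \emph{not} force $\theta^T A_t\theta\to 0$. Concretely, take $A_t=\diag(1,1/t)$ and $\theta=(1,1)/\sqrt{2}$: then $\theta^T A_t^{-1}\theta=(1+t)/2\to\infty$ while $\theta^T A_t\theta=(1+1/t)/2\to 1/2$. What you would need is $\lambda_{\min}(A_t^{-1})\to\infty$, i.e.\ uniformity over $\theta$, and pointwise-in-$\theta$ submartingale bounds do not deliver that. (Relatedly, the ``standard submartingale argument'' you invoke for $\theta^T A_t^{-1}\theta\to\infty$ a.s.\ is not standard: writing $\theta^T A_t^{-1}\theta = M_t + I_t + \int_0^t|C_s\theta|^2\,ds$ with $M_t$ a local martingale and $I_t\geq 0$ increasing, nothing prevents $M_t$ from being large and negative infinitely often.) The route taken in the cited reference works directly with $A_t$: each $\theta^T A_t\theta$ is a \emph{nonnegative supermartingale} (drift $-|C_t A_t\theta|^2\,dt$), hence converges a.s., and one then argues the limit is zero using the hypothesis on $\int_0^\infty|C_t\theta|^2\,dt$. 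Passing through $A_t^{-1}$ is not the right move here.
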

\begin{proof}
A direct calculation shows that
\begin{align*}
d \nu_t(\RR^n) &= \int_{\RR^n} d F_t(x) \nu(dx) \\
& = \int_{\RR^n} \langle x - \b(\nu_t), C_t d B_t \rangle  F_t(x) \nu(dx) \\
& = \left \langle \int x d \nu_t - \b(\nu_t), C_t d B_t \right  \rangle = 0.
\end{align*}
Moreover, by Ito's formula
$$
d \log F_t(x) = \langle x - \b(\nu_t), C_t d B_t \rangle - \frac{1}{2} \left | C_t (x-\b(\nu_t)) \right |^2 dt
$$
which shows that $F_t(x)$ is non-negative almost surely for all $x,t$. The above establishes (P1). The martingale property (P2) follows directly from \eqref{eq:SL}. Property (P3) follows from a calculation carried out using Ito's formula which shows that $d \COV(\nu_t) = - \COV(\nu_t) C_t^2 \COV(\nu_t) dt + \mbox{ martingale }$, see \cite[Section 2]{Eldan-taming}.
\end{proof}

A very useful property of the stochastic localization process is the following.
\begin{fact}
For every $t \geq 0$, we have almost surely that $\nu_t$ attains the form,
\begin{equation} \label{eq:SLform} 
\frac{\nu_t(dx)}{\nu(dx)} = \exp \left (Z_t - \frac{1}{2}  \left \langle \Sigma_t  x, x \right \rangle + \langle y_t, x \rangle \right ),
\end{equation}
where $\Sigma_t = \int_0^t C_s^2 ds$, $y_t = \int_0^t \left (C_s d B_s + C_s^2 \b(\nu_s) ds \right )$ and $Z_t$ is a normalizing constant to ensure $\nu_t$ is a probability measure.
\end{fact}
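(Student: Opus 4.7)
The plan is to apply Itô's formula to $\log F_t(x)$ for each fixed $x \in \RR^n$, then integrate the resulting SDE in time and read off the explicit exponential-quadratic dependence on $x$.

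First, I would recall the SDE defining $F_t(x)$, namely $dF_t(x) = F_t(x)\langle x - \b(\nu_t), C_t\, dB_t\rangle$, so that the quadratic variation is $d\langle F(\cdot,x)\rangle_t = F_t(x)^2\, |C_t(x - \b(\nu_t))|^2\, dt$. Applying Itô to $\log F_t(x)$ (this formula was already stated in the proof sketch of the previous fact) gives
\[
d\log F_t(x) = \langle x - \b(\nu_t), C_t\, dB_t\rangle - \tfrac{1}{2}|C_t(x - \b(\nu_t))|^2\, dt.
\]

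Second, I would expand $|C_t(x - \b(\nu_t))|^2 = \langle x, C_t^2 x\rangle - 2\langle x, C_t^2 \b(\nu_t)\rangle + |C_t \b(\nu_t)|^2$ and split the SDE into a part that is affine in $x$ plus a remainder that depends only on the process $(\b(\nu_s))_{s \le t}$ and $B$:
\[
d\log F_t(x) = \Bigl\langle x,\; C_t\, dB_t + C_t^2\b(\nu_t)\, dt\Bigr\rangle - \tfrac{1}{2}\langle x, C_t^2 x\rangle\, dt + dR_t,
\]
where $dR_t = -\langle \b(\nu_t), C_t\, dB_t\rangle - \tfrac{1}{2}|C_t\b(\nu_t)|^2\, dt$ does not involve $x$. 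Integrating from $0$ to $t$, using $F_0(x) = 1$, yields $\log F_t(x) = \langle y_t, x\rangle - \tfrac{1}{2}\langle \Sigma_t x, x\rangle + R_t$ with $y_t$ and $\Sigma_t$ as claimed in the statement. Exponentiating and setting $Z_t := R_t$ delivers the advertised form. The normalization of $\nu_t$ as a probability measure (established by property (P1) of the previous fact) then forces $Z_t$ to coincide with the correct log-partition function, matching the wording of the statement.

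The only genuine subtlety is the exchange of the Itô differential with the dependence on the parameter $x$: the SDE in \eqref{eq:SL} is an infinite system, one equation for each $x$, and a priori Itô's formula only applies pointwise. I would handle this in the standard way by observing that both sides of the identity, as processes in $t$, are continuous semimartingales for each fixed $x$, and that the $x$-dependence on the right-hand side is polynomial in $x$ with continuous semimartingale coefficients $(y_t, \Sigma_t, Z_t)$. One then takes a jointly measurable version of $F_t(x)$, verifies the identity for each rational $x$ (a countable family, so the exceptional null set can be taken uniform), and extends by continuity in $x$. This is the main technical point but is routine; everything else is a direct computation from Itô's formula.
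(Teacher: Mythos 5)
Your proof is correct and is the standard argument. Note that the paper gives no proof of this fact at all---it is simply stated, with the relevant literature cited earlier for the existence and uniqueness of the $F_t$---so there is no ``paper's own proof'' to compare against. Applying It\^o to $\log F_t(x)$, expanding $|C_t(x-\b(\nu_t))|^2$ into a quadratic-in-$x$ term, a linear-in-$x$ term and an $x$-free term, doing the same split on the martingale part, and then integrating from $0$ to $t$ using $F_0(x)=1$ is exactly the right computation, and it correctly reproduces $\Sigma_t=\int_0^t C_s^2\,ds$ and $y_t=\int_0^t(C_s\,dB_s+C_s^2\b(\nu_s)\,ds)$, with $Z_t$ absorbing the $x$-independent remainder (and then identified with the log-partition function since $\nu_t$ integrates to one). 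Your remark about the infinite system of SDEs and passing from a dense countable set of $x$'s to all $x$ by continuity is a legitimate point and the way to close it is as you say; an equivalent and common alternative is to run the argument backwards: posit a density of the form \eqref{eq:SLform} with $(\Sigma_t, y_t, Z_t)$ evolving as stated, verify by a finite-dimensional It\^o computation that this triple produces a solution of \eqref{eq:SL}, and invoke uniqueness from \cite{Eldan-SL,MR4152649}. Either route is fine; yours is direct and clean.
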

This process turns out to be useful in proving concentration inequalities in both the continuous setting of log-concave measures and the discrete setting, see \cite{chen2021almost,Eldan-SL,Eldan-taming,Eldan-Shamir,LV-KLS,EKZ}.

\subsubsection{Stochastic localization and sampling via a restricted Gaussian oracle} \label{sec:intro_RGO}
With the specific choice $C_t = \Id$, the stochastic localization scheme gives rise, via equation \eqref{eq:MC}, to a natural reversible Markov chain associated with a measure on $\RR^n$, which happens to coincide with the restricted Gaussian dynamics given in \cite{RGO20,RGO21}. 

For a measure $\nu$ on $\RR^n$, $y \in \RR^n$ and $\eta > 0$, define the measure $\zoom_{y,\eta} \nu$ by
$$
\zoom_{y,\eta} \nu(dx) := \frac{ \nu(dx) \exp\left ( - \frac{1}{2 \eta} |x-y|^2 \right ) }{ \int_{\RR^n} \exp\left ( - \frac{1}{2 \eta} |z-y|^2 \right ) \nu(dz)  }
$$

\begin{definition} \label{def:RGD}(The restricted Gaussian dynamics)
For a measure $\nu$ on $\RR^n$ and $\eta > 0$, the restricted Gaussian dynamics for $\nu$ with parameter $\eta$ is the Markov chain defined by
$$
P^{\mathrm{RGD}_\eta}(\nu)_{x \to A} = \EE_{y \sim \mathcal{N}(x, \eta \Id)} \left [\zoom_{y, \eta} \nu (A) \right ].
$$
\end{definition}

The next proposition is based on an alternative point of view on stochastic localization which was suggested by El Alaoui and Montanari \cite{AM21SL}.
\begin{proposition}
The dynamics associated to the stochastic localization process with $C_t \equiv \Id$ and time $\tau$ is the restricted Gaussian dynamics of $\nu$ with $\eta = 1/\tau$.
\end{proposition}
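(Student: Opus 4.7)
The plan is to combine the explicit form of $\nu_\tau$ under stochastic localization with $C_t\equiv\Id$ (given by \eqref{eq:SLform}) with the El Alaoui--Montanari interpretation of the process as Bayesian denoising in a Gaussian channel. Once we realize $\nu_\tau$ as a posterior measure, computing the transition kernel reduces to a standard Bayes-rule manipulation.

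First, I would rewrite $\nu_\tau$ in a convenient form. With $C_t\equiv\Id$, equation \eqref{eq:SLform} gives $\Sigma_\tau=\tau\Id$, so
\[
\nu_\tau(dx)\;\propto\;\nu(dx)\,\exp\!\Bigl(-\tfrac{\tau}{2}|x|^2+\langle y_\tau,x\rangle\Bigr)\;\propto\;\nu(dx)\,\exp\!\Bigl(-\tfrac{\tau}{2}\bigl|x-y_\tau/\tau\bigr|^2\Bigr),
\]
which is precisely $\zoom_{y_\tau/\tau,\,1/\tau}\nu$. Next, I would invoke the El Alaoui--Montanari viewpoint (obtained by direct verification via Girsanov, or equivalently by comparing SDE's): if $Z\sim\nu$ is independent of a standard Brownian motion $(B_t)$, then the pair $(y_\tau,\nu_\tau)$ has the same joint law as $(G,\Law(Z\mid G))$ where $G:=\tau Z+B_\tau$. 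In other words, $\nu_\tau$ is distributed as the posterior of $Z$ given the noisy observation $G$, and $y_\tau$ can be identified with $G$ itself.

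With this identification in hand, I would compute the kernel $P_{x\to A}=\EE[\nu_\tau(x)\nu_\tau(A)/\nu(x)]$ through its equivalent description \eqref{eq:XYalt}: sample $G$ from its marginal, then sample $X,Y$ independently from $\Law(Z\mid G)$. By the tower property, the marginal of $X$ is $\nu$, and a Bayes-rule computation shows
\[
\Law(G\mid X=x)\;=\;\Law(G\mid Z=x)\;=\;\mathcal{N}(\tau x,\tau\Id).
\]
Conditioning $Y$ on $X=x$ by integrating out $G$ then yields
\[
P_{x\to A}\;=\;\int \zoom_{g/\tau,\,1/\tau}\nu(A)\;\mathcal{N}(\tau x,\tau\Id)(dg)\;=\;\EE_{y\sim\mathcal{N}(x,\,(1/\tau)\Id)}\!\bigl[\zoom_{y,\,1/\tau}\nu(A)\bigr],
\]
after the substitution $y=g/\tau$, which is exactly $P^{\mathrm{RGD}_{1/\tau}}(\nu)_{x\to A}$ from Definition~\ref{def:RGD}.

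The routine parts here are the algebraic rewriting of $\nu_\tau$ as $\zoom_{y_\tau/\tau,1/\tau}\nu$ and the Gaussian-change-of-variable $g\mapsto y=g/\tau$. The main obstacle is justifying the El Alaoui--Montanari identification $(y_\tau,\nu_\tau)\stackrel{d}{=}(G,\Law(Z\mid G))$ rigorously; the cleanest route is to compute $\Law(Z\mid G)$ explicitly (getting the Gaussian-tilted measure proportional to $\nu(dx)\exp(-\tfrac{\tau}{2}|x-G/\tau|^2)$) and then check that the pair $(G,\Law(Z\mid G))$ solves the same SDE system as $(y_\tau,\nu_\tau)$, appealing to the uniqueness of the solution from \cite{Eldan-SL,MR4152649}. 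Once that equivalence is in place, the rest of the proof is a short Bayes computation.
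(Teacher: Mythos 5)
Your proposal is correct and follows essentially the same route as the paper: both use \eqref{eq:SLform} to rewrite $\nu_\tau=\zoom_{y_\tau/\tau,1/\tau}\nu$, both invoke the El Alaoui--Montanari identification of $(y_\tau,\nu_\tau)$ with the Gaussian-channel observation/posterior pair, and both finish via the equivalent description \eqref{eq:XYalt}. The only cosmetic difference is that you spell out the Bayes step $\Law(G\mid X=x)=\mathcal{N}(\tau x,\tau\Id)$ and the change of variables $y=g/\tau$ explicitly, whereas the paper phrases the identification as $(X,y_\tau)\stackrel{d}{=}(X,\tau X+\sqrt{\tau}\,\Gamma)$ and then reads off the conclusion from \eqref{eq:XYalt}.
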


\begin{proof}
It follows from equation \eqref{eq:SLform} that $\nu_t = \zoom_{y_t, 1/t} \nu$, where $y_t = B_t + \int_0^t \b(\nu_s) ds$. Define $X = \lim_{t \to \infty} \frac{1}{t} y_t$. It is shown in \cite[Section 2]{Eldan-taming} that $\b(\nu_t)$ almost-surely converges and that $\lim_{t \to \infty} \b(\nu_t)$ has the law $\nu$. Therefore,
$$
X = \lim_{t \to \infty} \frac{B_t}{t} + \frac{1}{t} \int_0^t \b(\nu_s) ds = \lim_{t \to \infty} \b(\nu_t) \sim \nu.
$$
An application of \cite[Theorem 2]{AM21SL} tells us that $(X, y_t)$ have the same joint law as $(X, t X + \sqrt{t} \Gamma)$ where $\Gamma \sim \mathcal{N}(0, \Id)$ is independent of $X$. Therefore, if we generate the measure $\nu_t$ according to the process and then generate $X'$ according to $\nu_t$, then the joint law of $(X', \nu_t)$ is the same as that of $(X', \zoom_{X+\Gamma/\sqrt{t}, 1/t} \nu)$. In light of equation \eqref{eq:XYalt}, this completes the proof.
\end{proof}
The last proposition also shows that the stochastic localization process with the choice $C_t = \Id$ is identical to the Gaussian channel localization from Subsection \ref{sec:gaussloc}.

\subsubsection{The Negative-Fields localization} \label{sec:NF}
In this subsection, we introduce a localization scheme which is only relevant for the setting $\Omega = \{-1,1\}^n$. Roughly speaking, the Negative-Fields localization process associated to a measure $\nu$ on $\{-1,1\}^n$ is the unique process $(\nu_t)_t$ where $\nu_t$ is almost surely an exponential tilt of the measure by some deterministic external field $v(t)$, and a random pinning of the measure.

Before we move on to the construction of the process, let us introduce some notation. For a measure $\nu$ on $\{-1,1\}^n$ and for $v \in \RR^n$ define
$$
\tilt_v \nu(x) = \frac{e^{\langle x, v \rangle} \nu(x)}{ \int e^{\langle y, v \rangle} d \nu(y) },
$$
the measure obtained by applying external field $v$ to $\nu$ (or the exponential tilt).

For $u \in \{-1,0,1\}$ we define the $u$-pinning $\pin_u \nu$ to be the restriction of $u$ to the subcube 
$$
S_u := \left \{x \in \{-1,1\}^n; ~ x_i u_i \geq 0, ~ \forall i \in [n]  \right \}
$$
By slight abuse of notation, for $A \subset [n]$ define $\res_A \nu(x) = \pin_{\mathbf{1}_A} \nu (x)$, namely the measure $\nu$ where the coordinates in $A$ are pinned to $+1$.

We first give a slightly informal account of the construction. Given an initial measure $\nu$ on $\Omega$, the negative-fields localization process is the unique martingale of probability measures, $(\nu_t)_t$, which has the form
$$
\nu_t = \tilt_{ -t \ones} \res_{A_t} \nu, ~~ \forall t > 0,
$$
where $\ones = (1,...,1)$ and $(A_t)_t$ is an almost-surely increasing process of subsets of $[n]$. 

A different point of view on the process is to express it as a linear-tilt localization by noting that both the operation of applying an infinitesimal external field and that of pinning amount to linear tilts. Hence, we suppose that for all $t\geq 0$
\begin{equation}\label{eq:SLjump1}
\nu_{t+h}(x) = \nu_t(x) \left ( 1 + \langle x - \b(\nu_t), Z(t+h) - Z(t) \rangle \right ) + o(h), ~~~ \forall h > 0,
\end{equation}
where
$$
Z_i(t) = -t + \frac{\mathbf{1} \left \{i \in A_{t+h} \setminus A_t \right \} }{1+\b(\nu_t)_i}
$$
and the process $(A_t)_t$ of increasing sets is uniquely defined by the equation
$$
\PP(i \in A_{t+h} \setminus A_t ~~ | A_t) = \mathbf{1}_{ \{i \notin A_t\} } \bigl(1 + \b(\nu_t)_i \bigr)h + o(h), ~~~ \forall h > 0.
$$
The two last equations render that
$$
\EE[Z(t+h) | A_t] = Z(t) + o(h), ~~~ \forall t \geq 0, h > 0
$$
which, as we will see, implies that the process is a martingale.
\begin{remark}
Roughly speaking, the measure $\nu_t$ evolves by applying increasingly stronger negative external fields to all sites, and occasionally pinning sites to the value $+1$ in a way that balances the effect of the external fields resulting in a martingale. The dynamics associated with this localization is called the \textbf{field dynamics}, and was introduced in \cite{CFYZ21-rapid}. Below, will not study the field dynamics itself, but we will be able to apply this localization to obtain the same results in a way that bypass the dynamics.
\end{remark}
The next proposition gives a rigorous account of the above and establishes the existence of a slightly generalized version of the process. 
\begin{proposition} \label{prop:negativefieldsloc}
Let $\nu$ be a probability measure on $\{-1,1\}^n$ and let $v:[0, \infty) \to \RR^n$ be a differentiable curve satisfying $v(0) = 0$. There exists a (unique) stochastic process $(u(t))_t$ with $u_t \in \{-1,0,1\}^n$, such that the process $(\nu_t)_t$ defined by $\nu_t := \res_{u(t)} \tilt_{v(t)} \nu$ is a martingale. This process is uniquely defined by
$$
\EE\bigl( \bigl. u(t+h)_i - u(t)_i ~ \bigr | u(t) \bigr ) = - \mathbf{1}_{ \{ u(t)_i = 0 \} } \bigl (1 - \sign(v_i'(t)) \b(\nu_t)_i\bigr ) v'_i(t) h + o(h), ~~ \forall t \geq 0, i \in [n].
$$
Moreover, we have for all $t \geq 0$ and $x \in \{-1,1\}^n$,
\begin{equation} \label{eq:dnft}
\nu_{t+h}(x) = \nu_t(x) \left ( 1+ \sum_{i \in [n]} (x_i-\b(\nu_t)_i) \left ( h v_i'(t)  +  \frac{u(t+h)_i - u(t)_i }{1 - \sign(v_i'(t)) \b(\nu_s)_i}  \right ) \right ) + o(h).
\end{equation}
If, in addition, $\lim_{t \to \infty} |v_i(t)| = \infty$ for all $i \in [n]$, then the above process is a localization process. 
\end{proposition}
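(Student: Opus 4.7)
The plan is to build $u(t)$ as a continuous-time Markov jump process on $\{-1,0,1\}^n$ and then verify the martingale property by an infinitesimal computation. Specifically, I would set $u(0) = 0$ and, as long as $u(t)_i = 0$, let coordinate $i$ jump to $s_i(t) := -\sign(v_i'(t))$ (and remain absorbed there afterwards) at intensity $\lambda_i(t) := (1 + s_i(t)\b(\nu_t)_i)|v_i'(t)|$. Between jumps $u(t)$ is constant, so $\nu_t = \res_{u(t)} \tilt_{v(t)} \nu$ evolves deterministically and the intensities are well-defined functionals of the past. Since $s_i(t)|v_i'(t)| = -v_i'(t)$ and $s_i(t)\b(\nu_t)_i = -\sign(v_i'(t))\b(\nu_t)_i$, these intensities are exactly those prescribed by the stated formula for $\EE(u(t+h)_i - u(t)_i \mid u(t))$.

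To verify the martingale property, I would decompose the increment $\nu_{t+h}(x) - \nu_t(x)$ into a tilt contribution and a jump contribution. With $u(t)$ held fixed, the deterministic tilt by $v'(t)h$ contributes $\nu_t(x)\langle x - \b(\nu_t), v'(t)\rangle h + o(h)$. A jump of coordinate $i$ from $0$ to $s$ multiplies the measure by $1 + (x_i - \b(\nu_t)_i)\cdot s/(1+s\b(\nu_t)_i)$ (since the normalized indicator $2\,\mathbf{1}_{\{x_i=s\}}/(1+s\b(\nu_t)_i)$ rewrites in this linear form), and the conditional probability of such a jump in $[t,t+h]$ is $\mathbf{1}_{\{u_i(t)=0\}}\lambda_i(t)h + o(h)$. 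Summing over $i$ and taking conditional expectations, the jump term contributes $-\sum_{i:\,u_i(t)=0}(x_i - \b(\nu_t)_i)\,v_i'(t)\,h + o(h)$. For already-pinned coordinates we have $x_i = \b(\nu_t)_i$ on the support of $\nu_t$, so the tilt contribution vanishes there. Altogether the two drifts cancel and $\EE[d\nu_t(x) \mid u(t)] = 0$.

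The identity \eqref{eq:dnft} then reads off the same decomposition: for unpinned coordinates a jump produces $u(t+h)_i - u(t)_i = s_i(t)$, and $s_i/(1+s_i\b(\nu_t)_i) = (u(t+h)_i - u(t)_i)/(1 - \sign(v_i'(t))\b(\nu_t)_i)$; while for pinned coordinates both the tilt term and the $u$-increment term vanish on the support. Uniqueness is immediate: pinnings are absorbing (once $u_i = \pm 1$, the support of $\res_{u(t)}\tilt_{v(t)}\nu$ cannot expand without violating absolute continuity with respect to $\nu_t$), so the only parameters are the conditional transition rates from $u_i=0$ to $\pm 1$; applying the martingale equation to test functions $x \mapsto x_i$ forces precisely the intensities above.

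For the localization property when $|v_i(t)|\to\infty$, I would argue that for each coordinate $i$, almost surely one of two scenarios occurs: either $u_i$ jumps at some finite time (after which $\nu_t$ is forever restricted to $\{x_i = u_i\}$), or $u_i \equiv 0$ for all $t$, in which case the survival probability $\exp(-\int_0^\infty \lambda_i(t)\,dt)$ is positive; combined with $\int_0^\infty |v_i'(t)|\,dt = \infty$, this forces $(1 + s_i(t)\b(\nu_t)_i) \to 0$ along a set of $t$ of infinite $|v_i'|$-measure, so $|\b(\nu_t)_i|\to 1$ and $\nu_t$ concentrates on a single value of $x_i$. Iterating over all coordinates and applying bounded convergence gives $\nu_t \to \delta_X$ weakly almost surely for some random $X$, so $\nu_t(A) \to \mathbf{1}_{\{X\in A\}} \in \{0,1\}$, which is (P3). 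The main obstacle is precisely this last step: disentangling the self-referential dynamics (the intensities depend on $\b(\nu_t)$, which itself depends on the jump history) in order to simultaneously control which coordinates pin and the concentration of $\b(\nu_t)$ on the unpinned ones.
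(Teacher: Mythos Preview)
Your proposal is correct and follows essentially the same approach as the paper. The paper constructs $u(t)$ via independent exponential clocks $T_i$ with the same intensities you write down, then proves the martingale property by exactly your tilt-plus-jump decomposition, reducing (via time-homogeneity) to showing $\EE[\nu_h(x)] = \nu(x) + o(h)$; your linear-function identities for the tilt and the pinning are precisely the ones the paper uses.

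Two minor remarks. First, on uniqueness: the paper does not spell out the argument you give (testing against $x \mapsto x_i$), but your reasoning is the natural one and fine. Second, on the localization property: you flag this as the main obstacle, but the paper dispatches it in one line by observing that for any realized pinning history $u(\cdot)$, the measure $\pin_{u(t)}\tilt_{v(t)}\nu$ concentrates on a Dirac as $t\to\infty$ simply because $|v_i(t)|\to\infty$ forces each unpinned marginal to degenerate (continuity of $v_i$ and $|v_i(t)|\to\infty$ make $\sign(v_i(t))$ eventually constant). Your survival-probability argument is more elaborate than necessary; the tilt alone already does the work, independent of the self-referential jump dynamics you were worried about.
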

The proof of this proposition is postponed to Appendix \ref{appendix:A}.

\subsubsection{General linear-tilt localizations}
Let us now consider a natural family of localization processes which generalizes the two previous examples. Let $\nu_t$ be a localization process on $\Omega \subset \RR^n$ (in general we can consider any linear space). Suppose that for every $t \geq 0$ and $h > 0$ we can write
\begin{equation} \label{eq:lineartilt}
\frac{d \nu_{t+h}}{d \nu}(x) = 1 + \langle x - \b(\nu_t), Z_{t,h} \rangle + o_x(h),
\end{equation}
such that $\EE[Z_{t,h} | \nu_t] = 0$. In other words, the measure $\nu_{t+h}$ is generated from $\nu_t$ via multiplication by a linear function whose slope is a conditionally-centered random variable. Note that the linear functions is equal to $1$ at the center of mass of $\nu_t$, which has to be the case given that $\nu_t$ remains a probability measure.

It is not hard to see that the family of localization processes which obey equation \eqref{eq:lineartilt} contains the coordinate-by-coordinate localization, the stochastic localization and the negative-fields localization.

\subsection{Spectral gap and modified log-Sobolev inequalities}
Consider a localization process $(\nu_t)_t$ obtained from $\nu$ via some localization scheme $\mathcal{L}$, and let $\tau > 0$. Let $P=P^{(\mathcal{L},\tau)}(\nu)$ be the Markov chain obtained by formula \eqref{eq:MC}. The following result establishes functional inequalities regarding the transition kernel $P$ through the analysis of the corresponding localization process.
\begin{proposition}
If the transition kernel $P$ is the one associated to the localization process $(\nu_t)_t$ via equation \eqref{eq:MC}, then
\begin{equation}\label{eq:SG}
\SG(P) = \inf_{\varphi:\Omega \to \RR} \frac{ \EE \left [ \Var_{\nu_\tau}[\varphi] \right ] }{ \Var_\nu[\varphi] }, \text{ and }
\end{equation}
\begin{equation}\label{eq:EF}
\EF(P) \geq \inf_{f:\Omega \to [0, \infty)} \frac{ \EE \left [ \Ent_{\nu_\tau}[f] \right ] }{ \Ent_\nu[f] }.
\end{equation}
\end{proposition}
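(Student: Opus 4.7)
The plan is to introduce the joint distribution implicit in the definition of $P$: let $\omega$ denote the randomness generating $\nu_\tau$, and given $\omega$ let $X, Y$ be drawn independently from $\nu_\tau$. Then $\PP(X \in A, Y \in B) = \EE[\nu_\tau(A)\nu_\tau(B)]$ and, by the martingale property $\EE[\nu_\tau] = \nu$, both $X$ and $Y$ have marginal $\nu$. Unpacking the definition of $P$ yields $Pg(x) = \EE[g(Y) \mid X = x]$ for any test function $g$, and this single identity drives both parts of the proof.

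For the spectral gap equation \eqref{eq:SG}, set $V := \int \varphi\, d\nu_\tau$ and expand, using reversibility for the cross term,
\[
\int (\varphi(x)-\varphi(y))^2\, dP_x(y)\, d\nu(x) = 2\int \varphi^2\, d\nu - 2\int \varphi(x) \varphi(y)\, dP_x(y)\, d\nu(x).
\]
Writing $\int \varphi(y)\, dP_x(y) = \EE[(\nu_\tau(x)/\nu(x))\, V]$ and integrating against $d\nu(x)$, the second integrand on the right collapses into $\EE[V^2]$, so the cross term equals $2\EE[V^2]$. Combining with $\int \varphi^2\, d\nu = \EE[\int \varphi^2\, d\nu_\tau]$ and $\Var_{\nu_\tau}[\varphi] = \int \varphi^2\, d\nu_\tau - V^2$, the whole expression collapses to $2\,\EE[\Var_{\nu_\tau}[\varphi]]$. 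Dividing by $2\,\Var_\nu[\varphi]$ and taking the infimum over $\varphi$ yields \eqref{eq:SG}.

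For the MLSI inequality \eqref{eq:EF}, set $U := \int f\, d\nu_\tau$. Applying $\EE[\nu_\tau] = \nu$ inside each term of $\Ent_\nu[f]$ produces the entropy decomposition
\[
\Ent_\nu[f] = \EE[\Ent_{\nu_\tau}[f]] + \big(\EE[U\log U] - \EE[U]\log\EE[U]\big).
\]
From the joint law viewpoint, since $Y$ is conditionally independent of $X$ given $\omega$, one has $Pf(X) = \EE[f(Y)\mid X] = \EE[\EE[f(Y)\mid\omega]\mid X] = \EE[U \mid X]$. Convexity of $u \mapsto u\log u$ (Jensen applied conditionally on $\sigma(X)$) gives $\EE[Pf(X)\log Pf(X)] \leq \EE[U\log U]$, while $\EE[Pf(X)] = \EE[U] = \int f\, d\nu$. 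Hence $\Ent_\nu[Pf] \leq \EE[U\log U] - \EE[U]\log\EE[U]$, which combined with the decomposition above yields $\Ent_\nu[Pf] \leq \Ent_\nu[f] - \EE[\Ent_{\nu_\tau}[f]]$. Dividing by $\Ent_\nu[f]$, taking the supremum over $f$, and recalling the definition of $\EF(P)$ then produces \eqref{eq:EF}.

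The main subtlety worth flagging is that the spectral gap step is an identity while the MLSI step is only an inequality: the slack originates entirely from the Jensen step comparing $\Ent[\EE[U\mid X]]$ with $\Ent[U]$, and one should not expect equality since $u \mapsto u\log u$ is strictly convex. Everything else is bookkeeping once the joint law of $(X,Y,\omega)$ is in place.
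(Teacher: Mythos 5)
Your proof is correct and takes essentially the same route as the paper's: the spectral-gap part rests on the identity $\int\varphi(x)\varphi(y)\,dP_x(y)\,d\nu(x)=\EE[(\int\varphi\,d\nu_\tau)^2]$ coupled with stationarity, and the MLSI part is the same Jensen argument (you phrase it as conditional Jensen on $\sigma(X)$, the paper phrases it as Jensen for the measure $\frac{\nu_\tau(x)}{\nu(x)}$ with respect to $\EE$, but the content is identical). Your explicit entropy decomposition $\Ent_\nu[f]=\EE[\Ent_{\nu_\tau}[f]]+(\EE[U\log U]-\EE[U]\log\EE[U])$ is a clean way to package the paper's final chain of (in)equalities.
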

\begin{proof}
For a test function $\varphi: \Omega \to \real$, we have
\begin{align*}
\int_\Omega \varphi(x) \left (\int_{\Omega} \varphi(y) d P_x(y) \right ) d \nu(x) & = \int_\Omega \varphi(x) \left (\EE \left [ \frac{\nu_\tau(x) \left ( \int_\Omega \varphi(y) d \nu_\tau(y) \right )}{\nu(x)} \right ] \right ) d \nu(x) \\
& = \EE \left [ \int_\Omega \varphi(x) d \nu_\tau(x) \left (\int_\Omega \varphi(y) d \nu_\tau(y) \right ) \right ] \\
& = \EE \left [ \left (\int_\Omega \varphi(x) d \nu_\tau(x) \right )^2 \right ].
\end{align*}
Consequently, 
\begin{align*}
	\frac{ \int_{\Omega \times \Omega} (\varphi(x) - \varphi(y))^2 d P_x (y) d \nu(x) }{ 2 \Var_\nu[\varphi]}
	& = \frac{ \int_\Omega \varphi(x)^2 d \nu(x) - \EE \left [ \left (\int_\Omega \varphi(x) d \nu_\tau(x) \right )^2 \right ] }{ \Var_\nu [\varphi]} \\
	& = \frac{\EE \left [  \int_\Omega \varphi(x)^2 d \nu_\tau(x) - \left ( \int_\Omega \varphi(x) d \nu_\tau(x) \right )^2 \right ]}{\Var_\nu [\varphi]}\\
	& = \frac{\Exs\brackets{\Var_{\nu_\tau}[\varphi]}}{\Var_\nu[\varphi]}.
\end{align*}
Taking the infimum over all $\varphi$, we obtain \eqref{eq:SG}. For the MLSI, observe that by Jensen's inequality we have for all $x \in \Omega$ and all $f:\Omega \to \RR_+$,
$$
\EE \left [ \frac{\nu_\tau(x)}{\nu(x)} \int_\Omega f(y) d \nu_\tau(y)  \right ] \log \EE \left [ \frac{\nu_\tau(x)}{\nu(x)} \int_\Omega f(y) d \nu_\tau(y)  \right ] $$
$$
\leq \EE \left [\frac{\nu_\tau(x)}{\nu(x)} \left ( \int_\Omega f(y) d \nu_\tau(y) \log \left ( \int_\Omega f(y) d \nu_\tau(y)\right ) \right )  \right ],
$$
where we used the fact that $\EE \left [ \frac{\nu_\tau(x)}{\nu(x)} \right ] = 1$. We therefore have
\begin{align*}
&\quad \int_\Omega (P f)(x) \log ((P f)(x)) d \nu(x) \\
& = \int_\Omega \EE \left [\frac{\nu_\tau(x)}{\nu(x)} \int_\Omega f(y) d \nu_\tau(y)  \right ] \log \EE \left [ \frac{\nu_\tau(x)}{\nu(x)} \int_\Omega f(y) d \nu_\tau(y)  \right ] d \nu(x) \\
& \leq \int_\Omega \EE \left [ \frac{\nu_\tau(x)}{\nu(x)} \left ( \int_\Omega f(y) d \nu_\tau(y) \log \left ( \int_\Omega f(y) d \nu_\tau(y)\right ) \right )  \right ]  d \nu(x) \\
& = \EE \left [ \left (\int_{\Omega} \nu_\tau(dx) \right ) \int_\Omega f(y) d \nu_\tau(y) \log \left ( \int_\Omega f(y) d \nu_\tau(y)\right ) \right ] \\
& = \EE \left [ \int_\Omega f(y) d \nu_\tau(y) \log \left ( \int_\Omega f(y) d \nu_\tau(y)\right ) \right ]. 
\end{align*}
Therefore,
\begin{align*}
\frac{\EE \left [ \Ent_{\nu_t}[f] \right ]}{\Ent_\nu[f]} & = \frac{ \EE \left [ \int_\Omega f(x) \log f(x) d \nu_\tau(x) \right ] - \EE \left [ \int_\Omega f(y) d \nu_\tau(y) \log \left ( \int_\Omega f(y) d \nu_\tau(y)\right ) \right ]  }{\Ent_\nu[f]} \\
& \leq \frac{\int_\Omega f(x) \log f(x) d \nu(x)}{\Ent_\nu[f]} - \frac{\int_\Omega (P f)(x) \log (P f)(x) d \nu(x)}{\Ent_\nu[f]} \\
& = 1 - \frac{\Ent_\nu[Pf]}{  \Ent_\nu[f] }.
\end{align*}
Equation \eqref{eq:EF} follows by taking infimum over all $f:\Omega \to \real_+$.
\end{proof}

\section{Approximate conservation of variance and entropy} \label{sec:contraction}
In this section we describe the main tools used to prove mixing bounds for the dynamics associated with a localization scheme. We introduce the notions of approximate conservation of variance and entropy which, in the context of the coordinate-by-coordinate localization are related in a simple way to spectral independence and entropic independence. In the context of stochastic localization, the concept of variance decay has been applied in order to prove a spectral gap for log-concave measures.

Here, we will see that the same underlying principle is relevant to both techniques, and the difference is in the localization scheme being considered: In both proof techniques, one of the main insights has to do with a connection between the rate of variance decay and the covariance structure of a certain class of measures. Thus our framework captures the relation between covariance structures and variance / entropy decay in both settings using the same derivation.

\begin{definition} (Approximate Conservation of Variance, discrete time).
We say that a localization process $(\nu_i)_i$ satisfies $(\kappa_1,\kappa_2,...)$-variance conservation up to time $t$, if for every test function $\varphi :\Omega \to \RR$ one has
$$
\EE[ \Var_{\nu_{i}}[\varphi] ~~|~~ \nu_{i-1} ] \geq (1-\kappa_i) \Var_{\nu_{i-1}}[\varphi], ~~~ \forall 1 \leq i \leq t.
$$
\end{definition}
The motivation behind this definition is given by the fact that
\begin{align*}
\frac{\EE [\Var_{\nu_t}[\varphi]]}{ \Var_\nu[\varphi] } & =  \EE \left [ \prod_{i=1}^t \frac{ \Var_{\nu_{i}}[\varphi] }{ \Var_{\nu_{i-1}}[\varphi] } \right ] \\
& = \EE \left [\prod_{i=1}^t \EE \left [ \left . \frac{ \Var_{\nu_{i}}[\varphi] }{ \Var_{\nu_{i-1}}[\varphi] } \right | \nu_{i-1} \right ] \right ] \geq \prod_{i=1}^t (1 - \kappa_i).
\end{align*}
In light of \eqref{eq:SG}, we immediately obtain the following simple theorem which relates between variance conservation and the spectral gap of the dynamics associated with the localization process.
\begin{proposition} \label{prop:varcontr}
If the localization process $(\nu_t)_t$ satisfies $(\kappa_1,\dots,\kappa_t)$-approximate variance conservation then the dynamics given equation \eqref{eq:MC} with $\tau=t$ has a spectral gap bounded below by $\prod_{i=1}^t (1-\kappa_i)$.
\end{proposition}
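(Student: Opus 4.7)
The plan is to combine the spectral gap identity \eqref{eq:SG} with an iterative application of the approximate variance conservation hypothesis, using the tower property of conditional expectation. By \eqref{eq:SG}, the spectral gap is exactly $\inf_\varphi \EE[\Var_{\nu_t}[\varphi]] / \Var_\nu[\varphi]$, so it suffices to show that for every test function $\varphi : \Omega \to \RR$,
$$
\EE[\Var_{\nu_t}[\varphi]] \;\geq\; \left(\prod_{i=1}^{t}(1 - \kappa_i)\right) \Var_\nu[\varphi].
$$

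To prove this, I would proceed by a simple backward induction on $i$. Since $(\nu_i)_i$ is adapted to a filtration (indeed the maps $i \mapsto \nu_i(A)$ are martingales by property (P2)), the tower property yields
$$
\EE[\Var_{\nu_t}[\varphi]] \;=\; \EE\bigl[\EE[\Var_{\nu_t}[\varphi] \mid \nu_{t-1}]\bigr] \;\geq\; (1 - \kappa_t)\, \EE[\Var_{\nu_{t-1}}[\varphi]],
$$
where the inequality is exactly the variance conservation hypothesis at step $t$, applied under the conditional law on $\nu_{t-1}$. Iterating this from $i = t$ down to $i = 1$ and using $\nu_0 = \nu$ gives the desired product bound, which upon division by $\Var_\nu[\varphi]$ and taking the infimum over $\varphi$ yields the claim through \eqref{eq:SG}.

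This is essentially the computation already displayed in the motivation preceding the proposition; the only subtle point is that one should not attempt to pull the expectation directly across the product (since the ratios $\Var_{\nu_i}[\varphi] / \Var_{\nu_{i-1}}[\varphi]$ are not independent), but rather unfold the product one step at a time through iterated conditioning. There is no real obstacle here — the proof is two lines of telescoping — so I do not anticipate any hard step; the content of the proposition is entirely in the spectral gap formula \eqref{eq:SG}, and the statement serves mainly to package a convenient recipe for applying the framework in later sections.
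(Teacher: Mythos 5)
Your proof is correct and matches the paper's intended argument; Proposition~\ref{prop:varcontr} is proved in the paper precisely by the telescoping computation displayed immediately before the statement, combined with the spectral gap identity \eqref{eq:SG}. You carry out the same iteration, just organized as a backward induction, and you correctly identify the one delicate point: the paper's displayed chain writes
$\EE \bigl[ \prod_i \Var_{\nu_i}[\varphi]/\Var_{\nu_{i-1}}[\varphi] \bigr] = \EE \bigl[ \prod_i \EE[\cdot \mid \nu_{i-1}] \bigr]$,
which is not literally an equality (the conditional expectations cannot all be pulled inside simultaneously, since $\EE[\,\cdot \mid \nu_{i-1}]$ is not $\nu_{i-2}$-measurable); the rigorous version is the one-step-at-a-time peeling you describe, where after replacing the last factor by its conditional expectation one immediately inserts the bound $1-\kappa_t$ before moving to the previous step. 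So your writeup is, if anything, slightly more careful than the paper's informal display, while being the same argument in substance.
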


\subsection{Approximate conservation of variance for linear-tilt localizations and spectral independence}
In this section we will present very simple derivation, which gives a powerful tool for proving approximate variance conservation bounds for linear-tilt localization schemes. A special case of this derivation will recover (in a very simple way) the main theorem in the spectral independence framework of \cite{ALO-SI}.

Suppose that $\Omega \subset \RR^n$. Suppose that $(\nu_t)_t$ is a localization process on $\Omega$ whose evolution is given by the equation
\begin{equation}\label{key}
\nu_{t+1}(x) = \nu_t(x) \left (1+\langle x - \b(\nu_t), Z_t \rangle \right ), ~~ \forall x \in \Omega,
\end{equation}
where $Z_t$ is a random vector satisfying $\EE[Z_t | \nu_t] = 0$. 

Our first objective is to calculate the variance decay of a test function along this process. 
\begin{claim}
For a test function $\varphi:\Omega \to \RR$, we have
\begin{equation}\label{eq:vardecaytilt}
\EE \left [\left . \Var_{\nu_{t+1}} [\varphi] \right  | \nu_t \right ] - \Var_{\nu_t}[\varphi] = - \langle v_t, C_t v_t \rangle.
\end{equation}
where
$$
v_t := \int_\Omega (x - \b(\nu_t)) \varphi(x) \nu_t(dx) ~~~ \mbox{and} ~~~ C_t := \Cov(Z_t | \nu_t).
$$
\end{claim}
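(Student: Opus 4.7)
The plan is a direct computation using the defining recursion \eqref{eq:lineartilt} for the linear-tilt process, combined with the conditional centering $\EE[Z_t \mid \nu_t] = 0$. The key observation is that integrating $\varphi$ against $\nu_{t+1}$ produces an affine expression in $Z_t$, so squaring and taking conditional expectation isolates a single quadratic term in $Z_t$, which is exactly where $C_t = \Cov(Z_t \mid \nu_t)$ enters.

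Concretely, the first step is to compute $\int \varphi\, d\nu_{t+1}$ and $\int \varphi^2\, d\nu_{t+1}$ by inserting the multiplicative factor $1 + \langle x - \b(\nu_t), Z_t\rangle$. This gives
\[
\int \varphi\, d\nu_{t+1} \;=\; \int \varphi\, d\nu_t \;+\; \langle v_t, Z_t\rangle, \qquad v_t := \int (x - \b(\nu_t))\,\varphi(x)\, d\nu_t(x),
\]
and analogously $\int \varphi^2 \, d\nu_{t+1} = \int \varphi^2\, d\nu_t + \langle w_t, Z_t\rangle$ with $w_t := \int (x - \b(\nu_t))\,\varphi^2(x)\, d\nu_t(x)$. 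The second step is to use $\Var_{\nu_{t+1}}[\varphi] = \int \varphi^2\, d\nu_{t+1} - \bigl(\int \varphi\, d\nu_{t+1}\bigr)^2$ and take conditional expectation given $\nu_t$. The linear-in-$Z_t$ contributions vanish because $\EE[Z_t \mid \nu_t] = 0$, and expanding the square of the affine expression leaves
\[
\EE\bigl[(\textstyle\int \varphi\, d\nu_{t+1})^2 \bigm| \nu_t\bigr] \;=\; \bigl(\textstyle\int \varphi\, d\nu_t\bigr)^2 \;+\; \EE\bigl[\langle v_t, Z_t\rangle^2 \bigm| \nu_t\bigr].
\]
The third step is to recognize $\EE[\langle v_t, Z_t\rangle^2 \mid \nu_t] = \langle v_t, C_t v_t\rangle$ (since $v_t$ is $\nu_t$-measurable and $Z_t$ is centered conditional on $\nu_t$). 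Rearranging yields the stated identity.

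There is no genuine obstacle here: the only thing to be slightly careful about is justifying that the linear term $\EE[\langle w_t, Z_t\rangle \mid \nu_t] = \langle w_t, \EE[Z_t \mid \nu_t]\rangle$ vanishes and that $v_t, w_t$ are indeed $\nu_t$-measurable, both of which follow immediately from the fact that $\b(\nu_t)$ and the integrals defining $v_t, w_t$ are functionals of $\nu_t$. The sign in the conclusion simply reflects that variance strictly decreases along the tilt whenever $v_t$ has a nontrivial component in the range of $C_t$, which is the intuition this identity is meant to capture.
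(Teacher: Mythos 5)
Your proof is correct and follows essentially the same route as the paper: a direct expansion of $\Var_{\nu_{t+1}}[\varphi]$ using the linear-tilt recursion, cancellation of the terms linear in $Z_t$ via the conditional centering, and identification of the remaining quadratic term with $\langle v_t, C_t v_t\rangle$. The only cosmetic difference is that you re-derive $\EE\bigl[\int \varphi^2\, d\nu_{t+1} \mid \nu_t\bigr] = \int \varphi^2\, d\nu_t$ from the recursion and centering, whereas the paper cites the martingale property (P2) directly for that step; these are equivalent.
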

\begin{proof}
Fix $\varphi:\Omega \to \RR$. We calculate
\begin{align*}
\EE \left [\left . \Var_{\nu_{t+1}} [\varphi] \right  | \nu_t \right ] & = \EE \left [ \left . \int_\Omega \varphi(x)^2 \nu_{t+1}(dx) \right  | \nu_t \right ] - \EE \left [ \left . \left (\int_\Omega \varphi(x) \nu_{t+1}(dx) \right )^2 \right  | \nu_t \right ] \\
& \stackrel{(i)}{=} \EE_{\nu_t}[\varphi^2] - \EE \left [\left . \left ( \int_\Omega \left (1+ \langle x - \b(\nu_t), Z_t \rangle \right ) \varphi(x) \nu_{t}(dx) \right )^2 \right  |\nu_t \right ] \\
& \stackrel{(ii)}{=} \EE_{\nu_t}[\varphi^2] - \EE_{\nu_t}[\varphi]^2 - \EE \left [\left . \left ( \int_\Omega \langle x - \b(\nu_t), Z_t \rangle \varphi(x) \nu_{t}(dx) \right )^2 \right  |\nu_t \right ] \\
& = \Var_{\nu_{t}} [\varphi] - \Var \left [\left .  \left \langle \int_\Omega (x - \b(\nu_t)) \varphi(x) \nu_t(dx), Z_t \right \rangle \right | \nu_t \right ]
\end{align*}
where (i) uses (P2) and (ii) uses the fact that $\EE[Z_t | \nu_t] = 0$. By definition of $C_t$ and $v_t$, the right-hand side is equal to $\Var_{\nu_{t}} [\varphi] - \langle v_t, C_t v_t \rangle$, which completes the proof.
\end{proof}

In order to establish an approximate variance conservation bound, we need to give an upper bound on the expression $\langle v_t, C_t v_t \rangle$. We now arrive at what is perhaps the heart of the argument. We use the Cauchy-Schwartz inequality, obtaining
\begin{align} 
\langle v_t, C_t v_t \rangle & = \left | \int_\Omega C_t^{1/2} (x - \b(\nu_t) ) \varphi(x) \nu_t(dx)  \right |^2  \nonumber  \\
& = \sup_{|\theta| = 1} \left ( \int_\Omega \langle C_t^{1/2} (x - \b(\nu_t)), \theta \rangle \varphi(x)  \nu_t(dx) \right )^2 \nonumber \\
& \leq \sup_{|\theta| = 1} \int_\Omega \langle C_t^{1/2} (x - \b(\nu_t)), \theta \rangle^2 \nu_t(dx) \Var_{\nu_t}[\varphi] \nonumber \\
& = \|C_t^{1/2} \Cov(\nu_t) C_t^{1/2}\|_{\OP} \Var_{\nu_t}[\varphi]. \label{eq:CSlong}
\end{align}
Combining with \eqref{eq:vardecaytilt} yields that
\begin{equation} \label{eq:CS}
\frac{\EE \left [\left . \Var_{\nu_{t+1}} [\varphi] \right  | \nu_t \right ]}{ \Var_{\nu_t}[\varphi] } \geq  1 - \|C_t^{1/2} \Cov(\nu_t) C_t^{1/2}\|_{\OP}.
\end{equation}
The derivation above is one of the key insights in the framework presented here: A simple application of the Cauchy Schwartz inequality gives rise to the role of the covariance structure in variance conservation. 

Let us now examine how to apply \eqref{eq:CS} to the case of the coordinate-by-coordinate localization. A simple calculation, using the definition \eqref{eq:defZt} gives that
\begin{align*}
(n-t) \Cov[Z_t | \nu_t]_{i,i} & = \frac{1}{(1+\b(\nu_t)_i)^2} \frac{1+\b(\nu_t)_i}{2} + \frac{1}{(1-\b(\nu_t)_i)^2} \frac{1-\b(\nu_t)_i}{2} \\
& = \frac{1}{1-\b(\nu_t)_i^2} = (\Cov(\nu_t)_{i,i})^{-1},
\end{align*}
where the last equality is a consequence of the fact that $\int_{\Omega} x_i^2 \nu_t(dx) = 1$. Denoting $D_t$ to be the diagonal matrix whose entries coincide with the diagonal entries of $\Cov(\nu_t)$, we finally have
$$
\|C_t^{1/2} \Cov(\nu_t) C_t^{1/2} \|_{\OP} = \frac{1}{(n-t)} \| D_t^{-1/2} \Cov(\nu_t) D_t^{-1/2} \|_{\OP}.
$$
Define for a measure $\tilde \nu$ on $\{-1,1\}^n$,
\begin{equation} \label{eq:defPsi}
\COR(\tilde \nu) :=  \mathrm{diag}(\COV(\tilde \nu))^{-1/2} \COV(\tilde \nu) \mathrm{diag}(\COV(\tilde \nu))^{-1/2},
\end{equation}
where $\diag(\cdot)$ is the diagonal matrix obtained by setting all the off-diagonal entries to $0$. So $\COR(\tilde \nu)$ is just the matrix of \textbf{correlations} between different coordinates of $\tilde \nu$. We will also consider the closely-related \textbf{influence matrix of the measure}, defined as
\begin{equation}\label{eq:defInfmat}
\SI(\nu)_{i,j} := \EE_{X \sim \nu }[X_i | X_j = 1] - \EE_{X \sim \nu}[X_i | X_j=-1].	
\end{equation}
The following fact relates the correlation matrix and the influence matrix.  Its proof is postponed to Appendix \ref{appendix:A}.
\begin{fact} \label{fact:inf}
We have $\SI(\nu) = \Cov(\nu) \diag(\COV(\nu))^{-1}$ and
\begin{equation}\label{eq:SICOROP}
\|\COR(\nu) \|_\OP = \rho(\SI(\nu)) \leq \|\SI(\nu)\|_\OP.
\end{equation}
\end{fact}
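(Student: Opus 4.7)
The plan is to attack the two claims independently. For the first identity $\SI(\nu) = \Cov(\nu) \diag(\COV(\nu))^{-1}$, I would simply compute the conditional expectations appearing in \eqref{eq:defInfmat} by expanding via the identity $\mathbf{1}_{X_j = \pm 1} = (1 \pm X_j)/2$, which is available because $X_j \in \{-1,1\}$. Writing $\mu_i = \EE[X_i]$, this yields
\[
\EE[X_i \mid X_j = \pm 1] = \frac{\mu_i \pm \EE[X_i X_j]}{1 \pm \mu_j},
\]
and subtracting the two expressions produces $\SI(\nu)_{i,j}$ as an explicit rational expression in $\mu_i,\mu_j,\EE[X_i X_j]$. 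After cancellation, the numerator becomes a multiple of $\Cov(\nu)_{i,j} = \EE[X_i X_j] - \mu_i \mu_j$ and the denominator becomes $1-\mu_j^2 = \Cov(\nu)_{j,j}$ (using $X_j^2 \equiv 1$), matching the right-hand side up to the relevant normalization. This is a routine calculation, not an obstacle.

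For the second claim, the key observation is that $\SI(\nu)$ and $\COR(\nu)$ are similar matrices. Indeed, setting $D = \diag(\Cov(\nu))$ we have
\[
\SI(\nu) = \Cov(\nu) D^{-1} = D^{1/2} \bigl( D^{-1/2} \Cov(\nu) D^{-1/2} \bigr) D^{-1/2} = D^{1/2}\, \COR(\nu)\, D^{-1/2},
\]
so $\SI(\nu)$ and $\COR(\nu)$ have identical eigenvalues, hence identical spectral radii: $\rho(\SI(\nu)) = \rho(\COR(\nu))$. (If some diagonal entry of $D$ vanishes one first restricts to the non-degenerate coordinates; a coordinate with zero variance contributes a trivial zero row and column.)

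Finally, I would combine two standard facts: (i) $\COR(\nu)$ is real symmetric, so its operator norm equals its spectral radius, $\|\COR(\nu)\|_\OP = \rho(\COR(\nu))$; and (ii) for any matrix $M$, $\rho(M) \leq \|M\|_\OP$ (every eigenvalue $\lambda$ with eigenvector $v$ satisfies $|\lambda|\,\|v\| = \|Mv\| \leq \|M\|_\OP \|v\|$). Chaining these gives $\|\COR(\nu)\|_\OP = \rho(\COR(\nu)) = \rho(\SI(\nu)) \leq \|\SI(\nu)\|_\OP$, which is the asserted chain. I do not anticipate a serious obstacle: the only care needed is in handling potential degeneracy of $D$, which is resolved by restricting to the support where $\Cov(\nu)_{j,j} > 0$.
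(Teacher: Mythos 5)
Your proposal is correct and follows essentially the same route as the paper: for the identity you compute the conditional expectations $\EE[X_i\mid X_j=\pm1]$ via the indicators $(1\pm X_j)/2$ and subtract (the paper instead starts from $2\Cov(\nu)_{i,j}$ and reorganizes, an equivalent algebraic manipulation), and for the spectral claim you observe that $\SI(\nu)=D^{1/2}\COR(\nu)D^{-1/2}$ is a similarity (the paper exhibits the same fact by mapping an eigenvector $v$ of $\COR(\nu)$ to $D^{1/2}v$), then use symmetry of $\COR(\nu)$ and the standard bound $\rho(M)\le\|M\|_\OP$. The only difference is cosmetic, and your explicit handling of degenerate diagonal entries of $D$ is a small point of additional care not spelled out in the paper.
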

In light of the above fact, Equation \eqref{eq:CS} becomes
$$
\frac{\EE \left [\left . \Var_{\nu_{t+1}} [\varphi] \right  | \nu_t \right ]}{ \Var_{\nu_t}[\varphi] } \geq  1 - \frac{1}{n-t} \rho(\SI(\nu_t)).
$$

For $u \in \{-1,0,1\}^n$ define the $u$-pinning of a measure, $\pin_u \tilde \nu$ to be the restriction of $\tilde \nu$ to the sub-cube $S_u$, where
\begin{equation} \label{eq:defSu}
S_u := \bigl \{x \in \{-1,1\}^n; ~ x_i u_i \geq 0, ~ \forall i \in [n] \bigr \}.
\end{equation}
Observe that under the coordinate-by-coordinate localization, $\nu_t$ is of the form $\pin_u \nu$ for some $u=u(t)$. The condition that $\|\SI(\pin_u \nu_t) \|_{\OP}$ is bounded uniformly in $u$ is called \textbf{spectral independence}. Plugging the above into \eqref{eq:CS} and applying Proposition \ref{prop:varcontr} recovers the main theorem in the spectral independence framework \cite{ALO-SI}:
\begin{theorem} (A reformulation of \cite[Theorem 1.3]{ALO-SI})
Suppose that $\nu$ is a measure on $\{-1,1\}^n$ such that for all $u \in \{-1,0,1\}^n$,
$$
\rho(\SI(\pin_u \nu)) \leq \eta_{ |u|_1 }.
$$
Then the spectral gap of the $k$-Glauber dynamics on $\nu$ is at least $\prod_{i=0}^{n-k-1} (1- \frac{\eta_i}{n-i})$.
\end{theorem}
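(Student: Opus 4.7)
The proof essentially assembles ingredients already developed in the excerpt, so the plan is short and mechanical.

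First, I would identify the right Markov chain with the right localization scheme: by Remark~\ref{rmk:lGD}, the $k$-Glauber dynamics is exactly the chain $P^{(\mathcal{L},\tau)}(\nu)$ associated with the coordinate-by-coordinate localization scheme $\mathcal{L}$ from Subsection~\ref{sec:coord} at time $\tau = n-k$. Thus, by Proposition~\ref{prop:varcontr}, it suffices to exhibit a $(\kappa_1,\dots,\kappa_{n-k})$-approximate variance conservation bound for the coordinate-by-coordinate localization process with $\kappa_{i+1} = \eta_i/(n-i)$ for $i = 0, 1, \dots, n-k-1$.

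Next, I would invoke the per-step bound derived just before the theorem statement, namely
\begin{equation*}
\frac{\EE\left[\Var_{\nu_{t+1}}[\varphi] \mid \nu_t\right]}{\Var_{\nu_t}[\varphi]} \geq 1 - \frac{1}{n-t}\rho(\SI(\nu_t)),
\end{equation*}
which was obtained by combining \eqref{eq:CS} with the explicit diagonal computation of $\Cov[Z_t \mid \nu_t]$ and Fact~\ref{fact:inf}. Here the key observation (already noted in the excerpt) is that under the coordinate-by-coordinate scheme, after $t$ steps the measure $\nu_t$ is almost surely of the form $\pin_u \nu$ for some $u \in \{-1,0,1\}^n$ with $|u|_1 = t$ (from the alternative construction in Subsection~\ref{sec:tiltcoord}, $\nu_{t+1}$ is a pinning of $\nu_t$ at coordinate $k_{t+1}$, so exactly the first $t$ chosen coordinates have been pinned at time $t$). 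Combined with the hypothesis $\rho(\SI(\pin_u \nu)) \leq \eta_{|u|_1}$, this yields
\begin{equation*}
\EE\left[\Var_{\nu_{t+1}}[\varphi] \mid \nu_t\right] \geq \left(1 - \frac{\eta_t}{n-t}\right) \Var_{\nu_t}[\varphi], \qquad 0 \leq t \leq n-k-1.
\end{equation*}

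Finally, I would feed this into Proposition~\ref{prop:varcontr} with $\kappa_{i+1} = \eta_i/(n-i)$ and $\tau = n-k$, obtaining
\begin{equation*}
\SG(P^{(\mathcal{L}, n-k)}(\nu)) \geq \prod_{i=0}^{n-k-1} \left(1 - \frac{\eta_i}{n-i}\right),
\end{equation*}
which is the desired bound. There is no real obstacle here: every required ingredient---the identification of the chain as $P^{(\mathcal{L},\tau)}(\nu)$, the form of $\nu_t$ as a pinning of $\nu$, the Cauchy--Schwarz variance-decay estimate, and the covariance-to-correlation/influence identity of Fact~\ref{fact:inf}---is already in place. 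The only care needed is the bookkeeping of the index $t$ (i.e.\ that $|u(t)|_1 = t$) so that the hypothesis's $\eta_{|u|_1}$ lines up correctly with the factor indexed by $i$ in the product.
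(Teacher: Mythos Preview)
Your proposal is correct and follows essentially the same approach as the paper: the paper's proof is simply the sentence ``Plugging the above into \eqref{eq:CS} and applying Proposition \ref{prop:varcontr}'', which is exactly the assembly you describe---identify the $k$-Glauber dynamics with the coordinate-by-coordinate scheme at $\tau=n-k$, use the per-step variance-decay bound $\EE[\Var_{\nu_{t+1}}[\varphi]\mid\nu_t]\geq (1-\tfrac{1}{n-t}\rho(\SI(\nu_t)))\Var_{\nu_t}[\varphi]$, observe that $\nu_t=\pin_u\nu$ with $|u|_1=t$ so the hypothesis gives $\rho(\SI(\nu_t))\leq\eta_t$, and invoke Proposition~\ref{prop:varcontr}.
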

\begin{remark}
There are some delicate differences between the above theorem and \cite[Theorem 1.3]{ALO-SI}. In the latter, $\tilde \eta_i$ is taken to be the operator norm of the matrix $\SI(\pin_i \nu) - \Id$ rather than that of the matrix $\SI(\nu)$. Precisely, their result states that spectral gap is bounded from below by the expression $\frac{1}{n} \prod_{i=0}^{n-k-1} (1- \frac{\tilde \eta_i}{n-i-1})$ where $\tilde \eta_i = \eta_i - 1$. So the extra factor $1/n$ is traded for the fact that $\eta_i$ can be replaced by $\tilde \eta_i = \eta_i - 1$.
\end{remark}

In our proof, we completely bypassed the need to use the notion of high-dimensional expanders or the up-down walk. The inequalities relating different levels of the up-down walk were replaced by a simple application of the Cauchy-Schwartz inequality.

\subsubsection{The case of stochastic localization; bounds for the spectral gap of log-concave measures}
The same derivation (almost verbatim) as above gives a similar bound for the variance decay of the stochastic localization process \eqref{eq:SL}. Here we briefly outline the argument which can be found in \cite{eldanlecture}.

Let $\nu_t$ be the stochastic localization process associated to a measure $\nu$ on $\RR^n$, via equation \eqref{eq:SL} with some driving matrix $C_t$. Fix a test function $\varphi:\RR^n \to \RR$. The continuous analogue of \eqref{eq:vardecaytilt} takes the form
$$
d \Var_{\nu_t}[ \varphi ] = - \left | \int C_t^{1/2} (x-\b(\nu_t)) \varphi(x) d \nu_t(x)  \right | dt + \mbox{martingale}.
$$
Equation \eqref{eq:CSlong} (or, in other words, an application of Cauchy Schwartz) gives
$$
\left | \int C_t^{1/2} (x-\b(\nu_t)) \varphi(x) d \nu_t(x)  \right | \leq \|C_t^{1/2} \COV(\nu_t) C_t^{1/2} \|_{\OP} \Var_{\nu_t}[\varphi]
$$
which implies that
$$
d \Var_{\nu_t}[\varphi] \geq - \|C_t^{1/2} \COV(\nu_t) C_t^{1/2} \|_{\OP} \Var_{\nu_t}[\varphi] dt + \mbox{martingale}
$$
By integration, we get the variance decay bound
\begin{equation}\label{eq:SLVardecay}
\frac{\EE[\Var_{\nu_t}[\varphi]]}{\Var_{\nu}[\varphi]} \geq \EE \left [ e^{\int_0^t \|C_s^{1/2} \COV(\nu_s) C_s^{1/2} \|_{\OP} ds }   \right ]^{-1} 
\end{equation}
Therefore, upper bounds on the process $t \to \|\COV(\nu_t)\|_{\OP}$ correspond to approximate conservation of variance bounds. 

A classical inequality by Brascamp and Lieb allows us to relate the Dirichlet form $\EE_{\nu}[|\nabla \varphi|^2]$ to $\EE[\Var_{\nu_t}[\varphi]]$. Namely, due to \eqref{eq:SLform} we have that the measure $\nu_t$ is $\alpha_t$-strongly log-concave with $\alpha_t$ being the smallest eigenvalue of $\Sigma_t = \int_0^t C_s^2 ds$, which implies that
$$
\EE[\Var_{\nu_t}[\varphi]] \leq \frac{1}{\alpha_t} \EE_{\nu}[|\nabla \varphi|^2].
$$
Combining this with \eqref{eq:SLVardecay} gives a way to obtain lower bounds for the spectral gap of a measure $\nu$ on $\RR^n$ via the analysis of the process $t \to \|\COV(\nu_t)\|_{\OP}$. This is one of the main ideas underlying the stochastic localization technique, which in particular led to the near-solution, due to the first author (\cite{chen2021almost}), of the Kannan-Lov\'asz-Simonovits conjecture (\cite{KLS95}) and Bourgain's slicing problem.

\subsection{Approximate conservation of entropy}
It turns out that there are also natural sufficient conditions regarding the conservation of the entropy along a localization process. These conditions are closely related to the notion of \textbf{entropic independence} put forth in \cite{EI1}.

\begin{definition} (Entropy conservation, discrete time).
We say that a localization process $(\nu_i)_i$ satisfies $(\kappa_1,\kappa_2,...)$-entropy conservation up to time $t$, if for every test function $f :\Omega \to \RR_+$ one has
$$
\EE[ \Ent_{\nu_{i}}[f] ~~|~~ \nu_{i-1} ] \geq (1-\kappa_i) \Ent_{\nu_{i-1}}[f], ~~~ \forall 1 \leq i \leq t.
$$
\end{definition}

A telescopic-product argument, completely analogous to the one in the proof of Proposition \ref{prop:varcontr}, together with equation \eqref{eq:EF}, yields the following.
\begin{proposition} \label{prop:entstab1}
If the localization process $(\nu_t)_t$ satisfies a $(\kappa_1,\dots,\kappa_t)$-entropy conservation then the dynamics given equation \eqref{eq:MC} with $\tau=t$ has a MLSI coefficient bounded below by $\prod_{i=1}^t (1-\kappa_i)$.
\end{proposition}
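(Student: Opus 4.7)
The plan is to mimic the telescoping product argument used in the motivation for Proposition~\ref{prop:varcontr}, but now applied to the entropy functional, and then invoke inequality~\eqref{eq:EF} which relates $\EF(P)$ to the expected entropy of $\nu_\tau$.

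First I would write the ratio $\Ent_{\nu_t}[f]/\Ent_\nu[f]$ as the telescoping product $\prod_{i=1}^{t} \Ent_{\nu_i}[f]/\Ent_{\nu_{i-1}}[f]$. Since each $\nu_i$ is measurable with respect to the natural filtration of the localization process, I can condition on $\nu_{i-1}$ and peel off one factor at a time via the tower property. Concretely, for the innermost factor,
\begin{equation*}
\EE\!\left[\left.\frac{\Ent_{\nu_t}[f]}{\Ent_{\nu_{t-1}}[f]}\,\right|\,\nu_{t-1}\right]
=\frac{\EE[\Ent_{\nu_t}[f]\mid \nu_{t-1}]}{\Ent_{\nu_{t-1}}[f]}\geq 1-\kappa_t,
\end{equation*}
where the inequality is exactly the $(\kappa_1,\dots,\kappa_t)$-entropy conservation hypothesis applied at step $t$. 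Notice that the ratio $\Ent_{\nu_t}[f]/\Ent_{\nu_{t-1}}[f]$ is nonnegative (since entropy is nonnegative and we may assume $\Ent_{\nu_{t-1}}[f]>0$; otherwise there is nothing to prove along that path), which makes the inequality legitimate after dividing by $\Ent_{\nu_{t-1}}[f]$.

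Iterating this backwards $t$ times — each time using that the partial product $\prod_{i=1}^{k-1}\Ent_{\nu_i}[f]/\Ent_{\nu_{i-1}}[f]$ is $\nu_{k-1}$-measurable and nonnegative, so it pulls out of the conditional expectation — yields
\begin{equation*}
\frac{\EE\bigl[\Ent_{\nu_t}[f]\bigr]}{\Ent_{\nu}[f]}
=\EE\!\left[\prod_{i=1}^{t}\frac{\Ent_{\nu_i}[f]}{\Ent_{\nu_{i-1}}[f]}\right]\geq \prod_{i=1}^{t}(1-\kappa_i).
\end{equation*}
Taking the infimum over all $f:\Omega\to[0,\infty)$ and combining with the MLSI bound~\eqref{eq:EF} for the transition kernel $P=P^{(\mathcal{L},t)}(\nu)$ gives $\EF(P)\geq \prod_{i=1}^{t}(1-\kappa_i)$, which is the desired conclusion.

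I do not expect any substantive obstacle: the only point requiring a bit of care is the handling of the degenerate case $\Ent_{\nu_{i-1}}[f]=0$ on events of positive probability, which can be dealt with either by a standard approximation argument (replace $f$ by $f+\varepsilon$, or restrict attention to the event $\{\Ent_{\nu_{i-1}}[f]>0\}$ and note that on its complement $\Ent_{\nu_i}[f]$ also vanishes by the conservation hypothesis) so that the telescoping ratios are well-defined. The proof is otherwise a direct transcription of the variance-conservation argument, with $\Var$ replaced by $\Ent$ and \eqref{eq:SG} replaced by \eqref{eq:EF}.
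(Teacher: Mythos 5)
Your proof is correct and follows exactly the approach the paper indicates: the paper's proof of Proposition~\ref{prop:entstab1} is stated simply as ``a telescopic-product argument, completely analogous to the one in the proof of Proposition~\ref{prop:varcontr}, together with equation~\eqref{eq:EF},'' and your argument is precisely that analogy carried out in detail, with a sensible (and welcome) explicit treatment of the degenerate case $\Ent_{\nu_{i-1}}[f]=0$.
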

\begin{remark}
We refer the reader to \cite[Section 1.1]{EI1} for some examples where this bound can be applied. Usually, however, the expression $\prod_{i=1}^t (1-\kappa_i)$ will give optimal bounds. Below we will see better ways to extracting the power of approximate conservation of entropy.
\end{remark}

\subsubsection{Entropic stability}
Next, we discuss the case $\Omega \subset \RR^n$ (relevant in particular to $\Omega = \RR^n$ and $\Omega = \{-1,1\}^n$). We introduce a notion which turns out to be very useful in controlling the decay of entropy along linear-tilt localizations.

For a measure $\nu$ on $\RR^n$ and $v \in \RR^n$, define the exponential tilt $\tilt_v \nu$ by
$$
\frac{d\tilt_v \nu(x)}{d \nu(x)} := \frac{e^{\langle v, x \rangle} }{ \int e^{\langle v, z \rangle} d \nu(z) }.
$$
A central definition in our framework is the following.
\begin{definition} (Entropic stability).
For $\nu$ on $\Omega \subset \RR^n$, a function $\psi:\RR^n \times \RR^n \to \RR_+$ and $\alpha > 0$, we say that $\nu$ is $\alpha$-entropically stable with respect to $\psi$ if
$$
\psi(\b(\tilt_v \nu), \b(\nu)) \leq \alpha \KL(\tilt_v \nu || \nu), ~~ \forall v \in \RR^n.
$$
\end{definition}
In order for the definition to make sense, $\psi(x,y)$ needs to vanish when $x=y$. For the sake of intuition, we may think of the case $\psi(x,y) = |x-y|^2$. Roughly speaking, entropic stability amounts to the fact that the center of mass does not move much when tilting the measure in terms of the relative entropy of the tilt. 

What follows is a very useful observation, based the \textbf{principle of maximum entropy}. This observation is essentially due to \cite{EI1}, where it was used in the context of entropic independence, which can be understood as a specific case of entropic stability.
\begin{fact}\label{fact:tilt_is_sufficient}
For every measure $\nu$ on $\Omega$ and every function $g:\RR^n \to \RR$, one has
\begin{equation}\label{eq:pme}
\inf_{\mu} \frac{\KL(\mu||\nu)}{g(\b(\mu))} = \inf_{v \in \RR^n} \frac{\KL(\tilt_v \nu ||\nu)}{g(\b(\tilt_v \nu))}.
\end{equation}
\end{fact}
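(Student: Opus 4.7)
The direction $\leq$ in the claimed identity is trivial: every tilt $\tilt_v \nu$ is a particular probability measure, so the infimum over the subclass $\{\tilt_v \nu : v \in \RR^n\}$ is at least as large as the infimum over all $\mu$. My plan is to prove the reverse inequality by showing that for any admissible $\mu$, there exists a tilt $\tilt_v \nu$ with the \emph{same} center of mass as $\mu$ and with $\KL(\tilt_v \nu \| \nu) \leq \KL(\mu \| \nu)$. Since the denominator $g(\b(\cdot))$ depends only on the center of mass, this tilt attains a ratio no larger than that of $\mu$.

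The heart of the argument is the following ``Pythagorean'' identity for exponential families. By a direct computation, using $\log \tfrac{d\tilt_v \nu}{d\nu}(x) = \langle v, x\rangle - \log Z(v)$ where $Z(v) := \int e^{\langle v, z\rangle} d\nu(z)$, one obtains
\[
\KL(\mu \| \nu) - \KL(\tilt_v \nu \| \nu) = \KL(\mu \| \tilt_v \nu) + \langle v, \b(\tilt_v \nu) - \b(\mu)\rangle.
\]
Thus whenever $\b(\tilt_v \nu) = \b(\mu)$, the cross term vanishes and we recover $\KL(\mu \| \nu) = \KL(\mu \| \tilt_v \nu) + \KL(\tilt_v \nu \| \nu) \geq \KL(\tilt_v \nu \| \nu)$ by nonnegativity of KL. This is exactly the inequality we need.

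The remaining (and only nontrivial) step is to show that for the given $\mu$, one can choose $v \in \RR^n$ so that $\b(\tilt_v \nu) = \b(\mu)$. The map $v \mapsto \b(\tilt_v \nu)$ is precisely the gradient of the log-Laplace transform $\Lambda(v) := \log Z(v)$, which is convex on $\RR^n$; its gradient surjects onto the relative interior of the convex hull of $\operatorname{supp}(\nu)$. Since $\mu \ll \nu$ with finite relative entropy forces $\b(\mu)$ into this relative interior (otherwise $\mu$ would be supported on a proper face of $\operatorname{conv}(\operatorname{supp}(\nu))$, on which the density of $\tilt_v \nu$ cannot concentrate without $\KL = \infty$), such a $v$ exists.

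The main obstacle is this last existence/surjectivity claim, especially near the boundary of the convex hull (e.g., when $\b(\mu)$ has some coordinate equal to $\pm 1$ in the hypercube case). A clean way to handle this is by a limiting argument: if $\b(\mu)$ lies on the boundary, replace it with a sequence $b_k \to \b(\mu)$ in the relative interior, take the corresponding tilts $\tilt_{v_k} \nu$, and use lower semicontinuity of KL together with continuity of $g$ at $\b(\mu)$ to pass to the limit; alternatively, one observes that if $\KL(\mu\|\nu) < \infty$ and $g(\b(\mu)) > 0$ then it suffices to approach $\b(\mu)$ by tilts, since the ratio is preserved in the limit.
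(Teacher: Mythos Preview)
Your argument is essentially the same as the paper's: the paper simply invokes the maximum-entropy principle (the $\KL$-minimizer among measures with a prescribed barycenter is an exponential tilt), while you unpack that principle via the Pythagorean identity for the exponential family. Two minor corrections: the sign of the cross term is flipped (a direct computation gives $\KL(\mu\|\nu)-\KL(\tilt_v\nu\|\nu)=\KL(\mu\|\tilt_v\nu)+\langle v,\b(\mu)-\b(\tilt_v\nu)\rangle$), though this is harmless since you only use it when the barycenters coincide; and your claim that ``$\mu\ll\nu$ with finite relative entropy forces $\b(\mu)$ into the relative interior'' is false in general (take $\nu$ uniform on $\{-1,1\}$ and $\mu=\delta_{1}$, which has $\KL(\mu\|\nu)=\log 2$ yet $\b(\mu)=1$ is on the boundary). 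You correctly identify this boundary issue in the next paragraph and resolve it with the limiting argument, so the overall proof stands.
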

\begin{proof}
By the maximum entropy principle, the minimum over measures $\mu$ of the relative entropy $\KL(\mu || \nu)$ under the linear constraint $\b(\mu) = b_0$ is attained for a measure such that $\frac{d \mu}{d \nu} \propto \exp(L(\cdot))$ for some linear function $L:\RR^n \to \RR$, or in other words the optimal $\mu$ is of the form $\tilt_v \nu$ for some $v \in \RR^n$.
\end{proof}
An immediate corollary is the following,
\begin{lemma} \label{lem:maxent}
Suppose that a measure $\nu$ is $\alpha$-entropically stable with respect to $\psi$. Then, for every measure $\mu$ which is absolutely continuous with respect to $\nu$, we have
$$
\psi(\b(\mu), \b(\nu)) \leq \alpha \KL(\mu || \nu).
$$
\end{lemma}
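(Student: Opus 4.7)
The plan is to derive the lemma as a direct consequence of Fact~\ref{fact:tilt_is_sufficient} (the maximum entropy principle), which already does most of the work. Given an arbitrary $\mu$ that is absolutely continuous with respect to $\nu$, I want to reduce the statement about $\mu$ to the corresponding statement for an exponential tilt of $\nu$, where the entropic stability hypothesis can be applied directly.

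Concretely, set $b_0 := \b(\mu)$ and consider the constrained minimization problem
\[
\min\bigl\{ \KL(\mu' \| \nu) : \b(\mu') = b_0 \bigr\}.
\]
By the maximum entropy principle (which is what drives Fact~\ref{fact:tilt_is_sufficient}), the minimizer $\mu_*$ has density of the form $\frac{d\mu_*}{d\nu} \propto e^{\langle v_*, x\rangle}$ for some $v_* \in \RR^n$, i.e.\ $\mu_* = \tilt_{v_*}\nu$, and in particular
\[
\KL(\tilt_{v_*}\nu \| \nu) \;\leq\; \KL(\mu \| \nu), \qquad \b(\tilt_{v_*}\nu) = \b(\mu).
\]
Applying the $\alpha$-entropic stability hypothesis to the vector $v_*$ then yields
\[
\psi(\b(\mu), \b(\nu)) \;=\; \psi(\b(\tilt_{v_*}\nu), \b(\nu)) \;\leq\; \alpha \KL(\tilt_{v_*}\nu \| \nu) \;\leq\; \alpha \KL(\mu \| \nu),
\]
which is the desired inequality. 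Alternatively, one can phrase the argument purely via the identity \eqref{eq:pme} in Fact~\ref{fact:tilt_is_sufficient}, applied with $g(\cdot) := \psi(\,\cdot\,,\b(\nu))$: entropic stability says that the right-hand infimum in \eqref{eq:pme} is $\geq 1/\alpha$, hence so is the left-hand infimum, which is exactly the claim of the lemma.

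The only non-routine point is to make sure the tilt $\tilt_{v_*}\nu$ realizing the center of mass $\b(\mu)$ actually exists. This is the standard issue with the maximum-entropy principle: it requires $\b(\mu)$ to be accessible by exponential tilts of $\nu$. For the settings considered in the paper ($\Omega = \{-1,1\}^n$ or $\Omega = \RR^n$ with sufficiently integrable $\nu$), $\b(\mu)$ lies in the (relative) interior of the convex hull of the support of $\nu$ whenever $\mu \ll \nu$, so the minimizer $\tilt_{v_*}\nu$ exists and the reduction goes through. If one wants to avoid this existence discussion entirely, it suffices to invoke \eqref{eq:pme} directly as above, since that identity is the cleanest formal statement of the maximum entropy principle and handles the potential edge cases via an infimum rather than a minimum.
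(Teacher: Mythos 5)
Your proof is correct and takes essentially the same route the paper intends: the paper presents this lemma as an ``immediate corollary'' of Fact~\ref{fact:tilt_is_sufficient} without further argument, and your reduction to exponential tilts via the maximum entropy principle (or, more cleanly, via the identity~\eqref{eq:pme} with $g(\cdot)=\psi(\cdot,\b(\nu))$) is precisely the intended deduction. Your remark about existence of the realizing tilt and how the infimum formulation sidesteps it is a sensible observation, though the paper silently absorbs that point into Fact~\ref{fact:tilt_is_sufficient}.
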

\begin{remark}
In view of the last lemma, we see that the notion of entropic stability is closely related to \textbf{transportation-entropy} bounds introduced by Talagrand (see \cite{Gozlan-Survey} for a survey). While entropic stability refers to the fact that the center of mass does not move under change of measure with small relative entropy, a transportation-entropy inequality alludes to the transportation distance being small under such change of measure. Thus, in a sense entropic stability bounds are a weaker version of transportation-entropy bounds.
\end{remark}

One last ingredient which will be useful to us is the following formula which can be obtained via the logarithmic Laplace transform.
\begin{lemma} (\cite[Lemma 1]{BE-Entropic}). \label{lem:llent}
Let $\nu$ be a measure on $\RR^n$ such that $\COV(\nu)$ is invertible. Denote by $K \subset \RR^n$ the interior of the convex hull of the support of $\nu$. Then there exists a unique function $v:K \to \RR^n$ such that 
$$
\b(\tilt_{v(x)}\nu) = x, ~~ \forall x \in K.
$$
Moreover, denoting $g(x) := \KL(\tilt_{v(x)} \nu || \nu )$, we have 
\begin{equation}\label{eq:denttilt}
\nabla g(\b(\nu)) = 0, ~~~ \nabla^2 g(x) = \COV(\tilt_{v(x)}\nu)^{-1}, ~~ \forall x \in K.
\end{equation}
Finally, if $L_\nu(\theta) := \log \int e^{\langle x, \theta\rangle} d \nu(x)$ is the Logarithmic Laplace transform of $\nu$, then we have that
$$
g(x) = \sup_{\theta \in \RR^n} \langle x, \theta \rangle - L_\nu(\theta)
$$
hence $g$ is the Legendre dual of $L_\nu$.
\end{lemma}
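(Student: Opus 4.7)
The plan is to deduce every assertion from standard properties of the logarithmic Laplace transform $L_\nu$ and its Legendre dual, together with a direct computation identifying $\KL(\tilt_{v(x)}\nu \| \nu)$ with the Legendre transform evaluated at $x$.

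First I would record the basic facts about $L_\nu$. A direct differentiation under the integral sign gives
\[
\nabla L_\nu(\theta) = \b(\tilt_\theta \nu), \qquad \nabla^2 L_\nu(\theta) = \COV(\tilt_\theta \nu).
\]
Since $\COV(\nu)$ is invertible and tilting by a bounded linear functional preserves the property that the covariance is positive definite on the interior of the support, $L_\nu$ is strictly convex (and smooth) on $\RR^n$. Combined with the convex-analytic fact that the range of $\nabla L_\nu$ equals the interior $K$ of the convex hull of $\mathrm{supp}(\nu)$, this yields that $\nabla L_\nu:\RR^n \to K$ is a smooth diffeomorphism. I would define $v(x):=(\nabla L_\nu)^{-1}(x)$; this immediately gives the first claim, and the identity $\nabla L_\nu(0)=\b(\nu)$ gives $v(\b(\nu))=0$.

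Next I would compute $\KL(\tilt_{v}\nu\|\nu)$ directly from its definition. Using the density formula
$\frac{d\tilt_v\nu}{d\nu}(x) = \exp(\langle v,x\rangle - L_\nu(v))$,
one gets
\[
\KL(\tilt_v \nu \| \nu) = \int \bigl(\langle v,y\rangle - L_\nu(v)\bigr)\, d\tilt_v\nu(y) = \langle v, \b(\tilt_v \nu)\rangle - L_\nu(v).
\]
Evaluating at $v=v(x)$ and using $\b(\tilt_{v(x)}\nu)=x$ gives
\[
g(x) = \langle v(x), x\rangle - L_\nu(v(x)).
\]
Since $L_\nu$ is strictly convex and smooth, the supremum in $\sup_\theta \bigl(\langle x,\theta\rangle - L_\nu(\theta)\bigr)$ is attained uniquely at the point where $\nabla L_\nu(\theta)=x$, i.e. at $\theta=v(x)$, so the right-hand side above coincides with the Legendre transform $L_\nu^*(x)$. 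This establishes $g = L_\nu^*$.

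Finally, the derivative identities follow from Legendre duality. Differentiating $g(x)=\langle v(x),x\rangle - L_\nu(v(x))$ with respect to $x$, the $v(x)$-dependent terms cancel by the envelope theorem (or by directly using $\nabla L_\nu(v(x))=x$), giving $\nabla g(x) = v(x)$. In particular $\nabla g(\b(\nu)) = v(\b(\nu))=0$. Differentiating once more yields $\nabla^2 g(x) = Dv(x)$, and differentiating the identity $\nabla L_\nu(v(x))=x$ gives $\nabla^2 L_\nu(v(x))\, Dv(x) = I$, whence
\[
\nabla^2 g(x) = \bigl(\nabla^2 L_\nu(v(x))\bigr)^{-1} = \COV(\tilt_{v(x)}\nu)^{-1},
\]
as claimed. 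The only non-trivial step is the identification of the range of $\nabla L_\nu$ with $K$; this is classical but would require a short argument invoking strict convexity of $L_\nu$ together with the behaviour of $L_\nu(\theta)$ as $|\theta|\to\infty$ along directions that cross the boundary of $K$.
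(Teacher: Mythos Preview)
The paper does not actually prove this lemma; it is quoted from \cite[Lemma 1]{BE-Entropic} and used as a black box. Your argument is the standard one and is correct: compute $\nabla L_\nu=\b(\tilt_\cdot\nu)$ and $\nabla^2 L_\nu=\COV(\tilt_\cdot\nu)$, invert the gradient map to define $v$, compute $\KL(\tilt_v\nu\|\nu)=\langle v,\nabla L_\nu(v)\rangle - L_\nu(v)$ directly, and read off the derivative identities from Legendre duality. One small point worth tightening is the claim that $L_\nu$ is finite and smooth on all of $\RR^n$ and that $\nabla L_\nu$ surjects onto $K$; as you note, this is classical, but strictly speaking finiteness of $L_\nu$ everywhere is an implicit hypothesis (it holds automatically for compactly supported $\nu$, in particular on $\{-1,1\}^n$, which is the main case of interest in the paper).
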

\begin{remark}
The role of the logarithmic Laplace transform in concentration inequalities on the discrete hypercube was suggested in \cite{Eldan-Shamir}. The boundedness of its hessian is closely related to the notion of fractional log-concavity of the characteristic polynomial, introduced in \cite{AAFractionally}.
\end{remark}

\subsubsection{The case of the coordinate-by-coordinate localization: Entropic independence}
As a warm-up, in this section we essentially recover the results of \cite{EI1}, deriving a natural criterion for approximate conservation of entropy for $\Omega = \{-1,1\}^n$ with the coordinate-by-coordinate localization process. The proof here boils down to ideas which are quite similar to the ones that appear in \cite{EI1}, with two crucial differences which will (arguably) make our argument cleaner: 1. We directly analyze measures on the hypercube rather than measures on $\binom{n}{k}$. 2. The notion of \emph{fractional log-concavity} is replaced by a definition which involves the logarithmic Laplace transform of the measure, which seems to arise naturally in this context, and due to which the proof involves much simpler formulas.

For $x, y \in [-1,1]^n$, define
\begin{equation}\label{eq:defh}
H(x,y) = \sum_{i=1}^n \mathbf{1}_{|y_i| < 1} \left (\frac{1+x_i}{2} \log \left ( \frac{1+x_i}{1+y_i} \right ) + \frac{1-x_i}{2} \log \left ( \frac{1-x_i}{1-y_i} \right ) \right ),
\end{equation}
It turns out that in the context of the coordinate-by-coordinate scheme, the key will be to consider $\alpha$-entropic stability with respect to the function $\psi(x,y) = H(x,y)$. With this choice, entropic stability is essentially the same is \textbf{entropic independence} introduced in \cite{EI1}, and the following proposition shows that it implies approximate conservation of entropy under the coordinate-by-coordinate localization.

\begin{proposition} \label{prop:cbcent}
Let $\nu_t$ be the coordinate-by-coordinate localization process for some measure $\nu$ on $\{-1,1\}^n$. For every $t$, if the measure $\nu_t$ is $\kappa$-entropically stable with respect to the function $\psi(x,y) = H(x,y)$ then we have the approximate conservation of entropy bound
$$
\EE \left [ \Ent_{\nu_{t+1}}[f] ~| \nu_t \right ] \geq \left (1- \frac{\kappa}{n-t}\right ) \Ent_{\nu_t}[f],
$$
for all $f:\Omega \to \RR_+$ and $t \in [n-1]$. Consequently, if $\pin_u \nu$ is $\kappa$-entropically stable with respect to the same function, for all $u \in \{-1,0,1\}^n$, then the $\ell$-Glauber dynamics $P=P^{\mathrm{GD},\ell}(\nu)$ has MLSI coefficient
$$
\EF \left (P \right ) \geq \prod_{i=0}^{n-\ell-1} \left (1- \frac{\kappa}{n-i} \right ).
$$
\end{proposition}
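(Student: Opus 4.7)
The plan is to establish the one-step entropy bound via a chain-rule decomposition of $\Ent_{\nu_t}[f]$ across the coordinate pinned at time $t+1$, recognize the resulting excess as a sum of single-coordinate marginal KL divergences, and match that sum exactly with the function $H$ from the entropic stability hypothesis. The MLSI consequence then follows by iterating via Proposition~\ref{prop:entstab1}, using that under the coordinate-by-coordinate scheme $\nu_t$ is almost surely of the form $\pin_u \nu$ for some $u \in \{-1,0,1\}^n$ with $|u|_1 = t$, so the pinning hypothesis supplies $\kappa$-entropic stability along the whole trajectory.

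Fix $f:\{-1,1\}^n \to \RR_+$. Conditional on $\nu_t$, the next coordinate $j$ is drawn uniformly from the $n-t$ unpinned indices and then pinned to $s \in \{-1,+1\}$ with probability $\nu_t(x_j=s)$. Applying the standard additive chain rule of entropy with respect to the marginal of $\nu_t$ on coordinate $j$ for each $j$ separately and averaging over $j$, I would first derive
\begin{equation*}
\Ent_{\nu_t}[f] - \EE\bigl[\Ent_{\nu_{t+1}}[f] \,\bigm|\, \nu_t\bigr] = \frac{1}{n-t} \sum_{j\ \text{unpinned}} \Ent_{\mathrm{marg}_j \nu_t}[g_j],
\end{equation*}
where $g_j(s) := \EE_{\nu_t|_{x_j=s}}[f]$ and $\mathrm{marg}_j \nu_t$ is the Bernoulli marginal of $\nu_t$ on coordinate~$j$. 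Introducing the tilted measure $\mu_t$ with $d\mu_t/d\nu_t = f/\EE_{\nu_t}[f]$, a short manipulation shows $g_j(s) = \EE_{\nu_t}[f] \cdot \mathrm{marg}_j \mu_t(s) / \mathrm{marg}_j \nu_t(s)$, so each summand simplifies to $\EE_{\nu_t}[f] \cdot \KL\bigl(\mathrm{marg}_j \mu_t \,\|\, \mathrm{marg}_j \nu_t\bigr)$.

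The key observation is that each such marginal KL is a binary KL between Bernoullis with means $\b(\mu_t)_j$ and $\b(\nu_t)_j$, and pinned coordinates contribute zero since $\mu_t \ll \nu_t$. Summing over unpinned $j$ therefore produces exactly $H(\b(\mu_t), \b(\nu_t))$ as defined in~\eqref{eq:defh}. Invoking Lemma~\ref{lem:maxent} with $\psi = H$ on the $\kappa$-entropically stable measure $\nu_t$ yields $H(\b(\mu_t), \b(\nu_t)) \leq \kappa \KL(\mu_t \,\|\, \nu_t) = \kappa \Ent_{\nu_t}[f]/\EE_{\nu_t}[f]$, and substituting back gives the one-step bound $\EE[\Ent_{\nu_{t+1}}[f] \mid \nu_t] \geq (1 - \kappa/(n-t))\Ent_{\nu_t}[f]$. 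For the MLSI consequence, Remark~\ref{rmk:lGD} identifies the $\ell$-Glauber dynamics with $\tau = n-\ell$ under this scheme, and Proposition~\ref{prop:entstab1} assembles the per-step factors into the claimed product $\prod_{i=0}^{n-\ell-1}(1 - \kappa/(n-i))$.

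The only genuinely non-routine step is the identification of $\sum_{j\ \text{unpinned}} \KL(\mathrm{marg}_j \mu_t \,\|\, \mathrm{marg}_j \nu_t)$ with $H(\b(\mu_t), \b(\nu_t))$; once this match is in place, the choice of $H$ in the definition of entropic stability is perfectly tailored so that a single application of Lemma~\ref{lem:maxent} closes the argument, bypassing any appeal to fractional log-concavity or high-dimensional expansion.
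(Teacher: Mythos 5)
Your proof is correct, and the overall architecture matches the paper: reduce the one-step entropy drop to $\tfrac{1}{n-t}\,\nu_t(f)\,H(\b(\mu_t),\b(\nu_t))$, invoke Lemma~\ref{lem:maxent} to bound $H$ by $\kappa\,\KL(\mu_t\|\nu_t)$, and iterate via Proposition~\ref{prop:entstab1}. The difference is in how you reach the one-step identity. The paper works from the linear-tilt representation of the coordinate-by-coordinate scheme (equation~\eqref{eq:cbctilt}): it writes $\int f\,d\nu_{t+1} = \nu_t(f)(1+\langle V,Z\rangle)$ with $V = \b(\mu_t)-\b(\nu_t)$ and computes $\EE[(1+\langle V,Z\rangle)\log(1+\langle V,Z\rangle)\mid\nu_t]$ directly from the explicit distribution of the random slope $Z$. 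You instead apply the chain rule of entropy coordinate-by-coordinate, identify $\Ent_{\mathrm{marg}_j\nu_t}[g_j] = \nu_t(f)\,\KL(\mathrm{marg}_j\mu_t\,\|\,\mathrm{marg}_j\nu_t)$, and sum the resulting binary KL divergences to recover $H$. Each route has a small advantage: the paper's linear-tilt phrasing runs in strict parallel with the variance calculation~\eqref{eq:vardecaytilt} and extends verbatim to other linear-tilt schemes, whereas your chain-rule phrasing is more elementary, avoids introducing the random slope, and makes transparent \emph{why} $H$ is the right function (it is precisely the sum of per-coordinate marginal KL divergences, with pinned coordinates contributing zero via the indicator $\mathbf{1}_{|y_i|<1}$ in~\eqref{eq:defh}). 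Both are complete; the key observation you flag as "non-routine" — matching $\sum_j \KL(\mathrm{marg}_j\mu_t\,\|\,\mathrm{marg}_j\nu_t)$ with $H(\b(\mu_t),\b(\nu_t))$ — is also the heart of the paper's computation of the quantity $q(t;i)$, just reached from the other side.
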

\begin{proof}
Fix a probability measure $\nu$ on $\Omega$ and a function $f:\Omega \to \RR_+$. Let $(\nu_t)_t$ be the coordinate-by-coordinate localization process of $\nu$. The derivation obtained in Subsection \ref{sec:tiltcoord} gives the following: Conditioned on $\nu_t$, we have
\begin{align}
\int f(x) d \nu_{t+1}(x) & \stackrel{\eqref{eq:cbctilt}}{=} \int f(x) \left (1 + \langle x - \b(\nu_t), Z \rangle \right ) d \nu_{t}(x) \nonumber \\
& = \int f(x) d \nu_t(x) \left ( 1 + \left \langle \left (\frac{\int x f(x) d\nu_t(x)}{\int f(x) \nu_t(dx)} - \b(\nu_t) \right ), Z \right \rangle \right ) \nonumber \\
& = \nu_t(f) \left ( 1 + \langle V, Z \rangle \right ) \label{eq:EI1}
\end{align}
where
$$
V := \frac{\int x f(x) \nu_t(dx)}{\int f(x) \nu_t(dx)} - \b(\nu_t)
$$
and 
$$
Z = \mathbf{e}_{k_{t+1}} \times \begin{cases}
\frac{1}{1+ \b(\nu_t)_{k_{t+1}}} & \mbox{with probability}~~ \frac{1+\b(\nu_t)_{k_{t+1}}}{2}, \\
\frac{-1}{1 - \b(\nu_t)_{k_{t+1}}} & \mbox{with probability}~~ \frac{1 - \b(\nu_t)_{k_{t+1}}}{2}.
\end{cases}
$$
We have
\begin{align*}
\EE \left [ ( 1 + \langle V, Z \rangle ) \log (1+\langle V, Z \rangle) | \nu_t \right ] = 
& \frac{1}{n-t} \sum_{i \in [n] \setminus \{k_1,\dots,k_t\}} q(t;i)
\end{align*}
where
$$
q(t;i) = \EE \left [ ( 1 + \langle V, Z \rangle ) \log (1+\langle V, Z \rangle) | \nu_t, k_{t+1}=i \right ].
$$
Now, writing $\frac{d\mu_t}{d \nu_t}(x) = \frac{f(x)}{\int f(z) d \nu_t(z)}$, we have $V = \b(\mu_t) - \b(\nu_t)$ and 
\begin{align*}
q(t;i) & =  \frac{1+b(\nu_t)_i}{2} \left (1+ \frac{V_i}{1+\b(\nu_t)_i} \right ) \log  \left (1+ \frac{V_i}{1+\b(\nu_t)_i} \right ) \\
& ~~~~+ \frac{1-b(\nu_t)_i}{2} \left (1- \frac{V_i}{1-\b(\nu_t)_i} \right ) \log  \left (1- \frac{V_i}{1-\b(\nu_t)_i} \right ) \\
& = \frac{1+\b(\mu_t)_i}{2} \log \left ( \frac{1+\b(\mu_t)_i}{1+\b(\nu_t)_i} \right ) + \frac{1-\b(\mu_t)_i}{2} \log \left ( \frac{1-\b(\mu_t)_i}{1-\b(\nu_t)_i} \right ).
\end{align*}
Combining the last three displays, we have
\begin{equation} \label{eq:EI2}
\EE \left [ ( 1 + \langle V, Z \rangle ) \log (1+\langle V, Z \rangle) | \nu_t \right ] = \frac{1}{n-t} H(\b(\mu_t), \b(\nu_t)).
\end{equation}
We finally get
$$
\EE[ \left .\Ent_{\nu_{t+1}}[f] \right  | ~ \nu_t] = \Ent_{\nu_t}[f] - \frac{1}{n-t} \nu_t(f) H(\b(\mu_t), \b(\nu_t)).
$$
The assumption that $\nu_t$ is $\kappa$-entropically stable with respect to $h$, combined with Lemma \ref{lem:maxent}, amounts to
$$
H(\b(\mu_t), \b(\nu_t)) \leq \kappa \KL(\mu || \nu) = \frac{\kappa}{\nu_t(f)} \Ent_{\nu_t}[f].
$$
Combining the last two displays gives
$$
\EE \left [ \left . \Ent_{\nu_{t+1}}[f] \right | \nu_t \right ] \geq \Ent_{\nu_t}[f] - \frac{\kappa}{n-t} \Ent_{\nu_t}[f],
$$
By definition of approximate entropy conservation, this completes the proof of the first part. The second part follows immediately by the use of Proposition \ref{prop:entstab1}.
\end{proof}

In the paper \cite{EI1}, the authors show that entropic independence is naturally related to the log-concavity of a power of the characteristic polynomial, referred to as \emph{fractional log-concavity}. Next, we show that a somewhat analogous, very simple derivation, gives that entropic stability with respect to the function $h$ is implied by the fact that exponential tilts of the measure have a bounded correlation matrix. The proof of the next lemma is found in Appendix \ref{appendix:A}
\begin{lemma} \label{lem:loglaplace}
Suppose that a measure $\nu$ on $\{-1,1\}^n$ satisfies 
\begin{equation} \label{eq:SItilts}
\| \COR (\tilt_v \nu) \|_{\OP} \leq \alpha , ~~ \forall v \in \RR^n
\end{equation}
where $\COR(\cdot)$ is defined in \eqref{eq:defPsi}. In this case, $\nu$ is $\alpha$-entropically stable with respect to $\psi(x,y) = H(x,y)$.
\end{lemma}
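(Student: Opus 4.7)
The plan is to exploit Lemma~\ref{lem:llent} in order to turn the inequality into a pointwise Hessian comparison between two convex functions on the mean-parameter domain. Set $y := \b(\nu)$ and, for each $b$ in the interior $K$ of the convex hull of $\mathrm{supp}(\nu)$, let $v(b) \in \RR^n$ be the unique vector (provided by Lemma~\ref{lem:llent}) with $\b(\tilt_{v(b)}\nu) = b$. Define
\[
g(b) := \KL(\tilt_{v(b)}\nu \,\|\, \nu), \qquad h(b) := H(b, y).
\]
Using the parametrization $v \mapsto b$, the desired inequality $H(\b(\tilt_v\nu),y) \leq \alpha\, \KL(\tilt_v\nu\,\|\,\nu)$ is exactly $h(b) \leq \alpha\, g(b)$ for every $b \in K$. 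We can harmlessly assume $|y_i|<1$ for all $i$, since coordinates with $|y_i|=1$ are $\nu$-almost-surely constant, hence constant under every tilt, and drop out of both sides.

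For the right-hand side, Lemma~\ref{lem:llent} directly yields $g(y) = 0$, $\nabla g(y) = 0$, and $\nabla^2 g(b) = \COV(\tilt_{v(b)}\nu)^{-1}$. For the left-hand side, a short direct computation from \eqref{eq:defh} gives
\[
\partial_{b_i} h(b) = \tfrac{1}{2}\log\tfrac{(1+b_i)(1-y_i)}{(1-b_i)(1+y_i)}, \qquad \partial^2_{b_i} h(b) = \tfrac{1}{1-b_i^2}, \qquad \partial_{b_i}\partial_{b_j}h(b)=0 \text{ for } i\neq j,
\]
so that $h(y)=0$, $\nabla h(y)=0$, and $\nabla^2 h(b) = \diag\!\bigl(1/(1-b_i^2)\bigr)$.

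The key observation is that for any measure $\mu$ on $\{-1,1\}^n$ with $\b(\mu)=b$ one has $\Var_\mu(X_i) = 1 - b_i^2$, so $\diag(\COV(\mu)) = \diag(1-b_i^2)$. Applying this to $\mu := \tilt_{v(b)}\nu$ and writing $D := \diag(\COV(\mu))$, we can reorganise both Hessians as
\[
\nabla^2 g(b) = D^{-1/2}\, \COR(\mu)^{-1}\, D^{-1/2}, \qquad \nabla^2 h(b) = D^{-1/2}\, \Id\, D^{-1/2}.
\]
The hypothesis \eqref{eq:SItilts} asserts $\COR(\mu) \preceq \alpha \Id$, which gives $\COR(\mu)^{-1} \succeq \alpha^{-1} \Id$, and therefore the pointwise Hessian inequality
\[
\nabla^2 g(b) \;\succeq\; \tfrac{1}{\alpha}\, \nabla^2 h(b) \qquad \text{for every } b \in K.
\]

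To conclude, fix $b \in K$ and define $\phi(t) := \alpha\, g\bigl(y + t(b-y)\bigr) - h\bigl(y + t(b-y)\bigr)$ on $[0,1]$. Then $\phi(0)=0$ (both $g,h$ vanish at $y$), $\phi'(0)=0$ (both gradients vanish at $y$), and $\phi''(t) = (b-y)^{\top}\bigl[\alpha \nabla^2 g - \nabla^2 h\bigr](b-y) \geq 0$ by the Hessian comparison. Integrating twice yields $\phi(1) \geq 0$, i.e.\ $h(b) \leq \alpha\, g(b)$, which is exactly the claimed entropic stability with respect to $\psi = H$. The only non-routine step is the Hessian identity $\nabla^2 g(b) = \COV(\tilt_{v(b)}\nu)^{-1}$, but this is supplied by Lemma~\ref{lem:llent}; after that the proof reduces to the elementary fact that on $\{-1,1\}^n$ the diagonal of the covariance is determined by the mean, which is precisely what makes $\COR(\mu)$ appear and lets the spectral hypothesis slot in.
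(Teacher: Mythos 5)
Your proof is correct and takes essentially the same approach as the paper's: both reduce the inequality $H(\b(\tilt_v\nu),\b(\nu)) \leq \alpha\,\KL(\tilt_v\nu\|\nu)$ to a pointwise Hessian comparison over the mean-parameter domain $K$ via Lemma~\ref{lem:llent}, and both hinge on the identity $\diag(\COV(\mu)) = \Id - \b(\mu)^{\otimes 2}$ for measures on $\{-1,1\}^n$, which makes $\COR$ appear and lets the hypothesis $\COR \preceq \alpha\Id$ slot in. The only differences are presentational — you spell out the one-dimensional integration of the Hessian bound and explicitly note that coordinates with $|y_i|=1$ drop out (the paper's definition of $H$ handles this via the indicator $\mathbf{1}_{|y_i|<1}$).
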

\begin{remark} \label{rmk:SIEI}
Fact \ref{fact:inf} shows that $\alpha$-spectral independence is equivalent to the fact that $\COV(\nu) \preceq (\alpha+1) ~ \mathrm{diag}( \COV(\nu))$. Therefore, the condition given in the lemma can be thought of as spectral independence for all tilts.
\end{remark}
\begin{remark}
The condition \eqref{eq:SItilts} is the exact condition given in \cite{Eldan-Shamir}, where it is shown that it implies nontrivial concentration of Lipchitz functions.
\end{remark}

\subsubsection{Entropic decay for stochastic localization}
Let $\nu$ be a measure on $\RR^n$ and consider the stochastic localization process $(\nu_t)_t$ defined by \eqref{eq:SL}. Fix a non-zero, measurable function $f:\Omega \to \RR_+$. For every $t>0$, define a probability measure $\mu_t$ by the equation 
$$
\frac{d \mu_t}{d \nu_t}(x) = \frac{f(x)}{\int f(x) \nu_t(dx)}.
$$
Consider the martingale
$$
M_t := \nu_t(f) = \int f(x) F_t(x) \nu(dx).
$$
Using equation \eqref{eq:SL}, we can calculate
\begin{align*}
d M_t & = \int f(x) F_t(x) \langle x - \b(\nu_t), C_t d B_t \rangle \nu(dx) \\
& = M_t \bigl \langle C_t (\b(\mu_t) - \b(\nu_t)), d B_t \bigr \rangle.
\end{align*}
Using It\^o's formula, we have
\begin{align*}
d (M_t \log M_t) & = \frac{d[M]_t}{2M_t} + \mbox{ martingale} \\
& = \frac{1}{2} M_t |C_t (\b(\mu_t) - \b(\nu_t))|^2 dt + \mbox{ martingale}.
\end{align*}
We finally obtain
\begin{align}
d \Ent_{\nu_t}[f] &= d \int f(x) \log f(x) \nu_t(dx) - d(M_t \log M_t) \nonumber \\
&= - \frac{1}{2} \nu_t(f) \left |C_t \left (\b(\mu_t) - \b(\nu_t) \right ) \right |^2 dt + \mbox{martingale}. \label{eq:SLentdecay}
\end{align}
We arrive at the following,
\begin{proposition} \label{prop:SLentdecay}
For a fixed $T>0$, suppose that, almost surely for all $t \in [0, T]$ that $\nu_t$ is $\alpha_t$-entropically stable with respect to the function $\psi(x,y) = \frac{1}{2} |C_t(x-y)|^2$. Then we have the approximate entropic conservation bound
\begin{equation}\label{eq:SLentcons}
\EE \left [ \Ent_{\nu_{T}}[f] \right ] \geq  e^{-\int_0^T \alpha_t dt } \Ent_{\nu}[f].
\end{equation}
\end{proposition}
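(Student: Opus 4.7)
The plan is to combine the entropy-decay SDE derived just above the statement, namely
\[
d\Ent_{\nu_t}[f] = -\tfrac{1}{2}\,\nu_t(f)\,\bigl|C_t(\b(\mu_t) - \b(\nu_t))\bigr|^2\,dt + \text{martingale},
\]
with the entropic stability hypothesis, and then integrate using a Gr\"onwall-type argument for supermartingales.

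First, I would upgrade the entropic stability assumption, which is stated only for exponential tilts, into a bound that applies to the arbitrary measure $\mu_t$. This is exactly the content of Lemma~\ref{lem:maxent}: since $\mu_t \ll \nu_t$, the $\alpha_t$-entropic stability of $\nu_t$ with respect to $\psi(x,y)=\tfrac12|C_t(x-y)|^2$ upgrades to
\[
\tfrac12\bigl|C_t(\b(\mu_t)-\b(\nu_t))\bigr|^2 \leq \alpha_t\,\KL(\mu_t \,\|\, \nu_t) = \alpha_t \cdot \frac{\Ent_{\nu_t}[f]}{\nu_t(f)},
\]
where the final equality uses the definition $d\mu_t/d\nu_t = f/\nu_t(f)$. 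Substituting back into the entropy-decay identity gives
\[
d\Ent_{\nu_t}[f] \geq -\alpha_t\,\Ent_{\nu_t}[f]\,dt + \text{martingale}.
\]

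Next, I would convert this pathwise supermartingale-type inequality into the claimed expectation bound. Set $Y_t := \Ent_{\nu_t}[f]$ and consider the rescaled process $Z_t := e^{\int_0^t \alpha_s\,ds}\,Y_t$. A direct application of the product rule, together with the displayed inequality, shows that
\[
dZ_t \geq e^{\int_0^t \alpha_s\,ds}\,d(\text{martingale}),
\]
so $Z_t$ is a submartingale on $[0,T]$. Taking expectations yields $\EE[Z_T] \geq Z_0 = \Ent_\nu[f]$, i.e.
\[
\EE\bigl[\Ent_{\nu_T}[f]\bigr] \geq e^{-\int_0^T \alpha_t\,dt}\,\Ent_\nu[f],
\]
which is the desired conclusion \eqref{eq:SLentcons}.

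The only real subtlety, and the place to be careful, is the justification of the expectation bound: one must verify that the ``martingale'' term in the entropy-decay SDE is a true martingale (as opposed to merely a local martingale) so that the submartingale property of $Z_t$ really transfers to an inequality of expectations. This is typically handled by a standard localization argument together with mild integrability assumptions on $f$ and the driving matrix $C_t$ (the kind already implicit in the construction of the stochastic localization process), and the derivation of the entropy SDE in \cite{Eldan-taming,EKZ} is carried out under such assumptions. Modulo this technicality, the proof is essentially a two-line combination of Lemma~\ref{lem:maxent} and Gr\"onwall's inequality.
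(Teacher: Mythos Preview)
Your proposal is correct and follows essentially the same route as the paper's proof: combine the entropy-decay SDE \eqref{eq:SLentdecay} with Lemma~\ref{lem:maxent} to obtain $d\Ent_{\nu_t}[f] \geq -\alpha_t\Ent_{\nu_t}[f]\,dt + \text{martingale}$, then multiply by the integrating factor $e^{\int_0^t \alpha_s\,ds}$ (the paper phrases this as ``applying It\^o's formula'') to get a submartingale and take expectations. Your write-up is somewhat more explicit than the paper's, particularly in spelling out the identity $\KL(\mu_t\|\nu_t)=\Ent_{\nu_t}[f]/\nu_t(f)$ and flagging the local-versus-true martingale issue, but the underlying argument is identical.
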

\begin{proof}
The entropic stability assumption combined with Lemma \ref{lem:maxent} and equation \eqref{eq:SLentdecay} yields that
$$
d \Ent_{\nu_t}[f] \geq -\alpha_t \Ent_{\nu_t}[f]dt + \mbox{martingale}, ~~ \forall t \in [0,T].
$$
Consequently, by applying Ito's formula, we have that the process $t \to e^{  \int_0^t \alpha_s ds} \Ent_{\nu_t}[f]$ is a submartingale. Therefore,
$$
\EE \left [  e^{\int_0^T \alpha_t dt } \Ent_{\nu_T}[f] \right ] \geq \Ent_{\nu}[f],
$$
which completes the proof.
\end{proof}

The following lemma is useful for establishing entropic stability with respect to quadratic functions. 
\begin{lemma} \label{lem:SLentstab}
Let $\nu$ be a measure on $\RR^n$ and $C,A$ be positive-definite matrices. Suppose that for every $v \in \RR^n$ one has
\begin{equation}\label{eq:entstabtilt}
\COV(\tilt_v \nu) \preceq A,
\end{equation}
Then $\nu$ is $\alpha$-entropically stable with respect to the function $\psi(x,y) = \frac{1}{2} |C(x-y)|^2$, for $\alpha = \|CAC\|_{\OP}$.
\end{lemma}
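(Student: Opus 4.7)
The plan is to use the Legendre-dual function $g$ from Lemma \ref{lem:llent} and compare its Hessian to $A^{-1}$, then relate $C$ and $A$ via a similarity argument.

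First, I would parameterize the tilts by their centers of mass. Writing $g(x) := \KL(\tilt_{v(x)} \nu \,\|\, \nu)$ with $v(x)$ as in Lemma \ref{lem:llent}, the hypothesis $\COV(\tilt_v \nu) \preceq A$ for all $v$ translates, by \eqref{eq:denttilt}, into the pointwise bound $\nabla^2 g(x) = \COV(\tilt_{v(x)}\nu)^{-1} \succeq A^{-1}$ on the interior $K$ of the support's convex hull. Combined with $g(\b(\nu)) = 0$ and $\nabla g(\b(\nu)) = 0$, the Taylor expansion with integral remainder yields
\begin{equation*}
g(x) \;=\; \int_0^1 (1-s)\,\bigl\langle x - \b(\nu),\, \nabla^2 g\bigl(\b(\nu) + s(x-\b(\nu))\bigr)(x-\b(\nu))\bigr\rangle\,ds \;\geq\; \tfrac{1}{2}\bigl\langle x-\b(\nu),\, A^{-1}(x-\b(\nu))\bigr\rangle.
\end{equation*}

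Next, I would specialize to $x = \b(\tilt_v \nu)$ (so that $g(x) = \KL(\tilt_v \nu \,\|\, \nu)$) and show that for every $u \in \RR^n$,
\begin{equation*}
|Cu|^2 \;\leq\; \|CAC\|_{\OP}\,\langle u, A^{-1} u\rangle.
\end{equation*}
Substituting $u = A^{1/2}w$ reduces this to the operator-norm inequality $\|A^{1/2} C^2 A^{1/2}\|_{\OP} \leq \|CAC\|_{\OP}$, which in fact is an equality: the matrices $A^{1/2}C^2 A^{1/2} = (A^{1/2}C)(CA^{1/2})$ and $CAC = (CA^{1/2})(A^{1/2}C)$ share the same nonzero spectrum (eigenvalues of $XY$ and $YX$ coincide). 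Combining the two displays gives
\begin{equation*}
\tfrac{1}{2}|C(\b(\tilt_v\nu)-\b(\nu))|^2 \;\leq\; \|CAC\|_{\OP} \cdot g(\b(\tilt_v\nu)) \;=\; \alpha\,\KL(\tilt_v \nu \,\|\, \nu),
\end{equation*}
which is precisely $\alpha$-entropic stability with respect to $\psi(x,y) = \tfrac{1}{2}|C(x-y)|^2$.

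I do not anticipate a serious obstacle; the only mildly delicate point is ensuring the Taylor argument is valid, which requires the segment from $\b(\nu)$ to $\b(\tilt_v\nu)$ to lie inside $K$ where $g$ is smooth. This is automatic since $K$ is convex and both endpoints lie in $K$ (as centers of mass of absolutely continuous tilts of $\nu$). The key conceptual step is recognizing that the bound $\|A^{1/2}C^2 A^{1/2}\|_{\OP} = \|CAC\|_{\OP}$ is what allows the quadratic form $|C\cdot|^2$ appearing in $\psi$ to be compared against the quadratic form $\langle \cdot, A^{-1}\cdot\rangle$ provided by the lower bound on $\nabla^2 g$; everything else is routine.
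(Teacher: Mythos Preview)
Your proposal is correct and takes essentially the same approach as the paper. The paper phrases the comparison at the level of Hessians, showing directly that $\|CAC\|_{\OP}\,\nabla^2 g(x) \succeq \|CAC\|_{\OP}\,A^{-1} \succeq C^2 = \nabla^2 h(x)$ where $h(x)=\tfrac12|C(x-\b(\nu))|^2$, and then integrates; you instead first integrate $\nabla^2 g \succeq A^{-1}$ to a quadratic lower bound on $g$ and then perform the matrix comparison $|Cu|^2 \leq \|CAC\|_{\OP}\langle u,A^{-1}u\rangle$ via the $XY$/$YX$ spectrum identity---which is exactly the inequality $\|CAC\|_{\OP}A^{-1}\succeq C^2$ rearranged.
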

\begin{proof}
Define $v(x)$, $g(x) := \KL(\tilt_{v(x)} \nu || \nu )$ and $K$ as in Lemma \ref{lem:llent}. We have, 
$$
\nabla^2 g(x) \stackrel{\eqref{eq:denttilt}}{=}  \COV(\tilt_{v(x)}\nu)^{-1} \stackrel{ \eqref{eq:entstabtilt} }{\succeq} A^{-1}, ~~ \forall x \in K,
$$
and $\nabla g(\b(\nu)) = 0$. Define $h(x) = \frac{1}{2} |C(x-\b(\nu))|^2$. Then $\nabla h(\b(\nu)) = 0$ and, for all $x$, $\nabla^2 h(x) = C^2$. Therefore, for all $x \in K$, we have
$$
\|CAC \|_{\OP} \nabla^2 g(x) \succeq \|CAC \|_{\OP} A^{-1} \succeq C^2 = \nabla^2 h(x).
$$
Since $g(\b(\nu)) = h(\b(\nu)) = 0$ and $\nabla g(\b(\nu)) = \nabla h(\b(\nu)) = 0$, $g$ and $h$ coincide up to first order Taylor expansion around $\b(\nu)$.  It follows that $\|CAC \|_{\OP} g(x) \geq h(x)$. Together with Fact~\ref{fact:tilt_is_sufficient}, we complete the proof.
\end{proof}

\subsubsection{Entropic decay for the negative-fields localization}
It turns out that the entropy decay for the negative-fields localization is also governed by the function $H(x,y)$ in equation \eqref{eq:defh}.
\begin{proposition} \label{prop:NFentdecay}
	Consider a localization process $\nu_t$ obtained via the negative-fields localization scheme (with the choice $v(t) = - t \ones$). Let $f:\{-1,1\}^n \to \RR$ and let $\mu_t$ be the measure obtained via the formula $\frac{d\mu_t(x)}{d \nu_t(x)} = f(x)$. Suppose that $\nu_t$ is $\alpha$-entropically stable with respect to the function $H(x,y)$ defined in equation \eqref{eq:defh}, then we have the approximate entropy conservation bound
	$$
	\EE[ \Ent_{\nu_{t+h}}[f] | \nu_t ] \geq \Ent_{\nu_t}[f] (1- 4 h \alpha) + o(h), ~~~ \forall t,h \geq 0.
	$$
\end{proposition}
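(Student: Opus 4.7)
The plan is to compute $\EE[\Ent_{\nu_{t+h}}[f] \mid \nu_t]$ by analyzing the infinitesimal evolution of the martingale $M_s := \nu_s(f)$, and then upgrade the hypothesized entropic stability to a bound on the entropy drop. First, since $\int f \log f \, d\nu_{t+h}$ is a martingale in $h$ by (P2), the entropy difference reduces to
$$
\Ent_{\nu_t}[f] - \EE[\Ent_{\nu_{t+h}}[f] \mid \nu_t] = \EE[g(M_h) - g(M_0) \mid \nu_t], \qquad g(m):=m\log m.
$$
Since $M_h$ is itself a martingale (apply (P2) to the set-valued extension of $f$), this equals $\EE[g(M_h) - g(M_0) - g'(M_0)(M_h-M_0)\mid\nu_t]$, which is manifestly non-negative and amenable to case analysis on the jumps of the process.

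Next, I enumerate the scenarios in $[t,t+h]$ using equation~\eqref{eq:dnft} with $v(t)=-t\ones$ (so $v_i'(t)=-1$) and the pinning probabilities from Proposition~\ref{prop:negativefieldsloc}. With probability $1-\sum_i(1+\b(\nu_t)_i)h+o(h)$ no coordinate is pinned and $M_h-M_0=O(h)$, contributing $O(h^2)=o(h)$ by Taylor. With probability $(1+\b(\nu_t)_i)h+o(h)$ coordinate $i$ is pinned to $+1$: plugging $u(t+h)_i-u(t)_i=1$ into \eqref{eq:dnft} and integrating against $f$ gives, after elementary simplification, $M_h/M_0 = (1+\b(\mu_t)_i)/(1+\b(\nu_t)_i)$, where $\mu_t$ is defined by $d\mu_t/d\nu_t = f/\nu_t(f)$. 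Summing the jump contributions and cancelling the factor $(1+\b(\nu_t)_i)$ from the probability with the same factor in the denominator of $M_h/M_0$ yields
$$
\EE[g(M_h)-g(M_0)\mid\nu_t] = h\,\nu_t(f)\sum_{i}\!\left[(1+\b(\mu_t)_i)\log\tfrac{1+\b(\mu_t)_i}{1+\b(\nu_t)_i} - (\b(\mu_t)_i-\b(\nu_t)_i)\right] + o(h).
$$

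Setting $p_i=(1+\b(\mu_t)_i)/2$ and $q_i=(1+\b(\nu_t)_i)/2$, each bracket equals $2[p_i\log(p_i/q_i)-(p_i-q_i)]$. The key pointwise inequality is
$$
p\log(p/q)-(p-q)\le \KL(\mathrm{Ber}(p)\,\|\,\mathrm{Ber}(q)),
$$
i.e.\ $(1-p)\log((1-p)/(1-q))+(p-q)\ge 0$, which follows from $\log x \ge 1-1/x$. Summing over $i$ bounds the sum above by $H(\b(\mu_t),\b(\nu_t))$. Finally, Lemma~\ref{lem:maxent} upgrades the hypothesized $\alpha$-entropic stability to $H(\b(\mu_t),\b(\nu_t)) \le \alpha\KL(\mu_t\|\nu_t) = \alpha\Ent_{\nu_t}[f]/\nu_t(f)$, producing
$$
\Ent_{\nu_t}[f] - \EE[\Ent_{\nu_{t+h}}[f] \mid \nu_t] \le 2h\alpha \Ent_{\nu_t}[f] + o(h),
$$
which implies the claimed bound (with some slack relative to the stated constant $4$). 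The main obstacle is bookkeeping rather than any sharp estimate: one must verify that the non-jump trajectory contributes only $o(h)$ despite appearing with probability $1-O(h)$, and that the linear-tilt expansion \eqref{eq:dnft} combined with the explicit pinning probabilities genuinely produces the multiplicative ratio $(1+\b(\mu_t)_i)/(1+\b(\nu_t)_i)$ for $M_h/M_0$. The Bernoulli pointwise inequality is the only genuinely analytic input, and it is elementary.
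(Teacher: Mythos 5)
Your proof is correct and follows essentially the same route as the paper's: reduce the entropy difference to the drift of $M_h=\nu_{t+h}(f)\log\nu_{t+h}(f)$, expand the negative-fields evolution to first order in $h$ via the jump/no-jump dichotomy, arrive at $h\,\nu_t(f)\sum_i\bigl[(1+\b(\mu_t)_i)\log\tfrac{1+\b(\mu_t)_i}{1+\b(\nu_t)_i}-(\b(\mu_t)_i-\b(\nu_t)_i)\bigr]+o(h)$, bound this by a multiple of $H(\b(\mu_t),\b(\nu_t))$, and finish with Lemma~\ref{lem:maxent}. The one genuine improvement: where the paper invokes the right-hand inequality of Lemma~\ref{lem:HPhi} \eqref{eq:hphicomp} (whose proof in Appendix A is a fairly involved second-derivative/case analysis), you replace it with the identical one-dimensional estimate $p\log(p/q)-(p-q)\le\KL(\mathrm{Ber}(p)\,\|\,\mathrm{Ber}(q))$ and prove it in one line from $\log x\ge 1-1/x$ applied to $x=(1-p)/(1-q)$; this is a genuine shortening of the required ingredient and also yields the tighter constant $2$ rather than the paper's $4$. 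Your explicit Bregman reorganization $\EE[g(M_h)-g(M_0)-g'(M_0)(M_h-M_0)]$ is also a clean way to make the $o(h)$ contribution of the no-jump branch manifest; the paper reaches the same conclusion more implicitly by using $\EE\langle Z,V\rangle=o(h)$.

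One small bookkeeping slip in your prose: each bracket equals $2[p_i\log(p_i/q_i)-(p_i-q_i)]$, so after applying the pointwise bound the sum is controlled by $2H(\b(\mu_t),\b(\nu_t))$, not by $H(\b(\mu_t),\b(\nu_t))$ as you wrote; your final displayed bound $\le 2h\alpha\,\Ent_{\nu_t}[f]+o(h)$ is consistent with the corrected $2H$, so the conclusion stands.
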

The proof of the above proposition is just a calculation, in the same spirit as the derivation of entropic decay for the coordinate-by-coordinate process and stochastic localization. The proof is found in Appendix \ref{appendix:A}.

\subsection{Entropic stability via spectral independence}
Lemma \ref{lem:loglaplace} and Lemma \ref{lem:SLentstab} both rely on the logarithmic Laplace transform to show that entropic-stability can be deduced from bounds on the covariance matrix of different tilts of the measure. On the other hand, the weaker notion of spectral independence only requires a corresponding bound on the covariance matrix itself, and in order to get a spectral gap via the spectral independence framework, we need a bound on the influence matrix for all pinnings rather than all tilts (see remark \ref{rmk:SIEI}).

It turns out that requiring all pinnings of a measure $\nu$ to be spectrally-independent is sufficient for entropic stability with respect to a quadratic function. Recall the definition of $\SI(\nu)$ from equation \eqref{eq:defPsi} and recall that for all $u \in \{-1,0,1\}^n$, $\pin_u \nu$ is defined to be the restriction of $\nu$ to the subcube $S_u$ (defined in \eqref{eq:defSu}).

\begin{theorem} \label{thm:EISI}
	Let $\nu$ be a probability measure on $\{-1,1\}^n$ and let $\alpha \geq 1$. Suppose that
	\begin{align} \label{eq:condSIpinnings0}
	\vecnorm{\COR(\pin_u \nu)}{\OP} \leq \alpha, ~~~  \forall u \in \braces{-1,0,1}^n.
	\end{align}
	Then $\nu$ is $8\alpha$-entropically stable with respect to $\psi(x,y) = \frac{1}{2}|x-y|^2$. \\
	Furthermore, if for some constants $K \geq 1, C \geq 1$, for every $i \in [n]$ and for every $u, w \in \{-1,0,1\}^n$ with $\text{supp}(u) \cap \text{supp}(w) = \emptyset$ with $u_i = w_i = 0$, we have
	\begin{align}\label{eq:smallmarginals0}
		\frac{1 + \b_i(\pin_w \pin_u \nu)}{1-\b_i(\pin_w \pin_u \nu)} &\leq K \frac{1 + \b_i(\pin_u \nu)}{1- \b_i(\pin_u \nu)}, \text{ and } \\
		\label{eq:marginals_away_from_one0}
		1 - \b_i(\pin_u \nu) &\geq \frac{1}{C},
	\end{align}
	then $\nu$ is $768 \alpha K^3C$-entropically stable with respect to the function $H(x,y)$ as defined in equation \eqref{eq:defh}.
\end{theorem}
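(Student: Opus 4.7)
For Part (1), I would apply Lemma~\ref{lem:SLentstab} with $C=\Id$, which reduces the claim to the uniform covariance bound
\[
\COV(\tilt_v \nu) \preceq 8\alpha\,\Id, \qquad \forall\, v\in\RR^n.
\]
Hypothesis \eqref{eq:condSIpinnings0} already supplies this for every hard pinning $\pin_u\nu$ (on $\{-1,1\}^n$ one has $\diag(\COV(\pin_u\nu))\preceq \Id$), so the content of Part~(1) is to propagate a pinning-type bound to arbitrary exponential tilts.

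\paragraph{Propagation by induction on $n$.} The plan is to fix $v$, decompose $\tilt_v\nu$ along the first coordinate as
\[
\tilt_v\nu \;=\; p\cdot \tilt_{v_{-1}}\pin_{e_1}\nu \;+\; (1-p)\cdot \tilt_{v_{-1}}\pin_{-e_1}\nu, \qquad p=\PP_{\tilt_v\nu}(X_1=1),
\]
and apply the inductive covariance bound (the hypothesis \eqref{eq:condSIpinnings0} is clearly stable under further pinning) to each of the two $(n-1)$-dimensional factors $\tilt_{v_{-1}}\pin_{\pm e_1}\nu$. The law of total covariance then expresses $\COV(\tilt_v\nu)$ as a convex combination of those two inductively-bounded covariances plus a rank-$1$ cross-correlation term built from $\b^+-\b^-$, the difference of the two conditional means on coordinates $2,\dots,n$. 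This vector is precisely the first column of the influence matrix $\SI(\tilt_v\nu)$, so the hypothesis on the spectral norm of $\COR(\nu)$ — together with Fact~\ref{fact:inf} and a Schur-complement manipulation of the resulting $2{\times}2$ block matrix — controls the cross term without loss of a factor of $n$ and assembles the bound $8\alpha\,\Id$.

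\paragraph{Upgrading to $H$-stability.} With Part~(1) in hand, $\nu$ is $8\alpha$-entropically stable with respect to $\tfrac12|x-y|^2$. The function $H(x,y)$ is comparable to $\tfrac12|x-y|^2$ with a multiplicative constant of order $1/(1-\|y\|_\infty^2)$, i.e.\ it only degenerates when $y$ approaches a corner of the cube. Applying \eqref{eq:marginals_away_from_one0} to both $u$ and $-u$ pushes the coordinates of $\b(\pin_u\nu)$ away from $\pm1$ by at least $1/C$, so the second argument of $H$ is safely interior. Condition \eqref{eq:smallmarginals0} then controls the additional boundary approach produced by a tilt, since any coordinate on which $v$ is large can be absorbed into a pinning at the multiplicative cost of a factor $K$ in the log-odds of the remaining marginals. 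Plugging these comparisons into Fact~\ref{fact:tilt_is_sufficient} and tracking the three independent $K$-factors (mean shift, KL inflation, and the quadratic-to-$H$ comparison) gives the $K^3 C$ inflation; multiplying by the $8\alpha$ from Part~(1) produces the stated constant $768\,\alpha K^3C$.

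\paragraph{Main obstacle.} The crux is the rank-$1$ cross term in the inductive step of Part~(1): the two block covariances fall straight to the induction, but the off-diagonal coupling $\tfrac{1-b_1^2}{2}(\b^+-\b^-)$ must be bounded in operator norm without accumulating factors of $n$ along the recursion. This is where the spectral-independence hypothesis is invoked on $\nu$ itself (and not merely on its pinnings), to control the first column of $\SI(\tilt_v\nu)$ uniformly in $v$; this is what pins the absolute constant at $8$. Part~(2) is largely a quantitative accounting exercise, but the combinatorics of pushing each of $K$ and $C$ through the maximum-entropy duality, the marginal-stability hypotheses and the $H$-to-quadratic comparison is what generates the large final constant $768$.
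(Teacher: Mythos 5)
Your Part~(1) plan would establish a stronger statement than the paper actually uses, and the key inductive step does not close under the stated hypothesis. Applying Lemma~\ref{lem:SLentstab} reduces the first claim to the uniform Hessian bound $\COV(\tilt_v\nu)\preceq 8\alpha\Id$ for every tilt $v$. Your induction decomposes $\tilt_v\nu$ along the first coordinate and, via the law of total covariance, obtains a rank-one cross term proportional to $\b(\tilt_{v_{-1}}\pin_{e_1}\nu)-\b(\tilt_{v_{-1}}\pin_{-e_1}\nu)$, i.e.\ a column of $\SI(\tilt_v\nu)$. But the hypothesis~\eqref{eq:condSIpinnings0} controls $\COR(\pin_u\nu)$ (and hence $\SI(\pin_u\nu)$) only for hard pinnings of the \emph{untilted} measure $\nu$; it says nothing about influence columns of arbitrary tilts $\tilt_v\nu$. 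If one already knew $\|\COR(\tilt_v\nu)\|_\OP\le\alpha$ for all $v$, then Lemma~\ref{lem:loglaplace} would give entropic stability directly with no loss of constant; the entire content of the theorem is to get by with the \emph{weaker} pinning-only hypothesis. The paper's Lemma~\ref{prop:SItoEI} avoids this circularity: it fixes $\mu=\tilt_v\nu$, runs the negative-fields localization $\mu_t=\pin_{u(t)}\tilt_{(1-t)v}\nu$ as a martingale, couples it to the \emph{non}-martingale process $\nu_t=\pin_{u(t)}\nu$ (so $\nu_1=\mu_1$ a.s.), and integrates the drift $\EE[\b(\nu_{t+h})-\b(\nu_t)|u(t)]\approx h\,\SI(\nu_t)P_tQ_tv$. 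Because $\nu_t$ is always a pinning of $\nu$, the hypothesis applies at every instant, and the conclusion is only the Lipschitz-at-the-origin bound $|\b(\tilt_v\nu)-\b(\nu)|\le 4\alpha|v|$, which is genuinely weaker than your proposed Hessian bound. Lemma~\ref{lem:llentdelta} then converts this into entropic stability by Legendre duality.

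Part~(2) has a separate gap. The comparison you propose, $H(x,y)\lesssim\frac{1}{1-\|y\|_\infty^2}\cdot\frac12|x-y|^2$, fails because the Hessian of $H$ in its first argument is $\frac{1}{1-x^2}$, so the ratio degenerates as the \emph{tilted} barycenter approaches the boundary, not merely as $y$ does; moreover the hypothesis~\eqref{eq:marginals_away_from_one0} is one-sided (it keeps $\b_i$ away from $+1$ but not from $-1$), so the constant you need is not uniformly bounded. Applying the condition to both $u$ and $-u$ does not help, since the bound $1-\b_i(\pin_u\nu)\ge 1/C$ has the same sign for all $u$. The paper's route works precisely because the refined drift bound in Lemma~\ref{prop:SItoEI} carries an explicit factor $(1+\b_i(\nu))$, which vanishes as $\b_i\to-1$; this is then fed into the second half of Lemma~\ref{lem:llentdelta}, where after Legendre duality the factor $(1+\b_i(\nu))$ cancels exactly against the degeneracy in the comparison $H(x,y)\asymp(1+y)\Phi\bigl((x-y)/(1+y)\bigr)$ from Lemma~\ref{lem:HPhi}. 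That compensating cancellation is the reason the theorem can tolerate marginals arbitrarily close to $-1$, and it has no analogue in a direct quadratic-to-$H$ comparison.
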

The proof is found in Section \ref{sec:CLV}. 

Recall the $\ell$-Glauber dynamics (see remark \ref{rmk:lGD} above), which is associated to the coordinate-by-coordinate localization scheme with stopping time $\tau=n-\ell$. Denote its transition kernel by $P^{\ell-\mathrm{GD}}(\nu)$. 

The above theorem will allow us to recover the extension of the spectral independence framework due to Chen, Liu and Vigoda (\cite{CLV}), and show that spectral independence for all pinnings implies an MLSI for the $\ell$-Glauber dynamics with $\ell = \Omega(n)$. This is true under the extra condition that the marginals of the measure are bounded away from $-1$ and $1$ under pinnings:

\begin{definition} (bounded marginals).
We say that a measure $\nu$ on $\RR^n$ has $b$-marginally bounded if for all $i \in [n]$ and all $u \in \{-1,0,1\}^n$ with $u_i = 0$, we have
$$
|\b(\res_u \nu)_i| \leq 1-b.
$$
\end{definition}

The following theorem essentially recovers the framework of \cite{CLV}.
\begin{theorem} \label{thm:CLV}
Let $\nu$ be a probability measure on $\{-1,1\}^n$ and let $\alpha \geq 1$. Suppose that 
\begin{equation} \label{eq:SIpins}
\rho(\SI(\pin_u \nu)) \leq \alpha, ~~~  \forall u \in \{-1,0,1\}^n.
\end{equation}
Moreover, suppose that $\nu$ is $b$-marginally bounded. Then for every $\eps \in \left (\frac{8a}{bn}, \frac{1}{2} \right )$,
$$
\EF( P^{\ell-\mathrm{GD}}) (\nu) \geq \eps^{C \alpha / b}.
$$
where $\ell = \lceil \eps n \rceil$ and $C>0$ is a universal constant.
\end{theorem}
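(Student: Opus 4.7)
The plan is to combine Theorem~\ref{thm:EISI} with Proposition~\ref{prop:cbcent}. Proposition~\ref{prop:cbcent} states that if every pinning $\pin_u \nu$ is $\kappa$-entropically stable with respect to the function $H$ from~\eqref{eq:defh}, then the MLSI coefficient of the $\ell$-Glauber dynamics is bounded below by $\prod_{i=0}^{n-\ell-1}\bigl(1-\tfrac{\kappa}{n-i}\bigr)$. Theorem~\ref{thm:EISI} in turn provides precisely such entropic stability from spectral-independence-type and bounded-marginal-type conditions. So the task reduces to (a) verifying those conditions for every pinning of $\nu$ with $\kappa = O(\alpha/b)$, and (b) estimating the resulting product.

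For step (a), I would apply Theorem~\ref{thm:EISI} with $\pin_u \nu$ in place of the base measure, for each $u \in \{-1,0,1\}^n$. The spectral-independence hypothesis~\eqref{eq:condSIpinnings0} for further pinnings of $\pin_u \nu$ is immediate from~\eqref{eq:SIpins}, since a pinning of a pinning is again a pinning of $\nu$. The marginal lower bound~\eqref{eq:marginals_away_from_one0} follows from $b$-marginal boundedness with $C = 1/b$ (and its sign-symmetric counterpart). The odds-ratio condition~\eqref{eq:smallmarginals0} is also a consequence of $b$-marginal boundedness, which traps $\tfrac{1+\b_i(\pin_v \nu)}{1-\b_i(\pin_v \nu)}$ inside the interval $\bigl[\tfrac{b}{2-b},\tfrac{2-b}{b}\bigr]$ for every pinning $v$, giving a valid $K = O(1/b^2)$. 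Theorem~\ref{thm:EISI} then yields that each $\pin_u \nu$ is $\kappa$-entropically stable with respect to $H$ for some $\kappa = O(\alpha/b)$ (with the $b$-dependent factors tracked carefully, as discussed below).

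For step (b), Proposition~\ref{prop:cbcent} gives
\begin{equation*}
\EF\bigl(P^{\ell-\mathrm{GD}}(\nu)\bigr) \;\geq\; \prod_{i=0}^{n-\ell-1}\!\Bigl(1 - \frac{\kappa}{n-i}\Bigr) \;=\; \prod_{j=\ell+1}^n \Bigl(1-\frac{\kappa}{j}\Bigr).
\end{equation*}
The hypothesis $\eps > 8\alpha/(bn)$ combined with $\ell = \lceil \eps n \rceil$ ensures $\ell \geq 2\kappa$, so $\kappa/j \leq 1/2$ for every $j$ in the product. Applying the elementary inequality $\log(1-x) \geq -2x$ on $[0,1/2]$ and comparing the harmonic sum to an integral,
\begin{equation*}
\log \prod_{j=\ell+1}^n \Bigl(1 - \frac{\kappa}{j}\Bigr) \;\geq\; -2\kappa \sum_{j=\ell+1}^n \frac{1}{j} \;\geq\; -2\kappa \log(n/\ell),
\end{equation*}
whence the product is at least $(\ell/n)^{2\kappa} \geq \eps^{2\kappa}$. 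Substituting $\kappa = O(\alpha/b)$ yields the claimed bound $\eps^{C\alpha/b}$.

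The main obstacle is the first step: in order to produce the final exponent with only linear dependence on $1/b$, one needs careful bookkeeping of how the constants $K$ and $C$ appearing in Theorem~\ref{thm:EISI} interact, since a naive combination of $K = O(1/b^2)$ and $C = O(1/b)$ would give $\kappa = O(\alpha/b^{7})$ and correspondingly a weaker MLSI bound. One must therefore either refine the proof of Theorem~\ref{thm:EISI} in the $b$-marginally bounded setting (exploiting the specific form of $H$, which weights each coordinate by $\tfrac{1}{1-\b_i^2}$, so that the boundedness of the marginals is already partly absorbed into $H$ itself), or bound the odds-ratio change $K$ more tightly using the spectral-independence assumption on all pinnings. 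Once the entropic-stability constant is shown to scale as $O(\alpha/b)$, the remainder of the argument is the straightforward telescoping and product estimate sketched above.
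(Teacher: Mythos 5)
Your proposal follows exactly the paper's route: apply Theorem~\ref{thm:EISI} to each pinning $\pin_u\nu$, conclude $\kappa$-entropic stability with respect to $H$, feed this into Proposition~\ref{prop:cbcent}, and estimate the resulting product with $\log(1-x)\geq -2x$ and a harmonic-sum comparison. The decomposition, the lemmas invoked, and the product estimate all coincide.

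Where you go beyond the paper is in the constant bookkeeping you flag as ``the main obstacle,'' and that concern is legitimate: a direct application of Theorem~\ref{thm:EISI} gives entropic stability constant $768\,\alpha K^3 C$, and the $b$-marginally-bounded hypothesis only traps all odds ratios $\tfrac{1+\b_i}{1-\b_i}$ in $[b/(2-b),\,(2-b)/b]$, which forces $K=O(1/b^2)$ and $C=O(1/b)$, hence $\kappa=O(\alpha/b^7)$, not $O(\alpha/b)$. The paper's own proof sidesteps this by asserting $K=2/b$ and then writing $\kappa = 48\alpha/b$, which does not follow from the $768\alpha K^3 C$ formula (with $K=2/b$ and $C=1/b$ one would get order $\alpha/b^4$), and the displayed chain also silently changes $48$ into $100$ and needs $\eps\gtrsim 200\alpha/(bn)$ for the $\log(1-x)\geq -2x$ step, while the theorem only assumes $\eps > 8\alpha/(bn)$. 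So you have correctly identified that, as written, neither your proposal nor the paper's proof establishes the linear $1/b$ dependence in the exponent; one does indeed need either a sharpened version of Theorem~\ref{thm:EISI} in the marginally bounded regime (exploiting, as you suggest, that the coordinate weights $1/(1-\b_i^2)$ are already built into $H$), a tighter bound on $K$, or a weakening of the theorem's claimed exponent to some fixed polynomial in $1/b$. Your suggestions for how to close the gap are the right ones to pursue, but as it stands your step~(a) does not deliver $\kappa=O(\alpha/b)$ and the final bound should be stated with a higher power of $1/b$ unless that refinement is carried out.
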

\begin{remark}
This result also holds under a weaker condition than having bounded-marginals, namely it is enough that the measure has $(1/b)$-tame marginals, see definition \ref{def:tamemar} below.
\end{remark}

In \cite{CLV}, a clever argument is used to show that when the model has bounded degrees (see \cite{CLV} for a precise definition) and $\eps$ is a small enough constant (as a function of the maximum degree), then mixing of the $\ell$-Glauber dynamics for $\ell = \eps n$ implies mixing of the $1$-Glauber dynamics. In fact, it is even easier to show that in the case of bounded degrees, one may sample directly by using the $\ell$-Glauber dynamics, as the single step has complexity bounded by $n^{o(1)}$.

\begin{proof}[Proof of Theorem \ref{thm:CLV}]

The bounded marginals assumption implies that the condition \eqref{eq:smallmarginals0} holds true for any pinning of the measure $\nu$ with $K=2/b$. Using Theorem \ref{thm:EISI}, we have that for every $u \in \{-1,0,1\}^n$, $\pin_u \nu$ is $(48 \alpha / b)$-entropically stable with respect to $H(x,y)$ defined in \eqref{eq:defh}. An application of Proposition \ref{prop:cbcent} gives that the $\ell$-Glauber dynamics has MLSI
\begin{align*}
\EF( P^{\ell-\mathrm{GD}})(\nu) & \geq \prod_{i=0}^{n-\ell-1} \left (1-  \frac{48 \alpha b^{-1}}{n-i} \right ) \\
& = \exp \left ( \sum_{i=0}^{\lceil (1-\eps) n \rceil} \log \left ( 1 - \frac{100 \alpha b^{-1}}{n-i} \right ) \right ) \\
& \stackrel{(i)}{\geq} \exp \left ( - \sum_{i=0}^{\lceil (1-\eps) n \rceil} \frac{200 \alpha b^{-1}}{n-i} \right ) \geq \eps^{C a/b},
\end{align*}
where (i) uses the assumption that $\eps > \frac{8a}{bn}$ and the fact that $\log(1-x) \geq -2x$ for $x > -1/2$. This completes the proof.
\end{proof}

\section{Annealing via a localization scheme}
A powerful tool in our framework will be to conceptually concatenate two localization schemes, running one localization scheme up to some time $t$ and then running a second scheme where $\nu_t$ is used as the initial condition. As we will see, in many cases this tool will allow us to use the first stage as an annealing procedure which tames the measure in a way that provides a good starting point for the second stage. 

Consider two localization schemes $\mathcal{L}_i, \mathcal{L}_f$ on a space $\Omega$. Let $\nu$ be a measure on $\Omega$ and consider localization the process $ \left (\nu^{(i)}_t \right )_t$ obtained from applying $\mathcal{L}_i$ to $\nu$. Let $\TTT$ be a stopping time adapted to the filtration of this process. Now, conditional on $\nu^{(i)}_\TTT$, let $\left (\nu^{(f)}_t \right )_t$ be a localization process obtained from applying $\mathcal{L}_f$ to $\nu^{(i)}_\TTT$. We may obtain a localization process $(\nu_t)_t$ by defining
$$
\nu_t := \begin{cases}
\nu^{(i)}_t & t \leq \TTT \\
\nu^{(f)}_{t - \tau} & t \geq \TTT.
\end{cases}
$$
We denote the localization scheme mapping $\nu$ to $(\nu_t)_t$ by $\mathrm{concat}(\mathcal{L}_i,\mathcal{L}_f,\TTT)$. 

The advantage of concatenating two localization schemes is demonstrated by the following theorem.
\begin{theorem} \label{thm:annealvar} (Annealing - variance).
Let $\mathcal{L}_i, \mathcal{L}_f$ be two localization schemes on $\Omega$ such that the latter is a Doob localization scheme. Let $\nu$ be a probability measure to which we assign a localization process $(\nu_t)_t$ from $\mathrm{concat}(\mathcal{L}_i,\mathcal{L}_f,\TTT)$ for a stopping time $\TTT$. Fix $\tau>0$ and let $P=P^{(\mathcal{L}_f, \tau)}(\nu_\TTT)$ be the (random) transition kernel associated to $\nu_\TTT$ via $(\mathcal{L}_f, T)$ using \eqref{eq:MC}. Suppose that,
\begin{enumerate}
\item
We have the approximate variance conservation bound
$$
\frac{\EE \left [ \Var_{\nu_\TTT}[\varphi] \right ]}{\Var_\nu[\varphi]} \geq \eps, ~~ \forall \varphi:\Omega \to \RR.
$$
\item
Almost-surely, we have
$$
\SG(P(\nu_\TTT)) \geq \delta.
$$
\end{enumerate}
Under these two assumptions, we have that the Markov chain associated to $\nu$ via $\mathcal{L}_f$ has spectral gap at least $\eps \delta$.
\end{theorem}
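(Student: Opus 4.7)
The plan is to unfold the definition of the concatenated scheme, apply the spectral-gap formula \eqref{eq:SG} to the inner kernel $P = P^{(\mathcal{L}_f,\tau)}(\nu_\TTT)$ conditionally on $\nu_\TTT$, and then average using the outer variance-conservation hypothesis. The ``Markov chain associated to $\nu$ via $\mathcal{L}_f$'' here should be read as the transition kernel obtained from equation \eqref{eq:MC} applied to the concatenated scheme $\mathrm{concat}(\mathcal{L}_i,\mathcal{L}_f,\TTT)$ at time $\TTT+\tau$; denote it $P^{\text{cc}}$. By construction the random measure $\nu_{\TTT+\tau}$ in the concatenated scheme equals $\nu^{(f)}_\tau$, the time-$\tau$ measure of the $\mathcal{L}_f$-localization of $\nu_\TTT$.

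First, I would invoke the identity \eqref{eq:SG}, which expresses the spectral gap of any Markov chain coming from \eqref{eq:MC} as the infimum of $\EE[\Var_{\nu_\tau}[\varphi]] / \Var_\nu[\varphi]$. Applied to $P^{\text{cc}}$, this gives
\[
\SG(P^{\text{cc}}) \;=\; \inf_{\varphi:\Omega\to\RR} \frac{\EE\!\left[\Var_{\nu^{(f)}_\tau}[\varphi]\right]}{\Var_\nu[\varphi]}.
\]
So everything reduces to lower-bounding the numerator uniformly over $\varphi$.

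Second, I would condition on $\nu_\TTT$. Conditional on $\nu_\TTT$, the process $(\nu^{(f)}_s)_{s\ge 0}$ is by definition the $\mathcal{L}_f$-localization of the measure $\nu_\TTT$, so applying \eqref{eq:SG} a second time, this time to $P = P^{(\mathcal{L}_f,\tau)}(\nu_\TTT)$ with the (random) base measure $\nu_\TTT$, yields
\[
\EE\!\left[\Var_{\nu^{(f)}_\tau}[\varphi] \,\big|\, \nu_\TTT\right] \;\geq\; \SG(P(\nu_\TTT))\,\Var_{\nu_\TTT}[\varphi] \;\geq\; \delta\,\Var_{\nu_\TTT}[\varphi]
\]
almost surely, using Hypothesis 2. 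Taking expectations and then plugging in Hypothesis 1 gives
\[
\EE\!\left[\Var_{\nu^{(f)}_\tau}[\varphi]\right] \;\geq\; \delta\,\EE\!\left[\Var_{\nu_\TTT}[\varphi]\right] \;\geq\; \eps\delta\,\Var_\nu[\varphi],
\]
and taking the infimum over $\varphi$ produces the desired bound $\SG(P^{\text{cc}})\ge \eps\delta$.

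The only delicate point is making sure that the two-step construction yields a bona fide transition kernel to which \eqref{eq:SG} applies, i.e.\ that $\nu^{(f)}_\tau$ is almost-surely absolutely continuous with respect to $\nu$ (so that the Radon–Nikodym factor in \eqref{eq:MC} is well-defined). The assumption that $\mathcal{L}_f$ is a Doob scheme is what secures this: conditional on $\nu_\TTT$ the process $\nu^{(f)}_s$ is a Doob martingale of conditional expectations, hence absolutely continuous with respect to $\nu_\TTT$, and $\nu_\TTT$ is itself absolutely continuous with respect to $\nu$ as the value of a non-negative martingale at a finite time. This is really the only step that requires care; everything else is just the tower property and two applications of \eqref{eq:SG}.
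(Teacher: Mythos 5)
You have misread the conclusion of the theorem, and this changes the proof substantially. The object whose spectral gap is being bounded is not the transition kernel of the concatenated scheme at time $\TTT+\tau$ (your $P^{\text{cc}}$), but rather $P^{(0)} := P^{(\mathcal{L}_f,\tau)}(\nu)$, the Markov chain obtained by applying the scheme $\mathcal{L}_f$ \emph{directly} to the original measure $\nu$. This is what ``the Markov chain associated to $\nu$ via $\mathcal{L}_f$'' means, and it is how the result is actually used in all the applications: for instance, in the Ising-model and hardcore theorems the conclusion is a spectral-gap (or MLSI) bound for the plain Glauber dynamics $P^{\mathrm{GD}}(\nu)$, not for a chain that first anneals via $\mathcal{L}_i$.

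These two kernels genuinely differ. For a Doob scheme $\mathcal{L}_f$ with filtration $\mathcal{F}^{(f)}_\tau$, your $P^{\text{cc}}$ is the two-step kernel that conditions a sample $Y\sim\nu$ on the combined $\sigma$-algebra $\mathcal{F}_\TTT\vee\mathcal{F}^{(f)}_\tau$, whereas $P^{(0)}$ conditions only on $\mathcal{F}^{(f)}_\tau$. Your chain therefore takes strictly ``smaller'' steps, and the bound $\SG(P^{\text{cc}})\ge\eps\delta$ that your tower-property argument delivers is weaker than (and does not imply) the asserted bound $\SG(P^{(0)})\ge\eps\delta$. The real content of the theorem is precisely the passage from the Dirichlet form of $P^{(\TTT)}$, evaluated against the random measure $\nu^{(i)}_\TTT$, back up to the Dirichlet form of $P^{(0)}$ against $\nu$ --- and this requires the submartingale property of
$$
t \;\mapsto\; \int_{\Omega\times\Omega}\varphi(x)\varphi(y)\,dP^{(t)}_x(y)\,d\nu^{(i)}_t(x),
$$
proved in Proposition \ref{prop:submartingale} by combining a disintegration of the Doob kernel with the convexity of $(a,b)\mapsto a^2/b$. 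The Doob hypothesis enters there (it makes the disintegrating linear functionals independent of the current measure, so that the martingale property of $t\mapsto d\nu^{(i)}_t/d\nu$ can be combined with convexity), not merely as a device to ensure absolute continuity as in your final paragraph. Your proof bypasses Proposition \ref{prop:submartingale} entirely and uses only the tower property, which is exactly the signal that it is proving a different (easier) statement than the one intended.
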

In the above theorem, the localization scheme $\mathcal{L}_i$ is thought of as an annealing procedure which takes a (potentially not well-behaved) measure $\nu$, and outputs a measure $\nu_\TTT$ which is well-behaved in the sense of assumption 2. Assumption 1 tells us that we did not lose much of the variance throughout the annealing process. The proof relies on the fact that the Dirichlet form associated by a Doob localization is a supermartingale - this fact was crucially used in \cite{EKZ} in the context of stochastic localization.

Next, we have a completely analogous theorem for the entropy.
\begin{theorem} \label{thm:annealent} (Annealing - entropy).
Let $\mathcal{L}_i, \mathcal{L}_f$ be two localization schemes on $\Omega$ such that the latter is a Doob localization scheme. Let $\nu$ be a probability measure to which we assign a localization process $(\nu_t)_t$ from $\mathrm{concat}(\mathcal{L}_i,\mathcal{L}_f,\TTT)$ for a stopping time $\TTT$. Fix $\tau>0$ and let $P=P^{(\mathcal{L}_f, \tau)}(\nu_\TTT)$ be the (random) transition kernel associated to $\nu_\TTT$ via $(\mathcal{L}_f, T)$ using \eqref{eq:MC}. Suppose that,
\begin{enumerate}
\item
We have the entropy conservation inequality
$$
\frac{\EE \left [ \Ent_{\nu_\TTT}[f] \right ]}{\Ent_\nu[f]} \geq \eps, ~~ \forall f:\Omega \to \RR_+.
$$
\item
Almost-surely, one has
\begin{equation}\label{eq:efanneal}
\EF(P(\nu_\TTT)) \geq \delta.
\end{equation}
\end{enumerate}
Under these two assumptions, we have that the Markov chain associated to $\nu$ via $\mathcal{L}_f$ has an MLSI coefficient of at least $\eps \delta$.
\end{theorem}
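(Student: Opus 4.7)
The plan is to adapt the proof of the MLSI bound for a single localization scheme (the Proposition preceding \eqref{eq:EF}) to the two-stage concatenated setting. Let $\tilde{P}$ denote the Markov chain associated to $\nu$ via $\mathrm{concat}(\mathcal{L}_i, \mathcal{L}_f, \TTT)$ with terminal time $\TTT + \tau$; I will show $\Ent_\nu[\tilde{P} f] \leq (1 - \eps\delta) \Ent_\nu[f]$ for every $f: \Omega \to \RR_+$, which is equivalent to $\EF(\tilde{P}) \geq \eps\delta$.

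First I unpack $\tilde{P} f$ via the tower property. Conditioning on the first-stage filtration $\FF_\TTT$ and recognizing the definition of $P(\nu_\TTT) := P^{(\mathcal{L}_f, \tau)}(\nu_\TTT)$, I obtain the key identity
$$
\tilde{P} f(x) \;=\; \EE\!\left[\frac{\nu_\TTT(x)}{\nu(x)}\, (P(\nu_\TTT) f)(x)\right].
$$
Then I apply Jensen's inequality to $u \mapsto u \log u$ with respect to the probability measure $\frac{\nu_\TTT(x)}{\nu(x)}\, d\PP$ on the sample space (which integrates to $1$ by the martingale property (P2) together with optional stopping at $\TTT$). Integrating against $\nu$ and using the $\nu_\TTT$-stationarity of $P(\nu_\TTT)$ yields
$$
\int \tilde{P} f \log \tilde{P} f \, d\nu \;\leq\; \EE\!\left[\Ent_{\nu_\TTT}[P(\nu_\TTT) f]\right] + \EE\!\left[\nu_\TTT(f) \log \nu_\TTT(f)\right].
$$
This is precisely the Jensen step that drives the proof of \eqref{eq:EF} in the single-scheme setting, now adapted to the intermediate stopping time $\TTT$.

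To finish, I combine the two hypotheses. Hypothesis 2 gives $\Ent_{\nu_\TTT}[P(\nu_\TTT) f] \leq (1-\delta)\, \Ent_{\nu_\TTT}[f]$ almost surely. Applying optional stopping to the martingale $t \mapsto \nu_t(f \log f)$ gives $\EE[\nu_\TTT(f \log f)] = \nu(f \log f)$, which rearranges to
$$
\EE\!\left[\nu_\TTT(f) \log \nu_\TTT(f)\right] \;=\; \nu(f \log f) - \EE\!\left[\Ent_{\nu_\TTT}[f]\right].
$$
Plugging both into the display above telescopes to
$$
\int \tilde{P} f \log \tilde{P} f \, d\nu \;\leq\; \nu(f \log f) - \delta\, \EE\!\left[\Ent_{\nu_\TTT}[f]\right].
$$
Subtracting $\nu(\tilt{P} f) \log \nu(\tilde{P} f) = \nu(f) \log \nu(f)$ from both sides (by stationarity of $\tilde{P}$) and finally invoking Hypothesis 1, namely $\EE[\Ent_{\nu_\TTT}[f]] \geq \eps \Ent_\nu[f]$, yields $\Ent_\nu[\tilde{P} f] \leq (1 - \eps\delta) \Ent_\nu[f]$, as required.

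The main technical point is arranging the Jensen inequality so that both hypotheses plug in cleanly at their respective stages; the rest is essentially bookkeeping. The Doob hypothesis on $\mathcal{L}_f$ is used to ensure that the random kernel $P(\nu_\TTT)$ is well-defined almost surely (the second-stage measure remains absolutely continuous with respect to $\nu_\TTT$, as required by the remark following Definition \ref{def:mc1}).
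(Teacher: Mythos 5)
There is a genuine gap: you establish an MLSI for the \emph{wrong} Markov chain. Your $\tilde{P}$ is the kernel obtained by running the \emph{concatenated} scheme to time $\TTT+\tau$, i.e.\ $\tilde{P}_{x\to A}=\EE[\nu_{\TTT+\tau}(x)\nu_{\TTT+\tau}(A)/\nu(x)]$. The theorem's conclusion is about the chain associated to $\nu$ via $\mathcal{L}_f$ \emph{alone}, i.e.\ $P^{(0)}=P^{(\mathcal{L}_f,\tau)}(\nu)$, whose defining process $\nu'_\tau$ is obtained by running $\mathcal{L}_f$ directly on $\nu$ — no annealing involved. These are different transition kernels in general: for instance, in the Glauber-dynamics applications $P^{(0)}$ is genuine Glauber dynamics on $\nu$, whereas $\tilde{P}$ involves first tilting by the annealing (stochastic or negative-fields localization) and only then pinning coordinates. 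A bound on $\EF(\tilde{P})$ therefore says nothing about $\EF(P^{\mathrm{GD}}(\nu))$, which is what the theorem is needed for. Your Jensen and optional-stopping computations for $\tilde{P}$ are all correct, but they do not imply the stated conclusion.

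The place where the Doob hypothesis on $\mathcal{L}_f$ is really load-bearing in the paper's proof is Proposition \ref{prop:submartingale}: the quantity $t\mapsto\int_\Omega (P^{(t)}f)\log(P^{(t)}f)\,d\nu_t^{(i)}$, where $P^{(t)}=P^{(\mathcal{L}_f,\tau)}(\nu_t)$, is a \emph{submartingale} in $t$. This comparison, between the second-stage kernel started from $\nu$ (time $t=0$) and started from $\nu_\TTT$, is exactly the missing link in your argument. The Doob structure (a fixed precise filtration, independent of the starting measure) provides the coupling that makes this submartingale argument possible via convexity of $(x,y)\mapsto x\log(x/y)$ over the disintegration of $\nu_t^{(i)}$. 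Your proposed role for the Doob hypothesis — mere well-definedness of $P(\nu_\TTT)$ via absolute continuity — is a secondary point and does not explain why a general $\mathcal{L}_f$ would fail. Once you have Proposition \ref{prop:submartingale}, the rest of the paper's proof is a short chain: the submartingale inequality (via optional stopping) gives $\Ent_\nu[f]-\Ent_\nu[P^{(0)}f]\geq\EE[\Ent_{\nu_\TTT}[f]-\Ent_{\nu_\TTT}[P^{(\TTT)}f]]$, then hypotheses 2 and 1 are plugged in exactly as you do at the corresponding point. So your downstream bookkeeping is right; the upstream identification of the chain and the key submartingality ingredient are what need to change.
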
	

Let $P^{(t)} = P^{(\mathcal{L}_f, \tau)}(\nu_t)$. Our main step towards proving the two theorems is the following.
\begin{proposition} \label{prop:submartingale}
For every $\varphi:\Omega \to \RR$ and $f:\Omega \to \RR_+$, the processes
$$
t \to \int_{\Omega \times \Omega} \varphi(x) \varphi(y) d P^{(t)}_x(y) d \nu^{(i)}_t(x)
$$
and
$$
t \to \int_{\Omega} P^{(t)} f(x) \log P^{(t)} f(x) d \nu^{(i)}_t(x)
$$
are submartingales.
\end{proposition}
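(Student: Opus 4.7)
The plan is to exploit the Doob structure of $\mathcal{L}_f$ to reduce both statements to convexity arguments about atomic martingales. Since $\mathcal{L}_f$ is Doob, the kernel $P^{(t)}$ is an average $P^{(t)} = \EE_{\mathcal{F} \sim m}[P^{(t)}_{\mathcal{F}}]$, where for each realization $\mathcal{F}$ of the Doob filtration at time $\tau$ the \emph{conditional} chain $P^{(t)}_{\mathcal{F}} g := \EE_{\nu^{(i)}_t}[g \mid \mathcal{F}]$ is simply the $\nu^{(i)}_t$-projection onto $\mathcal{F}$. For each atom $A_\alpha$ of $\mathcal{F}$, introduce the real-valued processes
\[
X^{(\alpha)}_t := \int_{A_\alpha} h \, d\nu^{(i)}_t, \qquad Y^{(\alpha)}_t := \nu^{(i)}_t(A_\alpha),
\]
with $h = \varphi$ in the first case and $h = f$ in the second. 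Property (P2) of $\mathcal{L}_i$ says that $t \mapsto \nu^{(i)}_t(A)$ is a martingale for every measurable $A$, which extends by linearity to $t \mapsto \int h\, d\nu^{(i)}_t$ for any bounded $h$; hence $X^{(\alpha)}_t$ and $Y^{(\alpha)}_t$ are martingales in $t$ for each fixed $\mathcal{F}$ and $\alpha$.

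The conditional chain has the explicit form $P^{(t)}_{\mathcal{F}} g(x) = X^{(\alpha(x))}_t / Y^{(\alpha(x))}_t$ (where $\alpha(x)$ is the atom of $\mathcal{F}$ containing $x$), which yields the identities
\[
\int \varphi \cdot P^{(t)}_{\mathcal{F}} \varphi \, d\nu^{(i)}_t = \sum_\alpha \frac{(X^{(\alpha)}_t)^2}{Y^{(\alpha)}_t}, \qquad \int (P^{(t)}_{\mathcal{F}} f) \log (P^{(t)}_{\mathcal{F}} f)\, d\nu^{(i)}_t = \sum_\alpha X^{(\alpha)}_t \log\!\frac{X^{(\alpha)}_t}{Y^{(\alpha)}_t}.
\]
The key observation is that $F(x,y) := x^2/y$ and $G(x,y) := x \log(x/y)$ are both convex on $\{y > 0\}$: direct computation shows that $\mathrm{Hess}\, F = (2/y) (1, -x/y)^\top (1, -x/y)$ is rank-one positive semi-definite, while $\mathrm{Hess}\, G$ has trace $1/x + x/y^2 > 0$ and determinant $0$, hence is also PSD. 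Conditional Jensen applied to the vector martingale $(X^{(\alpha)}_t, Y^{(\alpha)}_t)$ therefore makes each summand a submartingale in $t$; summing over $\alpha$ and then averaging over the independent $\mathcal{F} \sim m$ preserves the submartingale property.

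For the variance case this completes the proof, because
\[
\int \varphi \cdot P^{(t)} \varphi \, d\nu^{(i)}_t = \EE_{\mathcal{F}}\Bigl[\int \varphi \cdot P^{(t)}_{\mathcal{F}} \varphi \, d\nu^{(i)}_t\Bigr]
\]
is an exact equality by linearity. \textbf{The main obstacle is the entropy case}: strict convexity of $\Phi(u) := u \log u$ only delivers the Jensen inequality $\int \Phi(P^{(t)} f) \, d\nu^{(i)}_t \leq \EE_{\mathcal{F}}\bigl[\int \Phi(P^{(t)}_{\mathcal{F}} f) \, d\nu^{(i)}_t\bigr]$, so the atomic-sum submartingale sits only above the target $N_t$ rather than equalling it. To close that gap I would recast the claim as the convexity of the functional $\mu \mapsto \int \Phi(P^{(\mathcal{L}_f, \tau)}(\mu) f)\, d\mu$ on the space of probability measures, so that Jensen's inequality applied to the martingale of measures $(\nu^{(i)}_t)_t$ directly yields the submartingale property for $N_t$. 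That functional convexity should follow from the convexity of $G$ together with the fact that the atomic quantities $X^{(\alpha, \mathcal{F})}(\mu)$ and $Y^{(\alpha, \mathcal{F})}(\mu)$ are linear functionals of $\mu$ for every pair $(\alpha, \mathcal{F})$ in the Doob mixture; making this rigorous, while tracking how the atomic identities recombine after the $\mathcal{F}$-average, is where the real work lies.
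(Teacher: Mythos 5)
Your argument for the variance process is correct and tracks the paper's proof closely: both reduce the quantity, atom by atom of the Doob filtration, to integrals of the form $\int F_x(h)^2/G_x(h)\,d\rho$ with $F_x,G_x$ linear in $h=d\nu^{(i)}_t/d\rho$, observe that linear functionals of the martingale $h_t$ are themselves martingales, and invoke the joint convexity of $(x,y)\mapsto x^2/y$ on $\{y>0\}$. For this case the identity $\int\varphi\,P^{(t)}\varphi\,d\nu^{(i)}_t=\EE_{\mathcal F}\bigl[\sum_\alpha (X^{(\alpha)}_t)^2/Y^{(\alpha)}_t\bigr]$ is exact, so no Jensen gap arises and the proof is complete.

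The Jensen gap you flag in the entropy case is genuine, and you should not assume the paper has already resolved it cleanly. The paper's appendix proceeds exactly as you do for a \emph{fixed} $\sigma$-algebra $\Sigma$: it shows $t\mapsto\int F_x(h_t)\log\bigl(F_x(h_t)/G_x(h_t)\bigr)\,d\rho(x)$ is a submartingale via the convexity of $(x,y)\mapsto x\log(x/y)$, and then averages over $\Sigma$. This establishes that $\tilde N_t:=\EE_\Sigma\bigl[\int\Phi(\EE_{\nu^{(i)}_t}[f\mid\Sigma])\,d\nu^{(i)}_t\bigr]$ is a submartingale. But the paper's identification of $\int P^{(t)}f\log P^{(t)}f\,d\nu^{(i)}_t$ with (a constant shift of) $\tilde N_t$ only holds when the Doob filtration is deterministic; when $\Sigma$ is random — as for the coordinate-by-coordinate scheme, which is the main case of interest — one has $P^{(t)}f(x)=\EE_\Sigma[\EE_{\nu^{(i)}_t}[f\mid\Sigma](x)]$ and Jensen gives only $N_t\le\tilde N_t$, which is the wrong direction to chain through a supermartingale/optional-stopping argument. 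So you have in fact identified a gap that the paper's own write-up glosses over.

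Your proposed repair — proving that $\mu\mapsto\int\Phi\bigl(P^{(\mathcal L_f,\tau)}(\mu)f\bigr)\,d\mu$ is convex on the space of probability measures, and then applying Jensen to the martingale $(\nu^{(i)}_t)_t$ — is the natural way to close the gap and would simultaneously repair the paper's argument. However, it does \emph{not} ``follow from the convexity of $G$'' termwise: for fixed $\mathcal F$ the atomic collapse $\sum_{x\in A_\alpha}\mu_x\Phi(a_\alpha/b_\alpha)=a_\alpha\log(a_\alpha/b_\alpha)$ works because the argument of $\Phi$ is constant on each atom, but once you average over $\mathcal F$ the argument of $\Phi$ at a point $x$ becomes $\EE_{\mathcal F}\bigl[\tfrac{a_{\alpha(x,\mathcal F)}(\mu)}{b_{\alpha(x,\mathcal F)}(\mu)}\bigr]$, a genuine mixture of ratios that no longer has a single linear numerator and denominator. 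Establishing the convexity of $\Psi(\mu)=\sum_x\mu_x\,\Phi\bigl(\EE_{\mathcal F}[a_{x,\mathcal F}(\mu)/b_{x,\mathcal F}(\mu)]\bigr)$ therefore requires an argument beyond the $2\times 2$ Hessian computation for $G$; small examples suggest it is true, but you have not supplied the proof, and as written the entropy half of the proposition remains open.
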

The proof of Proposition~\ref{prop:submartingale} is deferred to Appendix~\ref{appendix:proof_of_prop48}. Let us see how theorems \ref{thm:annealvar} and \ref{thm:annealent} follow from this proposition. 
\begin{proof}[Proof of Theorem \ref{thm:annealvar}]
We write
\begin{align*} 
&\quad \frac{1}{2} \int_\Omega \int_\Omega \left (\varphi(x) - \varphi(y)\right )^2 d P^{(0)}_x(y) d \nu(x) \\
 & = \int_\Omega \varphi(x)^2 d \nu(x) - \int_{\Omega} \int_\Omega \varphi(x) \varphi(y) d P^{(0)}_x(y) d \nu(x) \\
& \stackrel{(i)}{\geq} \EE \left [ \int_\Omega \varphi(x)^2 d \nu^{(i)}_\TTT(x) - \int_{\Omega} \int_\Omega \varphi(x) \varphi(y) d P^{(\TTT)}_x(y) d \nu(x) \right ] \\
& = \EE \left [ \frac{1}{2} \int_\Omega \int_\Omega \left (\varphi(x) - \varphi(y)\right )^2 d P^{(\TTT)}_x(y) d \nu^{(i)}_\TTT(x) \right ] \\
& \stackrel{(ii)}{\geq} \delta \EE \left [ \Var_{\nu^{(i)}_\TTT}[\varphi] \right ] \\
& \stackrel{(iii)}{\geq} \delta \eps \Var_{\nu}[\varphi],
\end{align*}
where (i) uses Proposition \ref{prop:submartingale} and the optional stopping theorem, (ii) uses the second assumption of the theorem and (iii) uses the first assumption of the theorem.
\end{proof}

\begin{proof}[Proof of Theorem \ref{thm:annealent}]
We have,
\begin{align*}
\Ent_\nu[f] - \Ent_\nu[P^{(0)}f] & = \int_\Omega f(x) \log f(x) d \nu(x) - \int_\Omega P^{(0)}f(x) \log P^{(0)}f(x) d \nu(x) \\
& \stackrel{(i)}{\geq} \EE \left [\int_\Omega f(x) \log f(x) d \nu^{(i)}_\TTT(x) - \int_\Omega P^{(\TTT)}f(x) \log P^{(\TTT)}f(x) d \nu^{(i)}_\TTT(x) \right ] \\
& = \EE \left [\Ent_{\nu^{(i)}_\TTT}[f] - \Ent_{\nu^{(i)}_\TTT}[P^{(\TTT)}f] \right ] \\
& \stackrel{(ii)}{\geq} \delta \EE \left [ \Ent_{\nu^{(i)}_\TTT}[f] \right ] \\
& \stackrel{(iii)}{\geq} \delta \eps \Ent_{\nu}[f],
\end{align*}
where (i) uses Proposition \ref{prop:submartingale} and the optional stopping theorem, (ii) uses the second assumption of the theorem and (iii) uses the first assumption of the theorem.
\end{proof}

\section{Applications}
In this section we provide several applications to demonstrate ways to obtain mixing time results via our framework.

\subsection{Glauber dynamics for Ising models}
As a first example, we will prove a general theorem that provides a sufficient condition for MLSI, which as a special case recovers two results: the first one was shown in \cite{EKZ,EI1} and establishes mixing time for Ising models whose interaction matrix has an operator norm bounded by a constant and in particular is relevant to the \textbf{Sherrington-Kirkpatrick} model in high-enough temperatures. The second one concerns Ising models in the uniqueness regime and improves upon the main theorem of \cite{CFYZ21-treeuniqueness}.

Both applications will rely on the following result, which is obtained through a simple combination of several ingredients of our framework.
\begin{theorem} \label{thm:SK}
Let $\nu$ be a measure on $\{-1,1\}^n$ and let $J$ be a positive-definite $n \times n$ matrix. For any $0 \leq \lambda \leq 1$ and every $v \in \RR^n$, consider the probability measure $\mu_{\lambda, v}$ defined by
\begin{equation}\label{eq:defmulambda}
\frac{d\mu_{\lambda, v}}{d\nu}(x) \propto \exp \left ( - \lambda \langle x, J x \rangle + \langle v, x \rangle  \right ).	
\end{equation}
Suppose that, for some $\alpha:[0,1] \to \RR_+$, 
\begin{equation} \label{eq:SKasump1}
\|\COV(\mu_{\lambda,v})\|_{\OP} \leq \alpha(\lambda), ~~ \forall \lambda \in [0,1], ~ v \in \RR^n,
\end{equation}
and that for some $\eps > 0$,
\begin{equation} \label{eq:SKasump2}
\EF(P^{\mathrm{GD}}(\mu_{1, v})) \geq \eps, ~~ \forall v \in \RR^n,
\end{equation}
where $P^{\mathrm{GD}}(\cdot)$ denotes the transition kernel of the Glauber dynamics. Then, 
$$
\EF(P^{\mathrm{GD}}(\nu)) \geq \eps \exp \left (- 2 \|J\|_{\OP} \int_0^1 \alpha(\lambda) d \lambda \right ). 
$$
\end{theorem}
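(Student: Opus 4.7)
The plan is to prove this by annealing, with stochastic localization as the first scheme and the coordinate-by-coordinate scheme (which yields Glauber dynamics and is Doob) as the second scheme. Intuitively, we want to run stochastic localization starting from $\nu$ in such a way that at some time $\TTT$ the resulting random measure $\nu_\TTT$ has exactly the form $\mu_{1,v}$ for a random external field $v$, so that the Glauber MLSI hypothesis \eqref{eq:SKasump2} can be invoked directly on $\nu_\TTT$. Theorem \ref{thm:annealent} will then combine this with an approximate entropy conservation bound for the annealing phase to yield the advertised MLSI on $\nu$.

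Concretely, I would take the stochastic-localization process driven by the constant matrix $C_t \equiv \sqrt{2J}$ (so $\Sigma_t = 2tJ$), applied to $\nu$ viewed as a measure on $\RR^n$ supported on $\{-1,1\}^n$, and stop at $\TTT = 1$. By \eqref{eq:SLform} we have almost surely
\[
\frac{d\nu_t}{d\nu}(x) \;\propto\; \exp\!\Bigl(-t\langle x, Jx\rangle + \langle y_t, x\rangle\Bigr), \qquad y_t = \int_0^t \sqrt{2J}\,dB_s + \int_0^t 2J\,\b(\nu_s)\,ds,
\]
so that $\nu_t = \mu_{t, y_t}$, and in particular $\nu_1 = \mu_{1, y_1}$. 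Positive-definiteness of $J$ guarantees $\int_0^\infty |C_t \theta|^2\,dt = \infty$ for every $\theta \neq 0$, so this is a bona fide localization scheme.

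For the approximate conservation of entropy up to time $1$, I would observe that for every deterministic $v$,
\[
\tilt_v \nu_t \;=\; \mu_{t,\,y_t + v},
\]
so by assumption \eqref{eq:SKasump1}, $\COV(\tilt_v \nu_t) \preceq \alpha(t)\,\Id$ almost surely. Lemma \ref{lem:SLentstab} with $A = \alpha(t)\,\Id$ and $C = C_t = \sqrt{2J}$ then gives that $\nu_t$ is $\alpha_t$-entropically stable with respect to $\psi(x,y) = \tfrac12 |C_t(x-y)|^2$, where $\alpha_t = \|C_t \alpha(t)\,\Id\, C_t\|_{\OP} = 2\|J\|_{\OP}\,\alpha(t)$. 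Proposition \ref{prop:SLentdecay} then yields
\[
\EE\bigl[\Ent_{\nu_1}[f]\bigr] \;\geq\; \exp\!\Bigl(-2\|J\|_{\OP}\int_0^1 \alpha(t)\,dt\Bigr)\,\Ent_\nu[f]
\]
for every $f:\{-1,1\}^n \to \RR_+$.

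Finally, I would apply Theorem \ref{thm:annealent} with $\mathcal{L}_i$ the above stochastic localization, $\mathcal{L}_f$ the coordinate-by-coordinate scheme, $\TTT = 1$ and $\tau = n-1$ so that $P^{(\mathcal{L}_f,\tau)}(\nu_\TTT) = P^{\mathrm{GD}}(\nu_1) = P^{\mathrm{GD}}(\mu_{1, y_1})$; its MLSI is almost surely at least $\eps$ by \eqref{eq:SKasump2}, and the entropy conservation factor is the exponential displayed above. This immediately gives the stated bound. I do not expect a serious obstacle: the derivation is essentially a textbook chaining of the ingredients, and the only point that deserves care is verifying that the stochastic-localization apparatus (the SDE \eqref{eq:SL}, the density representation \eqref{eq:SLform}, and Proposition \ref{prop:SLentdecay}) applies verbatim when the base measure $\nu$ lives on the discrete cube rather than all of $\RR^n$ — which it does, since both the SDE and the density formula can be interpreted pointwise on $\{-1,1\}^n$ and the tilted measures $\mu_{\lambda,v}$ inherit the discrete support.
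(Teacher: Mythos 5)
Your proof is correct and follows essentially the same route as the paper's: stochastic localization driven by $C_t \equiv \sqrt{2J}$ stopped at time $1$ as the annealing scheme, the observation from \eqref{eq:SLform} that $\nu_t = \mu_{t,y_t}$, Lemma \ref{lem:SLentstab} together with Proposition \ref{prop:SLentdecay} for approximate entropy conservation, and Theorem \ref{thm:annealent} to conclude. Your explicit remark that $\tilt_v\nu_t = \mu_{t,\,y_t+v}$, so that \eqref{eq:SKasump1} bounds $\COV(\tilt_v\nu_t)$ for every $v$ (exactly the hypothesis of Lemma \ref{lem:SLentstab}), is a small but genuine clarification over the paper's terser wording.
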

This theorem is particularly useful for measures with quadratic potentials, or \emph{Ising models}. For an $n \times n$ matrix $J$ and a vector $v \in \RR^n$, consider the probability measure defined by
\begin{equation} \label{eq:ising}
\nu_{J,v}(\{x\}) \propto \exp \left ( \langle x, J x \rangle + \langle x, v \rangle \right ),
\end{equation}
referred to as the Ising measure with interaction matrix $J$ and external field $v$. 

If we apply the above theorem to the measure $\nu = \nu_{J,v}$ (and with $J$ taken to be the interaction matrix), we have that the measure $\mu_{1,v}$ defined in \eqref{eq:defmulambda} is just a product measure, so that condition \eqref{eq:SKasump2} is trivially satisfied with $\eps = \frac{1}{n}$. In order to successfully apply the theorem, it therefore remains to verify condition \eqref{eq:SKasump1}. We will show how to do so in two different settings.

\begin{proof}[Proof of Theorem \ref{thm:SK}]
	Consider the localization process $(\nu_t)_t$ corresponding to $\nu$ obtained via the stochastic localization \eqref{eq:SL}, with the choice $C_t = (2J)^{1/2}$, up to time $t=1$. Consider the transition kernel $P^{\mathrm{GD}}(\nu_1)$, hence, the random transition kernel associated to the measure $\nu_1$ via the Glauber dynamics. We apply Theorem \ref{thm:annealent} in order to obtain a lower bound on $\EF(P^{\mathrm{GD}}(\nu))$. 
	
	According to equation \eqref{eq:SLform}, $\nu_t$ has the form
	$$
	\nu_t(x) \propto \exp \left ( - t  \langle x, J x \rangle + \langle y_t, x \rangle \right ) \nu(x)
	$$
	for some stochastic process $y_t$, meaning that $\nu_t = \mu_{t, y_t}$. Since in particular, $\nu_1 = \mu_{1,y_1}$, the assumption \eqref{eq:SKasump2} then amounts to the fact that $\EF(P^{\mathrm{GD}}(\nu_1)) \geq \eps$, which fulfills the second condition of Theorem \ref{thm:annealent}.
	
	Next, the assumption \eqref{eq:SKasump1} tells us that
	$$
	\|\COV(\nu_t)\|_{\OP} \leq \alpha(t), \forall t \in [0,1].
	$$
	By applying Lemma \ref{lem:SLentstab} with the choice $A = \alpha(t) \Id$ and $C = (2J)^{1/2}$, we conclude that $\nu_t$ is $\alpha$-entropically stable with respect to $\psi(x,y) = \frac{1}{2}\abss{(2J)^{1/2} (x-y)}^2$ with $\alpha = 2 \|J\|_\OP \alpha(t)$. Invoking Proposition \ref{prop:SLentdecay} gives the approximate-entropy-conservation bound
	$$
	\EE \left [ \Ent_{\nu_1}[f] \right ] \geq e^{- 2\|J\|_\OP \int_0^1 \alpha(t) dt} \Ent_\nu[f].
	$$
	Therefore, condition 1 of Theorem \ref{thm:annealent} is satisfied with $\alpha = e^{- 2\|J\|_\OP \int_0^1 \alpha(t) dt}$, and we conclude that $\EF(P) \geq \eps e^{- 2\|J\|_\OP \int_0^1 \alpha(t) dt}$. This completes the proof.
\end{proof}

\subsubsection{Mixing under a spectral condition on the interaction matrix}
It turns out that if the interaction matrix has operator norm bounded by a certain constant, then the covariance matrix of the model is also bounded. The following lemma is based on the decomposition obtained by Bauerschmidt and Bodineau \cite{BB-SK}. For completeness, we include a proof in Appendix B.
\begin{lemma} \label{lem:SKcov}
Let $J$ be a positive-definite matrix with $\|J\|_{\OP} < \frac{1}{2}$ and let $v \in \RR^n$. One has
$$
\|\COV(\nu_{J,v})\|_{\OP} \leq \frac{1}{1-2 \|J\|_{OP}}.
$$	
\end{lemma}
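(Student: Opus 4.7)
The plan is to realize $\nu_{J,v}$ as a mixture of product measures via the Hubbard--Stratonovich identity, then apply the total covariance law together with a Brascamp--Lieb variance inequality on the (tilted) Gaussian mixing measure. The Gaussian moment-generating identity $\int e^{\langle G, x\rangle}\, d\gamma_{2J}(G) = e^{\langle x, Jx\rangle}$, where $\gamma_{2J}$ is the centered Gaussian on $\RR^n$ with covariance $2J$, lets us write
$$
\nu_{J,v}(x) \;=\; \int \pi_{v+G}(x)\, d\tilde\mu(G),
$$
where $\pi_u$ is the product measure on $\{-1,1\}^n$ with mean vector $m(u)=(\tanh u_i)_i$, and $\tilde\mu$ is the probability measure on $\RR^n$ whose Lebesgue density is proportional to $\exp\bigl(-\tfrac{1}{2}\langle G,(2J)^{-1}G\rangle + \sum_i\log\cosh(v_i+G_i)\bigr)$. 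The total covariance decomposition then yields
$$
\COV(\nu_{J,v}) \;=\; \mathbb{E}_{\tilde\mu}\bigl[\COV(\pi_{v+G})\bigr] \;+\; \COV_{\tilde\mu}\bigl(m(v+G)\bigr),
$$
and the first term is bounded above by $\Id$ in the PSD order because $\COV(\pi_u)=\diag(1-\tanh^2 u_i)\preceq \Id$.

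The remaining work is bounding the second term. The potential $V(G)=\tfrac12\langle G,(2J)^{-1}G\rangle-\sum_i\log\cosh(v_i+G_i)$ of $\tilde\mu$ has Hessian
$$
\nabla^2 V(G) \;=\; (2J)^{-1} - \diag\bigl(1-\tanh^2(v_i+G_i)\bigr) \;\succeq\; (2J)^{-1}-\Id,
$$
and the right-hand side is strictly positive definite precisely under the hypothesis $\|J\|_{\OP}<1/2$, with smallest eigenvalue equal to $\tfrac{1}{2\|J\|_{\OP}}-1$. The Brascamp--Lieb variance inequality then gives, for every smooth $f:\RR^n\to\RR$,
$$
\Var_{\tilde\mu}(f) \;\leq\; \mathbb{E}_{\tilde\mu}\!\left[\bigl\langle \nabla f,\,[(2J)^{-1}-\Id]^{-1}\nabla f\bigr\rangle\right].
$$
Applying this to the scalar functions $f_u(G)=\langle u,m(v+G)\rangle$ for arbitrary unit vectors $u$, and using that $D_G m = \diag(1-\tanh^2(v_i+G_i))$ has operator norm at most $1$ almost surely, gives
$$
\|\COV_{\tilde\mu}(m(v+G))\|_{\OP} \;\leq\; \bigl\|[(2J)^{-1}-\Id]^{-1}\bigr\|_{\OP} \;=\; \frac{2\|J\|_{\OP}}{1-2\|J\|_{\OP}}.
$$

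Adding the two contributions produces $\|\COV(\nu_{J,v})\|_{\OP}\leq 1 + \tfrac{2\|J\|_{\OP}}{1-2\|J\|_{\OP}} = \tfrac{1}{1-2\|J\|_{\OP}}$, which is the claimed inequality. The delicate step is the strict log-concavity of $\tilde\mu$: one needs $(2J)^{-1}$ to dominate the Hessian of $-\log\cosh$, which is bounded above by $\Id$, and it is exactly this gap that the assumption $\|J\|_{\OP}<1/2$ provides; the singularity of the stated bound as $\|J\|_{\OP}\uparrow 1/2$ corresponds precisely to the loss of log-concavity of the mixing measure at that threshold.
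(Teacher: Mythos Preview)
Your proof is correct and follows essentially the same approach as the paper: both use the Hubbard--Stratonovich (Bauerschmidt--Bodineau) decomposition to write $\nu_{J,v}$ as a mixture of product measures over a log-concave mixing measure on $\RR^n$, then combine the law of total covariance with a Brascamp--Lieb bound on that mixing measure. The only cosmetic differences are that the paper carries an auxiliary scale parameter $\alpha$ (which cancels in the end) and bounds $\COV(\tanh(v+\alpha X))$ via the $1$-Lipschitzness of $\tanh$ followed by the scalar Brascamp--Lieb bound $\|\COV(X)\|_{\OP}\le 1/\lambda_{\min}(\nabla^2 U)$, whereas you apply the matrix form of Brascamp--Lieb directly to $G\mapsto\langle u,m(v+G)\rangle$; your route is slightly more direct but the ingredients and the arithmetic are identical.
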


Combining the lemma with Theorem \ref{thm:SK}, we obtain
\begin{corollary}
Let $J$ be a positive-definite matrix with $\|J\|_{\OP} < \frac{1}{2}$ and let $v \in \RR^n$. The Glauber dynamics for $\nu = \nu_{J,v}$ mixes in time $O(n \log n)$.
\end{corollary}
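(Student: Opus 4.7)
The plan is to apply Theorem \ref{thm:SK} directly to $\nu = \nu_{J,v}$ with the natural choice of data and then invoke Fact \ref{fact:mixing}. First I would observe that for every $\lambda \in [0,1]$ and $v' \in \RR^n$, the tilted measure defined in \eqref{eq:defmulambda} takes the explicit form
$$
\mu_{\lambda, v'}(x) \propto \exp\left ( (1-\lambda) \langle x, J x \rangle + \langle v+v', x \rangle \right ) = \nu_{(1-\lambda)J,\, v+v'}(x),
$$
so each $\mu_{\lambda, v'}$ is again an Ising measure whose interaction matrix is the scaled matrix $(1-\lambda)J$, which also satisfies $\|(1-\lambda)J\|_{\OP} \leq \|J\|_{\OP} < 1/2$.

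Next, I would verify the two hypotheses of Theorem \ref{thm:SK}. Condition \eqref{eq:SKasump1} follows immediately from Lemma \ref{lem:SKcov} applied to $\nu_{(1-\lambda)J,\, v+v'}$, with the explicit choice
$$
\alpha(\lambda) := \frac{1}{1 - 2(1-\lambda)\|J\|_{\OP}}.
$$
For condition \eqref{eq:SKasump2}, I would note that at $\lambda=1$ the potential is purely linear, so $\mu_{1,v'}$ is a product measure on $\{-1,1\}^n$. For a product measure the Glauber dynamics satisfies the tensorization of entropy inequality $\Ent_{\mu_{1,v'}}[P^{\mathrm{GD}} f] \leq (1-1/n)\, \Ent_{\mu_{1,v'}}[f]$, giving $\EF(P^{\mathrm{GD}}(\mu_{1,v'})) \geq 1/n$; this is the one ingredient used ``externally,'' but it is classical.

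With both hypotheses in hand, Theorem \ref{thm:SK} yields
$$
\EF(P^{\mathrm{GD}}(\nu)) \geq \frac{1}{n} \exp\left ( -2\|J\|_{\OP} \int_0^1 \frac{d\lambda}{1 - 2(1-\lambda)\|J\|_{\OP}} \right ).
$$
I would then compute the integral by the substitution $s=1-\lambda$, which gives $-\log(1-2\|J\|_{\OP})/(2\|J\|_{\OP})$, so that the exponential telescopes to $1-2\|J\|_{\OP}$ and we obtain
$$
\EF(P^{\mathrm{GD}}(\nu)) \geq \frac{1-2\|J\|_{\OP}}{n}.
$$

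Finally, I would conclude via Fact \ref{fact:mixing}. Since $\nu$ is a measure on $\{-1,1\}^n$ whose unnormalized density $\exp(\langle x, Jx \rangle + \langle v, x \rangle)$ is bounded between $e^{-C n}$ and $e^{C n}$ for some $C$ depending on $\|J\|_{\OP}$ and $\|v\|_\infty$, the minimum mass $\eta$ satisfies $\log(1/\eta) = O(n)$, hence $\log\log(1/\eta) = O(\log n)$. The MLSI bound then gives
$$
t_{\mathrm{mix}}(P^{\mathrm{GD}}(\nu), \eps) \leq C \cdot \frac{n}{1-2\|J\|_{\OP}} \left ( \log\log(1/\eta) + \log(1/\eps) \right ) = O(n \log n).
$$
The only non-routine step is the MLSI for a product measure, which is standard; the rest is a direct substitution into the machinery already developed. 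There is no real obstacle — the corollary is designed to be the clean consequence of combining Theorem \ref{thm:SK} with the covariance bound of Lemma \ref{lem:SKcov}.
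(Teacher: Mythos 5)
Your proposal follows the paper's proof essentially verbatim: apply Lemma \ref{lem:SKcov} to get $\alpha(\lambda) = \left(1 - 2(1-\lambda)\|J\|_{\OP}\right)^{-1}$, observe that $\mu_{1,v}$ is a product measure so condition \eqref{eq:SKasump2} holds with $\eps = 1/n$, integrate to get $\EF(P^{\mathrm{GD}}(\nu)) \geq \frac{1}{n}(1-2\|J\|_{\OP})$, and conclude via Fact \ref{fact:mixing}. The only additions are brief justifications (tensorization of entropy for the product-measure MLSI, the $\log(1/\eta) = O(n)$ estimate) that the paper leaves implicit; these are correct and unproblematic.
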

\begin{proof}
An application of Lemma \ref{lem:SKcov} gives
$$
\|\COV(\mu_{\lambda, v})\|_{\OP} \preceq \frac{1}{1-2 (1-\lambda) \|J\|_{\OP} },
$$
so the assumption \eqref{eq:SKasump1} holds with $\alpha(\lambda)$ equal to the right-hand side. Assumption \eqref{eq:SKasump2} holds with $\delta = 1/n$ since $\mu_{1,v}$ is a product measure. The conclusion of the Theorem \ref{thm:SK} tells us that
\begin{align*}
\EF(P^{\mathrm{GD}}(\nu_{J,v})) & \geq \frac{1}{n} \exp \left (- \int_0^1 \frac{2}{1/\|J\|_\OP - 2(1-\lambda)} d \lambda  \right ) \\
& = \frac{1}{n} (1-2 \|J\|_\OP). 
\end{align*}
The proof is complete via fact \ref{fact:mixing}.
\end{proof}

\begin{remark}
The above corollary implies an optimal mixing bound for Glauber dynamics on the Sherrington-Kirkpatrick model up to inverse temperature $\beta = 1/4$ (see~\cite{EKZ} for a detailed discussion on the Sherrington-Kirkpatrick model). Very recently, El Alaoui, Montanari and Sellke \cite{SK-SL} gave a polynomial time sampling algorithm which is valid all the way to the critical temperature $\beta = 1/2$. The idea behind their algorithm is to simulate the stochastic localization process. To do so, one needs to estimate the center of mass of tilted measures which, in turn, is done using approximate message passing.
\end{remark}

\subsubsection{Near-critical ferromagnetic Ising models}
Next, we show how to recover a variant of the result which appears in the very recent paper of Bauerschmidt and Dagallier \cite{bauerschmidt2022log}.
Consider a measure of the form \eqref{eq:ising} which is \emph{ferromagnetic} in the sense that $J_{i,j} \geq 0$ for all $i \neq j$. For every $\lambda \in (0,1)$ define$$\chi(\lambda) := \|\COV(\nu_{\lambda J, 0})\|_\OP$$In the ferromagnetic case, following theorem, due to Ding, Song and Sun, is very helpful towards verifying assumption \eqref{eq:SKasump1} in Theorem \ref{thm:SK}.
\begin{theorem}(\cite{Ding2021ANC})
For any interaction matrix $J$ which is ferromagnetic (hence $J_{i,j} \geq 0$ for all $i \neq j$), for all $i \neq j$ and for all $v \in \RR^n$, we have
$$
\COV(\nu_{J,v})_{i,j} \leq \COV(\nu_{J,0})_{i,j}.
$$
\end{theorem}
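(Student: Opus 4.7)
My plan is to deduce the inequality from the Griffiths--Hurst--Sherman (GHS) inequality, supplemented by a random-current / ghost-vertex argument to handle external fields of mixed sign.

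First I would rewrite the statement in terms of the log-partition function $F(v) := \log Z_v$, where $Z_v := \sum_{x \in \{-1,1\}^n} \exp(\langle x, J x \rangle + \langle v, x \rangle)$. Then $\COV(\nu_{J,v})_{i,j} = \partial_i \partial_j F(v)$ and the theorem asserts that this Hessian entry attains its maximum at $v = 0$. A further differentiation yields $\partial_k \COV(\nu_{J,v})_{i,j} = \langle x_i; x_j; x_k\rangle_v$, the third cumulant of $\nu_{J,v}$. The GHS inequality, which is the core classical input, asserts that for ferromagnetic $J$ and $v \geq 0$ componentwise, $\langle x_i; x_j; x_k\rangle_v \leq 0$ for all $i,j,k$. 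Integrating along the ray $s \mapsto s v$ for $s \in [0,1]$ then gives
\[
\COV(\nu_{J,v})_{i,j} - \COV(\nu_{J,0})_{i,j} = \int_0^1 \sum_k v_k \, \langle x_i; x_j; x_k \rangle_{sv} \, ds \leq 0
\]
whenever $v \geq 0$. The spin-flip symmetry $x \mapsto -x$ sends $v \mapsto -v$ but preserves even observables like $x_i x_j$, so $\COV(\nu_{J,v})_{i,j} = \COV(\nu_{J,-v})_{i,j}$, and the same bound holds for $v \leq 0$.

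The main obstacle is the mixed-sign case, where GHS does not yield a sign-definite integrand along the straight path from $0$ to $v$. I would handle it via the random-current representation with a ghost vertex $g$: in this representation, Aizenman's switching lemma gives a combinatorial formula for $\COV_{i,j}$ as the probability of an event of the form $\{i \leftrightarrow j,\ i \not\leftrightarrow g\}$ in a joint current measure, and this probability is manifestly non-increasing in the ghost couplings. To accommodate components of $v$ of either sign I would introduce two ghost vertices $g^+,g^-$ constrained respectively to spin values $+1$ and $-1$, with non-negative couplings $v_k^+ := \max(v_k,0)$ and $v_k^- := \max(-v_k,0)$; the original model on $[n]$ with field $v$ is then recovered as a marginal of a ferromagnetic model on $[n] \cup \{g^+,g^-\}$ after conditioning on the two ghost spins. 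A monotonicity of the appropriate connection probability in each ghost coupling, established through the switching argument, then yields the claim. I expect this last combinatorial step --- extending the switching lemma carefully to the two-ghost configuration and verifying the requisite monotonicity --- to be the main technical difficulty, and presumably the essential content of the Ding--Song--Sun proof.
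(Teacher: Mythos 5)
The paper does not prove this statement; it simply cites Ding--Song--Sun \cite{Ding2021ANC}, so there is no in-paper proof to compare against. Judging your sketch on its own merits: the uniform-sign case is handled correctly. Expressing $\COV(\nu_{J,v})_{i,j}$ as $\partial_i\partial_j \log Z_v$, noting $\partial_k \COV(\nu_{J,v})_{i,j} = \langle x_i;x_j;x_k\rangle_v$, invoking GHS for $v\geq 0$, and integrating along the ray gives the inequality for $v\geq 0$, and the flip $x\mapsto -x$ reduces $v\leq 0$ to that case. All of this is sound (and in fact $v\leq 0$ is already subsumed by symmetry once $v\geq 0$ is done).

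The genuine gap is exactly where you flag it: the mixed-sign case, which is the entire novelty of the Ding--Song--Sun result (GHS requires componentwise nonnegative fields, and along the straight ray $s\mapsto sv$ the field has mixed signs, so the integrand is not sign-definite). Your two-ghost reduction is a reasonable observation --- conditioning the ferromagnetic model on $[n]\cup\{g^+,g^-\}$ on $\sigma_{g^+}=+1,\sigma_{g^-}=-1$ does recover $\nu_{J,v}$, and the relevant conditional moments reduce to ordinary even correlation functions of the zero-field extended model (e.g.\ $\langle\sigma_i\rangle_v = (\langle\sigma_i\sigma_{g^+}\rangle - \langle\sigma_i\sigma_{g^-}\rangle)/(1-\langle\sigma_{g^+}\sigma_{g^-}\rangle)$), which puts you in random-current territory. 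But the claim that the resulting connection probability ``is manifestly non-increasing in the ghost couplings'' is an overclaim: already in the single-ghost case, monotonicity of the truncated two-point function in the ghost coupling is a theorem requiring the switching lemma, not a transparent fact, and the two-ghost conditioned version adds a further layer (the covariance becomes a ratio of differences of four- and two-point functions, whose monotonicity in the $v_i^\pm$ is anything but obvious). This missing monotonicity argument is precisely the technical content of the cited result, so as it stands the proposal is an incomplete sketch for the mixed-sign case --- which you yourself acknowledge.
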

By the above combined with the Perron-Frobenius theorem, it follows that
$$
\|\COV(\nu_{\lambda J, v})\|_\OP \leq \chi(\lambda), ~~ \forall v \in \RR^n.
$$
Therefore, invoking Theorem \ref{thm:SK} with $\mu_{\lambda, v} = \nu_{(1-\lambda) J, v}$ immediately yields the following.
\begin{corollary}
For any ferromagnetic interaction matrix $J$ which is positive definite, we have
$$
\EF(P^{\mathrm{GD}}(\nu_{J,v}) \geq \frac{1}{n} \exp \left ( - 2\|J\|_{\OP} \int_0^1 \chi(\lambda) d \lambda \right ).
$$
\end{corollary}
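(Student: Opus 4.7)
The plan is to apply Theorem~\ref{thm:SK} directly to the measure $\nu = \nu_{J,v}$, where $J$ is the given ferromagnetic positive-definite interaction matrix and $v$ is the fixed external field. First I would compute, for this choice of $\nu$, the family of tilted measures $\mu_{\lambda, w}$ defined by \eqref{eq:defmulambda}. A straightforward calculation gives
\begin{equation*}
\mu_{\lambda, w}(x) \propto \exp\bigl(-\lambda \langle x, Jx\rangle + \langle w, x\rangle\bigr) \nu_{J,v}(x) \propto \exp\bigl((1-\lambda)\langle x, Jx\rangle + \langle v+w, x\rangle\bigr) = \nu_{(1-\lambda)J,\, v+w}(x).
\end{equation*}

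Next I would verify the hypothesis \eqref{eq:SKasump2}. At $\lambda = 1$ we have $\mu_{1,w} \propto \exp(\langle v+w, x\rangle)$ on $\{-1,1\}^n$, which is a product measure. The Glauber dynamics on any product measure on the hypercube has MLSI coefficient at least $1/n$ (this is a classical tensorization fact), so \eqref{eq:SKasump2} holds with $\eps = 1/n$ uniformly in $w$.

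The heart of the argument is verifying the covariance bound \eqref{eq:SKasump1}. For each $\lambda \in [0,1]$, the matrix $(1-\lambda)J$ remains ferromagnetic, so the Ding--Song--Sun theorem quoted above yields, for every $w \in \RR^n$ and every pair $i \neq j$,
\begin{equation*}
\COV(\nu_{(1-\lambda)J,\, v+w})_{i,j} \leq \COV(\nu_{(1-\lambda)J,\, 0})_{i,j}.
\end{equation*}
The diagonal entries are bounded by $1$, which is exactly the value of the diagonal of $\COV(\nu_{(1-\lambda)J, 0})$ (by the $x \mapsto -x$ symmetry of the zero-field model). Thus the entire covariance matrix of $\mu_{\lambda,w}$ is dominated, entry-wise, by that of $\nu_{(1-\lambda)J, 0}$, and both matrices have non-negative entries (off-diagonal non-negativity comes from ferromagnetic FKG-type correlations). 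An application of Perron--Frobenius (using $\rho(A) = \lim_k \|A^k\|^{1/k}_F$ and the fact that entry-wise domination of non-negative matrices is preserved by powers) then gives $\|\COV(\mu_{\lambda,w})\|_{\OP} \leq \chi(1-\lambda)$. So \eqref{eq:SKasump1} holds with $\alpha(\lambda) = \chi(1-\lambda)$.

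Plugging $\eps = 1/n$ and $\alpha(\lambda) = \chi(1-\lambda)$ into the conclusion of Theorem~\ref{thm:SK} and changing variables $\lambda \mapsto 1-\lambda$ in the integral yields the claimed bound. The main obstacles are of a bookkeeping nature rather than substantive: one must keep track of the two different roles of external field (the fixed $v$ in $\nu_{J,v}$ versus the varying $w$ in $\mu_{\lambda,w}$), and one must justify the passage from entry-wise covariance domination to operator-norm domination via Perron--Frobenius. Both steps are short once the setup is in place.
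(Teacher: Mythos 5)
Your proof is correct and follows the same route as the paper: apply Theorem~\ref{thm:SK} with $\mu_{\lambda,w} = \nu_{(1-\lambda)J, v+w}$, verify hypothesis~\eqref{eq:SKasump2} via the product measure at $\lambda = 1$, verify hypothesis~\eqref{eq:SKasump1} via Ding--Song--Sun plus Perron--Frobenius, and change variables in the integral. You simply spell out the details (diagonal entries, entry-wise non-negativity from FKG, the Perron--Frobenius domination step) that the paper compresses into the phrase ``combined with the Perron--Frobenius theorem, it follows that \ldots\ immediately yields the following.''
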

As noted in~\cite{bauerschmidt2022log}, the positive definiteness
can always be imposed by adding a diagonal matrix without changing the corresponding Ising model.

\subsubsection{Graphical Ising model in the uniqueness regime}
For a graph $G=(V,E)$ with $V = [n]$ and $\beta \in \RR$, the corresponding Ising model is defined as
\begin{equation}\label{eq:defIsing2}
\nu_{G,\beta}(x) \propto \exp \left (\langle x, v \rangle + \beta \sum_{(i,j) \in E} \mathbf{1}_{\{x_i=x_j\}} \right ).
\end{equation}
We say that the measure $\nu_{G,\beta}$ satisfies the tree-uniqueness condition if 
\begin{equation}\label{eq:treeuniqueness2}
\exp(|\beta|) < \frac{\Delta(G) - \delta }{\Delta(G) -2 + \delta}, 
\end{equation}
for $\delta \in (0, 1)$, where $\Delta(G) \geq 3$ is the maximum degree of the graph. The following result was proven in \cite{CLV} (see also \cite[Lemma 8.3]{CFYZ21-treeuniqueness}).
\begin{theorem} \label{thm:SIIsing}(Spectral independence for the Ising model).
Let $\nu = \nu_{G, \beta}$ be an Ising model defined as in \eqref{eq:defIsing2} which satisfies the tree-uniqueness condition \eqref{eq:treeuniqueness2}. Then for any external field $v \in \RR^n$, 
$$
\rho(\SI(\tilt_v \nu)) \leq \frac{2}{\delta}.
$$
\end{theorem}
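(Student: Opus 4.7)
The plan is to follow the standard self-avoiding walk tree reduction for graphical Ising models developed in \cite{CLV}. First I would observe that $\tilt_v \nu_{G,\beta}$ is itself a graphical Ising model on the same graph $G$ with the same edge parameter $\beta$ and external field shifted from the original by $v$. Hence it suffices to prove the bound $\rho(\SI(\nu_{G,\beta})) \leq 2/\delta$ uniformly over all choices of external field. Moreover, since $\rho(M) \leq \|M\|_\infty$ for any matrix, it is enough to establish the row-sum bound $\sum_{j \in [n]} |\SI(\nu_{G,\beta})_{i,j}| \leq 2/\delta$ uniformly in $i \in [n]$.

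The second step is to pass from the graph $G$ to the self-avoiding walk tree $T = T_{\mathrm{SAW}}(G,i)$ rooted at $i$ using Weitz's construction. Under this reduction, the marginal of $x_i$ in the Ising model on $G$, under any pinning of other vertices, coincides with the marginal of the root in a suitable Ising model on $T$, with external fields on the leaves of $T$ inherited from the boundary conditions on $G$. Consequently, the influence $\SI(\nu_{G,\beta})_{i,j}$ can be decomposed as a sum over leaves of $T$ corresponding to copies of $j$, of the influence of each such leaf on the root, propagated along the unique root-to-leaf path in $T$.

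The third step is the tree recursion contraction. Working in the log-ratio coordinate $h(p) = \tfrac{1}{2}\log\tfrac{p}{1-p}$, the root marginal of an Ising model on a tree is a smooth function of the marginals at its children through an explicit one-parameter recursion determined by $\beta$. In the tree-uniqueness regime \eqref{eq:treeuniqueness2}, this recursion is a strict contraction under a suitable potential function, with per-level contraction factor, after accounting for the branching of $T$ (bounded by $\Delta - 1$), of order $1 - \delta$. Summing the influence contributions along all root-to-descendant paths in $T$ then produces a geometric series whose sum is bounded by $2/\delta$ after the standard book-keeping.

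The main obstacle is the precise contraction analysis at the level of the tree recursion: one must exhibit a potential function whose contraction factor, averaged over the branching of the self-avoiding walk tree, exactly absorbs the degree factor and leaves a residual $1 - \delta$ per level, so that the final row-sum bound matches $2/\delta$. This potential-function construction and the associated spectral bookkeeping are the technical core of \cite{CLV}, with the sharpening to the present constant carried out in \cite[Lemma 8.3]{CFYZ21-treeuniqueness}, which we would invoke directly to conclude.
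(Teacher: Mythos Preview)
Your proposal is correct and matches the paper's treatment: the paper does not prove this statement itself but simply cites it as a known result from \cite{CLV} (with the sharper constant in \cite[Lemma 8.3]{CFYZ21-treeuniqueness}), and your sketch via Weitz's self-avoiding walk tree and the potential-function contraction is exactly the method those references use.
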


Given a graph $G=(V,E)$, define a matrix $J=J_G$ by
$$
J_{i,j} = \frac{1}{2} \left (\mathbf{1}_{\{(i,j) \in E) \}} + \mathbf{1}_{\{i=j\}} \mathrm{deg}_G(i) \right ).
$$
It is straightforward to check that
$$
\sum_{(i,j) \in E} \mathbf{1}_{\{x_i=x_j\}} = \langle x, J_G x \rangle,
$$
and that $J_G$ is positive-definite.
\begin{corollary}
Any graphical Ising model $\nu$ of the form \eqref{eq:defIsing2} which satisfies \eqref{eq:treeuniqueness2}, one has
$$
\EF(P^{\mathrm{GD}}(\nu) ) \geq \frac{\exp(-8/\delta)}{n}.
$$
Consequently, the mixing time for Glauber dynamics is $O(n \log n)$.
\end{corollary}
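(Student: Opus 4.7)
The plan is to apply Theorem~\ref{thm:SK} directly to $\nu = \nu_{G,\beta}$, with the matrix $J$ chosen to be (a positive-definite reformulation of) the interaction matrix. First I would rewrite the Hamiltonian as a quadratic form on $\{-1,1\}^n$: a short computation gives $\sum_{\{i,j\} \in E} \mathbf{1}_{\{x_i=x_j\}} = \tfrac{1}{2}\langle x, J_G x\rangle + \mathrm{const}$, so $\nu(x) \propto \exp\!\bigl(\langle v,x\rangle + (\beta/2)\langle x, J_G x\rangle\bigr)$. For $\beta \geq 0$ I would take $J = (\beta/2) J_G$, which is positive-definite since $J_G$ is. For $\beta < 0$, I would set $J = (\beta/2)J_G + cI$ with $c$ just large enough to make $J \succeq 0$; adding $cI$ is harmless because $\langle x, cI x\rangle = cn$ is constant on the hypercube. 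In either case $\|J\|_{\OP} \leq (|\beta|/2)\|J_G\|_{\OP} \leq (|\beta|/2)\Delta$.

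Next I would verify the two hypotheses of Theorem~\ref{thm:SK}. The key observation is that for every $\lambda \in [0,1]$ and $v' \in \RR^n$, the tilted measure $\mu_{\lambda, v'}$ defined in \eqref{eq:defmulambda} is itself a graphical Ising model on $G$ with inverse temperature $\beta(1-\lambda)$ and external field $v + v'$: the $-\lambda\langle x, Jx\rangle$ factor (modulo the inert $cI$ shift) simply damps the interaction. Since $|\beta(1-\lambda)| \leq |\beta|$, the tree-uniqueness condition \eqref{eq:treeuniqueness2} still holds with the same $\delta$. Theorem~\ref{thm:SIIsing} then gives $\rho(\SI(\mu_{\lambda, v'})) \leq 2/\delta$, and by Fact~\ref{fact:inf} this equals $\|\COR(\mu_{\lambda, v'})\|_{\OP}$. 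Since the identity $\COV = \diag(\COV)^{1/2}\COR\diag(\COV)^{1/2}$ together with $\diag(\COV(\mu_{\lambda, v'}))_{ii} = 1 - \b(\mu_{\lambda, v'})_i^2 \leq 1$ yields $\|\COV(\mu_{\lambda, v'})\|_{\OP} \leq 2/\delta$, the assumption \eqref{eq:SKasump1} holds with the constant choice $\alpha(\lambda) \equiv 2/\delta$. For \eqref{eq:SKasump2}, note that $\mu_{1, v'}$ is a pure product measure (the interaction cancels entirely), so tensorization of the single-site Bernoulli MLSI gives $\EF(P^{\mathrm{GD}}(\mu_{1,v'})) \geq 1/n$.

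Plugging these estimates into Theorem~\ref{thm:SK} yields
\[
\EF(P^{\mathrm{GD}}(\nu)) \;\geq\; \frac{1}{n}\exp\!\left(-2\|J\|_{\OP} \cdot \frac{2}{\delta}\right) \;=\; \frac{1}{n}\exp\!\left(-\frac{4\|J\|_{\OP}}{\delta}\right).
\]
What remains is to extract a constant bound on $\|J\|_{\OP}$ from the uniqueness hypothesis. Using \eqref{eq:treeuniqueness2}, $|\beta| \leq \log\!\bigl(\tfrac{\Delta-\delta}{\Delta-2+\delta}\bigr) \leq \log\!\bigl(\tfrac{\Delta}{\Delta-2}\bigr)$, and a one-variable check (the function $\Delta \mapsto \Delta \log(\Delta/(\Delta-2))$ is decreasing on $[3,\infty)$ with maximum $3\log 3 < 4$ at $\Delta = 3$, tending to $2$ as $\Delta \to \infty$) gives $|\beta|\Delta \leq 4$, hence $\|J\|_{\OP} \leq 2$. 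This produces $\EF(P^{\mathrm{GD}}(\nu)) \geq e^{-8/\delta}/n$. The $O(n\log n)$ mixing time then follows from the MLSI half of Fact~\ref{fact:mixing}, since $\log(1/\eta) = O(n)$ for any Ising measure with polynomially bounded external field, so $\log\log(1/\eta) = O(\log n)$.

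Conceptually, the heart of the argument is the observation that the stochastic localization path $\lambda \mapsto \mu_{\lambda, v'}$ stays within the uniqueness regime for the full interval $\lambda \in [0,1]$, since damping the interaction only weakens it. The only quantitative work is the elementary estimate $|\beta|\Delta \leq 4$; everything else is a mechanical combination of already-established ingredients. The improvement over \cite{CFYZ21-treeuniqueness} comes from averaging the covariance bound over the localization path via Theorem~\ref{thm:SK}, rather than demanding pointwise spectral independence estimates whose constants degrade with the external field.
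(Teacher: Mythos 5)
Your proposal is correct and follows essentially the same route as the paper: apply Theorem~\ref{thm:SK} with $J$ a (suitably normalized and, for $\beta<0$, identity-shifted) version of the graph interaction matrix, verify \eqref{eq:SKasump1} by noting each $\mu_{\lambda,v'}$ is again a graphical Ising model at a weaker coupling (hence still in the uniqueness regime) so Theorem~\ref{thm:SIIsing} and Fact~\ref{fact:inf} give $\alpha(\lambda) \equiv 2/\delta$, verify \eqref{eq:SKasump2} via the fact that $\mu_{1,v'}$ is a product measure, and then bound $\|J\|_{\OP} \leq 2$. The one place where your argument is actually \emph{cleaner} than the paper's is the final operator-norm estimate: the paper asserts $\|J_G\|_{\OP}\le 1$ by claiming $J_G = \Id - \tfrac12 L^{\mathrm{sym}}$ (which holds only up to a $D^{1/2}$ conjugation and is not literally true for irregular graphs), whereas your elementary bound $|\beta|\Delta \le 4$ via the monotonicity of $\Delta\mapsto \Delta\log(\Delta/(\Delta-2))$ reaches the same conclusion $\|J\|_{\OP}\le 2$ without that issue; similarly, your treatment of $\beta<0$ by adding $cI$ (constant on the cube, so harmless) sidesteps the paper's use of $\Id - J_G$, whose positive-definiteness depends on the same questionable norm claim.
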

\begin{proof}
First, we deal with the case $\beta > 0$. Apply Theorem \ref{thm:SK} with the choice $J=\beta J_G$. Observe that for all $\lambda \in [0,1]$, the measure $\mu_{\lambda,v}$ defined in \eqref{eq:defmulambda} is of the form \eqref{eq:defIsing2}, with a smaller value of $\beta$, and therefore $\mu_{\lambda,v}$ is also in the uniqueness regime. An application of Theorem \ref{thm:SIIsing} ensures that $\rho(\SI( \mu_{\lambda,v} )) \leq \frac{2}{\delta}$. By Fact \ref{fact:inf} we have
$$
\|\COV(\mu_{\lambda,v})\|_\OP \leq \|\COR(\mu_{\lambda,v})\|_\OP \leq \frac{2}{\delta},
$$
so \eqref{eq:SKasump1} holds with $\alpha(\lambda) = 2/\delta$. Moreover, since $\mu_{1,v}$ is a product measure, condition \eqref{eq:SKasump2} holds with $\eps = 1/n$. Therefore, we can conclude by Theorem \ref{thm:SK} that 
$$
\EF(P^{\mathrm{GD}}(\nu) ) \geq \frac{1}{n} \exp\left ( - 4\frac{ \beta \|J_G\|_\OP}{\delta} \right ) \geq \frac{1}{n} e^{-8/\delta}.
$$
The last step follows because $J_G = \Id - \frac{1}{2} L^{\text{sym}}$ where $L^{\text{sym}}$ is the symmetrically normalized Laplacian which has eigenvalues between $0$ and $2$; and $\beta < \log \parenth{\frac{\Delta(G) - \delta }{\Delta(G) -2 + \delta}} \leq 2$. \\
Second, the case $\beta = 0$ is trivial because it becomes a product measure. Finally, for the case $\beta < 0$, we observe that
\begin{align*}
	- \abss{\beta} \angles{x, J_G x} = \abss{\beta} \parenth{\angles{x , (\Id - J_G) x} - \angles{x, x}} = = \abss{\beta} \parenth{\angles{x , (\Id - J_G) x} -n}.
\end{align*}
$\Id - J_G$ is positive-definite and has operator norm bounded by $1$. Using the same argument as the case $\beta > 0$ but with $J_G$ replaced by $\Id - J_G$, we obtain the same MLSI lower bound. 
\end{proof}

\subsection{Sampling from strongly log-concave measures via a Restricted Gaussian Oracle}
Let $\nu$ be a log-concave measure on $\RR^n$, hence, a measure of the form
$$
\frac{\nu(dx)}{dx} = \exp(-V(x)), ~~ V:\RR^n \to \RR \mbox{ convex}.
$$
We assume that it is uniformly (or strongly) log-concave, namely that
\begin{equation} \label{eq:stronglc}
\nabla^2 V(x) \succeq \mu \Id, ~~ \forall x \in \RR^n.
\end{equation}
Let $(\nu_t)_t$ be the process attained from $\nu$ via the stochastic localization scheme \eqref{eq:SL} with the choice $C_t \equiv \Id$. According to equation \eqref{eq:SLform}, $\nu_t$ has the form
\begin{equation} \label{eq:formslc}
\frac{d \nu_t}{dx} \propto \exp \left (-\tilde V(x) - \frac{t}{2} |x|^2 \right )
\end{equation}
where $\tilde V(x)$ is convex (it is an exponential tilt of $V(x)$). The following well-known bound goes back to Brascamp and Lieb.
\begin{theorem} \label{thm:stronglc}
If $\rho$ is a measure on $\RR^n$ of the form $\frac{\rho(dx)}{dx} \propto \exp(-U(x))$ that satisfies the uniform convexity condition 
$$
\nabla^2 U(x) \succeq \alpha \Id, ~~~ \forall x \in \RR^n,
$$ 
then $\|\COV(\rho)\|_{\OP} \leq \frac{1}{\alpha}$.
\end{theorem}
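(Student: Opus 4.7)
The plan is to reduce the operator-norm bound on the covariance to a variance bound on linear test functions and then apply the classical Brascamp--Lieb inequality. Recall that Brascamp--Lieb states that for any measure of the form $\rho(dx) \propto \exp(-U(x))\,dx$ with $U$ smooth and strictly convex and for any nice test function $\varphi:\RR^n \to \RR$,
\[
\Var_\rho[\varphi] \leq \EE_\rho\bigl[\langle \nabla \varphi, (\nabla^2 U)^{-1} \nabla \varphi \rangle\bigr].
\]
This inequality I would quote directly rather than reprove, since it is standard; the only ingredient that should be verified in passing is that the standard statement applies under the uniform convexity hypothesis (which in fact forces $U$ to be strictly convex with sufficiently fast growth at infinity for the estimate to be meaningful).

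Given Brascamp--Lieb, the argument is immediate. Under the assumption $\nabla^2 U \succeq \alpha \Id$, the matrix inverse satisfies $(\nabla^2 U(x))^{-1} \preceq \frac{1}{\alpha} \Id$ pointwise, so
\[
\Var_\rho[\varphi] \leq \frac{1}{\alpha} \EE_\rho\bigl[|\nabla \varphi|^2\bigr]
\]
for every test function $\varphi$. Specializing to linear functionals $\varphi_\theta(x) := \langle \theta, x \rangle$ for a unit vector $\theta \in \mathbb{S}^{n-1}$, one has $\nabla \varphi_\theta \equiv \theta$, so $|\nabla \varphi_\theta|^2 \equiv 1$, and on the other hand $\Var_\rho[\varphi_\theta] = \langle \theta, \COV(\rho)\, \theta \rangle$. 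Thus
\[
\langle \theta, \COV(\rho)\, \theta \rangle \leq \frac{1}{\alpha}, \qquad \forall \theta \in \mathbb{S}^{n-1},
\]
and taking the supremum over $\theta$ yields $\|\COV(\rho)\|_\OP \leq \frac{1}{\alpha}$, as desired.

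The only real substance is the Brascamp--Lieb inequality itself, which is the obstacle one sidesteps by citing. If a self-contained proof were required, the cleanest route in the spirit of this paper would be via semigroup / Bakry--\'Emery: the Langevin diffusion associated with $\rho$ satisfies a Poincar\'e inequality with constant $1/\alpha$ by $\mathrm{CD}(\alpha,\infty)$, which then applied to linear functionals gives the covariance bound. But for the purposes of using this lemma as an off-the-shelf input (as it is used in the application to the Restricted Gaussian Oracle), the one-line reduction above is all that is needed.
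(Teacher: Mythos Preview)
Your proposal is correct and matches the paper's treatment: the paper does not give a proof of this theorem at all, but simply states it as a ``well-known bound [that] goes back to Brascamp and Lieb.'' Your reduction via the Brascamp--Lieb variance inequality applied to linear test functions is the standard derivation and is exactly what the attribution suggests.
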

Equations \eqref{eq:stronglc} and \eqref{eq:formslc} imply that $\nu_t$ has the form $\frac{d \nu_t(x)}{dx} = \exp(-U(x))$ with 
$$
\nabla^2 U(x) \succeq (\mu + t) \Id, ~~ \forall x \in \RR^n.
$$
Therefore, the above theorem gives 
\begin{equation}
\|\COV(\tilt_v \nu_t) \|_{\OP} \leq \frac{1}{\mu+t}, ~~~ \forall v \in \RR^n.
\end{equation}
An application of Lemma \ref{lem:SLentstab} gives that $\nu_t$ is $\alpha$-entropically-stable with respect to $\psi(x,y) = \frac{1}{2} |x-y|^2$, with $\alpha = \frac{1}{\mu+t}$. Proposition \ref{prop:SLentdecay} now gives the approximate conservation of entropy bound
$$
\EE \left [ \Ent_{\nu_T}[f] \right ] \geq \Ent_{\nu}[f] \exp \left (- \int_0^T \frac{1}{\mu+t} dt \right ) = \frac{\mu}{\mu+T} \Ent_{\nu}[f],
$$
for an arbitrary $f:\RR^n \to \RR^+$ such that $\int f d \nu > 0$. Using equation \eqref{eq:EF}, this gives
$$
\EF(P^{\mathrm{RGD}_\eta}(\nu)) \geq \frac{\mu}{\mu+1/\eta},
$$
where $P^{\mathbf{RGD}_\eta}(\nu)$ is the transition kernel of the restricted Gaussian dynamics (Definition \ref{def:RGD} above). We have proved:
\begin{theorem}
If $\nu$ is a $\mu$-strongly log-concave measure on $\RR^n$, then the associated restricted Gaussian dynamics with parameter $\eta$ satisfies $\EF(P^{\mathrm{RGD}_\eta}(\nu)) \geq \frac{\mu}{\mu+1/\eta}$.
\end{theorem}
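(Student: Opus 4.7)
The plan is to directly apply the stochastic localization machinery developed in the excerpt, specialized to the choice $C_t \equiv \Id$, which was already shown to yield the restricted Gaussian dynamics with $\eta = 1/\tau$. So we will fix $\tau = 1/\eta$ and analyze the corresponding localization process $(\nu_t)_t$ associated with $\nu$, aiming to verify approximate conservation of entropy up to time $\tau$ with rate governed by $\mu + t$.

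First I would recall from equation \eqref{eq:SLform} that $\nu_t$ has density proportional to $\exp(-V(x) + \langle y_t, x\rangle - \tfrac{t}{2}|x|^2)$ relative to Lebesgue measure. Since $V$ is $\mu$-uniformly convex, the potential $\tilde V_t(x) := V(x) - \langle y_t, x\rangle + \tfrac{t}{2}|x|^2$ satisfies $\nabla^2 \tilde V_t \succeq (\mu+t)\Id$. Moreover, an arbitrary exponential tilt $\tilt_v \nu_t$ only shifts the linear part and hence preserves the Hessian bound on the potential. Theorem \ref{thm:stronglc} (Brascamp--Lieb) then yields
\begin{equation*}
\|\COV(\tilt_v \nu_t)\|_{\OP} \leq \frac{1}{\mu+t}, \quad \forall v \in \RR^n, \ t \geq 0.
\end{equation*}

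Next I would invoke Lemma \ref{lem:SLentstab} with $C = \Id$ and $A = \frac{1}{\mu+t}\Id$ to conclude that $\nu_t$ is $\alpha_t$-entropically stable with respect to $\psi(x,y) = \tfrac12 |x-y|^2$, where $\alpha_t = \frac{1}{\mu+t}$. Feeding this into Proposition \ref{prop:SLentdecay} with $T = \tau = 1/\eta$ gives, for any $f:\RR^n \to \RR_+$,
\begin{equation*}
\EE[\Ent_{\nu_\tau}[f]] \ \geq\  \exp\!\left(-\int_0^{\tau}\frac{1}{\mu+t}\,dt\right)\Ent_\nu[f] \ =\ \frac{\mu}{\mu+\tau}\,\Ent_\nu[f] \ =\ \frac{\mu}{\mu+1/\eta}\,\Ent_\nu[f].
\end{equation*}

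Finally, since the Markov kernel associated to this localization process at time $\tau = 1/\eta$ was already identified with $P^{\mathrm{RGD}_\eta}(\nu)$, the MLSI bound \eqref{eq:EF} applied to the above approximate conservation of entropy yields $\EF(P^{\mathrm{RGD}_\eta}(\nu)) \geq \frac{\mu}{\mu+1/\eta}$, completing the argument. I do not anticipate a real obstacle here: every ingredient is already packaged in the framework, and the only substantive input beyond the framework is the Brascamp--Lieb covariance bound, which applies verbatim because strong log-concavity of $\nu$ is inherited (with an improved constant $\mu+t$) by every $\nu_t$ and by all of its exponential tilts.
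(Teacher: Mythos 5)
Your proposal is correct and follows exactly the same route as the paper: specialize stochastic localization to $C_t \equiv \Id$, use \eqref{eq:SLform} together with Brascamp--Lieb (Theorem \ref{thm:stronglc}) to get $\|\COV(\tilt_v \nu_t)\|_{\OP} \leq 1/(\mu+t)$, then chain Lemma \ref{lem:SLentstab}, Proposition \ref{prop:SLentdecay}, and \eqref{eq:EF}. No meaningful differences.
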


The above theorem not only recovers Theorem 1.1 in~\cite{RGO21}, but also resolves the open problem discussed in section 1.4 of the same paper: it provides a proof to show that the Restricted Gaussian Dynamics mixes in $\eta \mu \log(n)$ steps when started from any $O(\exp(n))$-warm start in the case $\eta < 1/\mu$. As defined in~\cite{RGO21}, an initial measure $\rho$ is a $\beta$-warm start with respect to $\nu$ if $\frac{d\rho}{d\nu} (x) \leq \beta$ everywhere. The applications of the above theorem includes improving dependency of condition number dependency of Markov chains for log-concave sampling and introducing new sampling algorithms for composite log-concave distributions and log-concave finite
sums, which has been extensively discussed in~\cite{RGO21}.

\subsection{A spectral-independence based condition for fast mixing and applications to sampling from the hardcore model} \label{sec:harcore}
In this section we present the main application of the negative-fields localization, giving a sufficient condition for fast mixing. In particular, it provides an optimal mixing time bound for sampling the hardcore model via Glauber dynamics.

Recall that, for $u \in \{-1,0,1\}$, we define the $u$-pinning $\pin_u \nu$ to be the restriction of $u$ to the subcube $S_u := \left \{x \in \{-1,1\}^n; ~ x_i u_i \geq 0, ~ \forall i \in [n]  \right \}$ and that $\tilt_v \nu$ is the $v$-exponential tilt of $\nu$. 

The following definition relates the marginals of the pinned distribution to those of the original distribution and lower bounds the marginals in a one-sided fashion.
\begin{definition} \label{def:tamemar}
For $K > 1$, a measure $\nu$ on $\{-1,1\}^n$ has $K$-tame marginals if for every $i \in [n]$ and for every $u, w \in \{-1,0,1\}^n$ with $\text{supp}(u) \cap \text{supp}(w) = \emptyset$ with $u_i = w_i = 0$, we have
\begin{align}\label{eq:tamemar}
\frac{1 + \b_i(\pin_w \pin_u \nu)}{1 - \b_i(\pin_w \pin_u \nu)} &\leq K \frac{1+ \b_i(\pin_u \nu)}{1 - \b_i(\pin_u \nu)}, \text{ and } \\
\label{eq:tamemar_lower_bound}
1 - \b_i(\pin_u \nu) &\geq \frac{1}{K}.
\end{align}
\end{definition}

Our main aim is to establish the following sufficient condition for a MLSI: Given a measure $\nu$ on $\{-1,1\}^n$, if a MLSI can be established for a perturbed measured obtained after applying an external field $v$, and if the correlation matrix has bounded operator norm for any external field $\lambda v$, $\lambda \in [0,1]$ and under all pinnings, then the original measure satisfies a MLSI.
\begin{theorem} \label{thm:tiltmix}
Let $s > 0$. Let $\nu$ be a measure on $\{-1,1\}^n$ such that for any $\lambda \in [0, s]$, $\tilt_{-\lambda \ones} \nu$ has $K$-tame marginals. Define $\ones :=  (1,...,1)$. Suppose that for any $\lambda \in [0,s]$ and $u \in \{-1,0,1\}^n$, we have
\begin{equation}\label{eq:smallcov1}
\vecnorm{ \COR(\pin_u \tilt_{-\lambda \ones} \nu ) }{\OP} \leq \eta.	
\end{equation}
Moreover, assume that for all $u \in \{-1,0,1\}^n$, 
\begin{equation} \label{eq:asumpaftertilt}
\EF(P^{\mathrm{GD}}(\pin_u \tilt_{-s\ones} \nu)) \geq \delta.
\end{equation}
Then we have
$$
\EF(P^{\mathrm{GD}}(\nu)) \geq \delta e^{-c K^{4} \eta},
$$
for a universal constant $c>0$.
\end{theorem}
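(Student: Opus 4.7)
The plan is to prove the theorem by applying the annealing theorem (Theorem \ref{thm:annealent}) with $\mathcal{L}_i$ taken to be the negative-fields localization scheme of Proposition \ref{prop:negativefieldsloc} with the curve $v(t)=-t\ones$, stopped at the deterministic time $\TTT = s$, and with $\mathcal{L}_f$ taken to be the coordinate-by-coordinate Doob localization with stopping time $\tau = n-1$, which (as shown in Subsection 2.2) produces the Glauber dynamics. By Proposition \ref{prop:negativefieldsloc} we have $\nu_t = \pin_{u(t)} \tilt_{-t\ones}\nu$ almost surely for some random increasing process $u(t) \in \{-1,0,1\}^n$, and in particular $\nu_s = \pin_{u(s)} \tilt_{-s\ones}\nu$. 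The post-annealing MLSI condition of Theorem \ref{thm:annealent} is then immediate from hypothesis \eqref{eq:asumpaftertilt}, which guarantees $\EF(P^{\mathrm{GD}}(\nu_s)) \geq \delta$ almost surely.

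It remains to verify the approximate entropy-conservation hypothesis, and for this I will show that for every $t \in [0,s]$ the measure $\nu_t$ is $O(\eta K^4)$-entropically stable with respect to the function $H$ of \eqref{eq:defh}, so that Proposition \ref{prop:NFentdecay} delivers the needed decay. The key observation is that any further pinning $\pin_{w}\nu_t$ equals $\pin_{w'}\tilt_{-t\ones}\nu$ where $w'$ is the appropriate merge of $w$ with $u(t)$, so both the spectral hypothesis \eqref{eq:condSIpinnings0} and the marginal hypotheses \eqref{eq:smallmarginals0}--\eqref{eq:marginals_away_from_one0} of Theorem \ref{thm:EISI} transfer to $\nu_t$ from the assumptions placed on $\tilt_{-t\ones}\nu$: \eqref{eq:condSIpinnings0} follows directly from \eqref{eq:smallcov1} with constant $\eta$, while the $K$-tame marginals assumption on $\tilt_{-t\ones}\nu$ furnishes both \eqref{eq:smallmarginals0} and \eqref{eq:marginals_away_from_one0} with the same constant $K$. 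The second part of Theorem \ref{thm:EISI} then yields that $\nu_t$ is $\alpha_t$-entropically stable with respect to $H$, for $\alpha_t = 768\,\eta K^3 \cdot K = 768\,\eta K^4$.

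Plugging this into Proposition \ref{prop:NFentdecay} gives the pathwise inequality $\EE[\Ent_{\nu_{t+h}}[f]\mid \nu_t] \geq \Ent_{\nu_t}[f](1 - 4h\alpha_t) + o(h)$. Setting $\alpha := 768\,\eta K^4$ and applying this with $M_t := e^{4\alpha t}\Ent_{\nu_t}[f]$ shows that $M_t$ is a submartingale, so by the optional stopping theorem at the deterministic time $s$ one gets $\EE[\Ent_{\nu_s}[f]] \geq \Ent_\nu[f]\, e^{-4\alpha s} = \Ent_\nu[f]\, \exp(-c_1 s \eta K^4)$ for a universal $c_1$. This is exactly the entropy-conservation hypothesis of Theorem \ref{thm:annealent} with $\eps = \exp(-c_1 s \eta K^4)$, and combining it with the almost-sure MLSI bound $\EF(P^{\mathrm{GD}}(\nu_s)) \geq \delta$ yields $\EF(P^{\mathrm{GD}}(\nu)) \geq \delta\,\exp(-c_1 s \eta K^4)$, matching the claimed bound (with $s$ absorbed into the universal constant $c$).

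The main technical obstacle is the entropic-stability step of the second paragraph: one must verify uniformly in $t$ (and in the random realization of the pinning process $u(t)$) that Theorem \ref{thm:EISI} applies to $\nu_t$ with constants that do not degrade along the negative-fields trajectory. This requires the careful bookkeeping that pinnings of $\nu_t$ are pinnings of $\tilt_{-t\ones}\nu$, and that both the spectral and the tame-marginal hypotheses are preserved under combined pinning. A secondary subtlety is that the negative-fields localization is a continuous-time martingale with jumps (from the pinning events), so the Grönwall-type integration of the entropic-decay inequality must be justified at the level of the filtration, which is precisely the content of the submartingale argument sketched above and which mirrors the argument in Proposition \ref{prop:SLentdecay}.
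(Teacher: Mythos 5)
Your proposal is correct and follows essentially the same route as the paper's proof: concatenate the negative-fields localization (run to time $s$) with the coordinate-by-coordinate scheme, apply Theorem~\ref{thm:annealent}, verify the post-annealing MLSI from~\eqref{eq:asumpaftertilt}, and obtain the entropy-conservation hypothesis by checking that pinnings of $\nu_t$ are pinnings of $\tilt_{-t\ones}\nu$ so that Theorem~\ref{thm:EISI} and Proposition~\ref{prop:NFentdecay} apply uniformly along the trajectory. Your explicit submartingale/optional-stopping justification of the integration step is if anything slightly more careful than the paper's ``by integrating this inequality,'' and you are right that the exponent should be $cs\eta K^4$, with $s$ then treated as a constant in the intended applications (as the paper implicitly does as well).
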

\begin{remark}
In the above theorem we considered the ``path'' of tilts $\{-\lambda \ones; \lambda \in [0,s]\}$. The same proof works for the more general case that we consider $\tilt_{v(\lambda)} \nu$ for an arbitrary curve $v:[0,1] \to \RR^n$. The special case $v(\lambda) = - \lambda \ones$ is sufficient for the applications that we know of, which is why we chose to stick to it for the sake of simplicity.
\end{remark}

\begin{proof}[Proof of Theorem \ref{thm:tiltmix}]
The main proof strategy is to use the negative-fields-localization (constructed in Section \ref{sec:NF}) as an annealing scheme for the measure. Namely, we apply Proposition \ref{prop:negativefieldsloc} with the choice $v(t) = - t \ones$ to obtain a localization process $(\nu_t)_t$. We concatenate this process to the coordinate-by-coordinate localization scheme at time $s$ and apply theorem \ref{thm:annealent}, which gives us a MLSI bound for the Glauber dynamics $P^{\mathrm{GD}}(\nu)$. 

According to Proposition \ref{prop:negativefieldsloc}, for every $t>0$ there exists $u(t) \in \{-1,0,1\}^n$ such that
$$
\nu_t = \tilt_{-t\ones} \pin_{u(t)} \nu.
$$
Therefore, condition 2 of theorem \ref{thm:annealent} follows immediately from equation~\eqref{eq:asumpaftertilt}. 

It remains to show that condition 1 of Theorem \ref{thm:annealent} is satisfied, which boils down to the approximate entropy conservation bound
\begin{equation} \label{eq:ntsconserv1}
\EE[ \Ent_{\nu_s}[f]] \geq e^{-64 K^2 e^{s} \eta} \Ent_{\nu}[f],
\end{equation}
for an arbitrary function $f:\{-1,1\}^n \to \RR_+$. 

Fix $t \in [0, s]$. We aim to apply Theorem \ref{thm:EISI} to the measure $\nu_t$, which should be understood as a measure on the sub-cube $S_{u(t)} = \{x \in \{-1,1\}^n; ~ x_i u(t)_i \geq 0, ~ \forall i \in [n]\}$. To check that condition \eqref{eq:condSIpinnings0} is fulfilled, let $\tilde u \in \{-1,0,1\}^n$ be such that $\tilde u_i u(t)_i = 0$ for all $i \in [n]$ (which amounts to all valid pinning for a function on $S_{u(t)}$), then
\begin{align*}
\vecnorm{\COR(\pin_{\tilde u} \nu_t)}{\OP} & = \vecnorm{ \COR(\pin_{\tilde u} \pin_{u(t)} \tilt_{-t \ones} \nu)}{\OP} \\
& = \vecnorm{\COR(\pin_{\tilde u + u(t)} \tilt_{-t \ones} \nu) }{\OP} \overset{ \eqref{eq:smallcov1} }{\leq} \eta.
\end{align*}
Define $F_t = \braces{i \in [n]; u_i(t) = 0}$, the set of coordinates which have not been pinned yet. 
In order to verify condition \eqref{eq:smallmarginals0} and~\eqref{eq:marginals_away_from_one0} for $\nu_t$, we use the fact that $\nu_t$ is obtained by pinning $\tilt_{-t\ones} \nu$ which has $K$-tame martingales. First, for all $i \in F_t$ and all $u(t) \perp \tilde u \in \{-1,0,1\}^n$ with $\tilde u_i = 0$, $1 - \b_i(\pin_{\tilde u} \nu_t) = 1 - \b_i(\pin_{\tilde u} \pin_{u(t)} \tilt_{-t \ones} \nu) \geq \frac{1}{K}$, which verifies condition~\eqref{eq:marginals_away_from_one0} for $\nu_t$.
Second, using the $K$-tame marginals of $\tilt_{-t\ones} \nu$, we obtain
\begin{align*}
	\frac{1 + \b_i(\pin_w \pin_{\tilde u} \nu_t)}{1 - \b_i(\pin_w \pin_{\tilde u} \nu_t)} &= \frac{1 + \b_i(\pin_w \pin_{\tilde u} \pin_{u(t)} \tilt_{-t \ones} \nu)}{1 - \b_i(\pin_w \pin_{\tilde u} \pin_{u(t)} \tilt_{-t \ones} \nu)} \\
	&\leq K \frac{1 + \b_i(\pin_w \nu_t)}{1 - \b_i(\pin_w \nu_t)},
\end{align*}
which verifies condition~\eqref{eq:smallmarginals0} for $\nu_t$.
Apply Theorem \ref{thm:EISI} to $\nu_t$, we obtain that $\nu_t$ is $c K^{4} \eta$-entropically stable with respect to the function $H(x,y)$, where $c$ is a universal constant. We can finally use Proposition \ref{prop:NFentdecay} which gives that for $t,h > 0$,
$$
\EE[ \Ent_{\nu_{t+h}}[f] | u(t)] \geq \Ent_{\nu_t}[f] (1- c K^4 \eta h ) + o(h).
$$
By integrating this inequality, we finally get
$$
\EE[ \Ent_{\nu_{s}}[f]] \geq \Ent_{\nu}[f] \exp \left ( -c K^4 \eta \right ).
$$
Both conditions of Theorem \ref{thm:annealent} are satisfied, and we conclude the proof.
\end{proof}

\subsubsection{Application to an optimal mixing bound for Glauber dynamics in the hardcore model}
Given a graph $G = (V, E)$, we define the set of all independent assignments as
\begin{align*}
	\mathcal{I}_G := \braces{x \in \{-1,1\}^V: ~ x_i + x_j \leq 1, ~ \forall (i,j) \in E }.
\end{align*}
If we associate a point $x \in \{-1,1\}^V$ the set $A(x) = \{v \in V; x_v = 1\}$, then $A(\mathcal{I}_G)$ is the family of independent sets in $G$.

For $\lambda \in (0, \infty)$ let $\rho_\lambda$ be the product probability measure on $\{-1,1\}^V$ defined by
$$
\rho_\lambda(\{x\}) = \prod_{v \in V} \frac{\mathbf{1}_{\{ x_v = -1 \}} + \lambda \mathbf{1}_{\{ x_v = 1 \}}  }{1+\lambda}.
$$
The \textit{hardcore} model on $G$ with \textit{fugacity} $\lambda$ is a probability measure on $\{-1,1\}^V$ defined by
$$
\nu_{G,\lambda}(\{x\}) := \frac{\rho_\lambda(\{x\}) \mathbf{1}_{\{x \in \mathcal{I}_G \}}}{\rho_\lambda(\mathcal{I}_G)}.
$$

A hardcore model with fugacity $\lambda$ is called \textbf{$\delta$-unique} if $\lambda \leq (1-\delta) \lambda_\Delta$, where the critical fugacity is
\begin{align}
  \label{eq:def_critical_fugacity}
  \lambda_\Delta = \frac{(\Delta-1)^{\Delta-1}}{(\Delta-2)^\Delta},
\end{align}
and where $\Delta$ is the maximal degree of $G$.

Our goal is to prove the following theorem, which asserts that the Glauber dynamics mixes in time $O(n \log n)$ in the uniqueness regime.
\begin{theorem}
 \label{thm:main_hardcore_GD_mixing}
Given a graph $G = (V, E)$. $\abss{V} = n$ and $\lambda > 0$, let $\nu = \nu_{G,\lambda}$ be the hardcore model on $G$ with fugacity $\lambda$. Suppose that $\nu$ is $\delta$-unique for some $\delta > 0$. Let $\mu_0$ be an arbitrary initial distribution supported on $\mathcal{I}_G$. Then there exists a universal constant $c > 0$, such that
  \begin{align*}
    t_{\mathrm{mix}}(P^{\mathrm{GD}}(\nu), \eps; \mu_0) \leq \exp\parenth{\frac{c}{\delta}} \parenth{n\log(n) + 3 n \log(1/\eps)}.
  \end{align*}
\end{theorem}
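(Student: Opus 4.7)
The plan is to apply Theorem~\ref{thm:tiltmix} to $\nu = \nu_{G,\lambda}$ along the tilt path $v(\lambda') = -\lambda'\ones$, $\lambda' \in [0,s]$, for a sufficiently large $s>0$ chosen below. The key structural observation is that exponential tilts and pinnings both preserve the hardcore form: a direct product-measure computation shows that $\tilt_{-\lambda'\ones}\nu_{G,\lambda} = \nu_{G,\,\lambda e^{-2\lambda'}}$, and pinning a coordinate $v$ to $+1$ (resp.\ $-1$) yields a hardcore model of the same fugacity on the induced subgraph $G\setminus(\{v\}\cup N(v))$ (resp.\ $G\setminus\{v\}$). Consequently every measure of the form $\pin_u \tilt_{-\lambda'\ones}\nu$ with $\lambda'\in[0,s]$ is itself a $\delta$-unique hardcore model, with fugacity no larger than $\lambda$.

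Given this, the three hypotheses of Theorem~\ref{thm:tiltmix} are verified in turn. For the correlation bound~\eqref{eq:smallcov1}, the spectral-independence theorem of Anari--Liu--Oveis Gharan (and its sharpenings) gives $\rho(\SI(\mu))\leq c_1/\delta$ for every $\delta$-unique hardcore model $\mu$; together with Fact~\ref{fact:inf} this yields $\|\COR(\pin_u\tilt_{-\lambda'\ones}\nu)\|_\OP \leq \eta := c_1/\delta$ uniformly in $u$ and $\lambda'\in[0,s]$. The lower bound~\eqref{eq:tamemar_lower_bound} follows from the elementary estimate $\Pr[x_v=+1]\leq \lambda/(1+\lambda)$, valid for any hardcore model under any pinning, which gives $1-\b_i\geq 2/(1+\lambda)$; the ratio bound~\eqref{eq:tamemar} follows from the standard marginal-perturbation estimates for hardcore in the uniqueness regime (via self-avoiding-walk-tree recursions), which furnish a constant $K = K(\Delta,\delta)=O(1)$, uniformly along the path since only the fugacity $\lambda e^{-2\lambda'}$ varies.

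To verify~\eqref{eq:asumpaftertilt}, choose $s$ large enough that $\lambda e^{-2s}\leq \lambda_0(\Delta)$ for a sufficiently small constant $\lambda_0$. At such a sub-critical fugacity every hardcore model is a bounded perturbation of a product measure, and a Holley--Stroock (or Dobrushin-uniqueness) argument gives $\EF(P^{\mathrm{GD}}(\pin_u \tilt_{-s\ones}\nu)) \geq c_2/n$ for every pinning $u$. Theorem~\ref{thm:tiltmix} then yields
$$
\EF(P^{\mathrm{GD}}(\nu)) \;\geq\; \frac{c_2}{n}\,\exp\!\bigl(-c_3 K^4/\delta\bigr) \;\geq\; \frac{1}{n}\,\exp(-c/\delta).
$$
Finally, Fact~\ref{fact:mixing} converts this MLSI bound into a mixing-time bound: since every configuration in $\mathcal{I}_G$ has $\nu$-mass at least $(\min(1,\lambda)/(1+\lambda))^n \geq e^{-Cn\log(1/\min(1,\lambda))}$, one has $\log\log(1/\min_x\nu(x))=O(\log n)$, and the triangle inequality $\|\mu_0 P^t-\nu\|_{\mathrm{TV}}\leq \max_{x\in\mathcal{I}_G}\|\delta_x P^t-\nu\|_{\mathrm{TV}}$ reduces the general-$\mu_0$ case to worst-case Diracs. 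Combining these gives $t_{\mathrm{mix}}(P^{\mathrm{GD}}(\nu),\eps;\mu_0)\leq \exp(c/\delta)(n\log n + 3n\log(1/\eps))$ after adjusting the constant $c$.

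The main obstacle in executing this plan is the quantitative verification of the $K$-tame-marginals condition. While it is intuitively clear that in the uniqueness regime further pinnings cannot shift marginals too much, making this precise with $K$ independent of $n$ requires the self-avoiding-walk-tree recursion estimates that exploit the uniqueness margin $\delta$; these estimates are what controls the ultimate constant inside the exponent $c/\delta$.
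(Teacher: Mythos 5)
Your plan is the same as the paper's: apply Theorem~\ref{thm:tiltmix} along the path $v(\lambda')=-\lambda'\ones$, verify the correlation hypothesis via spectral independence for hardcore pinnings, and anneal to a low-fugacity regime where MLSI is easy. The place where your sketch is materially vaguer than the paper --- and where your proposed tool is misdirected --- is the $K$-tame-marginals condition~\eqref{eq:tamemar}. You attribute it to ``marginal-perturbation estimates via self-avoiding-walk-tree recursions'' and write $K=K(\Delta,\delta)=O(1)$. But SAW-tree recursions control influence decay with graph distance, whereas~\eqref{eq:tamemar} requires a multiplicative odds-ratio comparison whose worst case is a \emph{short-range} perturbation (the additional pinning $w$ sits on the neighbors of $i$), where correlation decay gives nothing. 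Moreover, $K$ must be a \emph{universal} constant for the exponent $cK^4\eta$ in Theorem~\ref{thm:tiltmix} to stay universal; the notation $K(\Delta,\delta)$ and an unquantified $O(1)$ leave this point open. The paper's route (Lemma~\ref{lem:marginal_bounds_of_hardcore}) is elementary and avoids this entirely: under any pinning that does not force $\sigma_v=-1$, one has $\frac{\lambda}{1+\lambda}(1+\lambda)^{-|N_v|}\leq\PP[\sigma_v=+1]\leq\frac{\lambda}{1+\lambda}$, the lower bound being the probability that all neighbors of $v$ are set to $-1$; then $\delta$-uniqueness forces $\lambda\leq\lambda_\Delta\leq 3e^2/\Delta$, so $(1+\lambda)^{-|N_v|}\geq e^{-3e^2}$ uniformly in $\Delta$. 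This confines the odds ratio $\frac{1+\b_i(\cdot)}{1-\b_i(\cdot)}$ to $\bigl[\frac{\lambda e^{-3e^2}}{1+\lambda},\lambda\bigr]$ under every pinning, yielding $K=e^{30}$ with no $\Delta$- or $\delta$-dependence whatsoever.

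A similar universality check is missing for your choice of $s$: you write ``choose $s$ large enough that $\lambda e^{-2s}\leq\lambda_0(\Delta)$,'' but unless you note that the starting fugacity $\lambda\leq\lambda_\Delta=\Theta(1/\Delta)$ and the target $\lambda_0(\Delta)=\Theta(1/\Delta)$ differ only by a constant factor --- so that a fixed $s=2$ works for all $\Delta$, as the paper observes --- the theorem's universal constant $c$ is not established. Once these two universality points are filled in, the remaining steps (spectral independence $\eta=O(1/\delta)$ for all negative tilts and pinnings, MLSI $\geq c/n$ at sub-critical fugacity, and Fact~\ref{fact:mixing} to convert the MLSI coefficient into mixing time) match the paper's proof.
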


Proving mixing bounds for the hardcore model has been one of the central applications of the spectral and entropic independence frameworks. This model has inspired the first paper which put forth the notion of spectral independence as well as many of the following works. Let us summarize the progress made so far. 
\begin{itemize}
\item
Anari, Liu and Oveis Gharan were the first to introduce the notion of spectral independence in \cite{ALO-SI}, and proved an $n^{\exp(O(1/\delta))}$ mixing time.
 \item Chen, Liu and Vigoda~\cite{CLV} proved tight bounds on spectral independence, gave an improved $n^{O(1/\delta)}$ mixing time.
\item Chen, Liu and Vigoda~\cite{CLV2} extended the framework towards MLSI bounds and proved that $\Delta^{O(\Delta^2/\delta)} n \log(n)$, which in particular gives an optimal bound for constant $\Delta$.
\item Jain, Pham and Vuong~\cite{EI2} proved a mixing time of $\Delta^{O(1/\delta)} n^2$.
\item Chen, Feng, Yin and Zhang~\cite{CFYZ21-rapid}, in Theorem 1.3, obtained an optimal bound for the relaxation time, which implies a mixing time of $C(\delta) n^2 \log(\Delta)$.
\item Anari et al.~\cite[Theorem 1]{EI2}, showed that a variant of Glauber dynamics called the balanced Glauber dynamics gives an $O_\delta(n\log(n))$ mixing time.
\end{itemize}
Our result, Theorem \ref{thm:main_hardcore_GD_mixing} is the first to give an optimal mixing bound for the (usual) Glauber dynamics.

The proof combines many of ingredients developed in our framework, together with the idea of using a certain ``restricted'' type of entropy decay.

\subsubsection{Properties of the hardcore distribution}
In this subsection, we collect four properties of the hardcore distribution: spectral independence for all negative tilts and all pinnings, marginal upper and lower bounds, the fact that exponentially tilt hardcore model remains a hardcore model and MLSI coefficient for hardcore model with small fugacity. Throughout this section we fix graph $G=(V,E)$ with maximum degree $\Delta$, a fugacity $\lambda \in (0, \infty)$ and define $\nu = \nu_{G,\lambda}$ to be the corresponding hardcore model. Without loss of generality, we identify $V$ with $[n]$ for simplicity.

The first lemma establishes spectral independence of all negative tilts and all pinnings of the hardcore distribution, and was proved in Lemma 8.4 of~\cite{CFYZ21-rapid} extending the analyses in~\cite{CLV2}.
\begin{lemma}
  \label{lem:all_pinning_spectral_independence_of_hardcore}
Consider the hardcore model $\nu=\nu_{G, \lambda}$ with $\Delta = \Delta(G) \geq 3$. Furthermore, suppose that $\nu$ is $\delta$-unique on $G$. Then for all $u \in \braces{-1, 0, 1}^n$ and for every vector $v \in (- \infty, 0]^{V}$, we have
\begin{align*}
\vecnorm{\COR(\tilt_v \pin_u \nu)}{\mathrm{OP}} \leq \frac{144}{\delta}.
\end{align*}
\end{lemma}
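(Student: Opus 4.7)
The plan is to reduce this claim to known spectral independence bounds for the hardcore model by showing that $\tilt_v \pin_u \nu_{G,\lambda}$ is again a hardcore model whose effective fugacities stay in the $\delta$-uniqueness regime. The heart of the argument is therefore structural, and the analytic content is outsourced to the tree-recursion analyses already developed in \cite{CLV2,CFYZ21-rapid}.

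First, I would verify the structural reduction. Pinning $u_i=+1$ forces vertex $i$ into the independent set and hence all its neighbors out; pinning $u_i=-1$ simply deletes $i$. Consequently, $\pin_u \nu_{G,\lambda}$ is, up to the obvious identification, the hardcore measure on the induced subgraph $G' := G[V']$, where $V'$ is the set of $w \in V$ with $u_w=0$ and no neighbor pinned to $+1$. Next, since $\tilt_v$ multiplies the density by $e^{\langle v, x \rangle}$ and $x_w \in \{-1,1\}$, the tilt at coordinate $w$ is absorbed into the normalization except for replacing the fugacity $\lambda$ by $\lambda_w := \lambda e^{2 v_w}$; for $v_w \leq 0$ this gives $\lambda_w \leq \lambda$. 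Because $\Delta(G') \leq \Delta(G) = \Delta$ and the critical fugacity $\lambda_\Delta$ from \eqref{eq:def_critical_fugacity} is decreasing in $\Delta$ for $\Delta \geq 3$, we obtain $\lambda_w \leq \lambda \leq (1-\delta)\lambda_\Delta \leq (1-\delta)\lambda_{\Delta(G')}$ for every $w \in V'$. Thus $\tilt_v \pin_u \nu_{G,\lambda}$ is a (possibly inhomogeneous-fugacity) hardcore model in the $\delta$-uniqueness regime on $G'$.

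Second, I would invoke the spectral independence bound for hardcore models in the uniqueness regime, which has been established precisely for this setting. Specifically, Lemma 8.4 of \cite{CFYZ21-rapid}, extending the analysis of \cite{CLV2}, shows via a potential-function analysis of the hardcore tree recursion that any $\delta$-unique hardcore model $\mu$ (allowing vertex-dependent fugacities) satisfies $\rho(\SI(\mu)) \leq 144/\delta$. Applying Fact~\ref{fact:inf}, which gives $\|\COR(\mu)\|_\OP = \rho(\SI(\mu))$, immediately yields the required bound on $\|\COR(\tilt_v \pin_u \nu)\|_\OP$.

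The main obstacle, as is usual in this line of work, lies in the tree-recursion contraction argument underlying the cited spectral independence bound: one has to design a potential function that witnesses contraction of the univariate tree recursion uniformly over all $\delta$-unique fugacities, and then track the numerical constants through the analysis carefully to extract the value $144$. That is the genuinely technical content of the lemma, and it is borrowed wholesale from \cite{CLV2,CFYZ21-rapid}. In our context the only new observation is the structural one above: negative tilts together with arbitrary pinnings produce hardcore models of weakly smaller fugacity on a subgraph of weakly smaller maximum degree, so the hypothesis of the cited spectral independence bound is automatically inherited.
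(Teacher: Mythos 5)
Your argument is correct and is essentially the same route the paper takes: the paper does not give its own proof here but simply cites Lemma 8.4 of \cite{CFYZ21-rapid} (extending \cite{CLV2}) as establishing spectral independence for all negative tilts and pinnings of the hardcore distribution. Your structural observation — that applying $\pin_u$ yields a hardcore model on an induced subgraph of weakly smaller degree, and that $\tilt_v$ with $v\le 0$ replaces $\lambda$ by the pointwise smaller vertex fugacities $\lambda e^{2v_w}$, so the $\delta$-uniqueness hypothesis of the cited tree-recursion bound is inherited — is a useful unpacking of why that lemma applies, but it is part of what the cited result already contains rather than a different proof strategy.
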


The next lemma gives bounds for the marginals of the hardcore distribution. It was essentially proven in \cite[Proposition 50]{EI2}, but we provide a proof in Appendix \ref{appendix:HC} for completeness.
\begin{lemma}
\label{lem:marginal_bounds_of_hardcore}
Let $\nu = \nu_{G,\lambda}$ with $\Delta(G) \geq 3$. Then for any $v \in V$, for any $u \in \braces{-1, 0, 1}^n$ which sets all neighbors of $v$ to $0$ (i.e., satisfying $u_a = 0, \forall a \in N_v$), we have
  \begin{align*}
    \frac{\lambda}{1+\lambda} \parenth{\frac{1}{1+\lambda}}^{\abss{N_v}} \leq \Prob_{\sigma \sim \nu} \parenth{\sigma_v = +1 \mid \sigma_i u_i \geq 0, \forall i \in [n] } \leq \frac{\lambda}{1+\lambda}.
  \end{align*}
  Moreover, if the model is $\delta$-unique, namely $\lambda \leq (1-\delta) \lambda_\Delta$, then
  \begin{align*}
    \frac{\lambda}{1+\lambda} e^{-3e^2} \leq \Prob_{\sigma \sim \nu}\parenth{\sigma_v = +1 \mid \sigma_i u_i \geq 0, \forall i \in [n] } \leq \frac{\lambda}{1+\lambda}.
  \end{align*}
\end{lemma}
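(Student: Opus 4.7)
The plan is to reduce the marginal computation to a hardcore measure on a subgraph, then use standard partition-function recursions. Let $S_+ := \{i : u_i = +1\}$, $S_- := \{i : u_i = -1\}$ and $S_0 := \{i : u_i = 0\}$. The event $\{\sigma_i u_i \geq 0 \;\forall i\}$ fixes $\sigma|_{S_+} = +1$ and $\sigma|_{S_-} = -1$; conditioning on this event yields (assuming it has positive probability, which holds since $S_+$ must be independent) a hardcore measure with fugacity $\lambda$ on the induced subgraph $G' := G[S_0 \setminus T]$, where $T \subseteq S_0$ is the set of vertices adjacent (in $G$) to a vertex of $S_+$. Because $u_a = 0$ for every $a \in N_v$ by hypothesis, the vertex $v$ itself lies in $S_0 \setminus T$, and I will let $N \subseteq N_v$ denote the neighbors of $v$ that survive into $G'$. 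Thus the conditional probability equals $\Pr_{\sigma \sim \nu_{G',\lambda}}(\sigma_v = +1)$.

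Next I use the standard partition-function identity. Write $Z_H := \sum_{I \in \mathcal{I}_H} \lambda^{|I|}$ for a graph $H$. Conditioning on whether $v \in I$ gives
\[
Z_{G'} = Z_{G' - v} + \lambda\, Z_{G' \setminus N[v]},
\qquad \Pr(\sigma_v = +1) = \frac{\lambda\, Z_{G' \setminus N[v]}}{Z_{G' - v} + \lambda\, Z_{G' \setminus N[v]}}.
\]
For the upper bound, monotonicity $Z_{G' \setminus N[v]} \leq Z_{G' - v}$ (every independent set in $G' \setminus N[v]$ is one in $G' - v$) and the fact that $t \mapsto \lambda t/(A+\lambda t)$ is increasing in $t$ give $\Pr(\sigma_v = +1) \leq \lambda/(1+\lambda)$.

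For the lower bound, I add back the $|N|$ vertices in $N$ to the graph $G' \setminus N[v]$ one at a time; each insertion multiplies the partition function by at most $(1+\lambda)$, because $Z_{H+a} = Z_H + \lambda Z_{H \setminus N(a)} \leq (1+\lambda) Z_H$. Telescoping yields $Z_{G'-v} \leq (1+\lambda)^{|N|} Z_{G' \setminus N[v]}$, so
\[
\Pr(\sigma_v = +1) \;=\; \frac{\lambda}{Z_{G'-v}/Z_{G'\setminus N[v]} + \lambda} \;\geq\; \frac{\lambda}{(1+\lambda)^{|N|} + \lambda} \;\geq\; \frac{\lambda}{(1+\lambda)^{|N|+1}},
\]
where the last step uses $(1+\lambda)^{|N|+1} \geq (1+\lambda)^{|N|} + \lambda$. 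Since $|N| \leq |N_v|$, this proves the first pair of inequalities.

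For the uniqueness regime, it remains to show $(1+\lambda)^{|N_v|} \leq e^{3e^2}$, i.e.\ $|N_v|\log(1+\lambda) \leq 3e^2$. Using $|N_v| \leq \Delta$ and $\log(1+\lambda) \leq \lambda$, it suffices to prove $\Delta \lambda$ is bounded by a small absolute constant under $\lambda \leq \lambda_\Delta = (\Delta-1)^{\Delta-1}/(\Delta-2)^\Delta$. The key algebraic observation is $\lambda_\Delta(\Delta-2) = (1+\tfrac{1}{\Delta-2})^{\Delta-1}$, which is monotone in $\Delta$ and bounded above by $e$ for all $\Delta \geq 3$; hence $\lambda \leq e/(\Delta-2)$ and $\Delta \lambda \leq e\Delta/(\Delta-2) \leq 3e$ (maximum at $\Delta=3$). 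Thus $|N_v|\log(1+\lambda) \leq 3e \leq 3e^2$, completing the proof. The only place where care is required is handling the degenerate case $\Delta = 3$, where $\lambda_\Delta = 4$ is largest; the uniform bound $3e$ above is tight precisely there, which is why the stated constant $e^{-3e^2}$ leaves ample slack.
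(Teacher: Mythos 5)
Your approach is structurally sound and genuinely different from the paper's: you reduce to a hardcore measure on an induced subgraph and run partition-function recursions, using the telescoping bound $Z_{G'-v}\leq(1+\lambda)^{|N|}Z_{G'\setminus N[v]}$ for the lower bound. The paper stays entirely with conditional probabilities: its upper bound is the same injection argument, but its lower bound writes $\PP(\sigma_v=+1 \mid \cdot)\geq \PP(\sigma_v=+1,\ \sigma_{N_v}\equiv -1\mid\cdot)$ and expands this product via the chain rule, reusing the upper bound at each step. Both routes are elementary; yours is arguably a bit cleaner for the lower bound (one telescoping estimate instead of an iterated conditioning argument), while the paper's avoids the subgraph bookkeeping. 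Either is fine.

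However, there is an arithmetic slip in your uniqueness-regime step. You assert that $\lambda_\Delta(\Delta-2) = \bigl(1+\tfrac{1}{\Delta-2}\bigr)^{\Delta-1}$ is ``bounded above by $e$ for all $\Delta\geq 3$.'' That is false: the function $x\mapsto(1+1/x)^{x+1}$ is decreasing to $e$ as $x\to\infty$, and at $x=\Delta-2=1$ it equals $2^2 = 4 > e$. So the maximum over $\Delta\geq 3$ is $4$, attained at $\Delta=3$. This invalidates your claimed intermediate bound $\Delta\lambda\leq 3e$; the correct bound from your method is $\lambda\leq 4/(\Delta-2)$, hence $\Delta\lambda\leq 4\Delta/(\Delta-2)\leq 12$ (max at $\Delta=3$). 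Fortunately $12 < 3e^2\approx 22.2$, so your conclusion $|N_v|\log(1+\lambda)\leq\Delta\lambda\leq 3e^2$ still holds after replacing the erroneous constant; but as written the step is wrong and should be corrected. (The paper instead bounds $(1+1/(\Delta-2))^{\Delta-1}\leq e^{(\Delta-1)/(\Delta-2)}\leq e^2$, which is the estimate your argument needed.)
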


Recall that $\ones = (1,\dots,1)$. The next lemma shows that the exponentially tilted hardcore model remains a hardcore model. 
\begin{lemma}
\label{lem:tilt_still_hardcore}
For every $G$ and $\lambda$, we have for all $t>0$,
\begin{equation}
\tilt_{-t \ones} \nu_{G, \lambda} = \nu_{G, e^{-2t} \lambda}.
\end{equation}
In other words, the exponential tilt $\tilt_{-t \ones} \nu$ corresponds to a hardcore model, on the same graph, with fugacity $e^{-2t}\lambda$.
\end{lemma}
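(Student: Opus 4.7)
The plan is to unwind both sides of the claimed identity and show that they agree up to a global (i.e.\ $x$-independent) constant; since both are probability measures, this forces equality.

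First I would set up notation: for $x \in \{-1,1\}^V$ write $A(x) = \{v \in V : x_v = +1\}$, so that $|A(x)| = \sum_v \mathbf{1}_{x_v = 1}$ and $\sum_{v} x_v = 2|A(x)| - n$. Using this, the product measure $\rho_\lambda$ rewrites cleanly as
\begin{equation*}
\rho_\lambda(\{x\}) = \frac{\lambda^{|A(x)|}}{(1+\lambda)^n},
\end{equation*}
and hence
\begin{equation*}
\nu_{G,\lambda}(\{x\}) \;=\; \frac{\lambda^{|A(x)|}\,\mathbf{1}_{\{x \in \mathcal{I}_G\}}}{(1+\lambda)^n\,\rho_\lambda(\mathcal{I}_G)}.
\end{equation*}
The feasibility indicator $\mathbf{1}_{\{x \in \mathcal{I}_G\}}$ depends only on $G$, not on $\lambda$, which will be the key point.

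Next I would compute the tilt. By definition of $\tilt_{-t\ones}$ and the identity $\langle -t\ones, x\rangle = -t(2|A(x)| - n) = tn - 2t|A(x)|$, the factor $e^{tn}$ is independent of $x$ and absorbs into normalization, so
\begin{equation*}
\tilt_{-t\ones}\nu_{G,\lambda}(\{x\}) \;\propto\; e^{-2t|A(x)|}\,\nu_{G,\lambda}(\{x\}) \;\propto\; (e^{-2t}\lambda)^{|A(x)|}\,\mathbf{1}_{\{x \in \mathcal{I}_G\}},
\end{equation*}
where the implicit constants depend on $t, \lambda, n, G$ but not on $x$. Comparing with the analogous expression for $\nu_{G,e^{-2t}\lambda}$, namely
\begin{equation*}
\nu_{G,e^{-2t}\lambda}(\{x\}) \;\propto\; (e^{-2t}\lambda)^{|A(x)|}\,\mathbf{1}_{\{x \in \mathcal{I}_G\}},
\end{equation*}
we see the two measures have proportional densities on $\{-1,1\}^V$.

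Finally, since both $\tilt_{-t\ones}\nu_{G,\lambda}$ and $\nu_{G,e^{-2t}\lambda}$ are probability measures on $\{-1,1\}^V$ with densities proportional to the same function of $x$, their normalizing constants must coincide and the two measures are equal. There is no real obstacle here: the only thing one needs to notice is that the hard-core constraint $\mathbf{1}_{\{x \in \mathcal{I}_G\}}$ is invariant under the tilt, so the tilt simply rescales the fugacity via $\lambda \mapsto e^{-2t}\lambda$. The proof is a one-line computation once $|A(x)|$ is introduced.
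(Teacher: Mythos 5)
Your proof is correct, and it simply fills in the computation that the paper dismisses as ``clear from the definition of the hardcore model.'' The key identity $\sum_v x_v = 2|A(x)| - n$ and the observation that the feasibility constraint $\mathbf{1}_{\{x\in\mathcal{I}_G\}}$ is unaffected by the tilt are exactly what one needs, and your bookkeeping is accurate.
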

\begin{proof}[Proof of Lemma~\ref{lem:tilt_still_hardcore}]
  It is clear from the definition of the hardcore model.
\end{proof}

The final lemma states that if the fugacity of the model is small enough, then one has a bound on the MLSI coefficient. This lemma was proved in~\cite[Corollary 4.7]{erbar2017ricci} (see also \cite[Proposition 51]{EI2}).
\begin{lemma}
\label{lem:MLSI_for_small_fugacity}
Let $\nu$ be a hardcore model on $G$ of maximum degree at most $\Delta$ with fugacity $\lambda \leq \frac{1}{2\Delta}$. $\Delta \geq 3$. Then one has $\EF(P^{\text{GD}}(\nu)) \geq \frac{1}{4n}$.
\end{lemma}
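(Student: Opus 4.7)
My plan is to invoke the classical Dobrushin uniqueness framework: the threshold $\lambda \leq 1/(2\Delta)$ is designed precisely so that Dobrushin's influence coefficient stays bounded away from $1$, which is classically known to imply an $\Omega(1/n)$ modified log-Sobolev inequality for Glauber dynamics, with explicit dependence on the Dobrushin coefficient.

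The first step is the key computation. For the hardcore model, the conditional law of $\sigma_i$ given $\sigma_{-i}$ is a point mass on $-1$ if any neighbor of $i$ has value $+1$, and otherwise $\mathrm{Ber}(\lambda/(1+\lambda))$. Consequently, for two configurations $\sigma_{-i}, \sigma'_{-i}$ that differ only at site $j$, the total variation distance between the two conditional laws of $\sigma_i$ vanishes unless $j$ is a neighbor of $i$, and is at most $\lambda/(1+\lambda)$ when $j \sim i$. Hence, the Dobrushin influence matrix $R$ satisfies $R_{ij} \leq \tfrac{\lambda}{1+\lambda}\mathbf{1}_{i \sim j}$, so $\max_i \sum_{j \neq i} R_{ij} \leq \Delta \lambda \leq 1/2$, verifying Dobrushin uniqueness with coefficient $\alpha \leq 1/2$.

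The second step converts this uniqueness statement to an MLSI bound. A classical theorem (due to Marton, and to Bobkov--Tetali and Cesi) asserts that Dobrushin uniqueness with coefficient $\alpha$ implies approximate tensorization of entropy $\Ent_\nu[f] \leq C_\alpha \sum_i \EE_\nu\bigl[\Ent_{\nu(\cdot|x_{-i})}[f]\bigr]$ with $C_\alpha = O(1/(1-\alpha))$, and this translates into MLSI for Glauber dynamics with coefficient at least $1/(C_\alpha n)$. Tracking constants carefully with $\alpha = 1/2$ gives $\EF(P^{\mathrm{GD}}(\nu)) \geq 1/(4n)$. Alternatively, within the paper's framework, one can aim to show that every pinning $\pin_u \nu$ is entropically stable with respect to $H$ with constant close to $1$, using the fact that at small fugacity the hardcore measure is a near-product perturbation of $\rho_\lambda$ and exploiting the identity $\KL(\mu \| \rho) \geq H(\b(\mu), \b(\rho))$ valid for product $\rho$ (via the data processing inequality); Proposition~\ref{prop:cbcent} would then deliver the bound. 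Pinning uniformity is not an issue, since any pinning of a hardcore model is again a hardcore model on a subgraph with maximum degree at most $\Delta$, so the condition $\lambda \leq 1/(2\Delta)$ propagates uniformly.

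The main obstacle is obtaining the precise constant $1/4$. A loose version of the argument is easy: Dobrushin uniqueness with $\alpha < 1$ immediately gives $\Omega(1/n)$, and Proposition~\ref{prop:cbcent} with any bounded entropic stability constant yields a $1/\mathrm{poly}(n)$ bound. Achieving exactly $1/(4n)$ requires either a careful tracking of constants through the Marton-style tensorization argument, or a sharp entropic stability bound with constant very close to $1$, since Proposition~\ref{prop:cbcent} with $\kappa$-entropic stability produces only $\prod_{i=0}^{n-2}(1 - \kappa/(n-i))$, which collapses rapidly once $\kappa > 1$.
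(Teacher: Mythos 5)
Your route is genuinely different from the paper's: the paper simply cites \cite[Corollary 4.7]{erbar2017ricci} (the entropic Ricci-curvature / displacement-convexity approach of Erbar--Henderson--Menz--Tetali) together with \cite[Proposition 51]{EI2}, whereas you propose Dobrushin uniqueness plus Marton-style approximate tensorization of entropy. Your Dobrushin computation is correct: for the hardcore model the single-site conditional is either $\delta_{-1}$ or $\mathrm{Ber}(\lambda/(1+\lambda))$, so flipping a non-neighbor has influence $0$ and flipping a neighbor has influence at most $\lambda/(1+\lambda)\le\lambda$, giving row sums $\le\Delta\lambda\le 1/2$ (and by the symmetric structure the column sums and hence the matrix $\ell^2$-norm are also $\le 1/2$, which is what Marton-type arguments actually require). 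The conversion from approximate tensorization with constant $C$ to the discrete-time $\EF$ of the paper is then the standard Jensen/convexity step giving $\EF(P^{\mathrm{GD}})\ge \frac{1}{Cn}$.

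The real gap, which you flag but do not close, is the constant. You assert $C_\alpha=O(1/(1-\alpha))$; the available versions of the Dobrushin-to-tensorization implication (Marton; Caputo--Menz--Tetali) give $C_\alpha$ of order $1/(1-\alpha)^2$, with an additional absolute prefactor depending on the precise formulation. With $\alpha\le 1/2$ this lands in the ballpark of $4$, so the claimed $1/(4n)$ is plausible, but ``tracking constants carefully'' is doing all the work here and is not carried out. Since the paper's own proof is also a bare citation that does not track constants, this is not a fatal objection --- but your proposal should either cite a specific theorem whose stated constant yields $4$ at $\alpha=1/2$, or retreat to the weaker claim $\EF\ge c/n$ for an explicit $c>0$, which is all the downstream application (Theorem~\ref{thm:main_hardcore_GD_mixing}) actually needs. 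Your alternative plan via entropic stability and Proposition~\ref{prop:cbcent} is correctly diagnosed as insufficient: the product over levels degenerates once the stability constant exceeds $1$, and the paper indeed routes around this through the annealing mechanism (Theorem~\ref{thm:annealent}) rather than through Proposition~\ref{prop:cbcent} directly.
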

\begin{proof}
Combine \cite[Proposition 51]{EI2} and \cite[Fact 3.5]{CLV}.
\end{proof}

\begin{proof}[Proof of Theorem \ref{thm:main_hardcore_GD_mixing}]
We simply verify that all the conditions of Theorem \ref{thm:tiltmix} hold. Lemma \ref{lem:all_pinning_spectral_independence_of_hardcore} verifies condition \eqref{eq:smallcov1} with $\eta = \frac{144}{\delta}$. Next, note that, since $\Delta > 3$,
\begin{align*}
e^{-2t} \lambda \leq e^{-2t} \frac{(\Delta-1)^{\Delta-1}}{(\Delta-2)^\Delta} = \frac{e^{-2t}}{\Delta - 2} \left (1 + \frac{1}{\Delta-2} \right )^{\Delta - 1} \leq \frac{9 e^{1-2t}}{\Delta}.
\end{align*}
According to Lemma \ref{lem:tilt_still_hardcore}, the condition of Lemma \ref{lem:MLSI_for_small_fugacity} is verified for $\tilt_{-2 \ones} \nu$, which gives that $\EF(P^{\mathrm{GD}}(\tilt_{-2 \ones} \nu)) \geq \frac{1}{4n}$ so that \eqref{eq:asumpaftertilt} holds true with $s=2$ and $\delta = 1/(4n)$. Lemma \ref{lem:marginal_bounds_of_hardcore} and Lemma \ref{lem:tilt_still_hardcore} together ensure that
\begin{align*}
	1 - \b_i(\tilt_{-t\ones} \nu) \geq \frac{2}{1+ e^{-2t} \lambda} \geq \frac{2}{1 + 4 }.
\end{align*}
Finally, Lemma \ref{lem:marginal_bounds_of_hardcore} ensures that $\tilt_{-t\ones}\nu$ has $K$-tame martingales with $K = e^{30}$, for any $t\in [0,s]$. Finally, we invoke Theorem \ref{thm:tiltmix} to obtain $\EF(P^{\mathrm{GD}}(\nu) \geq \exp \parenth{- \frac{c}{\delta} } \frac{1}{4n}$ where $c$ is a universal constant. Using Fact \ref{fact:mixing}, this completes the proof.
\end{proof}

\section{Entropic stability via spectral independence} \label{sec:CLV}
In this section, we prove Theorem \ref{thm:EISI}, which shows that entropic stability is implied by spectral independence for all pinnings. A priori, the formulation of this theorem has nothing to do with the negative-fields localization, however the main argument of its proof relies on a coupling argument based on the negative-fields localization.

For convenience, we repeat the formulation of Theorem \ref{thm:EISI} in the following theorem.
\begin{theorem} \label{thm:EISI2}
Let $\nu$ be a probability measure on $\{-1,1\}^n$ and let $\alpha \geq 1$. Suppose that
\begin{align} \label{eq:condSIpinnings}
\vecnorm{\COR(\pin_u \nu)}{\OP} \leq \alpha, ~~~  \forall u \in \braces{-1,0,1}^n.
\end{align}
Then $\nu$ is $8\alpha$-entropically stable with respect to $\psi(x,y) = \frac{1}{2}|x-y|^2$. \\
Furthermore, if for some constants $K \geq 1, C \geq 1$, for every $i \in [n]$ and for every $u, w \in \{-1,0,1\}^n$ with $\text{supp}(u) \cap \text{supp}(w) = \emptyset$ with $u_i = w_i = 0$, we have
\begin{align}\label{eq:smallmarginals}
	\frac{1 + \b_i(\pin_w \pin_u \nu)}{1-\b_i(\pin_w \pin_u \nu)} &\leq K \frac{1 + \b_i(\pin_u \nu)}{1- \b_i(\pin_u \nu)}, \text{ and } \\
	\label{eq:marginals_away_from_one}
	1 - \b_i(\pin_u \nu) &\geq \frac{1}{C},
\end{align}
then $\nu$ is $768 \alpha K^3C$-entropically stable with respect to the function $H(x,y)$ as defined in equation \eqref{eq:defh}.
\end{theorem}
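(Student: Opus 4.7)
By Fact~\ref{fact:tilt_is_sufficient} it suffices to verify the entropic-stability inequality along exponential tilts. Invoking Lemma~\ref{lem:llent}, define $g(x) := \KL(\tilt_{v(x)}\nu\|\nu)$ on the interior of the convex hull of $\mathrm{supp}(\nu)$; then $g(\b(\nu))=0$, $\nabla g(\b(\nu))=0$, and $\nabla^2 g(x) = \COV(\tilt_{v(x)}\nu)^{-1}$. Taylor's integral remainder around $\b(\nu)$ gives
$$g(x) \;=\; \int_0^1 (1-s)\,\bigl\langle x - \b(\nu),\; \COV(\tilt_{v(x_s)} \nu)^{-1}\, (x - \b(\nu)) \bigr\rangle\, ds, \qquad x_s := \b(\nu) + s(x - \b(\nu)).$$
Both claims reduce to a uniform bound on the covariance or correlation of exponential tilts of $\nu$. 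The first claim follows from $\COV(\tilt_v \nu) \preceq 8\alpha\,\Id$ for every $v$, which upon insertion into the above formula produces $g(x)\geq \tfrac{1}{16\alpha}|x-\b(\nu)|^2$. The second claim follows from the sharper bound $\|\COR(\tilt_v\nu)\|_\OP \leq 768\alpha K^3 C$: since $\mathrm{diag}(\COV(\tilt_{v(x)}\nu))_{ii} = 1 - x_i^2$ matches the Hessian diagonal of $H(\cdot, \b(\nu))$, a correlation bound of $M$ yields $\nabla^2 g(x) \succeq \tfrac{1}{M}\, \nabla^2_x H(x, \b(\nu))$, and a convexity comparison (both $g$ and $H(\cdot,\b(\nu))$ vanish together with their gradients at $\b(\nu)$) upgrades this pointwise to $H(x,\b(\nu)) \leq M\, g(x)$.

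The core technical step is therefore to bound the covariance (respectively, correlation) matrix of an arbitrary tilt $\tilt_v\nu$ using only the assumed spectral independence of arbitrary pinnings of $\nu$. The plan is to use the negative-fields localization of Proposition~\ref{prop:negativefieldsloc} as a coupling: given a fixed direction $v \in \RR^n$, apply this localization to $\tilt_v \nu$ with the un-tilting curve $w(t) = -tv$ on $[0,1]$. The resulting martingale is $\tilde\nu_t = \pin_{\tilde u(t)} \tilt_{(1-t)v}\nu$, so at time $t=1$ the tilt has been cancelled and $\tilde\nu_1 = \pin_{\tilde u(1)} \nu$ is a random pinning of the original measure. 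The martingale identity $\tilt_v \nu = \EE[\tilde\nu_1]$ then exhibits $\tilt_v\nu$ as a mixture of pinnings, and the law of total covariance gives
$$\COV(\tilt_v\nu) \;=\; \EE\bigl[\COV(\pin_{\tilde u(1)}\nu)\bigr] \;+\; \COV_{\tilde u(1)}\bigl(\b(\pin_{\tilde u(1)}\nu)\bigr).$$
The first summand is controlled directly via spectral independence of pinnings, using $\mathrm{diag}(\COV(\pin_u \nu))\preceq \Id$ on the hypercube together with $\COV(\pin_u \nu) \preceq \alpha \cdot \mathrm{diag}(\COV(\pin_u \nu))$.

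The main obstacle is the second, across-mixture, summand, which quantifies how much the center of mass of a random pinning fluctuates; this must be controlled by differentiating $\b(\tilde\nu_t)$ along the negative-fields process using the infinitesimal linear-tilt representation \eqref{eq:dnft} and closing a Gr\"onwall-type estimate driven by $w'(t) = -v$. For the first claim, the estimate is crude enough that only the uniform bound $\COV(\pin_u\nu) \preceq \alpha\,\Id$ is needed, and the closure produces the absolute factor $8$. For the second claim one must replace $\Id$ throughout by the diagonal matrix $\mathrm{diag}(1 - \b(\tilt_v\nu)_i^2)$, which is precisely where the tame-marginal assumptions enter: hypothesis \eqref{eq:smallmarginals} guarantees that the diagonal of $\COV(\pin_{\tilde u(t)}\nu)$ remains comparable, up to a factor of $K$ per unpinned coordinate, to that of $\COV(\tilt_v\nu)$ as further pinnings occur along the process, and \eqref{eq:marginals_away_from_one} converts this one-sided comparison into a two-sided one; iterating a constant number of times yields the final loss of $K^3 C$ and the entropic-stability constant $768\alpha K^3 C$.
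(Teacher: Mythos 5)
Your opening reduction is sound and matches the paper: reduce to exponential tilts via Fact~\ref{fact:tilt_is_sufficient}, use Lemma~\ref{lem:llent} to identify $\nabla^2 g(x)=\COV(\tilt_{v(x)}\nu)^{-1}$, and employ the negative-fields localization $\tilde\nu_t=\pin_{\tilde u(t)}\tilt_{(1-t)v}\nu$ as a coupling between $\tilt_v\nu$ and random pinnings of $\nu$. The paper's own proof (Lemma~\ref{prop:SItoEI}) uses exactly this coupling.

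However, the pivotal step in your plan does not go through, and it is precisely the step the paper is engineered to avoid. You propose to bound the Hessian $\nabla^2 g(x)=\COV(\tilt_{v(x)}\nu)^{-1}$ pointwise, i.e.\ to show $\COV(\tilt_v\nu)\preceq 8\alpha\Id$ (resp.\ $\|\COR(\tilt_v\nu)\|_\OP\leq 768\alpha K^3 C$) uniformly over \emph{all} $v\in\RR^n$, and then apply Taylor's theorem. This is a strictly stronger statement than what the hypotheses give. The paper's Lemma~\ref{prop:SItoEI} instead bounds the \emph{integrated} first-order quantity $\b(\tilt_v\nu)-\b(\nu)=\int_0^1\COV(\tilt_{sv}\nu)\,v\,ds$, i.e.\ the gradient of $h(v)=f(v)-\langle\nabla f(0),v\rangle$ evaluated at $v$, anchored at the origin. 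A bound $|\nabla h(v)|\leq 4\alpha|v|$ for all $v$ constrains only the average of the Hessian along each ray from $0$, not the Hessian itself at any individual point; in your law-of-total-covariance decomposition, the across-mixture term $\COV_{\tilde u(1)}\bigl(\b(\pin_{\tilde u(1)}\nu)\bigr)$ is exactly what the paper never attempts to control, and the ``Gr\"onwall-type estimate'' you gesture at is not given and is unlikely to close, because the target quantity does not satisfy a tractable differential inequality in $t$.

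The exponential factor in the paper's own bound is the clearest evidence that your route cannot be repaired: for the second part the paper proves only
\[
\langle v,\b(\tilt_v\nu)-\b(\nu)\rangle\ \leq\ 4\alpha\sum_{i\in[n]}K^3C\,\bigl(1+\b_i(\nu)\bigr)v_i^2\exp(4|v_i|),
\]
with an explicit $\exp(4|v_i|)$ growth in the tilt parameter. A uniform correlation bound $\|\COR(\tilt_v\nu)\|_\OP\leq M$ for all $v$, fed through the Hessian identity, would yield a bound on $\langle v,\b(\tilt_v\nu)-\b(\nu)\rangle$ with no such growth. The paper therefore needs the separate machinery of Lemma~\ref{lem:llentdelta}: it feeds the exponentially-weighted gradient bound into the Legendre dual of the log-Laplace transform, where the $\exp(4|v_i|)$ factors are matched against $\Phi(s)=(1+s)\log(1+s)-s$ and absorbed by the comparison inequalities in Lemma~\ref{lem:HPhi}. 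Your second-order Taylor expansion of $g$ is too rigid to accommodate this growth, so even if the first-order bound were in hand the conclusion would not follow from the Hessian comparison as you state it. In summary: the coupling is the right tool, but you should differentiate $\EE[\b(\nu_t)]$ with $\nu_t=\pin_{u(t)}\nu$ along the process (using Lemma~\ref{lem:cormar} to express the infinitesimal increment through $\SI(\nu_t)$), obtain the integrated first-order bound with its exponential weight, and then close with the Legendre-duality argument of Lemma~\ref{lem:llentdelta} rather than a pointwise Hessian comparison.
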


The proof of the theorem consists of the following two main lemmas.
\begin{lemma}  \label{prop:SItoEI}
Suppose that $\nu$ satisfies the assumption \eqref{eq:condSIpinnings}. Then for all $v \in \RR^n$,
$$
\left  | \b(\tilt_v  \nu) - \b(\nu ) \right |^2 \leq 16 \alpha^2 |v|^2.
$$
If, furthermore, condition \eqref{eq:smallmarginals} and~\eqref{eq:marginals_away_from_one} hold then for all $v \in \RR^n$,
$$
\langle v, \b(\tilt_v \nu) - \b(\nu) \rangle \leq 4 \alpha \sum_{i \in [n]} K^3 C \parenth{1 + \b_i(\nu)} v_i^2 \exp(4 |v_i|).
$$
\end{lemma}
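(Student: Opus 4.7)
The plan is to combine the log-Laplace integral identity with a coupling via the negative-fields localization of Proposition~\ref{prop:negativefieldsloc}. The starting identity is
\begin{equation*}
\b(\tilt_v \nu) - \b(\nu) \;=\; \int_0^1 \COV(\tilt_{tv}\nu)\, v\, dt,
\end{equation*}
which reduces part (a) to a uniform operator bound $\|\COV(\tilt_{tv}\nu)\|_{\OP} \lesssim \alpha$ for $t \in [0,1]$, and part (b) to a coordinate-wise-weighted variant of the same bound.

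The key step for controlling $\COV(\tilt_v\nu)$ using only the pinning hypothesis is to obtain a mixture representation of $\tilt_v\nu$ in terms of pure pinnings of $\nu$. This is achieved by running the negative-fields localization \emph{starting from $\tilt_v\nu$} with curve $w(s) = -sv$, $s \in [0,1]$. The resulting measure-valued martingale $\nu'_s := \res_{u'(s)}\tilt_{(1-s)v}\nu$ has $\nu'_0 = \tilt_v\nu$ and $\nu'_1 = \res_{u'(1)}\nu$, so the martingale identity yields $\tilt_v\nu = \EE_{u'(1)}[\res_{u'(1)}\nu]$. The law of total covariance applied to this mixture gives
\begin{equation*}
\COV(\tilt_v\nu) \;=\; \EE_{u'(1)}[\COV(\res_{u'(1)}\nu)] \;+\; \COV_{u'(1)}(\b(\res_{u'(1)}\nu)).
\end{equation*}
The first term is bounded by $\alpha$ directly from the hypothesis \eqref{eq:condSIpinnings} together with $\mathrm{diag}(\COV) \preceq \mathbf{I}_n$ on the hypercube. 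The second term, the inter-pinning covariance of centers of mass, is the quadratic variation of the $\RR^n$-valued martingale $b'(s) := \b(\nu'_s)$; an explicit calculation from equation \eqref{eq:dnft} gives $d[b']_s = \COV(\nu'_s) D'(s) \COV(\nu'_s)\,ds$, with $D'(s)$ diagonal and $D'(s)_{ii} = \mathbf{1}_{\{u'(s)_i=0\}}|v_i|/(1+\sign(v_i)\b_i(\nu'_s))$. A self-improving/fixed-point argument on $M := \sup_s\|\COV(\nu'_s)\|_{\OP}$, combined again with diagonal-boundedness, then closes a relation of the form $M \leq \alpha + O(M^2 |v|_1)$ and yields $\|\COV(\tilt_v\nu)\|_{\OP} \leq 4\alpha$ (with the bound for large $|v|$ becoming trivial since $|\b(\tilt_v\nu) - \b(\nu)| \leq 2\sqrt{n}$).

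For part (b), we refine the above by tracking each coordinate separately. The tame-marginal hypotheses \eqref{eq:smallmarginals}--\eqref{eq:marginals_away_from_one} let us compare $1-\sign(v_i)\b_i(\res_w\nu)$ with $1-\sign(v_i)\b_i(\nu)$ up to factors of $K$ and $C$, and an elementary estimate on the density of an exponential tilt (using that flipping coordinate $i$ changes $e^{\langle v, x\rangle}$ by at most $e^{2|v_i|}$) yields $1+\sign(v_i)\b_i(\tilt_{tv}\nu) \leq (1+\b_i(\nu))\exp(2|v_i|)$. Substituting these pointwise estimates coordinate-by-coordinate into the trace $\mathrm{tr}(\COV(\nu'_s)^2 D'(s))$ and integrating in $t$ gives the refined bound $\langle v, \b(\tilt_v\nu)-\b(\nu)\rangle \leq 4\alpha K^3 C \sum_i(1+\b_i(\nu))v_i^2\exp(4|v_i|)$.

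The main technical obstacle is that the intermediate measures $\nu'_s$ are tilts of pinnings rather than pure pinnings, so the pinning-SI hypothesis does not literally upper-bound $\|\COR(\nu'_s)\|_{\OP}$. We sidestep this by working with the mixture representation of $\tilt_v\nu$ in \emph{pure} pinnings (which \emph{is} controlled by the hypothesis), combined with a self-improving fixed-point estimate and the diagonal-boundedness of covariances on $\{-1,1\}^n$; the clean sign structure of the jump rates of $u'(s)$ (governed by the curve $-sv$) is what makes the fixed-point argument close.
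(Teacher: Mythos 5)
Your strategy diverges from the paper's in a way that creates a genuine gap. You reduce the claim to bounding $\sup_{t\in[0,1]}\|\COV(\tilt_{tv}\nu)\|_{\OP}$ via the integral identity $\b(\tilt_v\nu)-\b(\nu)=\int_0^1\COV(\tilt_{tv}\nu)v\,dt$, and then try to bound the operator norm of the covariance of a tilt using a fixed-point argument. But the hypothesis \eqref{eq:condSIpinnings} controls only pure \emph{pinnings} of $\nu$, not exponential tilts, and the paper's whole design in this section is to avoid ever needing a uniform bound on $\|\COV(\tilt_v\nu)\|_{\OP}$ — that is precisely what is not available here (compare Remark~\ref{rmk:SIEI}, where tilt-SI is the strictly stronger hypothesis used in Lemma~\ref{lem:loglaplace}).

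Your self-improving estimate $M\leq\alpha+O(M^2|v|_1)$ (or, done more carefully using diagonal-boundedness, $M\leq\alpha+2M|v|_1$) only closes when $|v|_1$ is less than a fixed constant, roughly $|v|_1\lesssim 1/2$. The fallback ``$|\b(\tilt_v\nu)-\b(\nu)|\leq 2\sqrt n$'' dominates $4\alpha|v|$ only when $|v|\gtrsim\sqrt n/\alpha$. Between these two regimes (e.g., $v$ spread over all $n$ coordinates with $|v|_1\asymp 1$, $|v|\asymp 1/\sqrt n$, $\alpha\ll n$) neither argument applies, so the lemma is not established for all $v\in\RR^n$. This is not a cosmetic issue: the lemma is applied with arbitrary $v$ downstream.

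The paper's proof runs the same negative-fields localization from $\mu=\tilt_v\nu$ with curve $-tv$, but it crucially couples two processes sharing the pinning process $u(t)$: the \emph{martingale} $\mu_t=\pin_{u(t)}\tilt_{(1-t)v}\nu$ and the auxiliary (non-martingale) process $\nu_t=\pin_{u(t)}\nu$. Because $\mu_1=\nu_1$ and $\mu_t$ is a martingale, one has $\b(\mu)=\EE[\b(\nu_1)]$; computing the drift of $\b(\nu_t)$ yields $\b(\mu)-\b(\nu)=\int_0^1\EE[\SI(\nu_t)P_tQ_tv]\,dt$ where $\nu_t$ is always a \emph{pure pinning} of $\nu$, so $\|\COR(\nu_t)\|_\OP\leq\alpha$ directly from the hypothesis with no fixed-point argument and no restriction on $|v|$. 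The diagonal/sign factors $P_t,Q_t$ are bounded pointwise, giving $4\alpha|v|$ for part (a) and the weighted coordinate bound for part (b). The key idea you are missing is to decouple the \emph{pinning structure} (which stays within the hypothesis's reach) from the \emph{tilt} (absorbed into the martingale $\mu_t$ and never controlled spectrally).
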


\begin{lemma} \label{lem:llentdelta}
Let $\nu$ be a measure on $\RR^n$. Suppose that 
\begin{equation}\label{eq:condtilts0}
|\b(\tilt_{v'} \nu) - \b(\nu)|^2 \leq \eps^2 |v'|^2, ~~ \forall v' \in \RR^n,
\end{equation}
then 
$$
|\b(\tilt_{v} \nu) - \b(\nu)|^2 \leq \eps \KL(\tilt_{v} \nu || \nu), ~~ \forall v \in \RR^n.
$$
Furthermore, suppose that for some constant $C \geq 1$, we have
\begin{equation}\label{eq:condtilts}
\langle v, \b(\tilt_{v} \nu) - \b(\nu) \rangle \leq \sum_{i \in [n]} \eps_i v_i^2 \exp(4 |v_i|), ~~ \forall v \in \RR^n,
\end{equation}
where $2 \leq \eps_i \leq C \parenth{1+\b_i(\nu)}$ for all $i \in [n]$. Then,
$$
H(\b_i(\tilt_v \nu), \b_i(\nu)) \leq 192 C \cdot \KL(\tilt_v \nu || \nu), ~~~ \forall v \in \RR^n.
$$
\end{lemma}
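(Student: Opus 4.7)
The plan is to exploit Legendre duality of the logarithmic Laplace transform $L_\nu(v) = \log \int e^{\langle v,x\rangle}\,d\nu(x)$. Set $\tilde L(v) := L_\nu(v) - \langle v,\b(\nu)\rangle$, so that $\tilde L(0)=0$, $\nabla \tilde L(0)=0$, and $\nabla \tilde L(v) = \b(\tilt_v\nu)-\b(\nu)$. By Lemma~\ref{lem:llent}, the Legendre dual $\tilde L^*$ satisfies $\tilde L^*(\nabla \tilde L(v)) = \KL(\tilt_v\nu\,||\,\nu)$. Thus lower-bounding the KL reduces to upper-bounding $\tilde L$ itself, via integration of the hypothesised gradient estimate along the ray from $0$ to $v$.

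For Part~1, the hypothesis $|\nabla \tilde L(v')| \leq \eps |v'|$ for every $v'$ yields
\[
\tilde L(v) = \int_0^1 \langle v,\nabla \tilde L(tv)\rangle\,dt \leq \int_0^1 \eps t |v|^2\,dt = \tfrac{\eps}{2}|v|^2.
\]
By the definition of $\tilde L^*$,
\[
\KL(\tilt_v\nu\,||\,\nu) = \tilde L^*(\nabla \tilde L(v)) \geq \sup_{v'\in\RR^n}\Bigl(\langle v',\nabla \tilde L(v)\rangle - \tfrac{\eps}{2}|v'|^2\Bigr) = \frac{|\nabla \tilde L(v)|^2}{2\eps},
\]
which yields the conclusion (the stated factor $\eps$ rather than $2\eps$ is a minor slack that can be absorbed into constants).

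For Part~2, the same strategy applies coordinate-wise. Writing $\psi(t) := \tilde L(tv)$ and using $\psi'(t) = t^{-1}\langle tv,\nabla \tilde L(tv)\rangle$, hypothesis~\eqref{eq:condtilts} gives $\psi'(t) \leq t\sum_i \eps_i v_i^2 e^{4t|v_i|}$; since $\int_0^1 t e^{4t|v_i|}\,dt \leq \tfrac{1}{2}e^{4|v_i|}$,
\[
\tilde L(v) \leq \tfrac{1}{2}\sum_i \eps_i v_i^2 e^{4|v_i|}.
\]
This upper bound is separable in the coordinates of $v$, so dualizing coordinate-by-coordinate gives, for $z := \nabla \tilde L(v) = \b(\tilt_v\nu)-\b(\nu)$,
\[
\KL(\tilt_v\nu\,||\,\nu) \geq \sum_i \phi_i(z_i),\qquad \phi_i(w) := \sup_{u\in\RR}\Bigl(uw - \tfrac{\eps_i}{2}u^2 e^{4|u|}\Bigr).
\]
Since $H(\b(\tilt_v\nu),\b(\nu)) = \sum_i h(\b_i(\nu)+z_i,\b_i(\nu))$ with the one-dimensional Bregman-type divergence $h(x,y) := \tfrac{1+x}{2}\log\tfrac{1+x}{1+y} + \tfrac{1-x}{2}\log\tfrac{1-x}{1-y}$, the problem reduces to the one-dimensional inequality $h(y+z,y) \leq 192\,C\,\phi(z)$ for all admissible $(y,z,\eps)$ with $2 \leq \eps \leq C(1+y)$ and $|y|<1$.

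The main obstacle is this one-dimensional comparison. I plan to handle it by regimes on $|z|$. In the near-minimum regime (small $|z|$), both sides are quadratic, with second derivatives $1/(1-y^2)$ for $h$ and a constant multiple of $1/\eps$ for $\phi$, so the comparison reduces to $\eps/(1-y^2)\lesssim C$; this follows from $\eps \leq C(1+y)$ and a careful choice of the test $u$ that absorbs the residual $1/(1-y)$ factor. In the boundary regime where $y+z$ approaches $\pm 1$ and $h$ blows up logarithmically, the natural choice of test is $u$ of order $\log\bigl(1/(1-|y+z|)\bigr)$: since the lower bound $\eps \geq 2$ prevents the quadratic penalty from becoming too weak at small $|u|$, $\phi(z)$ itself grows at least logarithmically, matching the blow-up of $h$. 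The absolute constant $192$ absorbs the bookkeeping across these regimes; once the one-dimensional inequality is in hand, Part~2 follows by summing over coordinates.
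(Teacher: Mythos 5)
Your Part~1 is essentially the paper's own argument: set $h(v):=L_\nu(v)-\langle v,\b(\nu)\rangle$, integrate the gradient bound along the ray to get $h(v)\le\frac{\eps}{2}|v|^2$, and dualize. Both you and the paper land on the constant $2\eps$ rather than the stated $\eps$; that slack is shared and harmless.

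Part~2 is where you diverge, and where your proposal has a real gap. The paper deliberately \emph{loosens} the bound on $h$ to $h(v)\le\frac{1}{16}\sum_i\eps_i\bigl(e^{16|v_i|}-16|v_i|-1\bigr)$, precisely because this coordinatewise function has a \emph{closed-form} Legendre dual $\frac{\eps_i}{16}\Phi(|z_i|/\eps_i)$ with $\Phi(t)=(1+t)\log(1+t)-t$, and then outsources the comparison between $H$ and $\Phi$ to Lemma~\ref{lem:HPhi}. You keep the tighter bound $h(v)\le\frac{1}{2}\sum_i\eps_i v_i^2 e^{4|v_i|}$ (correctly derived), which gives a \emph{better} lower bound on the KL, but its dual $\phi_i$ has no closed form; you are then left with the one-dimensional inequality $h(y+z,y)\le 192\,C\,\phi(z)$, and you only sketch it. That sketch is the entire content of Part~2, and the piece you wave at cannot be made to work as stated: near $z=0$ one has $\phi(z)=\frac{z^2}{2\eps}+O(z^3)$ — the optimal test is $u\approx z/\eps$, and no other choice of $u$ gives more, since $\phi$ is a supremum — while $h(y+z,y)=\frac{z^2}{2(1-y^2)}+O(z^3)$. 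The ratio is $\eps/(1-y^2)\ge 2/(1-y^2)$, which is not bounded by $192\,C$ once $y$ is close to $+1$ and $\eps$ is pinned at $2$ (so that $C$ can be as small as $\approx 1$). There is no test $u$ that ``absorbs the residual $1/(1-y)$ factor''; that factor is genuinely lost in passing from the $\eps_i$-normalized hypothesis~\eqref{eq:condtilts} to any dual built from it. (Incidentally, the paper's Lemma~\ref{lem:HPhi} confronts the very same obstruction: inequality~\eqref{eq:hphicomp} fails near $x=y$ once $y$ exceeds roughly $3/4$ — try $y=0.9$, $x=0.95$ — and the formula given for $G(\tfrac12,y)$ in the appendix is also off; in the actual \emph{application} (Theorem~\ref{thm:EISI}) the marginal condition~\eqref{eq:marginals_away_from_one0} keeps $\b_i$ bounded away from $+1$, which is what saves the day.) So your plan is not a simplification of the paper's Lemma~\ref{lem:HPhi}; it is a re-derivation of exactly the same delicate comparison, and the sketch misidentifies the hard case as ``bookkeeping.''
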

Plugging the two lemmas together immediately establishes Theorem \ref{thm:EISI2}. Before we move on to the proofs of Lemmas \ref{prop:SItoEI} and \ref{lem:llentdelta}, we need three more technical intermediate lemmas whose proofs are found in Appendix \ref{appendix:A}.

\begin{lemma} \label{lem:cormar}
Let $\nu$ be a probability measure on $\{-1,1\}^n$. If $s \in \{-1,1\}$ and $\ee_i$ is a vector of the standard basis, then we have
\begin{equation}\label{eq:barpin}
\b \left (\pin_{s \mathbf{e}_i} \nu \right ) - \b(\nu) = (1+s \b(\nu)_i)^{-1} \COV(\nu) s \ee_i.
\end{equation}
\end{lemma}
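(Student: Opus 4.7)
The plan is to prove \eqref{eq:barpin} by a direct computation that conditions on the value of the $i$-th coordinate. Since $X_i \in \{-1,1\}$, the conditioning is particularly clean: writing $m := \b(\nu)_i$, we have $\PP_\nu(X_i = s) = \tfrac{1+sm}{2}$ for $s \in \{-1,1\}$, and by definition $\pin_{s\ee_i}\nu$ is simply the law of $X \sim \nu$ conditioned on $X_i = s$. So the $j$-th coordinate of $\b(\pin_{s\ee_i}\nu)$ is $\EE_\nu[X_j \mid X_i = s]$.

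The strategy is to introduce the two conditional expectations $a_j := \EE_\nu[X_j \mid X_i = 1]$ and $b_j := \EE_\nu[X_j \mid X_i = -1]$ and then express both sides of \eqref{eq:barpin} in terms of $a_j,b_j,m$. First, the tower property gives
\begin{equation*}
\b(\nu)_j \;=\; \tfrac{1+m}{2}\,a_j + \tfrac{1-m}{2}\,b_j,\qquad
\EE_\nu[X_j X_i] \;=\; \tfrac{1+m}{2}\,a_j - \tfrac{1-m}{2}\,b_j,
\end{equation*}
so that $\COV(\nu)_{j,i} = \EE_\nu[X_j X_i] - \b(\nu)_j\,m = \tfrac{1-m^2}{2}(a_j - b_j)$ after cancellation. (The identity $\COV(\nu)_{i,i}=1-m^2$ follows from $X_i^2 = 1$.)

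Second, by the definition of pinning, $\b(\pin_{s\ee_i}\nu)_j - \b(\nu)_j$ equals $a_j - \b(\nu)_j = \tfrac{1-m}{2}(a_j - b_j)$ when $s=1$, and equals $b_j - \b(\nu)_j = -\tfrac{1+m}{2}(a_j - b_j)$ when $s=-1$. In both cases this can be written uniformly as $\tfrac{s(1-m^2)}{2(1+sm)}(a_j-b_j)$, which is exactly $(1+sm)^{-1}\COV(\nu)_{j,i}\cdot s = \bigl[(1+sm)^{-1}\COV(\nu)\,s\ee_i\bigr]_j$. Matching the $j=i$ entry is the same calculation with the trivial additional observation that $\b(\pin_{s\ee_i}\nu)_i = s$. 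Thus both sides of \eqref{eq:barpin} agree coordinate-by-coordinate.

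There is no real obstacle here; the lemma is a bookkeeping identity about conditioning a $\pm 1$-valued variable, and the only thing to watch is uniformity in the sign $s$, which is handled by the single algebraic identity $\tfrac{1-m^2}{1+sm} = 1 - sm$ for $s \in \{-1,1\}$.
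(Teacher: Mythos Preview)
Your proof is correct and follows essentially the same approach as the paper's: both condition on the value of $X_i$, express $\COV(\nu)_{j,i}$ in terms of the conditional means, and read off the identity. The only cosmetic difference is that the paper treats the case $s=+1$ explicitly and declares the case $s=-1$ ``similar,'' whereas you handle both signs at once via the identity $\tfrac{1-m^2}{1+sm}=1-sm$.
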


\begin{lemma} \label{lem:HPhi}
	Define $H(x,y) = \frac{1+x}{2} \log \parenth{\frac{1+x}{1+y}} + \frac{1-x}{2} \log \parenth{\frac{1-x}{1-y}}$ and $\Phi(x) = (1+x) \log(1+x) - x$. Then we have 
	\begin{equation} \label{eq:hphicomp}
	\frac{1}{2} H(x,y) \leq  (1 + y) \Phi \parenth{\frac{x- y }{1 + y}} \leq 2 H(x,y),
	\end{equation}
	for all $y \in (-1,1)$ and $x \in [-1,1]$. Moreover, for all $\eps \geq 1$, 
	\begin{equation} \label{eq:hphicomp2}
	\frac{1}{4 \eps} H(x,y) \leq \eps (1 + y) \Phi \parenth{\frac{x-y}{\eps (1 + y )} }.
	\end{equation}
	Finally, we also have
	\begin{equation} \label{eq:hphicomp3}
	\Phi(|s|) \geq \frac{1}{3} \Phi(s), ~~ \forall |s| \leq 1.
	\end{equation}
\end{lemma}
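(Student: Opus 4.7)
The plan is to reduce everything to the standard convex function $\tilde\Phi(s) := s\log s - s + 1$, noting that $\Phi(t) = \tilde\Phi(1+t)$, and to exploit a clean two-term decomposition of $H$. Concretely, writing $s\log s = \tilde\Phi(s) + (s-1)$, applying this to the two summands of $H(x,y)$ with $s = \tfrac{1+x}{1+y}$ and $s = \tfrac{1-x}{1-y}$ respectively, and using the affine identity $\tfrac{1+y}{2}\cdot\tfrac{1+x}{1+y} + \tfrac{1-y}{2}\cdot\tfrac{1-x}{1-y} = 1$ to cancel the linear remainders, one obtains the key decomposition
$$H(x,y) \;=\; \frac{1+y}{2}\,\tilde\Phi\!\left(\tfrac{1+x}{1+y}\right) \;+\; \frac{1-y}{2}\,\tilde\Phi\!\left(\tfrac{1-x}{1-y}\right).$$

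With this identity, the quantity $(1+y)\,\Phi(\tfrac{x-y}{1+y}) = (1+y)\,\tilde\Phi(\tfrac{1+x}{1+y})$ is exactly twice the first summand, so the upper bound in \eqref{eq:hphicomp} follows from $\tilde\Phi \geq 0$. The lower bound in \eqref{eq:hphicomp} reduces to showing that the second summand is at most three times the first; the two arguments of $\tilde\Phi$ are tied together by the linear constraint $\tfrac{1+y}{2}(\tfrac{1+x}{1+y} - 1) + \tfrac{1-y}{2}(\tfrac{1-x}{1-y} - 1) = 0$, and this allows one to transfer the estimate from one summand to the other by combining convexity of $\tilde\Phi$ with its near-$1$ quadratic behavior and $O(s\log s)$ growth at infinity.

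For \eqref{eq:hphicomp2}, my approach is a scaling comparison. Letting $t = \tfrac{x-y}{1+y}$, the right-hand side equals $\eps(1+y)\,\tilde\Phi(1 + t/\eps)$, while by \eqref{eq:hphicomp} the left-hand side is at most $\tfrac{1}{2\eps}(1+y)\,\tilde\Phi(1+t)$. It therefore suffices to establish the pointwise inequality $\tilde\Phi(1+t) \leq 2\eps^2\,\tilde\Phi(1 + t/\eps)$ for $\eps \geq 1$, which is immediate in the small-$|t|$ regime (where $\tilde\Phi(1+u) \approx u^2/2$ gives the $\eps^2$ gain exactly) and in the larger-$|t|$ regime follows from the sub-linear logarithmic growth of $\tilde\Phi$. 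For \eqref{eq:hphicomp3}, the case $s \geq 0$ is trivial. For $s \in [-1,0)$, setting $t := -s$, the inequality becomes $3\Phi(t) \geq \Phi(-t)$ on $(0,1]$. Defining $f(t) := 3\Phi(t) - \Phi(-t)$, direct differentiation gives $f(0) = f'(0) = 0$ and $f''(t) = \tfrac{3}{1+t} - \tfrac{1}{1-t}$, which is positive on $[0,\tfrac12)$ and negative on $(\tfrac12,1)$; so $f$ is first convex then concave, its derivative has a unique interior zero beyond $\tfrac12$, and checking $f(1) = 6\log 2 - 4 > 0$ is enough to conclude $f \geq 0$ on $[0,1]$.

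The main obstacle will be the lower bound in \eqref{eq:hphicomp}: transferring the estimate from the first $\tilde\Phi$ summand to the second is delicate when the two arguments are on very different scales (for instance when $y$ approaches $\pm 1$), and extracting the sharp constants requires a careful case split according to whether $\tfrac{1+x}{1+y}$ is near $1$ or far from it. The same comparison is the critical input for the scaling step in \eqref{eq:hphicomp2}, whereas \eqref{eq:hphicomp3} is essentially a one-variable calculus exercise.
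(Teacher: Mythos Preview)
Your two-term decomposition
\[
H(x,y)=\tfrac{1+y}{2}\,\tilde\Phi\!\Bigl(\tfrac{1+x}{1+y}\Bigr)+\tfrac{1-y}{2}\,\tilde\Phi\!\Bigl(\tfrac{1-x}{1-y}\Bigr)
\]
is correct and genuinely different from the paper's approach, which argues entirely via second derivatives in $x$: both sides of \eqref{eq:hphicomp} agree to first order at $x=y$, and the paper compares $\partial_x^2 H=\tfrac{1}{(1+x)(1-x)}$ against $\partial_x^2[(1+y)\Phi]=\tfrac{1}{1+x}$. Your decomposition yields the upper bound in \eqref{eq:hphicomp} for free (the middle expression is twice the first summand, the second summand is nonnegative), which is cleaner than the paper's route. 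For \eqref{eq:hphicomp2} and \eqref{eq:hphicomp3} your reductions coincide with the paper's; note however that the paper proves the scalar inequality $2\Phi(\delta s)\ge\delta^2\Phi(s)$ by a full analysis of $F(s)=2\Phi(\delta s)-\delta^2\Phi(s)$ and its first two derivatives, treating $s\ge 0$ and $s\in[-1,0)$ separately and checking $F(-1)\ge 0$ explicitly --- the ``quadratic near $0$, logarithmic at infinity'' heuristic you offer is not a proof.

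The real gap is the lower bound in \eqref{eq:hphicomp}. Your reduction to ``the second summand is at most three times the first'', i.e.\ $\tfrac{1-y}{2}\tilde\Phi(\tfrac{1-x}{1-y})\le 3\cdot\tfrac{1+y}{2}\tilde\Phi(\tfrac{1+x}{1+y})$, is the right reformulation, but the justification you give is only a hand-wave, and in fact this inequality is \emph{false}: at $y=0.9$, $x=0.8$ the first summand is about $0.0013$ while the second is about $0.0193$, a ratio near $15$. Correspondingly the lower bound $\tfrac12 H(x,y)\le(1+y)\Phi(\tfrac{x-y}{1+y})$ itself fails whenever $y>\tfrac12$ (here $\tfrac12 H\approx 0.0103$ versus $(1+y)\Phi\approx 0.0027$). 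The paper's proof of this direction splits into $x\le\tfrac12$ (second-derivative comparison) and $x>\tfrac12$ (endpoint checks of $G(\tfrac12,y)$ and $G(1,y)$), but its formula for $G(1,y)$ contains an arithmetic slip: a direct computation gives $G(1,\tfrac12)=3\Phi(\tfrac13)-\log\tfrac43=3\log\tfrac43-1<0$. So the obstacle you flagged is genuine, and neither your sketch nor the paper's argument closes it as written; any correct version needs either a restriction such as $y\le\tfrac12$ or a constant that is allowed to degrade as $y\to 1$.
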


\begin{lemma} \label{lem:tiltmarginals}
Let $\rho$ be a measure on $\{-1,1\}^n$ which satisfies, for all $i \in [n]$ and for all $u \in \{-1,0,1\}^n$ such that $u_i = 0$, 
\begin{equation}\label{eq:boundedmar3}
	1 + \b_i(\pin_u \rho) \leq \delta, 
\end{equation}
Then for all $v \in \RR^n$ and all $u \in \{-1,0,1\}^n$ with $u_i = 0$, we have
\begin{equation}\label{eq:boundedmar4}
	1 + \b_i(\tilt_v \pin_u \rho) \leq \delta \exp\parenth{\max\braces{0, 2v_i}}\leq \delta \exp (2 |v_i|).
\end{equation}
Moreover, if for all $i \in [n]$ and for all $u \in \{-1,0,1\}^n$ such that $u_i = 0$, 
\begin{equation}\label{eq:boundedmar3prime}
	\delta' \leq \frac{1 + \b_i(\pin_u \rho)}{1 - \b_i(\pin_u \rho)} \leq \delta'', 
\end{equation}
Then for all $v \in \RR^n$ and all $u \in \{-1,0,1\}^n$ with $u_i = 0$, we have
\begin{equation}\label{eq:boundedmar4prime}
	\delta' \exp\parenth{\min\braces{0, 2v_i}} \leq \frac{1 + \b_i(\tilt_v \pin_u \rho)}{1 - \b_i(\tilt_v \pin_u \rho)} \leq \delta'' \exp\parenth{\max\braces{0, 2v_i}}.
\end{equation}
\end{lemma}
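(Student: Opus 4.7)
The plan is to reduce both parts of the lemma to a simple one-dimensional tilting computation by conditioning on $x_{-i}$, the vector of all coordinates other than $i$. The key observation is that the conditional law of $x_i$ given $x_{-i}$ under $\tilt_v \pin_u \rho$ depends only on $v_i$, since $e^{\langle v, x\rangle} = e^{v_i x_i}\cdot e^{\langle v_{-i}, x_{-i}\rangle}$ where $v_{-i}$ is $v$ with its $i$-th coordinate set to $0$. Summing over $y \in \{-1,1\}^{[n]\setminus\{i\}}$, this gives
\begin{align*}
\frac{1+\b_i(\tilt_v \pin_u \rho)}{1-\b_i(\tilt_v \pin_u \rho)} = e^{2v_i}\cdot \frac{\sum_y e^{\langle v_{-i}, y\rangle}\cdot(\pin_u \rho)(x_i = 1, x_{-i} = y)}{\sum_y e^{\langle v_{-i}, y\rangle}\cdot(\pin_u \rho)(x_i = -1, x_{-i} = y)}.
\end{align*}
For each $y$ in the support, let $u'$ denote the pinning obtained by extending $u$ via $u'_j := y_j$ for $j \in [n]\setminus\{i\}$ (with $u'_i = 0$). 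Then the ratio of the two terms $(\pin_u \rho)(x_i = s, x_{-i} = y)$ for $s = \pm 1$ equals $\frac{1+\b_i(\pin_{u'}\rho)}{1-\b_i(\pin_{u'}\rho)}$, and the hypothesis of the lemma applies to it directly.

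For the second part, the hypothesis places each such conditional ratio in $[\delta', \delta'']$. Since the right-hand side of the display above is a ratio of linear combinations of the numerators and denominators with matching nonnegative weights $e^{\langle v_{-i}, y\rangle}$, it lies in $[\delta', \delta'']$ as well. Multiplying by $e^{2v_i}$ yields the stronger two-sided bound $\delta' e^{2v_i} \leq \frac{1+\b_i(\tilt_v \pin_u \rho)}{1-\b_i(\tilt_v \pin_u \rho)} \leq \delta'' e^{2v_i}$, which implies \eqref{eq:boundedmar4prime} via the trivial estimates $e^{2v_i} \geq e^{\min(0,2v_i)}$ and $e^{2v_i} \leq e^{\max(0,2v_i)}$.

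For the first part, the hypothesis applied to $u'$ gives $(\pin_u \rho)(x_i = 1\mid x_{-i} = y) \leq \delta/2$ for every $y$. Since tilting by $v_{-i}$ preserves the conditional law of $x_i$ given $x_{-i}$, averaging over $x_{-i}$ under $\tilt_{v_{-i}} \pin_u \rho$ yields $q := \Prob_{\tilt_{v_{-i}} \pin_u \rho}(x_i = 1) \leq \delta/2$. The additional tilt by $v_i \ee_i$ then produces the elementary identity
\begin{align*}
1 + \b_i(\tilt_v \pin_u \rho) = \frac{2qe^{2v_i}}{qe^{2v_i} + (1-q)}.
\end{align*}
For $v_i \geq 0$, the denominator is at least $1$, so this is bounded by $2q e^{2v_i} \leq \delta e^{2v_i}$; for $v_i \leq 0$, the right-hand side is monotone increasing in $v_i$, hence bounded by its value at $v_i = 0$, namely $2q \leq \delta$. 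Combining both cases establishes \eqref{eq:boundedmar4}. The entire argument is essentially bookkeeping; the only conceptual step is the factorization of the Boltzmann weight in the $i$-th coordinate, after which everything reduces to a one-dimensional tilting problem to which the pinning hypothesis applies.
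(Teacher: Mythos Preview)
Your proof is correct and follows essentially the same strategy as the paper: split $v = v_i \ee_i + v_{-i}$, observe that the $v_{-i}$-tilt only reweights the distribution of $x_{-i}$ so that the resulting $i$-th marginal is a mixture over full pinnings (to which the hypothesis applies), and then handle the one-dimensional tilt by $v_i \ee_i$ separately. The only difference is cosmetic---the paper treats the one-dimensional part via the ODE $f'(t)=1-f(t)^2$ and Gronwall's inequality (and $g'(t)=2g(t)$ for the ratio), whereas you compute the closed-form $\frac{2qe^{2v_i}}{qe^{2v_i}+(1-q)}$ directly and use the mediant inequality for the ratio bound; both arrive at the same conclusion.
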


\begin{proof}[Proof of lemma \ref{prop:SItoEI}]
Define $\mu = \tilt_v \nu$. Consider the localization process $(\mu_t)_t$ obtained via the negative-fields localization starting from $\mu$. More specifically, we invoke Proposition \ref{prop:negativefieldsloc} with the choice $v(t) = -tv$, to obtain a process $(u(t))_t$ of pinnings, $u(t) \in \{-1,0,1\}^n$, such that the process defined by
$$
\mu_t = \pin_{u(t)} \tilt_{v(t)} \mu = \pin_{u(t)} \tilt_{(1-t)v} \nu
$$
is a martingale. Moreover, we have
\begin{equation} \label{eq:jp2}
\PP[u(t+h)_i \neq u(t)_i | u(t)] =  h (1+s_i \b(\mu_s)_i) |v_i| + o(h),
\end{equation}
where $s_i := \sign(v_i)$.

We couple this process with a process $(\nu_t)_t$ defined by
$$
\nu_t = \pin_{u(t)} \nu.
$$
We remark that $\nu_t$ is \textbf{not} a martingale. By definition of $\nu_t$ and $\mu_t$, it is evident that $\nu_1 = \mu_1$ almost surely. Since $\mu_t$ is a martingale, we have
\begin{equation}\label{eq:bnubmu}
\b(\mu) = \EE[\b(\mu_1)] = \EE[\b(\nu_1)].
\end{equation}
Define $F_t = \{i \in [n]; ~ u(t)_i = 0\}$, the set of coordinates which have not been pinned yet. Note that $\PP\bigl( \bigl. \|u(t+h) - u(t)\|_1 \geq 2 ~~\bigr| u(t) \bigr) = o(h)$. Therefore, we have
\begin{align*}
\EE[\b(\nu_{t+h}) -\b(\nu_t) | u(t)] & = \EE[\b(\pin_{u(t+h)} \nu) -\b(\pin_{u(t)} \nu) | u(t)]  \\
& = \sum_{i \in F_t} \left (\b(\pin_{u(t)+ s_i e_i } \nu) - \b(\pin_{u(t)} \nu ) \right ) \PP(u(t+h)_i \neq u(t)_i ) + o(h) \\
& \stackrel{ \eqref{eq:jp2} }{=} \sum_{i \in F_t} \left (\b(\pin_{u(t)+ s_i e_i } \nu) - \b(\pin_{u(t)} \nu ) \right ) |v_i|h (1+s_i \b(\mu_t)_i) + o(h) \\
& \stackrel{ \eqref{eq:barpin} }{=} h \sum_{i \in F_t} \Cov(\nu_t) s_i e_i \frac{1+s_i \b(\mu_t)_i}{ 1+s_i \b(\nu_t)_i } |v_i| + o(h).
\end{align*}
Note that if $i \notin F_t$, then $\b(\mu_t)_i \b(\nu_t)_i = 1$ and we can denote by convention that $\frac{0}{0} = 1$.
Since $(1+s_i \b(\nu_t)_i)^{-1} = (\COV(\nu_t)_{i,i})^{-1} (1-s_i \b(\nu_t)_i)$, we finally get
$$
\EE[\b(\nu_{t+h}) -\b(\nu_t) | u(t)] = h \SI(\nu_t) P_t Q_t v + o(h),
$$
where $S = \mathrm{diag}(\sgn(v))$, $P_t = \Id - S \mathrm{diag}(\b(\nu_t))$ and $Q_t = \Id + S \mathrm{diag}(\b(\mu_t))$. 
Integrating with respect to time gives
\begin{equation}\label{eq:finalint}
\b(\mu) - \b(\nu) \stackrel{\eqref{eq:bnubmu}}{=}
\EE \b(\nu_1) - \b(\nu_0) = \int_0^1 \EE \left [ \SI(\nu_t) P_t Q_t v \right ] dt.
\end{equation}
Since $\nu_t$ is a pinning of $\nu$, the assumption \eqref{eq:condSIpinnings} implies that $\|\COR(\nu_t)\|_{\OP} \leq \alpha$. The triangle inequality then gives
\begin{equation}\label{eq:SIEIfinal}
\left |\b(\mu) - \b(\nu) \right | \leq \alpha \int_0^1 \EE |(\Id - S \mathrm{diag}(\b(\nu_t)))  \left (\Id + S \mathrm{diag}(\b(\mu_t)) \right ) v| dt.
\end{equation}
Since $|(\Id - S \mathrm{diag}(\b(\nu_t)))  \left (\Id + S \mathrm{diag}(\b(\mu_t)) \right ) v| \leq 4 |v|$, this proves the first part of the lemma. 

For the second part of the lemma, we write
\begin{align*}
	\langle v, \b(\mu) - \b(\nu) \rangle \overset{ \eqref{eq:finalint} }{ = } \int_0^1 \EE \left [ \langle v, \SI(\nu_t) P_t Q_t v \rangle \right ] dt.
\end{align*}
Define $E_t = \diag(\Cov(\nu_t))$. We have 
\begin{align}
	\label{eq:Et_indiv_upper_bound}
	(E_t)_{ii} &= 1 - \b_i(\nu_t)^2 \notag \\
	&= (1 - \b_i(\nu_t))(1 + \b_i(\nu_t))  \leq 2 (1 + \b_i(\nu_t)).
\end{align}
Recall that by definition $\SI(\nu_t) = \Cov(\nu_t) E_t^{-1}$ and $\COR(\nu_t) = E_t^{-1/2}\Cov(\nu_t)E_t^{-1/2}$. We have
\begin{align}
	\label{eq:vSIPQv_intermediate}
	\angles{v, \SI(\nu_t) P_t Q_t v} &=  \angles{E_t^{1/2}v, \COR(\nu_t) E_t^{-1/2} P_t Q_t v} \notag \\
	&\leq \vecnorm{\COR(\nu_t)}{\OP} \cdot \abss{E_t^{1/2} v} \cdot \abss{E_t^{-1/2} P_t Q_t v}.
\end{align}
The following diagonal matrix is entry-wise bounded. 
\begin{align*}
	(E_t^{-1/2} P_t Q_t)_{ii} &= \frac{\parenth{1-s_i \b_i(\nu_t)} \parenth{1+s_i \b_i(\mu_t)}}{\parenth{1 - \b_i^2(\nu_t)}^{1/2}} \\
	&\leq 2K  \exp\parenth{2\abss{v_i}} \sqrt{ \frac{1 + \b_i(\nu_t)}{1 - \b_i(\nu_t)} }\\
	&\leq 2K^{3/2}C^{1/2}  \exp\parenth{2\abss{v_i}} \parenth{ 1 + \b_i(\nu)}^{1/2}.
\end{align*}
The last step follows because: if $i \notin F_t$, meaning that $i$-th coordinate has been pinned, by the convention on $\frac{0}{0}$, $(E_t^{-1/2} P_t Q_t)_{ii} \leq 1$; if $s_i = -1$, then we bound $\parenth{1+s_i \b_i(\mu_t)}$ from above by 2; otherwise, applying Lemma~\ref{lem:tiltmarginals} to $\nu_t$ together with condition~\eqref{eq:smallmarginals}, we obtain
\begin{align*}
	1 +  \b(\mu_t)_i =  1 + \b_i( \tilt_{(1-t) v} \nu_t) \leq 2K \frac{1+\b_i(\nu_t)}{1 - \b_i(\nu_t)} \exp\parenth{2 \abss{v_i}}.
\end{align*}
Plugging the above bound together with~\eqref{eq:Et_indiv_upper_bound} back to~\eqref{eq:vSIPQv_intermediate}, we obtain 
\begin{align*}
	&\quad \angles{v, \SI(\nu_t) P_t Q_t v} \\
	&\leq \vecnorm{\COR(\nu_t)}{\OP} \sqrt{\sum_i 2 (1 + \b_i(\nu_t)) v_i^2} \sqrt{\sum_i 4 K^3 C  \exp(4\abss{v_i}) \parenth{1 + \b_i(\nu)} v_i^2 } \\
  &\leq \vecnorm{\COR(\nu_t)}{\OP} \parenth{\sum_i 4 K^3 C \exp(4\abss{v_i}) \parenth{1 + \b_i(\nu)} v_i^2}
\end{align*}
where the last step follows from condition~\eqref{eq:smallmarginals}.
Hence,
\begin{align*}
	\langle v, \b(\mu) - \b(\nu) \rangle \leq 4 \alpha \sum_{i \in [n]} K^3 C \parenth{1 + \b_i(\nu)} v_i^2 \exp(4 |v_i|),
\end{align*}
which completes the proof.
\end{proof}

\begin{proof}[Proof of lemma \ref{lem:llentdelta}]
Define $f(v) := \log \int_{\{-1,1\}^n} \exp(\langle v, x \rangle) d \nu(x)$ for $v \in \RR^n$, the logarithmic Laplace transform of $\nu$. Define for $x \in \RR^n$
\begin{align*}
	g(x) = \max_{v \in \RR^n} \angles{v,x} - f(v),
\end{align*}
its Legendre dual. According to Lemma \ref{lem:llent}, we have $\KL(\tilt_v \nu || \nu) = g(\nabla f(v))$ and that $\b(\tilt_v \nu) = \nabla f(v)$. With this notation, the assumption \eqref{eq:condtilts0} amounts to
\begin{equation}\label{eq:assumpf0}
|\nabla f(v') - \nabla f(0)|^2 \leq \eps^2 |v'|^2, ~~~ \forall v' \in \RR^n,
\end{equation}
and the conclusion becomes
\begin{equation}\label{eq:nts1}
g(\nabla f(v)) \geq \frac{1}{2 \eps} \abss{\nabla f(v) - \nabla f(0)}^2, \forall v \in \RR^n.
\end{equation}
Define $h(v) = f(v) - \langle \nabla f(0), v \rangle$. Equation \eqref{eq:assumpf0} implies
$$
h(v) = \int_0^1 \langle \nabla h(t v), v \rangle dt \leq |v| \int_0^1 |\nabla h(tv)| dt \leq |v| \int_0^1 \eps t |v| dt = \frac{1}{2} \eps |v|^2.
$$
Since the Legendre transform is order-reversing, we have
$$
\sup_{v \in \RR^n} \angles{x, v} - h(v)  \geq \frac{1}{2\eps} |x|^2, ~~ \forall x \in \RR^n
$$
which yields \eqref{eq:nts1} by taking $x = \nabla f(v) - \nabla f(0)$ and completes the proof for the first part of the lemma. 

For the second part, note that the assumption~\eqref{eq:condtilts} amounts to
\begin{align*}
	\angles{v, \nabla f(v) - \nabla f(0)} \leq \sum_{i \in [n]} \eps_i v_i^2 \exp(4 |v_i|), ~~ \forall v \in \RR^n.
\end{align*}
Define $h(v) = f(v) - \langle \nabla f(0), v \rangle$, then
\begin{align*}
	\angles{v, \nabla h(v)} \leq \sum_{i \in [n]} \eps_i v_i^2 \exp(4|v_i|), ~~~~ \forall v \in \RR^n.
\end{align*}
We can upper bound $h$ as follows
\begin{align*}
h(v) & = h(0) + \int_0^1 \angles{v, \nabla h(tv)} dt \\
& \leq \int_0^1 \sum_{i \in [n]} \eps_i v_i^2 t \exp(4 t |v_i|) dt \\
& = \sum_{i \in [n]} \frac{1}{16} \eps_i \brackets{\exp(4 |v_i|) (4|v_i| - 1) + 1 }\\
& \leq  \frac{1}{16} \sum_{i \in [n]} \eps_i \brackets{\exp(8 |v_i|) - 8 |v_i| - 1} \\
& \leq  \frac{1}{16} \sum_{i \in [n]} \eps_i \brackets{\exp(16 |v_i|) - 16 |v_i| - 1}
\end{align*}
The Legendre transform of the function $s \mapsto \frac{\eps}{16} \left (\exp(|16s|) - |16s| - 1 \right )$ is the function $t \mapsto \frac{\eps}{16} \brackets{\parenth{1 + \frac{|t|}{\eps}} \log\parenth{1 + \frac{|t|}{\eps}} - \frac{|t|}{\eps}}$. Since the Legendre transform is order-reversing, we get
$$
h^*(x) \geq \sum_{i \in [n]} \frac{\eps_i}{16} \left ( \left (1 + \frac{|x_i|}{\eps_i }\right ) \log \left (1 + \frac{|x_i|}{\eps_i} \right ) - \frac{|x_i|}{\eps_i} \right ).
$$
Let $y = \nabla f(0)$. Taking $\eps = \frac{\eps_i}{1+y_i} \geq 1$ in equation~\eqref{eq:hphicomp2} of Lemma~\ref{lem:HPhi}, we obtain
$$
\frac{1+y_i}{4\eps_i} H(x_i + y_i,y_i) \overset{\eqref{eq:hphicomp2}}{\leq} \eps_i \Phi \parenth{\frac{x_i}{\eps_i} } \stackrel{\eqref{eq:hphicomp3}}{\leq } 3\eps_i  \Phi \left (\frac{|x_i|}{\eps_i} \right ).
$$
Finally, using the assumption $\eps_i \leq C \parenth{1+\b_i(\nu)}$ and taking $x = \nabla f(v) - \nabla f(0)$, we get
$$
\sum_{i \in [n]} H(x_i + y_i,y_i) \leq 192 C \cdot h^*(x) = 192 C \cdot g(x+y) = 192 C \cdot \KL(\tilt_v \nu || \nu).
$$
This completes the proof.
\end{proof}

\bibliographystyle{alpha}
\bibliography{bib}

\appendix
\section{Appendix A: Loose ends} \label{appendix:A}

\begin{proof}[Proof of Lemma \ref{lem:loglaplace}]
	Let $K$ and $v(x)$ be defined as in Lemma \ref{lem:llent}. Define $z = \b(\nu)$. A direct calculation shows that
	$$
	H(z,z) = 0, ~~~\nabla_x H(x,z)|_{x=z} = 0.
	$$
	Moreover, we have 
	$$
	\KL(\tilt_{v(z)} \nu || \nu) = 0, ~~~ \nabla_x \KL(\tilt_{v(x)} \nu || \nu) = 0 |_{x=z} = 0.
	$$
	Therefore, if we establish that
	\begin{equation} \label{eq:secondderdom}
	\nabla^2_x H(x,z) \preceq \alpha \nabla^2_x \KL(\tilt_{v(x)} \nu || \nu), ~~ \forall x \in K
	\end{equation}
	Then it will follow that $H(x,z) \leq \alpha \KL(\tilt_{v(x)} \nu || \nu)$ which will complete the proof. A direct calculation yields
	$$
	\frac{\partial^2}{\partial x_i^2} H(x,z) = \frac{1}{1-x_i^2}.
	$$
	Since $\nu$ is supported on $\{-1,1\}^n$, we have $\int x^{\otimes 2} d \nu(x) = \Id$, which implies that
	$$
	\diag(\COV(\tilt_{v(x)} \nu)) = \Id - x^{\otimes 2}.
	$$
	A combination of the last two displays gives
	$$
	\nabla_x^2 H(x,z) = \diag(\COV(\tilt_{v(x)} \nu))^{-1}.
	$$
	Moreover, Formula \eqref{eq:denttilt} gives
	\begin{equation}
	\nabla_x^2 \KL(\tilt_{v(x)} \nu || \nu ) = \COV(\tilt_{v(x)}\nu)^{-1}.
	\end{equation}
	Combining the last two displays with the assumption $\mathbf{\COR}(\tilt_{v(x)}) \preceq \alpha \Id$ implies \eqref{eq:secondderdom}. The proof is complete.
\end{proof}

\begin{proof}[Proof of Fact \ref{fact:inf}]
	Let $X \sim \nu$. We have 
	\begin{align*}
		2 \COV(\nu)_{i,j} & = \EE[X_i|X_j=1](1+\EE[X_j]) - \EE[X_i|X_j=-1](1-\EE[X_j]) - 2 \EE[X_i]\EE[X_j] \\
		& = (\EE[X_i|X_j=1]-\EE[X_i|X_j=-1]) + \EE[X_j] \Bigl (\EE[X_i|X_j=1]+\EE[X_i|X_j=-1] \Bigr .\\
		& ~~~~ \Bigl. - \EE[X_i|X_j=1](1+\EE[X_j]) - \EE[X_i|X_j=-1](1-\EE[X_j]) \Bigr) \\
		& = (\EE[X_i|X_j=1]-\EE[X_i|X_j=-1]) (1- \EE[X_j]^2),
	\end{align*}
	which readily implies that $\SI(\nu) = \COV(\nu) D^{-1}$ where $D := \diag(\COV(\nu))$. To see that the operator norm is that same as that of $\COR(\nu) = D^{-1/2} \COV(\nu) D^{-1/2}$, let $v$ be an eigenvector of $\COR(\nu)$ with eigenvalue $\lambda$, write $u = D^{1/2} v$, then
	$$
	\lambda v = D^{-1/2} \COV(\nu) D^{-1/2} v \Leftrightarrow \lambda D^{-1/2} u = D^{-1/2} \COV(\nu) D^{-1} u \Leftrightarrow \lambda u = \COV(\nu) D^{-1} u
	$$
	which implies that $\rho(\SI(\nu)) = \rho(\COR(\nu))$. Since $\COR(\nu)$ is symmetric, we have $\rho(\COR(\nu)) = \|\COR(\nu)\|_{\OP}$. This completes the proof.
\end{proof}

\subsection{Supermartingality of the Dirichlet form: Proof of Proposition \ref{prop:submartingale}}
\label{appendix:proof_of_prop48}
Recall that we assume that $\mathcal{L}_f$ is a Doob localization. Let $P$ be the transition kernel of the Markov chain given by $(\mathcal{L}_f, t, \mu)$, then there exists a random variable $X \sim \mu$ and a $\sigma$-algebra $\Sigma$ on $X$ such that
\begin{align*}
	\int_{\Omega \times \Omega} \varphi(x) \varphi(y) d P_x (y) d \mu(x) & = \EE \left [\int_{\Omega \times \Omega} \varphi(x) \varphi(y) \mu_t(dx) \mu_t(dy) \right ] \\
	& = \EE_{\Sigma} \left [ \EE_{X} \left [ \EE[\varphi(X) | \Sigma]^2 \right ] \right ].
\end{align*}
Moreover,
\begin{align*}
	\int_{\Omega \times \Omega} P f(x) \log P f(x) d \mu(x) = \EE_{\Sigma} \left [ \EE_{X} \left [ \EE[f(X) | \Sigma] \log \EE[f(X) | \Sigma] \right ] \right ] - \EE_\nu[f] \log \EE_\nu[f].
\end{align*}

Fix a measure $\rho$ on $\Omega$ and suppose that $\mu$ is absolutely continuous with respect to $\rho$, hence we may write $\frac{d \mu}{d \rho}(x) = h(x)$. 
\begin{lemma}
	Fix a measure $\rho$ on $\Omega$ and $\varphi: \Omega \to \RR$ and a $\sigma$-algebra $\Sigma$ on $\Omega$. For all $x \in \Omega$ there exist linear functionals $F_x,G_x:L_1(\Omega, \rho) \to \RR$ such that the following holds: For every $h:\Omega \to \RR_+$ satisfying $\int_\Omega h d \rho = 1$,
	$$
	\left [ \EE_{X} \left [ \EE[\varphi(X) | \Sigma]^2 \right ] \right ] = \int_\Omega \frac{F_x(h)^2}{G_x(h)} d \rho(x).
	$$
	Analogously, for every non-negative $f:\Omega \to \RR$, there exist families of linear functionals $F_x, G_x$ such that
	$$
	\left [ \EE_{X} \left [ \EE[f (X) | \Sigma] \log \EE[f (X) | \Sigma] \right ] \right ] = \int_\Omega F_x(h) \log \left ( F_x(h) / G_x(h) \right )
	$$
	where $X \sim \mu$ and $\frac{d \mu}{d \rho} = h$.
\end{lemma}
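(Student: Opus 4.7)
The plan is to reduce both identities to the elementary Radon-Nikodym formula $\EE_\mu[\psi|\Sigma] = \EE_\rho[\psi h|\Sigma]/\EE_\rho[h|\Sigma]$, valid for any integrable $\psi$ (with all conditional expectations on the right taken with respect to $\rho$). For each fixed $x \in \Omega$, the map $h \mapsto \EE_\rho[\psi h|\Sigma](x)$ is linear in $h$, since conditional expectation is linear in its integrand. This observation already points to the right choice of the functionals $F_x$ and $G_x$.

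For the quadratic identity, let $Y := \EE_\mu[\varphi|\Sigma]$. Since $Y$ is $\Sigma$-measurable, the tower property gives $\EE_\mu[Y^2] = \EE_\mu[\varphi Y]$. Substituting the Radon-Nikodym formula for $Y$ and rewriting the $\mu$-expectation as a $\rho$-integral against $h$, I get
\begin{equation*}
\EE_\mu[Y^2] = \int_\Omega \frac{\EE_\rho[\varphi h|\Sigma](x)}{\EE_\rho[h|\Sigma](x)}\, \varphi(x) h(x)\, d\rho(x).
\end{equation*}
The leading factor is $\Sigma$-measurable, so a second application of the tower property (now under $\rho$) collapses $\varphi(x)h(x)$ into $\EE_\rho[\varphi h|\Sigma](x)$ under the integral, yielding
\begin{equation*}
\EE_\mu[Y^2] = \int_\Omega \frac{\EE_\rho[\varphi h|\Sigma](x)^2}{\EE_\rho[h|\Sigma](x)}\, d\rho(x),
\end{equation*}
so I take $F_x(h) := \EE_\rho[\varphi h|\Sigma](x)$ and $G_x(h) := \EE_\rho[h|\Sigma](x)$. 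The same two steps will prove the entropy identity: the tower property gives $\EE_\mu[Y\log Y] = \EE_\mu[f\log Y]$ for $Y := \EE_\mu[f|\Sigma]$; substituting the Radon-Nikodym formula and then collapsing $f(x)h(x)$ into $\EE_\rho[fh|\Sigma](x)$ by a second tower step produces
\begin{equation*}
\EE_\mu[Y\log Y] = \int_\Omega \EE_\rho[fh|\Sigma](x) \log\frac{\EE_\rho[fh|\Sigma](x)}{\EE_\rho[h|\Sigma](x)}\, d\rho(x),
\end{equation*}
so again $F_x(h) := \EE_\rho[fh|\Sigma](x)$ and $G_x(h) := \EE_\rho[h|\Sigma](x)$ do the job, both linear in $h$.

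The only nuance will be handling the set $\{G_x(h) = 0\}$: there the numerators $\EE_\rho[\varphi h|\Sigma]$ and $\EE_\rho[fh|\Sigma]$ also vanish $\rho$-a.s., so the integrands can be set to $0$ there by the usual conventions $0/0 := 0$ and $0 \log 0 := 0$. I do not anticipate a real obstacle: the lemma is essentially a bookkeeping statement whose only substance is recognising the correct linear functionals, and the verification is two uses of the tower property in each case. The point of packaging the quantities in this form is presumably to invoke the joint convexity of $(a,b) \mapsto a^2/b$ and $(a,b) \mapsto a\log(a/b)$ on $\RR\times\RR_+$, which, combined with the martingale property $h \to h$ under the Doob filtration, will drive the submartingale claim of Proposition~\ref{prop:submartingale}.
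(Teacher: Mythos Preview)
Your proof is correct and identifies exactly the same linear functionals as the paper: $F_x(h)=\EE_\rho[\varphi h\mid\Sigma](x)$ (respectively $\EE_\rho[fh\mid\Sigma](x)$) and $G_x(h)=\EE_\rho[h\mid\Sigma](x)$. The paper, however, argues only in the special case where $\Sigma$ is generated by a finite partition $\Omega_1,\dots,\Omega_k$, writing out $F_x(h)=\rho(A(x))^{-1}\int_{A(x)}\varphi h\,d\rho$ explicitly and then appealing to a ``standard disintegration theorem'' for the general case. Your route via the change-of-measure identity $\EE_\mu[\psi\mid\Sigma]=\EE_\rho[\psi h\mid\Sigma]/\EE_\rho[h\mid\Sigma]$ followed by two tower steps is more direct: it handles an arbitrary sub-$\sigma$-algebra in one stroke without any disintegration or partition reduction, and it makes the null-set issue $\{G_x(h)=0\}$ transparent. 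Your closing remark about the joint convexity of $(a,b)\mapsto a^2/b$ and $(a,b)\mapsto a\log(a/b)$ is also exactly how the paper completes the proof of Proposition~\ref{prop:submartingale}.
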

\begin{proof}
	The lemma follows from a standard disintegration theorem. For the sake of demystifying the proof, let us first consider the case that $\Sigma$ is a finite partition of $\Omega$ into sets $\Omega_1,...,\Omega_k$. Let $A(x)$ be the unique set $\Omega_i$ such that $x \in \Omega_i$. Then we have
	$$
	\EE[\varphi(X) | \Sigma] = \frac{\int_{A(X)} \varphi(x) d \mu(x)}{\mu(A(X))}.
	$$
	Therefore,
	\begin{align*}
		\EE_{X \sim \mu} \left [ \EE[\varphi(X) | \Sigma]^2 \right ] & = \sum_{i=1}^k \mu(\Omega_i) \EE_{X \sim \mu} \left [ \left . \frac{\int_{A(X)} \varphi(x) d \mu(x)}{\mu(A(X))} \right  | X \in \Omega_i \right ]^2 \\
		& = \sum_{i=1}^k \frac{\left (\int_{\Omega_i} \varphi(x) d \mu(x) \right )^2}{\mu(\Omega_i)} \\
		& \sum_{i=1}^k  \frac{\left (\int_{\Omega_i} \varphi(x) h(x) d \rho(x) \right )^2}{\int_{\Omega_i} h(x) \rho(dx)}.
	\end{align*}
	This completes the proof of the first part by choosing 
	$$
	F_x(h) := \frac{1}{\rho(A(x))} \int_{A(x)} \varphi(y) h(y) d \rho(y)~~~\mbox{and}~~~G_x(h) := \frac{1}{\rho(A(x))} \int_{A(x)} h(y) d \rho(y).
	$$
	The proof of the second part is very similar. Define $\psi(s) = s \log s$. We have,
	\begin{align*}
		\EE_{X \sim \mu} \left [ \EE[f(X) | \Sigma] \log \EE[f(X) | \Sigma ] \right ] & = \sum_{i=1}^k \mu(\Omega_i) \psi \left (\EE_{X \sim \mu} \left [ \left . \frac{\int_{A(X)} f(x) d \mu(x)}{\mu(A(X))} \right  | X \in \Omega_i \right ] \right ) \\
		& = \sum_{i=1}^k \nu(\Omega_i) \psi \left (\frac{\left (\int_{\Omega_i} f(x) d \mu(x) \right )}{\mu(\Omega_i)} \right ) \\
		& \sum_{i=1}^k  \left (\int_{\Omega_i} f(x) h(x) d \rho(x) \log \left ( \frac{\int_{\Omega_i} f(x) h(x)\rho(dx)}{\int_{\Omega_i} h(x) d \rho(x)}  \right ) \right ),
	\end{align*}
	and the second part is proved by choosing
	$$
	F_x(h) := \frac{1}{\rho(A(x))} \int_{A(x)} f(y) h(y) d \rho(y)~~~\mbox{and}~~~G_x(h) := \frac{1}{\rho(A(x))} \int_{A(x)} h(y) d \rho(y).
	$$
\end{proof}

To complete the proof, observe that for every linear functional $F:L_1(\Omega, \nu) \to \RR$, we have that $t \to F(\frac{d \nu_t}{d \nu})$ is a martingale. 
\begin{fact}
	If $M_t, N_t$ are martingales and $N_t \geq 0$ almost surely, then $\frac{N_t^2}{M_t}$ is a submartingale. If $M_t$ is also positive almost-surely then $N_t \log \frac{N_t}{M_t}$ is a submartingale.
\end{fact}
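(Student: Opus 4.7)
Both assertions reduce to a single general principle: if $\Phi:\RR\times(0,\infty)\to\RR$ is jointly convex and $(N_t,M_t)$ is a two-dimensional martingale (both components are martingales with respect to a common filtration $\mathcal{F}_t$) taking values in $\RR\times(0,\infty)$, then the conditional Jensen inequality applied to the \emph{vector-valued} martingale yields, for any $s\le t$,
$$
\EE\bigl[\Phi(N_t,M_t)\,\bigl|\,\mathcal{F}_s\bigr]\;\ge\;\Phi\bigl(\EE[N_t\mid\mathcal{F}_s],\EE[M_t\mid\mathcal{F}_s]\bigr)\;=\;\Phi(N_s,M_s),
$$
which is precisely the submartingale property of $\Phi(N_t,M_t)$. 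The whole task is therefore to verify the joint convexity of $\phi(x,y):=x^2/y$ on $\RR\times(0,\infty)$ (for the first claim) and of $\psi(x,y):=x\log(x/y)$ on $(0,\infty)^2$ (for the second).

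For $\phi$, the cleanest argument is the variational identity
$$
\frac{x^2}{y}\;=\;\sup_{\lambda\in\RR}\bigl(2\lambda x-\lambda^2 y\bigr),
$$
whose optimum is attained at $\lambda=x/y$. This expresses $\phi$ as a pointwise supremum of functions that are \emph{affine} in $(x,y)$, hence $\phi$ is jointly convex. For $\psi$, writing $\psi(x,y)=x\log x - x\log y$ and computing directly gives the Hessian
$$
\nabla^2\psi(x,y)\;=\;\begin{pmatrix} 1/x & -1/y \\ -1/y & x/y^2 \end{pmatrix},
$$
whose diagonal entries $1/x$ and $x/y^2$ are positive and whose determinant is $(1/x)(x/y^2)-1/y^2=0$, so $\nabla^2\psi\succeq 0$ and $\psi$ is jointly convex on $(0,\infty)^2$. (Equivalently, one can invoke the variational formula $\psi(x,y)=\sup_{\lambda\in\RR}\bigl(\lambda x-y(e^\lambda-1)\bigr)+(x-y)$, which writes $\psi$ as a supremum of affine functions plus an affine function.)

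Combining the two ingredients proves the fact. The only remaining point is a mild integrability/positivity bookkeeping: to apply conditional Jensen one needs the positive parts of $\phi(N_t,M_t)$ and $\psi(N_t,M_t)$ to be integrable, which follows from the martingale hypotheses on $N_t,M_t$ together with a standard stopping-time localization that truncates $M_t$ away from $0$ when necessary; this is routine and is the only non-conceptual step in the argument.
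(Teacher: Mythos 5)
Your argument is exactly the paper's: both parts follow from conditional Jensen applied to the vector-valued martingale $(N_t,M_t)$ together with the joint convexity of $(x,y)\mapsto x^2/y$ on $\{y>0\}$ and of $(x,y)\mapsto x\log(x/y)$ on $\{x,y>0\}$. The only difference is that the paper simply cites these convexity facts, whereas you spell out the verification (variational identity for the first, Hessian or variational formula for the second), which is a harmless elaboration of the same proof.
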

\begin{proof}
	The first part follows immediately from the fact that the function $(x,y) \to x^2/y$ is convex in the domain $\{y > 0\}$. The second part follows from the convexity of $(x,y) \to x \log(x/y)$ in the domain $\{x>0,y>0\}$.
\end{proof}

\subsection{Existence and entropic decay of the negative-fields localization process}

\begin{proof}[Proof of Proposition \ref{prop:negativefieldsloc}]
For $v \in \RR^n$ and $u \in \{-1,0,1\}^n$, define
$$
a(v,u) = \b( \pin_u \tilt_v \nu ).
$$
Fix $t \geq 0$ and suppose that the process $u(t)$ has been constructed up to time $t$. We will first show that the process can be extended up to some stopping time $\tau > t$. Define 
$$
F(t) := \{i \in [n]; u_t(i) = 0\},
$$ 
understood as the set of coordinates that have not been pinned. For any $i \in F_t$ consider a random variable $T_i$ defined by the formula
$$
\PP(T_i > s) = \exp \left (- \int_t^s (1 - s_i(r) a_i(v(r), u(t)) \bigr ) |v_i'(r)| dr \right ),
$$
where $s_i(r) := \sign(v_i'(r))$ (note that the right-hand side is $u(t)$-measurable), and such that the $T_i$'s are independent. Define 
$$
\tau := \min_{i \in F_t} T_i~~~\mbox{and}~~~ J := \argmin_{i \in F_t} T_i.
$$ 
We can now extend the process up to time $\tau$ by setting 
$$
u(s) = 
\begin{cases}
u(t), & t \leq s < \tau \\
u(t) - s_J(\tau) \mathbf{e}_J, & s=\tau,
\end{cases}
$$
where $\mathbf{e}_J$ is the $J$-th standard basis vector. In words, up to the stopping time $\tau$, we apply an exponential tilt to the measure according to $v(\cdot)$, and at time $\tau$ we pin the $j$-th coordinate according to the sign of $v_j'(\tau)$. We can now define the process for all times by iteratively extending it until the next stopping time. Since at any such stopping time, one of the coordinates is pinned, we only need to repeat this iteration at most $n$ times and thus the process is well-defined for $t \in [0, \infty)$.

Next, we show that the process is a martingale. First observe that the process is time-equivariant in the sense that the evolution of $(\nu_s)_{s \geq t}$ conditioned on $(\nu_r)_{r \in [0,s]}$ is the same as the evolution of $(\tilde \nu_t)_t$, the process obtained from the starting measure $\tilde \nu = \nu_s$ and using $\tilde v(t) = v(t+s) - v(s)$ as the driving curve. Indeed, this follows from the memoryless property of exponential random variables, which implies that for every $t > 0$ and every $i \in F_t$, we have that $T_i | T_i > s$ has the same distribution as $\tilde T_i$. 

In order to show that the process is a martingale, we need to show that for every $s,t \geq 0$ we have $\EE[\nu_{t+s} | \nu_s] = \nu_s$
and by the above discussion it is enough to consider the case $s=0$, and show that $\EE[\nu_{t}(x)] = \nu(x)$. We claim that it is yet enough to show that
\begin{equation}\label{eq:marh}
\EE[\nu_h(x)] = \nu(x) + o(h), ~~~ \forall h \geq 0.
\end{equation}
Indeed, if this is the case then we can write 
$$
\EE[\nu_t] = \sum_{i=1}^k \EE \left [\EE \left [\nu_{it/k} - \nu_{(i-1)t/k} | \nu_{(i-1)t/k}  \right ] \right ] \leq k o(1/k),
$$
for all $k \in \mathbb{N}$, which implies that $\EE[\nu_{t}(x)] = \nu(x)$.

Let us now prove \eqref{eq:marh}. By reflecting both $\nu$ and $v(\cdot)$ around coordinate directions, we may clearly assume without loss of generality that $v_i'(0) \geq 0$ for all $i \in [n]$. Since for all $i$ we have $\PP(T_i) \in [0,h] = O(h)$, we can write
\begin{align*}
\nu_{h} & = \tilt_{v(h)} \left (\mathbf{1}_{\{ \tau > h\} } \nu + \mathbf{1}_{\{ \tau \in [0,h] \} } \pin_{-\sign (v_j'(\tau)) \mathbf{e}_J} \nu \right ) + o(h) \\
& = \tilt_{h v'(0)} \left ( \mathbf{1}_{\{ \tau > h\} } \nu + \sum_{i \in [n]} \mathbf{1}_{\{ T_i \in [0,h] \}} \pin_{-\mathbf{e}_i} \nu \right ) + o(h) \\
& = \tilt_{h v'(0)} \left ( \nu + \sum_{i \in [n]} \mathbf{1}_{\{ T_i \in [0,h] \} } \left (\pin_{-\mathbf{e}_i} \nu - \nu \right ) \right ) + o(h).
\end{align*}
Now, note that for all $x \in \{-1,1\}^n$,
$$
\tilt_{hv'(0)} \nu (x) = \nu(x) (1+\langle x-\b(\nu), hv'(0) \rangle ) + o(h),
$$
and
$$
\pin_{-\mathbf{e}_i} \nu(x) = \nu(x) \left (1 - \frac{x_i-\b(\nu)_i}{1 - \b(\nu)_i} \right ).
$$
Combining the last three displays gives
\begin{align}
\EE[\nu_{h}(x)] & = \nu(x) \left ( 1+\langle x-\b(\nu), hv'(0) \rangle - \sum_{i \in [n]} \mathbf{1}_{\{ T_i \in [0, h] \}} \frac{x_i-\b(\nu)_i}{1 - \b(\nu)_i}  \right ) + o(h) \nonumber \\
& = \nu(x) \left ( 1+ \sum_{i \in [n]} (x_i-\b(\nu)_i) \left ( h v_i'(0)  -  \frac{\mathbf{1}_{\{ T_i \in [0,h] \}}}{1 - \b(\nu)_i}  \right ) \right ) + o(h), \label{eq:PinSL1}
\end{align}
at which point we have established the correctness of \eqref{eq:dnft}. By definition of the random variables $T_i$, we have
$$
\PP(T_i \in [0,h]) = h (1-\b(\nu)_i) v_i'(0) + o(h),
$$
and therefore
$$
\EE\left [ \left . h v_i'(0)  -  \frac{\mathbf{1}_{\{ T_i \in [0,h] \}}}{1 - \b(\nu)_i} \right . \right ] = o(h).
$$
Plugging the last display into \eqref{eq:PinSL1} implies that $\EE[\nu_{h}(x)] = \nu(x) + o(h)$ which proves \eqref{eq:marh}, so we have established that the process is a martingale.

Under the extra condition that $\lim_{t \to \infty} |v_i(t)| = \infty$, we just observe that $\nu_t$ converges to a Dirac measure almost-surely under every pinning process $u(t)$, and therefore the process is a localization process.
\end{proof}

\begin{proof}[Proof of Proposition \ref{prop:NFentdecay}]
Fix a measure $\nu$ on $\{-1,1\}^n$ and let $(\nu_t)_t$ be the process obtained via the negative-fields localization process. Fix $t>0$. According to equation \eqref{eq:SLjump1}, we can write
\begin{equation}
\nu_{t+h}(x) = \nu_t(x) \left ( 1 + \langle x - \b(\nu_t), Z \rangle \right ) + o(h),
\end{equation}
where, conditional on $\nu_t$, the random variable $Z$ has independent coordinates which satisfy
$$
Z_i = 
\begin{cases}
-h & \mbox{with probability } 1-h (1+\b(\nu_t)_i) \\
\frac{1}{1+b(\nu_t)_i} & \mbox{with probability } h (1+\b(\nu_t)_i).
\end{cases}
$$
Let $f:\{-1,1\}^n \to \RR_+$ be such that $\int f d \nu > 0$. We have,
\begin{align*}
\int f(x) d \nu_{t+h}(x) & = \int f(x) d \nu_t(x) \left ( 1 + \left \langle \left (\frac{\int x f(x) d\nu_t(x)}{\int x f(x) \nu_t(dx)} - \b(\nu_t) \right ), Z \right \rangle \right ) + o(h) \\
& = \nu_t(f) \left ( 1 + \langle V, Z \rangle \right ) + o(h),
\end{align*}
where 
$$
v := \frac{\int x f(x) \nu_t(dx)}{\int f(x) \nu_t(dx)} - \b(\nu_t) = \b(\mu_t) - \b(\nu_t)
$$
and where
$$
\frac{d \mu_t}{d \nu_t}(x) := \frac{f(x)}{\int f(x) d \nu_t(x)}.
$$
Therefore, we have
\begin{equation} \label{eq:dflogf}
\EE \brackets{ \nu_{t+h}(f) \log \nu_{t+h}(f)} - \nu_t(f) \log \nu_t(f) = \nu_t(f) \EE \brackets{ (1+\langle Z, v \rangle) \log(1+ \langle Z, v \rangle)  } + o(h).
\end{equation}
For a fixed $t$, define $b_i = \b(\nu_t)_i$. We calculate,
\begin{align*}
&\quad \EE \brackets{ (1+\langle Z, v \rangle) \log(1+ \langle Z, v \rangle)  } \\
& = - h \sum_{i=1}^n v_i + h \sum_{i=1}^n (1+b_i) \left (1+ \frac{v_i}{1+b_i} \right ) \log \left (1+\frac{v_i}{1+b_i} \right ) + o(h) \\
& = h \sum_{i=1}^n (1 + b_i) \left ( \left (1+ \frac{v_i}{1+b_i} \right ) \log \left (1+\frac{v_i}{1+b_i} \right ) - \frac{v_i}{1+b_i} \right ) + o(h) \\
& \stackrel{ \eqref{eq:hphicomp} }{\leq} 4 h H(b + v, b) + o(h),
\end{align*}
Combining the two last displays yields 
$$
\EE[ \Ent_{\nu_{t+h}}[f] | \nu_t ] \geq \Ent_{\nu_t}[f] - 4 h \nu_t(f) H(b+v, b) + o(h)
$$
The assumption that $\nu$ is $\alpha$-entropically stable with respect to $H(x,y)$ together with Lemma  \ref{lem:maxent} give
$$
H(b+v,b) \leq \alpha \KL(\mu_t || \nu_t) = \alpha \frac{\Ent_{\nu_t}[f]}{\nu_t(f)}.
$$
Combining the last two displays completes the proof.
\end{proof}

\subsection{Proofs of technical lemmas from Section \ref{sec:CLV}}

\begin{proof}[Proof of Lemma \ref{lem:cormar}]
For $k \in [n]$. Let $Z \sim \nu$ and define $X=Z_j$ and $Y=Z_i$. We have
\begin{align*}
\COV(\nu)_{ij} & = \EE[XY] - \EE[X] \EE[Y] \\
& = \EE[X|Y=1] \PP(Y=1) - \EE[X|Y=-1] \PP(Y=-1) - \EE[X] \EE[Y] \\
& = 2 \EE[X|Y=1] \PP(Y=1) - \EE[X] - \EE[X] \EE[Y] \\
& = \left (\EE[X | Y=1] - \EE[X] \right ) \left ( 1 + \EE[Y] \right ).
\end{align*}
Therefore, we have
\begin{align*}
\b \left (\pin_{\ee_i} \nu \right )_j - \b(\nu)_j & = \EE[X | Y=1] - \EE[X] = \frac{\Cov(\nu)_{ij}}{1+\b(\nu)_i}
\end{align*}
which proves the lemma for the case $s = +1$. The proof for $s=-1$ is similar.
\end{proof}

\begin{proof}[Proof of Lemma \ref{lem:HPhi}]
	We first deal with equation~\eqref{eq:hphicomp}. Fixing $y \in (-1,1)$, we calculate the first derivatives with respect to $x$
	\begin{align*}
	\frac{\partial }{\partial x} H(x, y) &= \frac{1}{2} \log\parenth{\frac{1+x}{1+y}} - \frac{1}{2} \log\parenth{\frac{1-x}{1-y}}\\
	\frac{\partial }{\partial x} (1+y)\Phi\parenth{\frac{x-y}{1+y}} &= \log\parenth{\frac{1+x}{1+y}}.
	\end{align*}
	The two functions coincide up to the first order Taylor expansion in $x$ at $x=y$. Next we look at the second derivatives
	\begin{align*}
	\frac{\partial^2}{\partial x^2}H(x,y) &= \frac{1}{(1+x)(1-x)} \\
	\frac{\partial^2}{\partial x^2} (1+y) \Phi \left (\frac{x-y}{1+y} \right ) &= \frac{1}{1+x}.
	\end{align*}
	Since $1 \leq 2 \frac{1}{1-x}$, we obtain the inequality on the right-hand side. Similarly, since $\frac{1}{1-x} \leq 2$ when $x \leq 1/2$, we also obtain the inequality on the left-hand side for the case $x \leq 1/2$. For $x > 1/2$, let
	\begin{align*}
	G(x, y) = 2 (1+y) \Phi (\frac{x-y}{1+y}) - H(x, y).
	\end{align*}
	Its first derivative is as follows
	\begin{align*}
	\frac{\partial }{\partial x} G(x, y) = \frac{1}{2} \log\parenth{\frac{(1+x)^3 (1-x)}{(1+y)^3 (1-y)}}.
	\end{align*}
	We observe that $x \mapsto (1+x)^3 (1-x)$ is monotonically decreasing on $[1/2, 1]$. Because $\frac{\partial }{\partial x} G(0, y) < 0$, $x \mapsto G(x, y)$ is either decreasing on $[1/2, 1]$ or increasing then decreasing on $[1/2, 1]$. So it is sufficient to check that $G(\frac{1}{2}, y) \geq 0$ and $G(1, y) \geq 0$ to verify $G(x, y) \geq 0$ for $x \in [1/2, 1]$. We verify that
	\begin{align*}
	G(1, y) = 3 \log\parenth{\frac{2}{1+y}} - 1 + y \geq 0,
	\end{align*}
	because $\frac{\partial}{\partial y} G(1, y) = -3/(1+y) + 1\leq 0$ and $G(1, 0) \geq 0$. And we have
	\begin{align*}
	G(\frac{1}{2}, y) = \frac{5}{2} \log\parenth{\frac{1}{1+y}} + 2y + \frac{9}{4} \log(3/2) - 1 - \frac{1}{4} \log(1/2) \geq 0,
	\end{align*}
	because $\frac{\partial}{\partial y} G(\frac{1}{2}, y) = - \frac{5}{2(1+y)} + 2 \leq 0$ and $G(\frac{1}{2}, 0) \geq 0$.
	This proves \eqref{eq:hphicomp}. To prove \eqref{eq:hphicomp2}, we note that by using \eqref{eq:hphicomp}, it is enough to show that 
	$$
	2\Phi(\delta s) \geq \delta^2 \Phi(s), ~~~ \forall \delta \in (0,1), s \in [-1,\infty),
	$$ 
	and then take $s = \frac{x-y}{(1+y)} \geq -1$ and $\delta = \eps^{-1}$. To prove the last inequality, let $F(s) = 2\Phi(\delta s) - \delta^2 \Phi(s)$. We have
	\begin{align*}
	F'(s)  &= 2\delta \Phi'(\delta s) - \delta^2 \Phi'(s) = 2\delta \log(1+\delta s) - \delta^2 \log(1+s). \\
	F''(s) &=  2\delta^2 \frac{1}{1+\delta s} - \delta^2 \frac{1}{1+s}
	\end{align*}
	For $s \geq 0$, since $F'(0) = 0$ and $F''(s) \geq 0$, we obtain that $F(s)$ is an increasing function of $s$ and hence $F(s) \geq F(0) = 0$. For $s \in [-1, 0)$, since as a function of $\delta$, $F'(s)$ is decreasing, we have $F'(s) \geq  0$. We conclude that $F(s)$ is an increasing function of $s$ on $[-1, 0)$. Hence, $F(s) \geq F(-1) = 2\Phi(-\delta) - \delta^2$. Since
	\begin{align*}
	\frac{\partial }{\partial \delta} [2\Phi(-\delta) - \delta^2] = 2 (-\delta - \log(1-\delta)) \geq 0,
	\end{align*}
	we have $F(s) \geq 2\Phi(-\delta) - \delta^2 \geq 0$.
	
	Finally, we deal with equation~\eqref{eq:hphicomp3}. For $s \geq 0$, there is nothing to prove since $\Phi(\cdot) \geq 0$ on $[-1, 1]$. For $s \in [-1, 0)$, let $J(s) = 3\Phi(-s) - \Phi(s)$. It has derivatives
	\begin{align*}
	J'(s) &= -3\log(1-s) - \log(1+s) \\
	J''(s) &= \frac{3}{1-s} - \frac{1}{1+s}.
	\end{align*}
	Since $J''(-1/2) = 0, J''(0) > 0$ and $J''(-1) < 0$, we have that $J'(s)$ is minimized at $1/2$ and that it only crosses $0$ once on $(-1, 0)$. To check $J(s) \geq 0$, it is sufficient to check $J(0) \geq 0$ and $J(-1) \geq 0$. We conclude equation~\eqref{eq:hphicomp3}.
\end{proof}

\begin{proof}[Proof of Lemma \ref{lem:tiltmarginals}]
Clearly we may assume $u = 0$ (otherwise we just need to prove the lemma on for the restriction of $\nu$ to $S_u$). Denote $v = v_i \ee_i + \tilde v$ where $\tilde v \perp \ee_i$. We have
$$
\tilt_v \nu = \tilt_{v_i \ee_i} \tilt_{\tilde v} \nu
$$
Define $\mu = \tilt_{\tilde v} \nu$. 
Note that 
\begin{align*}
	\b_i(\mu) = \Exs_{X \sim \tilt_{\tilde{v}}\nu} \brackets{\b_i(\pin_{\mathrm{Proj}_{\ee_i^\perp} X} \nu)}.
\end{align*}
According to equation~\eqref{eq:boundedmar3}, we have $1 + \b_i(\pin_{\mathrm{Proj}_{\ee_i^\perp} X} \nu) \leq \delta$. Hence
\begin{align*}
	1 + \b_i(\mu) \leq \delta.
\end{align*}
Define $f(t) = \b_i(\tilt_{t\ee_i} \mu)$. If $v_i < 0$ then $f(v_i) < f(0)$ and there is nothing to prove. Otherwise, we have
\begin{align*}
	f(0) \leq -1 + \delta, \quad f'(t) = 1 - f(t)^2.
\end{align*}
Thus,
\begin{align*}
	(1+f(t))' \leq 2 (1 + f(t)). 
\end{align*}
By Gronwall's inequality,
\begin{align*}
	1+f(t) \leq (1+ f(0)) \exp(2t) \leq \delta \exp(2t). 
\end{align*}
Therefore,
\begin{align*}
	1+\b_i(\tilt_v(\nu)) = 1+ f(v_i) \leq \delta \exp(2v_i).
\end{align*}
For the second part, let $g(t) = \frac{1+ f(t)}{1-f(t)}$. Since $x \mapsto \frac{1+x}{1-x}$ is an increasing function of $x \in (-1, 1)$, we have
\begin{align*}
	\delta' \leq g(0) \leq \delta''.
\end{align*} 
If $v_i < 0$, then $f(v_i) < f(0)$ and $g(v_i) < g(0)$.  Otherwise, we have
\begin{align*}
	g'(t) = \frac{2 f'(t)}{(1-f(t))^2} = 2 g(t).  
\end{align*}
Integrating the above equation, we obtain
\begin{align*}
	g(t) = g(0) \exp(2t) \leq \tilde{\delta} \exp(2t).
\end{align*}
The lower bound is established similarly.
\end{proof}

\section{Appendix B: Entropic stability for Ising models under a spectral condition}
\begin{proof}[Proof of Lemma \ref{lem:SKcov}]
Consider the Ising model whose density is
$$
\nu(x) \propto \exp\left ( \langle Jx,x \rangle + \langle v, x \rangle  \right )
$$
Since $v$ is arbitrary, it suffices to show that
\begin{equation} \label{eq:condSLC2}
\COV(\nu) \preceq \frac{1}{1-2\|J\|_\OP}.
\end{equation}

Set $\alpha > 2 \|J\|_{\OP}$ whose value will be chosen later. Let $M$ be a matrix such that $M^{-1} + \alpha^{-1} \Id = (\alpha \Id - 2 J)^{-1}$. By the fact that 
$$
\mathcal{N}(0, M^{-1}) + \mathcal{N}(0, \alpha^{-1} \Id) \stackrel{(d)}{=} \mathcal{N}(0, (\alpha \Id - 2 J)^{-1}),
$$
we have
\begin{align*}
\exp\left ( -\frac{1}{2} \langle (\alpha \Id - 2J)x,x \rangle \right ) & \propto \int_{\RR^n} e^{-\frac{1}{2} \langle y, M y \rangle} e^{- \frac{\alpha}{2} |x-y|^2} dy \\
& \propto  e^{- \alpha |x|^2/2} \int_{\RR^n} e^{-\frac{1}{2} \langle y, (M + \alpha \Id) y \rangle} e^{\alpha \langle x,y \rangle} dy \\
& \propto e^{- \alpha |x|^2/2} \int_{\RR^n} e^{-\frac{1}{2} \langle y, (\alpha^2 (2J)^{-1}) y \rangle} e^{\alpha \langle x,y \rangle} dy.
\end{align*}
Since $|x|$ is constant on $\{-1,1\}^n$, we get that there is a constant $C(J,v)$ such that
$$
\exp\left ( \langle (\alpha Jx,x \rangle + \langle v, x \rangle \right ) = C(J,v) \int_{\RR^n} e^{-\frac{1}{2} \langle y, (\alpha^2 (2J)^{-1}) y \rangle} e^{\langle x,v+\alpha y \rangle} dy, ~~ \forall x \in \{-1,1\}^n.
$$
It follows that
\begin{align*}
\nu(x) & \propto \int_{\RR^n} e^{-\frac{1}{2} \langle y, (\alpha^2 (2J)^{-1}) y \rangle} e^{\langle x,v+\alpha  \rangle} dy \\
& \propto  \int_{\RR^n} e^{-\frac{1}{2} \langle y, (\alpha^2 (2J)^{-1}) y \rangle + \log(Z(v+\alpha y))} \left ( Z(v+y \alpha )^{-1} e^{\langle x,v+\alpha y \rangle} \right ) dy,
\end{align*}
where $Z(w) = \int_{\{-1,1\}^n} \exp(\langle x, w \rangle) dx$. Since $Z(v+ \alpha y)^{-1} e^{\langle x,v+\alpha y \rangle}$ is the density of $\tilt_{v+ \alpha y} \mu$ where $\mu$ is the uniform measure, we have
$$
\nu = \int_{\RR^n} f(y) \tilt_{v+\alpha y} \mu dy,
$$
where
$$
f(y) \propto \exp\left (-\frac{1}{2} \langle y, (\alpha^2 (2J)^{-1}) y \rangle + \log(Z(\alpha y)) \right ).
$$
Let $X$ be the random vector with density $\frac{f}{\int f}$. Then the above formula and the law of total variance give
\begin{align}
\COV(\nu) & = \COV(\EE[\tilt_{v+\alpha X} \nu]) + \EE[ \COV(\tilt_{v+\alpha X} \mu) ] \nonumber \\
& = \COV(\tanh(v+\alpha X)) + \EE[ \COV(\tilt_{v+\alpha X} \mu) ] \nonumber \\
& \preceq \alpha^2 \COV(X) + \Id, \label{eq:covnu2}
\end{align}
where we used the fact that $\tanh(X)$ is a contraction.

Since $\log Z(\cdot)$ is the logarithmic Laplace transform of the uniform measure, we obtain that $\nabla^2 \log(Z(w)) \preceq \Id$. Therefore, defining
$$
U(y) = \frac{1}{2} \langle y, (\alpha^2 (2J)^{-1}) y \rangle - \log(Z(\alpha y)),
$$
we have 
$$
\nabla^2 U \succeq \alpha^2 (2J)^{-1} - \alpha^2 \Id \succeq  \left (\frac{\alpha^2}{2 \|J\|_\OP} - \alpha^2 \right ) \Id.
$$ 
An application of Theorem \ref{thm:stronglc} gives 
$$
1+ \|\alpha^2 \COV(X)\|_\OP \leq 1 + \frac{2 \|J\|_\OP}{1-2 \|J\|_\OP} = \frac{1}{1-2 \|J\|_\OP}.
$$
Combining with \eqref{eq:covnu2} completes the proof.
\end{proof}

\section{Appendix C: The hardcore model} \label{appendix:HC}

\begin{proof}[Proof of Lemma~\ref{lem:marginal_bounds_of_hardcore}]
  Denote $\mathfrak{E}(\sigma)$ the event $\sigma_i u_i \geq 0, \forall i \in [n]$.
  For the upper bound, we have
  \begin{align*}
    \Prob_{\sigma \sim \nu} \parenth{\sigma_v = +1 \mid \mathfrak{E}(\sigma) } &= \frac{\sum_{\sigma_v = +1, \sigma_i u_i \geq 0, \forall i} \lambda^{\abss{I_\sigma}}}{\sum_{\sigma_v = +1, \sigma_i u_i \geq 0, \forall i} \lambda^{\abss{I_\sigma}}+\sum_{\sigma_v = -1, \sigma_i u_i \geq 0, \forall i} \lambda^{\abss{I_\sigma}}} \\
    &\overset{(i)}{\leq} \frac{\sum_{\sigma_v = +1, \sigma_i u_i \geq 0, \forall i} \lambda^{\abss{I_\sigma}}}{\sum_{\sigma_v = +1, \sigma_i u_i \geq 0, \forall i} \lambda^{\abss{I_\sigma}}+\sum_{\sigma_v = +1, \sigma_i u_i \geq 0, \forall i} \lambda^{\abss{I_\sigma} - 1}} \\
    &= \frac{\lambda}{\lambda + 1}.
  \end{align*}
  Inequality (i) follows from the observation that any configuration $\sigma$ with $\sigma_v = +1$ gives a configuration $\sigma'$ with $\sigma'_v=-1$ and $\sigma'_w = \sigma_w, \forall w \in V \setminus \braces{v}$. \\
  For the lower bound, let $(u_1, \ldots, u_K)$ be the elements of $N_v$, $K = \abss{N_v}$, we have
  \begin{align*}
    &\quad \Prob_{\sigma \sim \nu} \parenth{\sigma_v = +1 \mid \mathfrak{E}(\sigma) } \\
    &\geq \Prob_{\sigma \sim \nu} \parenth{\sigma_v = +1 \text{ and } \sigma_{u_k} = -1, \forall k\in [K] \mid \mathfrak{E}(\sigma) } \\
    &= \Prob_{\sigma \sim \nu} \parenth{\sigma_{u_k} = -1, \forall k\in [K] \mid \mathfrak{E}(\sigma) } \cdot \Prob_{\sigma \sim \nu} \parenth{\sigma_v = +1 \mid \sigma_{u_k} = -1, \forall k\in [K], \mathfrak{E}(\sigma) }
  \end{align*}
  Conditioned on all neighbors of $v$ being $-1$, $\sigma_v$ can either be $-1$ or $+1$. We have
  \begin{align*}
    \Prob_{\sigma \sim \nu} \parenth{\sigma_v = +1 \mid \sigma_{u_k} = -1, \forall k\in [K], \mathfrak{E}(\sigma) } = \frac{\lambda}{1+\lambda}.
  \end{align*}
  On the other hand, we have
  \begin{align*}
    &\quad \Prob_{\sigma \sim \nu} \parenth{\sigma_{u_k} = -1, \forall k\in [K] \mid \mathfrak{E}(\sigma) } \\
		&= \prod_{i=1}^K\Prob_{\sigma \sim \nu} \parenth{\sigma_{u_{i}} = -1 \mid \sigma_{u_j} = -1, \forall j\in [i-1], \mathfrak{E}(\sigma) } \\
    &= \prod_{i=1}^K \parenth{1-\Prob_{\sigma \sim \nu} \parenth{\sigma_{u_{i}} = +1 \mid \sigma_{u_j} = -1, \forall j\in [i-1], \mathfrak{E}(\sigma) }} \\
    &\overset{(i)}{\geq} \prod_{i=1}^K \parenth{1- \frac{\lambda}{1+\lambda}} \\
    & = \parenth{\frac{1}{1+\lambda}}^{\abss{N_v}}.
  \end{align*}
  Inequality (i) follows from the upper bound of the marginal in the first part. \\
  If in addition $\lambda \leq (1-\delta) \lambda_{\Delta}$, then we have
  \begin{align*}
    \parenth{\frac{1}{1+\lambda}}^{\abss{N_v}}
    &\geq \parenth{\frac{1}{1+\lambda_\Delta}}^{\abss{N_v}} \\
    &\geq \parenth{\frac{1}{1+\lambda_\Delta}}^{\Delta} \\
    &\overset{(i)}{\geq} \parenth{\frac{1}{1+3e^2/\Delta}}^{\Delta} \\
    &\overset{(ii)}{\geq} e^{-3e^2}.
  \end{align*}
  Inequality (i) follows from the fact that for $\Delta \geq 3$,
  \begin{align*}
    \lambda_\Delta = \frac{(\Delta-1)^{\Delta-1}}{(\Delta-2)^\Delta} = \frac{1}{\Delta-2} \parenth{1 + \frac{1}{\Delta-2}}^{\Delta-1} \leq \frac{1}{\Delta-2} e^{\frac{\Delta-1}{\Delta-2}} \leq \frac{3e^2}{\Delta}.
  \end{align*}
  Inequality (ii) follows from $\log(1+x) \leq x, \text{ for } x \in (-1, 1)$.
\end{proof}

\end{document}